\documentclass[a4paper]{amsart}
\usepackage{graphicx}
\usepackage{amssymb}
\usepackage[leqno]{amsmath}
\usepackage{amsfonts}
\usepackage{amsopn}
\usepackage{amstext}
\usepackage{amsthm}
\usepackage{enumerate}
\usepackage{verbatim}
\usepackage[colorlinks,backref]{hyperref}
\usepackage{mathrsfs}
\usepackage{euscript}
\usepackage{todonotes}
\usepackage[normalem]{ulem}

\makeatletter\@namedef{subjclassname@2020}{%
 \textup{2020} Mathematics Subject Classification}
\makeatother

\newtheorem{theorem}{Theorem}[section]
\newtheorem{corollary}[theorem]{Corollary}
\newtheorem{proposition}[theorem]{Proposition}
\newtheorem{lemma}[theorem]{Lemma}
\theoremstyle{definition}
\newtheorem{definition}[theorem]{Definition}
\newtheorem{remark}[theorem]{Remark}
\newtheorem{example}[theorem]{Example}
\newtheorem{problem}[theorem]{Problem}

\newtheorem{thmx}{Theorem}

\newenvironment{cor}[1]{\begin{corollary}\label{cor:#1}}{\end{corollary}}
\newenvironment{dfn}[1]{\begin{definition}\label{def:#1}}{\end{definition}}
\newenvironment{exm}[1]{\begin{example}\label{exm:#1}}{\end{example}}
\newenvironment{lem}[1]{\begin{lemma}\label{lem:#1}}{\end{lemma}}
\newenvironment{pro}[1]{\begin{proposition}\label{pro:#1}}{\end{proposition}}
\newenvironment{rem}[1]{\begin{remark}\label{rem:#1}}{\end{remark}}
\newenvironment{thm}[1]{\begin{theorem}\label{thm:#1}}{\end{theorem}}
\newenvironment{prb}[1]{\begin{problem}\label{prb:#1}}{\end{problem}}

\newcommand{\COR}[1]{Corollary~\textup{\ref{cor:#1}}}
\newcommand{\DEF}[1]{Definition~\textup{\ref{def:#1}}}
\newcommand{\EXM}[1]{Example~\textup{\ref{exm:#1}}}
\newcommand{\LEM}[1]{Lemma~\textup{\ref{lem:#1}}}

\newcommand{\PRO}[1]{Proposition~\textup{\ref{pro:#1}}}
\newcommand{\REM}[1]{Remark~\textup{\ref{rem:#1}}}
\newcommand{\THM}[1]{Theorem~\textup{\ref{thm:#1}}}

\newcommand{\aaA}{\mathscr{A}}
\newcommand{\bbB}{\mathscr{B}}
\newcommand{\ddD}{\mathscr{D}}
\newcommand{\eeE}{\mathscr{E}}
\newcommand{\ggG}{\mathscr{G}}
\newcommand{\KKK}{\mathbb{K}}
\newcommand{\NNN}{\mathbb{N}}
\newcommand{\QQQ}{\mathbb{Q}}
\newcommand{\RRR}{\mathbb{R}}
\newcommand{\ZZZ}{\mathbb{Z}}
\newcommand{\Pp}{\mathfrak{P}}
\newcommand{\qQ}{\mathfrak{q}}
\newcommand{\zZ}{\mathfrak{z}}

\newcommand{\dd}{\colon}
\newcommand{\OPN}[1]{\operatorname{#1}}
\newcommand{\UP}[1]{\textmd{\textup{#1}}}
\newcommand{\varempty}{\varnothing}
\newcommand{\df}{\stackrel{\textup{def}}{=}}
\newcommand{\TFCAE}{The following conditions are equivalent:}
\newcommand{\tfcae}{the following conditions are equivalent:}
\newcommand{\Cantor}{Cantor space}
\newcommand{\epsi}{\varepsilon}
\newcommand{\compatible}[1]{\(#1\)-compati\-ble}
\newcommand{\morphism}{morphism}
\newcommand{\Can}{2^\NNN}

\newcommand{\zero}{\pmb{0}}
\newcommand{\unit}{\pmb{1}}
\newcommand{\BooV}[2]{#1 \vee #2}
\newcommand{\BooW}[2]{#1 \wedge #2}
\newcommand{\BooD}[2]{#1 \setminus #2}
\newcommand{\homspec}[1]{hom#1spec}

\begin{document}

\title[Automorphism groups of measures, II]{Automorphism groups of measures\\%
 on the Cantor space.\\Part II: Abstract homogeneous measures}
\author[P.~Niemiec]{Piotr Niemiec}
\address{Wydzia\l{} Matematyki i~Informatyki\\%
 Uniwersytet Jagiello\'{n}ski\\ul.\ \L{}o\-ja\-sie\-wi\-cza 6\\%
 30-348 Krak\'{o}w\\Poland}
\email{piotr.niemiec@uj.edu.pl}
\thanks{Research partially supported by the National Science Centre, Poland
 under the Weave-UNISONO call in the Weave programme [grant no
 2021/03/Y/ST1/00072].}
\begin{abstract}
The main aim of the paper is to introduce a new class of (semi\-group-valued)
measures that are ultrahomogeneous on the Boolean algebra of all clopen subsets
of the \Cantor\ and to study their automorphism groups. A characterisation, in
terms of the so-called trinary spectrum of a measure, of ultrahomogeneous
measures such that the action of their automorphism groups is (topologically)
transitive or minimal is given. Also sufficient and necessary conditions for
the existence of a dense (or co-meager) conjugacy class in these groups are
offered. In particular, it is shown that there are uncountably many full
non-atomic probability Borel measures \(\mu\) on the \Cantor\ such that \(\mu\)
and all its restrictions to arbitrary non-empty clopen sets have all
the following properties: this measure is ultrahomogeneous and not good,
the action of its automorphism group \(G\) is minimal (on a respective clopen
set), and \(G\) has a dense conjugacy class. It is also shown that any minimal
homeomorphism \(h\) on the \Cantor\ induces a homogeneous \(h\)-invariant
probability measure that is universal among all \(h\)-invariant probability
measures (which means that it `generates' all such measures) and this property
determines this measure (in a certain sense) up to a \(\QQQ\)-linear
isomorphism.
\end{abstract}
\subjclass[2020]{Primary 37A15, 28D15; Secondary 28D05, 03E15}
\keywords{Cantor space; homogeneous measure; transitive group action; minimal
 group action; dense conjugacy class; Rokhlin property; good measure; vector
 measure}
\maketitle

\tableofcontents

\section{Introduction}

Automorphism groups of good measures are intensively studied for the last two
decades. According to the results of Glasner and Weiss \cite{GW95} and of Akin
\cite{Akin05}, good measures can nicely be characterised as precisely those
(full probability Borel) measures (on the \Cantor) that are preserved by some
homeomorphisms that are both uniquely ergodic and minimal. These measures
provide interesting examples of transformation groups whose action is minimal
and which, in certain cases, have Rokhlin properties (consult, e.g.,
\cite{Akin05}, \cite{IM16}, \cite{DKMN1}). Currently it is not clear whether
such exceptional properties are specific precisely to good measures (among
probability Borel ones on the \Cantor) or if there are other measures that share
analogous properties. One of the main goals of this paper is to introduce a new
class of measures that we call supersolid (these are precisely those measures
that are ultrahomogeneous and have the property that the action of their
automorphism groups is minimal on the \Cantor). Among other things, we prove
that there are uncountably many such measures each of which is full, non-atomic
and not good, and, in addition, whose automorphism groups have the Rokhlin
property. However, to construct such probability measures we involve more
abstract set functions that we call \emph{group-valued measures}. As our ideas
work perfectly for semigroup-valued set functions as well, we discuss them in
this more general context. Our basic concept is motivated by a well-known
property of non-negative (finitely) additive set functions defined on
the algebra \(\bbB\) of all clopen subsets of the \Cantor\ that says that each
such a function extends uniquely to a (finite non-negative) Borel measure (see,
e.g., \cite{BieKuWa19}). Having this in mind, instead of dealing with the full
\(\sigma\)-algebra of all Borel sets, we restrict our considerations to
the aforementioned algebra \(\bbB\). A similar approach (but concentrated only
on non-negative real-valued measures) is presented in \cite{BieKuWa19}. However,
our investigations are independent of that work. Our most interesting examples
of (probability Borel) measures are obtained from measures taking values in
finite-dimensional vector spaces. Our main tools are the concept of
the so-called \emph{trinary spectrum} that is a full invariant of a homogeneous
measure, and the products of such sets. These trinary spectra (of homogeneous
measures) play the role of clopen values sets of good measures---they uniquely
determine the measure under considerations up to homeomorphism of the \Cantor.
In our investigations we are mostly interested in properties that are
\emph{hereditary} for a given homogenous measure \(\mu\) in a sense that not
only \(\mu\) shares this property but also all its restrictions to arbitrary
non-empty clopen sets (as measures on these clopen sets) share it.\par
Probably the most important result of the paper reads as follows.
\begin{thmx}[cf. Theorems~\ref{thm:PN-homeo-mini} and \ref{thm:PN-univ-exist}]
Let \(h\) be a minimal homeomorphism on the \Cantor.
\begin{enumerate}[\upshape(A)]
\item There exists a homogeneous probability Borel measure \(m\) that is
 \(h\)-invariant and has the following universal property: For any
 \(h\)-invariant probability measure \(\mu\) there exists a unique
 \(\QQQ\)-linear operator \(f\) from the \(\QQQ\)-linear span \(W(m)\) of \(V(m)
 = m(\bbB)\) into \(\RRR\) such that \(\mu(U) = (f \circ m)(U)\) for each clopen
 set \(U\). (In other words, there is a natural one-to-one correspondence
 between \(h\)-invariant probabilty measures \(\mu\) and \(\QQQ\)-linear
 operators \(f\) from \(W(m)\) into \(\RRR\) such that \(f(1) = 1\) and \(f(v)
 \geq 0\) for all \(v \in V(m)\).)
\item The above measure \(m\) is unique in the following sense: if \(m'\) has
 all the properties specified in \UP{(A)}, then there exists a unique
 \(\QQQ\)-linear isomorphism \(g\) from \(W(m)\) onto \(W(m')\) such that
 \(m'(U) = (g \circ m)(U)\) for any clopen set \(U\).
\item The above measure \(m\) is good if and only if \(h\) is uniquely ergodic.
\item The automorphism group of \(m\) coincides with the uniform closure of
 the full group of \(h\).
\end{enumerate}
\end{thmx}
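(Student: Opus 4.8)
The plan is to identify the \(h\)-invariant probability measures with the states of a single ordered \(\QQQ\)-vector space, and then to realise \(m\) as a faithful real-valued copy of that space. A finitely additive \(\RRR\)-valued set function on \(\bbB\) is the same as a homomorphism \(C(\Can,\ZZZ)\to\RRR\) (a clopen set being sent via its indicator), and \(h\)-invariance is exactly the condition of annihilating the coboundaries \(\varphi-\varphi\circ h\); thus invariant set functions factor through the coinvariant group \(W_\ZZZ=C(\Can,\ZZZ)/\OPN{im}(1-h^{*})\), where \(h^{*}\varphi=\varphi\circ h\). Writing \(W=W_\ZZZ\otimes\QQQ\), an ordered \(\QQQ\)-vector space with order unit \([\Can]\) (simple, by minimality of \(h\)), the \(h\)-invariant probability measures correspond bijectively to the states of \((W,[\Can])\), i.e.\ the positive unital \(\QQQ\)-linear functionals; here I use that every nonnegative finitely additive set function on \(\bbB\) extends uniquely to a Borel measure.

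For part (A) I would put \(m=\iota\circ u\), where \(u\dd\bbB\to W\) is the canonical map and \(\iota\dd W\to\RRR\) is a positive unital \(\QQQ\)-linear map with \(\ker\iota=\OPN{Inf}(W)\), the intersection of the kernels of all states. Since \(W\) is countable-dimensional, such an \(\iota\) exists as a sufficiently generic convergent combination \(\iota=\sum_n c_ns_n\) of a point-separating sequence of states with positive coefficients, chosen so as to be injective on \(W/\OPN{Inf}(W)\). Then \(W(m)=\iota(W)\cong W/\OPN{Inf}(W)\), and the universal property is immediate: an invariant \(\mu\) gives a state \(\bar\mu\) on \(W\), which kills \(\OPN{Inf}(W)=\ker\iota\) and hence factors uniquely as \(\bar\mu=f\circ\iota\), so \(\mu=f\circ m\); conversely any positive unital \(f\) on \(W(m)\) returns the invariant probability measure \(f\circ m\), which yields the asserted bijection. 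The one substantial point, and the step I expect to be the main obstacle, is that this \(m\) can be taken \emph{homogeneous} together with all its clopen restrictions: this is where I would invoke \THM{PN-univ-exist} and \THM{PN-homeo-mini} and the trinary-spectrum machinery, arranging \(\iota\) (equivalently, the trinary spectrum of \(m\)) so that homogeneity holds; controlling the trinary spectrum of the realisation, rather than merely its value group, is the crux.

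Part (B) is then formal. If \(m'\) also satisfies (A), then, regarding \(m'\) as an \(h\)-invariant probability measure, the universal property of \(m\) supplies a unique positive unital \(f'\dd W(m)\to\RRR\) with \(m'=f'\circ m\); its image is \(W(m')\). Symmetrically one obtains \(f\dd W(m')\to W(m)\) with \(m=f\circ m'\). Then \((f\circ f')\circ m=m\) and \((f'\circ f)\circ m'=m'\), so by the uniqueness clause of (A) (the only operator reproducing \(m\) from \(m\) is the identity of \(W(m)\)) both compositions are identities; hence \(g=f'\) is the required \(\QQQ\)-linear isomorphism, unique again by (A).

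For (C), suppose first \(h\) is uniquely ergodic. Then \((W,[\Can])\) has a single state, and for the homogeneous measure \(m\) this forces \(V(m)=W(m)\cap[0,1]\) with \(W(m)\) a dense subgroup of \(\RRR\), which is precisely Akin's description of a good measure. Conversely, if \(m\) is good then \(V(m)\) is a dense subgroup of \(\RRR\) intersected with \([0,1]\); any state \(f\) of \(W(m)\) is then additive and monotone on this dense subgroup, hence equal to the inclusion, so the state is unique and \(h\) is uniquely ergodic. Finally, for (D): every element of the full group \([h]\) preserves each \(h\)-invariant measure, in particular \(m\), and \(\OPN{Aut}(m)\) is closed in the uniform topology, so the uniform closure of \([h]\) lies in \(\OPN{Aut}(m)\). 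For the reverse inclusion I would use universality from (A): a homeomorphism fixing \(m\) fixes every \(h\)-invariant measure \(f\circ m\), hence acts trivially on the states of \(W\), and the structural description furnished by \THM{PN-homeo-mini} identifies exactly such homeomorphisms with the uniform closure of \([h]\). This reverse inclusion, alongside the homogeneity assertion in (A), carries the real weight; the remaining steps are routine.
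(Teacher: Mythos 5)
Your reduction to states of the coinvariant ordered \(\QQQ\)-vector space, and the construction of \(\iota\) as a generic positive convergent combination of a point-separating sequence of states, is essentially the paper's own construction in part (C) of \THM{PN-univ-exist} (there phrased with a dense sequence in the weak* compact set \(\Delta\) of invariant measures, the map \(M\dd\bbB\to\ell_{\infty}\), and \LEM{PN-l-infty}); your parts (B) and (C) then run as in the paper, modulo citing Glasner--Weiss for goodness of the unique invariant measure in the uniquely ergodic case rather than deriving \(V(m)=W(m)\cap[0,1]\) from scratch. The problem is that the two claims you yourself single out as carrying ``the real weight'' --- homogeneity of \(m\), and the inclusion \(\OPN{Aut}(m)\subseteq\overline{[h]}\) --- are left unproved, and the routes you sketch for them would not close the gap.

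For homogeneity, ``arranging \(\iota\) (equivalently, the trinary spectrum of \(m\)) so that homogeneity holds'' is not an available move: once \(\iota\) is required to be injective on \(W/\OPN{Inf}(W)\), the measure \(m\) is determined up to the identifications of \THM{PN-univ-exist}(D), so its trinary spectrum is forced by the dynamics and there is nothing left to arrange; moreover \THM{PN-ext-spec} produces \emph{some} homogeneous measure with a prescribed \homspec{3} as spectrum, but gives no criterion for recognising that the particular measure you have built is homogeneous. The ingredient actually needed is the Glasner--Weiss theorem (part (A) of \THM{PN-G-W}): if \(m(a)=m(b)\) then, by injectivity of \(\iota\) and density of the separating family, \(\nu(a)=\nu(b)\) for every \(h\)-invariant probability measure \(\nu\), whence some \(g\in[h]\) carries \(a\) onto \(b\); since \([h]\subseteq\OPN{Aut}(m)\), this is exactly homogeneity. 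Similarly, in (D) you appeal to ``the structural description furnished by \THM{PN-homeo-mini}'', which is the statement being proved; the external input required is the Glasner--Putnam--Skau theorem (part (B) of \THM{PN-G-W}), that \(\overline{[h]}\) consists precisely of the homeomorphisms preserving every \(h\)-invariant measure, combined with your (correct) observation that \(\OPN{Aut}(m)=\bigcap_{\nu\in\Delta}\OPN{Aut}(\nu)\). Without these two classical results the argument is circular at exactly its two essential points.
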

Other main results of the paper (that are valid for measures taking values in
arbitrary Abelian semigroups with neutral elements) are:
\begin{itemize}
\item \THM{PN-4EP} which says that a measure is homogeneous iff it is
 ultrahomogeneous (both these properties are related to the property of
 extending partial isomorphisms preserving a given measure from a finite Boolean
 subalgebra of \(\bbB\) to automorphisms of \(\bbB\) that preserve the measure
 as well). In particular, homogeneity (for measures) is hereditary.
\item \THM{PN-ext-4EP} that gives an equivalent condition for two homogeneous
 measures to be conjugate by a Boolean algebra automorphism in terms of their
 trinary spectra (briefly defined in \eqref{eqn:PN-spectrum}).
\item \THM{PN-ext-spec} which gives a full characterisation of abstract sets
 (called \emph{\homspec{3}s}) that are trinary spectra of some homogeneous
 measures (it involves axioms (Sp0)--(Sp4b) listed in \PRO{PN-any-spectrum}).
\item \THM{PN-generic} on genericity of the conjugacy class of a homogeneous
 measure with a prescribed trinary spectrum among all measures that are
 compatible with this set.
\item \THM{PN-solid} and \THM{PN-supersolid} which introduce, respectively,
 \emph{solid} and \emph{supersolid} \homspec{3}s that are precisely the trinary
 spectra of homogeneous measures such that the action of their automorphism
 groups is (topologically) transitive or, respectively, minimal on the \Cantor.
 (If a probability Borel measure on the \Cantor\ is supersolid, then it is
 automatically full and non-atomic---see \PRO{PN-super-full-nonatom} below.)
 Characterisations given in these theorems enables us to produce new solid (or
 supersolid) \homspec{3}s by forming Cartesian products of such sets (as
 described in Section~\ref{sec:products} below).
\item \THM{PN-zero-super} that fully classifies all homogeneous
 (semigroup-valued) measures \(\mu\) such that the action of the automorphism
 group of \(\mu\) is minimal on \(\Can\) and \(\mu(A)\) is zero for some proper
 (non-empty) clopen set \(A\).
\item \THM{PN-univ-exist} on the existence of the so-called universal
 \(H\)-invariant probability measures for any set \(H\) of homeomorphisms of
 the \Cantor.
\item \COR{PN-super-nogood-dense} on the existence of probability (or signed)
 Borel measures that are supersolid and non-good, and  for which the property
 that their automorphism groups have the Rokhlin property is hereditary.
\item \THM{PN-conj-class} that establishes an equivalence between two
 \underline{hereditary} properties: of existence of dense vs. dense
 \(\ggG_{\delta}\) conjugacy classes in the automorphism groups of homogeneous
 measures.
\item \THM{PN-strong-good-like} that generalises Akin's \cite{Akin05} result on
 the existence of dense \(\ggG_{\delta}\) conjugacy classes (for certain good
 measures) to the context of the so-called good-like group-valued measures.
\end{itemize}
We also study two classes of measures that serve as a natural counterpart of
good measures (among probability ones) in the realm of group-valued measures; we
call them \(G\)-filling (where \(G\) is a countable group) and good-like (with
values in a linearly ordered countable group). All measures from the former
class are supersolid and they depend on a unique parameter running over all
elements of \(G\). Denoting by \(H\) the automorphism group of such a measure,
we give an equivalent condition for \(H\) to have the Rokhlin property when \(G
= \ZZZ^d\) (see \THM{PN-Z-dense} below) or when \(G\) is a vector space over
\(\QQQ\) or over the field \(\ZZZ/p\ZZZ\) where \(p\) is a prime (see
Theorems~\ref{thm:PN-0-filling} and \ref{thm:PN-Q-vector} below). The results
about \(G\)-filling measures resemble classical theorems on good measures
obtained in \cite{Akin05}, \cite{IM16} and by others.\par
The paper is organized as follows. In the next section we introduce homogenepus
semigroup-valued measures as well as their trinary spectra and abstract
\homspec{3}s, and present fundamental results on these notions. For example, we
show there that each homogeneous measure is ultrahomogeneous and that both these
properties are hereditary; we prove that two homogeneous measures are conjugate
(by a Boolean algebra automorphism) iff their trinary spectra coincide; and that
the class of all \homspec{3}s coincides with the class of all trinary spectra of
homogeneous measures---so, there is a one-to-one correspondence between
\homspec{3}s and such measures (considered up to conjugacy), and instead of
constructing homogeneous measures it is sufficient to build \homspec{3}s. We
also give an equivalent condition for an abstract measure defined on a finite
subalgebra of \(\bbB\) to be extendable to a homogeneous one (on \(\bbB\)) with
a prescribed trinary spectrum. Next we show that being a homogenous measure with
a prescribed trinary spectrum is a generic property in a certain Polish space of
semigroup-valued measures (see \THM{PN-generic} below). The section is concluded
with an example of a \emph{rigid} homogeneous measure (here rigidity means that
the automorphism group is trivial). This example serves as a motivation to study
`better' measures than only homogeneous, which we do in the third section,
devoted to solid and supersolid measures as well as such \homspec{3}s. These
properties are related to topological transitivity and minimality of the action
of the automorphism group of a homogeneous measure. In the fourth section we
present an application of our theory by showing that new solid or supersolid
measures can be produced by forming products of trinary spectra of already
existing such measures. We also introduce there the notion of \emph{independent}
measures (in the sense of homogeneity) and by means of this term we give
examples of probability Borel measures on the \Cantor\ that are supersolid but
not good. In Section~5 we discuss semigroup-valued homogeneous measures that
attain zero value on a proper clopen set. It turns out that each such a measure
is automatically solid, and clopen sets on which this measure is supersolid are
`densely' placed in \(\Can\). Finally, we give a full
classification/characterisation of all such measures that are supersolid.
Section~6 discusses the case of group-valued homogeneous measures. We introduce
\emph{binary spectra} of such measures and define \emph{\homspec{2}s} that are
in a one-to-one correspondence with \homspec{3}s in groups. (Although
\homspec{2}s use two parameters instead of three, some of their axioms are more
complicated or less transparent than their counterparts for \homspec{3}s.) We
also introduce more handy structures that we call \emph{solid partially ordered
spectra}---they are in a one-to-one correspondence with solid (group-valued)
measures that do not attain value zero on proper clopen sets. With the aid of
this notion we show that the so-called good-like measures (introduced therein)
are all solid. In Section~7 we introduce (both probability and group-valued)
universal invariant measures that, in a sense, generate all other invariant
measures, and we show that quite often such measures are automatically
homogeneous. This is the case when one deals with \(h\)-invariant probability
measures where \(h\) is a minimal homeomorphism on the \Cantor\ (see Theorem~A
above). Also all \(G\)-filling as well as all good-like measures are universal.
In Section~8 we address the question of when the automorphism group of
a homogeneous measure has the Rokhlin property (that is, when this group admits
a dense conjugacy class). We adapt there classical concepts and generalise
\cite[Theorem~4.8]{DKMN1} to the context of abstract homogeneous measures (see
\THM{PN-dense-conj} below). We also prove that there are uncountably many
probability Borel measures on the \Cantor\ that are non-good and supersolid, and
for which the aforementioned property concerning dense conjugacy classes is
hereditary. Finally, we study this issue for \(G\)-filling measures (their
definition is most similar to the one of good measures, but does not involve any
linear order). We fully answer the question about the existence of a dense
conjugacy class for countable vector spaces over \(\QQQ\) and for the groups
\(\ZZZ^d\) where \(d\) is finite. We also show that---for an arbitrary
group-valued homogeneous measure---if the property of having dense conjugacy
class is hereditary, then the underlying group (generated by the values of
the measure) is divisible (see \PRO{PN-hered-1/N} below). Finally, Section~8 is
devoted to the study of those (homogeneous) measures whose automorphism groups
have dense \(\ggG_{\delta}\) conjugacy class. We develop there main ideas from
\cite{DKMN1} to show that in certain cases this property is equivalent to
the Rokhlin property. We conclude the paper with a positive result in this
direction for good-like measures, and with two open problems.

\subsection*{Notation and terminology}
In this paper \(\NNN = \{0,1,2,\ldots\}\). We use \(\Can\) to denote
the \Cantor. A subset of \(\Can\) is \emph{clopen} if it is both open and
closed. \((\bbB,\zero,\unit,\vee,\wedge,\setminus)\) is reserved to denote
a countable atomless Boolean algebra (symbol ``\(\setminus\)'' stands for
the complement operator in \(\bbB\)). So, \(\bbB\) is a Boolean algebra model of
the \Cantor. For simplicity, we will write \(\BooD{a}{b}\) (where \(a, b \in
\bbB\)) instead of \(\BooW{a}{(\setminus b)}\). In particular, \(\setminus b =
\BooD{\unit}{b}\). We call two elements \(a\) and \(b\) of \(\bbB\)
\emph{disjoint} if \(\BooW{a}{b} = \zero\). Additionally, we call the elements
of \(\bbB \setminus \{\zero\}\) \emph{non-zero}; \(a \in \bbB\) is called
\emph{proper} if it is non-zero and differs from \(\unit\). Finally, by
a \emph{partition} (of \(\bbB\)) we mean any finite non-empty set \(\{a_1,
\ldots,a_N\} \subset \bbB\) that consists of pairwise disjoint non-zero elements
such that \(\BooV{\BooV{a_1}{\ldots}}{a_N} = \unit\).\par
All semigroups (as well as groups) considered in this section are Abelian and
have neutral elements. We use the additive notation for them. The neutral
element of a semigroup \(S\) is denoted by \(0_S\). All homomorphism between
semigroups preserve neutral elements.\par
A probability Borel measure on \(\Can\) is said to be \emph{full} if it does not
vanish on any non-empty open set (or, equivalently, if it does not vanish on any
non-empty clopen set). A probability Borel measure \(\mu\) on \(\Can\) is
\emph{good} if it is full and for any two clopen subsets \(U\) and \(V\) of
\(\Can\) such that \(\mu(U) < \mu(V)\) there exists a clopen set \(W \subset
V\) for which \(\mu(W) = \mu(U)\).

\section{Homogeneous semigroup-valued measures}

When representing the \Cantor\ as the Boolean algebra (of all its clopen
subsets), ultrahomogeneous measures (e.g., those resembling good ones) can be
defined in a more abstract and general context, as specified in
\DEF{PN-homo-meas} below.

\begin{dfn}{PN-S-measure}
Let \(S\) be a semigroup. A (finitely additive) \emph{\(S\)-valued measure} on
\(\bbB\) is any function \(\mu\dd \bbB \to S\) such that \(\mu(\zero) = 0_S\)
and \(\mu(\BooV{a}{b}) = \mu(a)+\mu(b)\) for all disjoint elements \(a, b \in
\bbB\). In a similar way one defines \(S\)-valued measures defined on Boolean
subalgebras of \(\bbB\).\par
If \(\bbB\) is identified with the Boolean algebra of all clopen subsets of
\(\Can\), then:
\begin{itemize}
\item for any semigroup \(S\), there is a natural one-to-one correspondence
 between \(S\)-valued measures on \(\bbB\) and homomorphisms from
 \(C(\Can,\NNN)\) to \(S\); and
\item similarly, for any group \(G\), there is a natural one-to-one
 correspondence between \(G\)-valued measures on \(\bbB\) and homomorphisms from
 \(C(\Can,\ZZZ)\) to \(G\)
\end{itemize}
(where \(C(\Can,\NNN)\) and \(C(\Can,\ZZZ)\) denote the semigroups equipped with
pointwise addition of all continuous functions from \(\Can\) to the respective
target spaces).
\end{dfn}

In this section we will deal with homogeneous semigroup-valued measures, which
we introduce in the following

\begin{dfn}{PN-homo-meas}
Let \(S\), \(\mu\) and \(\aaA\) be, respectively, a semigroup, an \(S\)-valued
measure on \(\bbB\) and a Boolean subalgebra of \(\bbB\). A \emph{partial
\(\mu\)-isomorphism} is a one-to-one Boolean algebra homomorphism \(h\dd \aaA
\to \bbB\) such that \(\mu(h(a)) = \mu(a)\) for all \(a \in \aaA\). If, in
addition to this, \(\aaA = \bbB\) and \(h(\bbB) = \bbB\), \(h\) is called
a \emph{\(\mu\)-automorphism}. We use \(\OPN{Aut}(\mu)\) to denote the group of
all \(\mu\)-automorphisms of \(\bbB\). As usual, \(\OPN{Aut}(\mu)\) becomes
a Polish group when equipped with the pointwise convergence topology induced by
the discrete topology on \(\bbB\).\par
An \(S\)-valued measure \(\mu\) on \(\bbB\) is said to be:
\begin{itemize}
\item \emph{homogeneous} if any partial \(\mu\)-isomorphism defined on
 an arbitrary 4-point Boolean subalgebra of \(\bbB\) extends to
 a \(\mu\)-automorphism; in other words, \(\mu\) is homogeneous iff for any two
 proper elements \(a\) and \(b\) in \(\bbB\) such that \(\mu(a) = \mu(b)\) and
 \(\mu(\BooD{\unit}{a}) = \mu(\BooD{\unit}{b})\) there exists \(\phi \in
 \OPN{Aut}(\mu)\) with \(\phi(a) = b\);
\item \emph{ultrahomogeneous} (or \emph{finitely homogeneous}) if any partial
 \(\mu\)-isomorphism defined on a finite Boolean subalgebra of \(\bbB\) extends
 to a \(\mu\)-automorphism of \(\bbB\).
\end{itemize}
\end{dfn}

One of our basic goals of this section is to show that all homogenenous
\(S\)-valued measures are automatically ultrahomogeneous. It is worth noting
here that all good measures are ultrahomogeneous (see, e.g., \cite{DKMN1}).\par
To formulate our first result on homogeneous semigroup-valued measures, we need
another notion, which turns out to be crucial in our considerations.

\begin{dfn}{PN-meas-4-point}
We say that an \(S\)-valued measure has \emph{4 elements property}, or
(\emph{4EP}) for short, if for any partition \(\{a,b,c\} \subset \bbB\) and each
\(d \in \bbB\) such that
\begin{equation}\label{eqn:PN-4EP}
\mu(a)+\mu(b) = \mu(d) \qquad \UP{and} \qquad \mu(c) = \mu(\BooD{\unit}{d})
\end{equation}
there exist two disjoint proper elements \(p\) and \(q\) in \(\bbB\) for which
\(\mu(p) = \mu(a)\), \(\mu(q) = \mu(b)\) and \(\BooV{p}{q} = d\).
\end{dfn}

The role of (4EP) is explained in \THM{PN-4EP} below. Before establishing this
result, we collect fundamental properties of \(S\)-valued measures with (4EP).
To formulate the first of them, we define for each \(S\)-valued measure
\(\mu\) on \(\bbB\) its \emph{trinary spectrum} as
\begin{equation}\label{eqn:PN-spectrum}
\Sigma(\mu) \df \{(\mu(a),\mu(b),\mu(c))\dd\ \{a,b,c\}\textup{ is a partition
of }\bbB\}.
\end{equation}
Utility of tbe above notion is shown by

\begin{thm}{PN-ext-4EP}
Let \(\mu\) and \(\lambda\) be two \(S\)-valued measures on \(\bbB\) that have
\UP{(4EP)} and let \(\phi\dd \aaA \to \bbB\) be a one-to-one Boolean algebra
homomorphism defined on a finite Boolean subalgebra \(\aaA\) of \(\bbB\) such
that \(\lambda(a) = \mu(\phi(a))\) for all \(a \in \aaA\). Then \(\phi\) extends
to a Boolean algebra automorphism \(\Phi\dd \bbB \to \bbB\) such that
\begin{equation}\label{eqn:PN-trans}
\lambda(b) = \mu(\Phi(b)) \qquad (b\in\bbB)
\end{equation}
if and only if \(\Sigma(\lambda) = \Sigma(\mu)\).\par
In particular, for two \(S\)-valued measures \(\mu\) and \(\lambda\) on \(\bbB\)
with \UP{(4EP)} there exists a Boolean algebra automorphism \(\Phi\) such that
\eqref{eqn:PN-trans} holds iff \(\Sigma(\lambda) = \Sigma(\mu)\).
\end{thm}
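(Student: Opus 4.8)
The plan is to prove the equivalence by a back-and-forth construction whose only non-routine ingredient is a one-point extension lemma powered by \UP{(4EP)} and the hypothesis \(\Sigma(\lambda)=\Sigma(\mu)\); the converse implication is an immediate transport-of-partitions argument. For necessity, suppose \(\Phi\) extends \(\phi\) and satisfies \eqref{eqn:PN-trans}. As a Boolean algebra automorphism, \(\Phi\) carries each partition \(\{a,b,c\}\) of \(\bbB\) to the partition \(\{\Phi(a),\Phi(b),\Phi(c)\}\), and \eqref{eqn:PN-trans} gives \((\lambda(a),\lambda(b),\lambda(c))=(\mu(\Phi(a)),\mu(\Phi(b)),\mu(\Phi(c)))\in\Sigma(\mu)\), so \(\Sigma(\lambda)\subseteq\Sigma(\mu)\); reading the same computation along \(\Phi^{-1}\) (for which \(\lambda(\Phi^{-1}(b'))=\mu(b')\)) yields the reverse inclusion, hence \(\Sigma(\lambda)=\Sigma(\mu)\).

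For sufficiency the heart of the matter is the following claim: if \(\psi\dd\aaA\to\bbB\) is a one-to-one Boolean homomorphism on a finite subalgebra with \(\lambda(a)=\mu(\psi(a))\) for all \(a\in\aaA\), then for every \(x\in\bbB\) the map \(\psi\) extends to such a homomorphism on the subalgebra generated by \(\aaA\cup\{x\}\). Granting this, I enumerate the countable algebra \(\bbB=\{x_1,x_2,\dots\}\) and build an increasing chain \(\phi=\phi_0\subseteq\phi_1\subseteq\cdots\) of partial isomorphisms of the above type, at odd steps forcing \(x_k\) into the domain (apply the claim to \(\phi_k\)) and at even steps into the range (apply the claim to \(\phi_k^{-1}\), which is legitimate because the hypotheses are symmetric in \(\mu\) and \(\lambda\): both have \UP{(4EP)} and \(\Sigma(\mu)=\Sigma(\lambda)\)). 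The union \(\Phi=\bigcup_k\phi_k\) is then a measure-preserving Boolean automorphism of \(\bbB\) extending \(\phi\). The ``in particular'' clause is the special case \(\aaA=\{\zero,\unit\}\), where the hypothesis on \(\phi\) holds automatically once one notes \(\lambda(\unit)=\mu(\unit)\), which can be read off any triple common to the (nonempty, since \(\bbB\) is atomless) spectra.

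To prove the claim, let \(u_1,\dots,u_n\) be the atoms of \(\aaA\) and put \(d_i=\psi(u_i)\), so that \(\{d_1,\dots,d_n\}\) is a partition of \(\bbB\). Since the atoms of the enlarged algebra are the non-zero members of \(\{u_i\wedge x,\ u_i\setminus x\}\), it suffices to split each \(d_i=p_i\vee q_i\) with \(p_i\wedge q_i=\zero\), \(\mu(p_i)=\lambda(u_i\wedge x)\) and \(\mu(q_i)=\lambda(u_i\setminus x)\); because the \(d_i\) are pairwise disjoint with join \(\unit\), the resulting pieces then form a partition refining \(\{d_i\}\), so the assignment \(u_i\wedge x\mapsto p_i\), \(u_i\setminus x\mapsto q_i\) defines an injective homomorphism extending \(\psi\) and preserving the measure. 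If \(u_i\wedge x=\zero\) or \(u_i\setminus x=\zero\) the split is trivial. Otherwise, and when \(\setminus u_i\neq\zero\) (i.e.\ \(n\geq2\)), the triple \((\lambda(u_i\wedge x),\lambda(u_i\setminus x),\lambda(\setminus u_i))\) arises from the partition \(\{u_i\wedge x,\,u_i\setminus x,\,\setminus u_i\}\), hence lies in \(\Sigma(\lambda)=\Sigma(\mu)\), giving a \(\mu\)-partition \(\{a,b,c\}\) with \(\mu(a)=\lambda(u_i\wedge x)\), \(\mu(b)=\lambda(u_i\setminus x)\) and \(\mu(c)=\lambda(\setminus u_i)\). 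The decisive point is that \(\mu(c)=\lambda(\setminus u_i)=\mu(\psi(\setminus u_i))=\mu(\setminus d_i)\), since \(\setminus u_i\in\aaA\) and \(\psi\) respects complements; together with \(\mu(a)+\mu(b)=\lambda(u_i)=\mu(d_i)\) this says precisely that \(\{a,b,c\}\) and \(d_i\) satisfy \eqref{eqn:PN-4EP}, so \UP{(4EP)} for \(\mu\) supplies the desired \(p_i,q_i\) with \(p_i\vee q_i=d_i\). The one residual case \(n=1\) (so \(d_1=\unit\)) I handle directly via atomlessness of \(\bbB\): split \(x=x_1\vee x_2\) into two non-zero disjoint pieces, apply \(\Sigma(\lambda)=\Sigma(\mu)\) to \(\{x_1,x_2,\setminus x\}\) to obtain a \(\mu\)-partition \(\{a_1,a_2,a_3\}\) matching the three \(\lambda\)-values, and set \(p_1=a_1\vee a_2\), \(q_1=a_3\).

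The main obstacle is isolating the correct input to \UP{(4EP)}: one must recognise that the complementary value \(\mu(\setminus d_i)\) equals \(\lambda(\setminus u_i)\) purely because \(\psi\) preserves complements, since in a bare semigroup there is no cancellation with which to deduce this from additivity alone. Once the triple is identified as a spectrum element and matched to a \(\mu\)-partition, \UP{(4EP)} does exactly the job of relocating that abstract split onto the prescribed element \(d_i\)---something the equality of spectra by itself does not provide.
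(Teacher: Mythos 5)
Your proposal is correct and follows essentially the same route as the paper: a back-and-forth construction whose one-point extension step identifies the triple \((\lambda(u_i\wedge x),\lambda(u_i\setminus x),\lambda(\setminus u_i))\) as an element of \(\Sigma(\mu)\), realises it by a \(\mu\)-partition, and then invokes \UP{(4EP)} to relocate the split onto \(\psi(u_i)\), with the degenerate case \(\aaA=\{\zero,\unit\}\) handled via a partition into two pieces exactly as in the paper's \(N=1\) case. The only differences are cosmetic (the paper first normalises \(b\) so that \(b=\bigvee_{j\in I}b_j\), while you decompose every atom directly).
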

\begin{proof}
It is straightforward that if \(\Phi\) exists, then the trinary spectra of
\(\mu\) and \(\lambda\) coincide. To prove the converse, we assume \(\Sigma(\mu)
= \Sigma(\lambda)\). For simplicity, we set \(\Sigma = \Sigma(\mu)\) and
\(\Sigma' \df \{(a,b+c)\dd\ (a,b,c) \in \Sigma\}\). In a similar manner we
define \(\Sigma'(\mu)\) (which coincides with \(\Sigma'\)). Observe that
\(\Sigma'(\mu)\) consists precisely of all pairs of the form \((\mu(a),\mu(b))\)
where \(\{a,b\}\) is a partition of \(\bbB\).\par
By a standard back-and-forth argument, it suffices to show that for arbitrary
\(b \in \bbB \setminus \aaA\), \(\phi\) can be extended to a one-to-one Boolean
algebra homomorphism \(\psi\dd \ddD \to \bbB\) such that
\begin{equation}\label{eqn:PN-aux1}
\lambda(x) = \mu(\psi(x)) \qquad (x \in \ddD)
\end{equation}
where \(\ddD\) is the Boolean algebra generated by \(\aaA \cup \{b\}\). To this
end, we fix such \(b\) and take a (unique) partition \(\{a_1,\ldots,a_N\}
\subset \aaA\) of \(\bbB\) (where \(N > 0\)) such that for each non-zero element
\(c\) of \(\aaA\) there exists a unique finite non-empty set \(J \subset \{1,
\ldots,N\}\) with \(c = \bigvee_{j \in J} a_j\). (Note that \(a_j = \unit\) (for
some \(j\) or for all \(j\)) iff \(N = 1\).) Further, let \(I\) consist of all
indices \(j \in \{1,\ldots,N\}\) such that \(b_j \df \BooW{b}{a_j}\) is non-zero
and differs from \(a_j\). Then \(I \neq \varempty\) (because \(b \notin \aaA\))
and \(b' \df \bigvee_{j \in I} b_j\) satisfies \(\BooW{b'}{b} = b'\) and
\(\BooD{b}{b'} \in \aaA\). So, the Boolean algebra generated by \(\aaA \cup
\{b\}\) coincides with the one generated by \(\aaA \cup \{b'\}\), and therefore
we may and do assume that \(b' = b\). Now for any \(j \in I\):
\begin{itemize}
\item if \(N > 1\), then \((p,q,r) \df (\lambda(b_j),\lambda(\BooD{a_j}{b_j}),
 \lambda(\BooD{\unit}{a_j}))\) belongs to \(\Sigma = \Sigma(\mu)\) (resp.
 \((p,q) \df (\lambda(b_j),\lambda(\BooD{a_j}{b_j}))\) belongs to \(\Sigma' =
 \Sigma'(\mu)\) provided that \(N = 1\));
\item in particular, if \(N > 1\), then there exists a partition \(\{x,y,z\}\)
 of \(\bbB\) such that \(\mu(x) = p\), \(\mu(y) = q\) and \(\mu(z) = r\) (resp.
 there exists a partition \(\{x,y\}\) of \(\bbB\) such that \(\mu(x) = p\) and
 \(\mu(y) = q\), provided that \(N = 1\));
\item \(\mu(x)+\mu(y) = p+q = \lambda(a_j) = \mu(\phi(a_j))\) and, if \(N > 1\),
 \(\mu(z) = r = \lambda(\BooD{\unit}{a_j}) = \mu(\BooD{\unit}{\phi(a_j)})\);
\item consequently, it follows from (4EP) of \(\mu\) that there exists non-zero
 \(d_j \in \bbB\) such that \(\BooW{d_j}{\phi(a_j)} = d_j \neq \phi(a_j)\) and
 \(\mu(d_j) = p\) and \(\mu(\BooD{\phi(a_j)}{d_j}) = q\) (if \(N = 1\), it is
 sufficient to set \(d_j = x\));
\item then automatically \(d_j \notin \aaA\) (as \(\phi(a_1),\ldots,\phi(a_N)\)
 are all minimal non-zero elements of \(\phi(\aaA)\)).
\end{itemize}
Now it is enough to note that the elements \(a_j\) for \(j \notin J\) together
with \(b_j,\BooD{a_j}{b_j}\) where \(j \in J\), and---similarly---the elements
\(\phi(a_j)\) for \(j \notin J\) together with \(d_j,\BooD{\phi(a_j)}{d_j}\)
where \(j \in J\), form partitions of \(\bbB\) to convince oneself that
the assignments
\[\begin{cases}
a_j \mapsto \phi(a_j), & j \notin J\\
b_j \mapsto d_j, & j \in J\\
\BooD{a_j}{b_j} \mapsto \BooD{\phi(a_j)}{d_j}, & j \in J
\end{cases}\]
uniquely extend to a one-to-one Boolean algebra homomorphism \(\psi\dd \ddD \to
\bbB\) (where, as before, \(\ddD\) stands for the Boolean algebra generated by
\(\aaA \cup \{b\}\)). It follows from the construction that \(\psi\) extends
\(\phi\) and satisfies \eqref{eqn:PN-aux1} (we leave the details to
the reader).\par
Finally, if \(\Sigma(\mu) = \Sigma(\lambda)\) (and both \(\mu\) and \(\lambda\)
have (4EP)), then necessarily \(\mu(\unit) = \lambda(\unit)\) and it follows
from the first part of the result that the identity on \(\{\zero,\unit\}\) can
be extended to a Boolean automorphism satisfying \eqref{eqn:PN-trans}.
\end{proof}

As a corollary, we obtain

\begin{thm}{PN-4EP}
A homogenous \(S\)-valued measure has \UP{(4EP)}. Conversely, an \(S\)-valued
measure on \(\bbB\) with \UP{(4EP)} is ultrahomogeneous.\par
In particular, all homogeneous semigroup-valued measures on \(\bbB\) are
ultrahomogeneous.
\end{thm}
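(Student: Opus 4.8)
The plan is to obtain both halves of the theorem from \THM{PN-ext-4EP}, dealing with the converse first since it is essentially a specialisation of that result. Assuming $\mu$ has (4EP), I would apply \THM{PN-ext-4EP} with $\lambda \df \mu$; then both measures have (4EP) and $\Sigma(\lambda) = \Sigma(\mu)$ holds trivially. Given any partial $\mu$-isomorphism $\phi \dd \aaA \to \bbB$ on a finite subalgebra $\aaA$, its defining property $\mu(\phi(a)) = \mu(a)$ is exactly the hypothesis $\lambda(a) = \mu(\phi(a))$ required there, so $\phi$ extends to a Boolean algebra automorphism $\Phi$ with $\mu(\Phi(b)) = \mu(b)$ for all $b \in \bbB$, that is, to a $\mu$-automorphism. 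Since $\aaA$ and $\phi$ were arbitrary, $\mu$ is ultrahomogeneous.

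For the first assertion I would argue directly from homogeneity. Fix a homogeneous $\mu$, a partition $\{a,b,c\}$ and an element $d$ subject to \eqref{eqn:PN-4EP}, and set $e \df \BooV{a}{b}$. Additivity gives $\mu(e) = \mu(a)+\mu(b) = \mu(d)$ and $\mu(\BooD{\unit}{e}) = \mu(c) = \mu(\BooD{\unit}{d})$, and $e$ is proper ($a$ forces $e \neq \zero$, while $c$, being non-zero and disjoint from $e$, forces $e \neq \unit$). In the principal case in which $d$ is likewise proper, homogeneity applied to the pair $e, d$ produces $\phi \in \OPN{Aut}(\mu)$ with $\phi(e) = d$; then $p \df \phi(a)$ and $q \df \phi(b)$ are the required elements, being disjoint and proper (as $\phi$ is an injective automorphism and $a,b$ are proper blocks of a partition), satisfying $\mu(p) = \mu(a)$ and $\mu(q) = \mu(b)$ (because $\phi$ preserves $\mu$), and $\BooV{p}{q} = \phi(\BooV{a}{b}) = \phi(e) = d$. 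The value $d = \unit$ is settled without homogeneity: \eqref{eqn:PN-4EP} then forces $\mu(c) = 0_S$, so $p \df a$ and $q \df \BooV{b}{c}$ work.

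The ``in particular'' clause is then just the composition of the two implications. The real content is carried by the back-and-forth extension already proved in \THM{PN-ext-4EP}; here the only genuine step is the first assertion, whose crux is that homogeneity supplies an automorphism moving the single block $e$ onto $d$, after which the refinement $\{a,b\}$ of $e$ is transported to a refinement of $d$. The point demanding care---and the one I expect to be the main obstacle---is that homogeneity, as defined, only moves \emph{proper} elements, so the degenerate value $d = \zero$ (for which no proper $p, q$ with $\BooV{p}{q} = d$ can exist) must be set aside by reading (4EP) with $d$ proper; this is harmless, since (4EP) is only ever invoked for proper $d$ (in the proof of \THM{PN-ext-4EP} the element playing the role of $d$ is always proper).
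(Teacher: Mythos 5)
Your proof follows the paper's argument exactly: the converse is obtained by applying \THM{PN-ext-4EP} with \(\lambda = \mu\), and the forward direction applies homogeneity to move \(\BooV{a}{b}\) onto \(d\) and then transports the blocks \(a\) and \(b\). Your extra treatment of the degenerate values \(d=\unit\) and \(d=\zero\) is a refinement that the paper's one-line argument silently skips (it invokes homogeneity, which is stated only for proper elements), and your reading of (4EP) as concerning proper \(d\) is consistent with how the paper actually uses and verifies that property elsewhere.
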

\begin{proof}
If \(\mu\) is homogeneous and \(\{a,b,c\}\) and \(d\) are, respectively,
a partition of \(\bbB\) and an element from \(\bbB\) such that
\eqref{eqn:PN-4EP} holds, then there exists \(\phi \in \OPN{Aut}(\mu)\) for
which \(\phi(\BooV{a}{b}) = d\) (by homogeneity). Consequently, \(p \df
\phi(a)\) and \(q \df \phi(b)\) are proper and disjoint and satisfy \(\mu(p) =
\mu(a)\), \(\mu(q) = \mu(b)\) and \(\BooV{p}{q} = d\), which means that \(\mu\)
has (4EP).\par
On the other hand, if \(\mu\) has (4EP) and \(\phi\dd \aaA \to \bbB\) is
a partial \(\mu\)-isomorphism where \(\aaA\) is a finite Boolean subalgebra of
\(\bbB\), then it follows from \THM{PN-ext-4EP} (applied to \(\lambda = \mu\))
that \(\phi\) extends to a member of \(\OPN{Aut}(\mu)\), which shows that
\(\mu\) is ultrahomogeneous.
\end{proof}

\THM{PN-ext-4EP} shows, among other things, that the trinary spectrum of
a homogeneous semi\-group-valued measure determines this measure up to
automorphism. So, it is important to study this characteristic, which we do in
the next two results.

\begin{pro}{PN-any-spectrum}
Let \(\mu\) be a homogeneous \(S\)-valued measure on \(\bbB\) and let
\(\Sigma\) stand for its trinary spectrum. Then:
\begin{enumerate}[\upshape(Sp1b)]
\item[\upshape(Sp0)] \(\Sigma \subset S \times S \times S\) is \UP(at most\UP)
 countable and non-empty;
\item[\upshape(Sp1)] \(\Sigma\) is symmetric; that is, for any \((a_1,a_2,a_3)
 \in \Sigma\) and each permutation \(\sigma\) of \(\{1,2,3\}\),
 \((a_{\sigma(1)},a_{\sigma(2)}, a_{\sigma(3)})\) belongs to \(\Sigma\) as well;
\item[\upshape(Sp2)] \(a+b+c = p+q+r\) for all \((a,b,c),(p,q,r) \in \Sigma\);
\item[\upshape(Sp3)] if \(a,b,c,d \in S\) are such that \((a,b,c+d),(a+b,c,d)
 \in \Sigma\), then \((a,b+c,d) \in \Sigma\) as well;
\item[\upshape(Sp4a)] for any \((a,b,c) \in \Sigma\) there exists \((x,y,z) \in
 \Sigma\) such that \(a = x+y\) and \(z = b+c\);
\item[\upshape(Sp4b)] if \((a,b,c),(p,q,r) \in \Sigma\) are such that \(a+b =
 p+q\) and \(c = r\); or \((a,b),(p,q) \in \Sigma'\) and \(r = 0_S\) where
 \(\Sigma' \df \{(f,g+h)\dd\ (f,g,h) \in \Sigma\}\), then there are
 \[(x,y,z) \in \Sigma \cup (\Sigma' \times \{0_S\}) \qquad and \qquad
 (u,v,w) \in \Sigma \cup (\{0_S\} \times \Sigma')\]
 or conversely \UP(that is, \((x,y,z) \in \Sigma \cup (\{0_S\} \times \Sigma')\)
 and \((u,v,w) \in \Sigma \cup (\Sigma' \times \{0_S\})\)\UP) for which \(x+z =
 p\), \(y = q+r\), \(u+w = q\), \(v = p+r\) and \(a = x+u\) and \(b = z+w\).
\end{enumerate}
\end{pro}
\begin{proof}
Items (Sp0) and (Sp1) are obvious. Further, fix a partition \(\{k,\ell,m\}\) of
\(\bbB\). Then \(\mu(k)+\mu(\ell)+\mu(m) = \mu(\unit)\), which is followed by
(Sp2). There are two disjoint proper elements \(g\) and \(h\) of \(\bbB\) such
that \(\BooV{g}{h} = k\). Consequently, \((\mu(g),\mu(h),\mu(\BooV{\ell}{m}))
\in \Sigma\) and \(\mu(k) = \mu(g)+\mu(h)\) and \(\mu(\BooV{\ell}{m}) =
\mu(\ell)+\mu(m)\), which shows (Sp4a).\par
Now we pass to (Sp3). Let \(a,b,c,d \in S\) be such that both the triples
\((a+b,c,d)\) and \((a,b,c+d)\) belong to \(\Sigma\). Then that there are two
partitions \(\{x,y,z\}\) and \(\{p,q,r\}\) of \(\bbB\) such that \((\mu(x),
\mu(y),\mu(z)) = (a+b,c,d)\) and similarly \((\mu(p),\mu(q),\mu(r)) = (a,b,
c+d)\). Since \(\mu(x) = \mu(p)+\mu(q)\) and \(\mu(\BooD{\unit}{x}) = \mu(y) +
\mu(z) = \mu(r)\), we infer from (4EP) that there are two disjoint proper
elements \(s\) and \(t\) of \(\bbB\) such that \((\mu(s),\mu(t)) = (a,b)\) and
\(x = \BooV{s}{t}\). Then \(\{s,\BooV{t}{y},z\}\) is a partition of \(\bbB\) and
hence \((a,b+c,d) = (\mu(s),\mu(\BooV{t}{y}),\mu(z))\) belongs to
\(\Sigma\).\par
Finally, we pass to (Sp4b). To this end, we fix two triples \((a,b,c)\) and
\((p,q,r)\) from \(\Sigma\) such that \(p+q = a+b\) and \(r = c\). Take two
partitions \(\{k,\ell,m\}\) and \(\{f,g,h\}\) of \(\bbB\) for which \((\mu(k),
\mu(\ell),\mu(m)) = (a,b,c)\) and \((\mu(f),\mu(g),\mu(h)) = (p,q,r)\). Since
then \(\mu(\BooV{k}{\ell}) = \mu(f)+\mu(g)\) and
\(\mu(\BooD{\unit}{(\BooV{k}{\ell})}) = \mu(h)\), we infer from (4EP) that
the partition \(\{f,g,h\}\) may be chosen so that
\begin{equation}\label{eqn:PN-aux10}
\BooV{f}{g} = \BooV{k}{\ell}.
\end{equation}
Thus, we do assume that \eqref{eqn:PN-aux10} holds. Now set \(\alpha =
\BooW{f}{k}\), \(\beta = \BooW{f}{\ell}\), \(\gamma = \BooW{g}{k}\) and \(\delta
= \BooW{g}{\ell}\). Note that all \(\alpha,\beta,\gamma,\delta\) are pairwise
disjoint (however, some of them can equal \(\zero\)) and it follows from
\eqref{eqn:PN-aux10} that
\begin{itemize}
\item \(f = \BooV{\alpha}{\beta}\) and \(g = \BooV{\gamma}{\delta}\); and
\item \(k = \BooV{\alpha}{\gamma}\) and \(\ell = \BooV{\beta}{\delta}\).
\end{itemize}
In particular, setting
\[\begin{cases}
x = \mu(\alpha)\\
y = \mu(\BooV{g}{h})\\
z = \mu(\beta)\\
u = \mu(\gamma)\\
v = \mu(\BooV{f}{h})\\
w = \mu(\delta)
\end{cases},\]
we obtain:
\begin{itemize}
\item \(p = \mu(f) = x+z\) and \(q = \mu(g) = u+w\);
\item \(a = \mu(k) = x+u\) and \(b = \mu(\ell) = z+w\);
\item \(y = q+r\) and \(v = p+r\); and
\item if both \(\alpha\) and \(\beta\) are non-zero, then \(\{\alpha,
 \BooV{g}{h},\beta\}\) is a partition of \(\bbB\) and, consequently, \((x,y,z)
 \in \Sigma\),
\item otherwise, if either \(\alpha\) or \(\beta\) is equal to \(\zero\), then
 \((x,y,z) \in (\{0_S\} \times \Sigma') \cup (\Sigma' \times \{0_S\})\);
\item similarly: if both \(\gamma\) and \(\delta\) are non-zero, then
 \(\{\gamma,\BooV{f}{h},\delta\}\) is a partition of \(\bbB\) and therefore
 \((u,v,w) \in \Sigma\),
\item otherwise, if either \(\gamma\) or \(\delta\) equals \(\zero\), then
 \((u,v,w) \in (\{0_S\} \times \Sigma') \cup (\Sigma' \times \{0_S\})\),
\item if \(\alpha = \zero\), then \(\beta\) and \(\gamma\) are non-zero;
 similarly: if \(\beta = \zero\), then \(\alpha\) and \(\delta\) are non-zero;
 if \(\gamma = \zero\), then \(\alpha\) and \(\delta\) are non-zero; and finally
 if \(\delta = \zero\), then \(\beta\) and \(\gamma\) are non-zero---this shows
 that \((x,y,z) \in \Sigma \cup (\Sigma' \times \{0_S\})\) and \((u,v,w) \in
 \Sigma \cup (\{0_S\} \times \Sigma')\) or conversely.
\end{itemize}
Similarly, if \((a,b)\) and \((p,q)\) are in \(\Sigma'\), then there are two
partitions \(\{k,\ell\}\) and \(\{f,g\}\) of \(\bbB\) such that \((a,b,f,g) =
(\mu(k),\mu(\ell),\mu(f),\mu(g))\). Setting additionally \(h = m \df 0_S\), we
may repeat the above proof (that starts from \eqref{eqn:PN-aux10}) to get
the same conclusion. The details are left to the reader.
\end{proof}

\begin{dfn}{PN-homspec}
A \emph{\homspec{3}} in a semigroup \(S\) is an arbitrary set \(\Sigma\)
satisfying all the conditions (Sp0)--(Sp4b) specified in \PRO{PN-any-spectrum}.
Whenever \(\Sigma\) is a \homspec{3}, we define \(e(\Sigma)\) as the unique
element of \(S\) such that \(e(\Sigma) = a+b+c\) for all \((a,b,c) \in \Sigma\)
(cf. (Sp2)), and denote by \(\Sigma'\) the set of all pairs of the form
\((a,b+c)\) where \((a,b,c)\) runs over all triples from \(\Sigma\) (cf.
(Sp4b)).
\end{dfn}

\begin{dfn}{PN-comp-spec}
Let \(\Sigma\) be a \homspec{3} in a semigroup \(S\). An \(S\)-valued measure
\(\mu\) defined on a Boolean subalgebra \(\aaA\) of \(\bbB\) is said to be
\emph{compatible} with \(\Sigma\) if a respective condition is fulfilled from
the following list:
\begin{itemize}
\item when \(\aaA = \{\zero,\unit\}\): \(\mu(\unit) = e(\Sigma)\); or
\item when \(\aaA = \{\zero,\unit,a,\BooD{\unit}{a}\}\) (with proper \(a \in
 \bbB\)): \((\mu(a),\mu(\BooD{\unit}{a})) \in \Sigma'\) (cf. \DEF{PN-homspec});
 or
\item when \(|\aaA| > 4\): \((\mu(a),\mu(b),\mu(c)) \in \Sigma\) for any
 partition \(\{a,b,c\} \subset \aaA\) of \(\bbB\).
\end{itemize}
\end{dfn}

\begin{thm}{PN-ext-spec}
Let \(\Sigma\) and \(\mu\) be, respectively, a \homspec{3} in a semigroup \(S\)
and an \(S\)-valued measure defined on a finite Boolean subalgebra \(\aaA\) of
\(\bbB\). \TFCAE
\begin{enumerate}[\upshape(i)]
\item \(\mu\) extends to a homogeneous measure \(\lambda\dd \bbB \to S\) with
 \(\Sigma(\lambda) = \Sigma\);
\item \(\mu\) is compatible with \(\Sigma\).
\end{enumerate}
In particular, for any set \(\Sigma \subset S \times S \times S\), there exists
a homogeneous \(S\)-valued measure \(\rho\) on \(\bbB\) with \(\Sigma(\rho) =
\Sigma\) iff \(\Sigma\) is a \homspec{3}.
\end{thm}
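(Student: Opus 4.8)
The plan is to establish the two implications separately; the displayed ``in particular'' clause will then drop out of the case $\aaA=\{\zero,\unit\}$. The implication (i)$\Rightarrow$(ii) is routine. Writing $\mu=\lambda|_{\aaA}$ for a homogeneous $\lambda$ with $\Sigma(\lambda)=\Sigma$, I would check compatibility case by case against \DEF{PN-comp-spec}: when $\aaA=\{\zero,\unit\}$ any partition $\{u,v,w\}$ of $\bbB$ gives $\mu(\unit)=\lambda(u)+\lambda(v)+\lambda(w)=e(\Sigma)$ by (Sp2); when $\aaA=\{\zero,\unit,a,\BooD{\unit}{a}\}$ with $a$ proper I split $\BooD{\unit}{a}=\BooV{b}{c}$ into two non-zero disjoint pieces (possible since $\bbB$ is atomless), so that $(\mu(a),\mu(\BooD{\unit}{a}))=(\lambda(a),\lambda(b)+\lambda(c))\in\Sigma'$; and when $|\aaA|>4$ every partition $\{a,b,c\}\subset\aaA$ of $\bbB$ satisfies $(\mu(a),\mu(b),\mu(c))=(\lambda(a),\lambda(b),\lambda(c))\in\Sigma(\lambda)=\Sigma$.

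For (ii)$\Rightarrow$(i) I would construct $\lambda$ by a countable back-and-forth recursion, building an increasing chain of finite subalgebras $\aaA=\aaA_0\subseteq\aaA_1\subseteq\cdots$ with $\bigcup_n\aaA_n=\bbB$ and measures $\mu_n$ on $\aaA_n$ extending one another, under the single invariant that each $\mu_n$ is compatible with $\Sigma$. Putting $\lambda=\bigcup_n\mu_n$, the invariant immediately forces $\Sigma(\lambda)\subseteq\Sigma$, since every partition of $\bbB$ lies in some $\aaA_n$. To force equality and, crucially, homogeneity, I would dovetail two families of requirements (both countable, using (Sp0) and countability of $\bbB$) into the recursion: (A) for each $(f,g)\in\Sigma'$ produce at some stage a two-block partition $\{D,\BooD{\unit}{D}\}$ of $\bbB$ with $\lambda(D)=f$ and $\lambda(\BooD{\unit}{D})=g$; and (B) for each $d\in\bbB$ and each triple $(x,y,r)\in\Sigma$, split $d$ into a piece of value $x$ and a piece of value $y$ as soon as $d$ has entered the current algebra with $\mu_n(d)=x+y$ and $\mu_n(\BooD{\unit}{d})=r$.

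These requirements would give both remaining facts. For $\Sigma\subseteq\Sigma(\lambda)$: given $(x,y,z)\in\Sigma$ one checks from (Sp1) and (Sp4a) that $(x+y,z)\in\Sigma'$, so (A) yields a block $D$ with $\lambda(D)=x+y$ and $\lambda(\BooD{\unit}{D})=z$, and then (B) splits $D$ into values $x$ and $y$, exhibiting a partition of $\bbB$ with spectrum value $(x,y,z)$. For homogeneity: requirement (B) gives in the limit the richness property that for every proper $d$ and every $(x,y,\lambda(\BooD{\unit}{d}))\in\Sigma$ with $x+y=\lambda(d)$ there is $p$ with $\BooW{p}{d}=p$, $\lambda(p)=x$ and $\lambda(\BooD{d}{p})=y$ (the two pieces being realized as non-zero elements); since $\Sigma(\lambda)=\Sigma$, this property yields (4EP) for $\lambda$, so $\lambda$ is (ultra)homogeneous by \THM{PN-4EP}. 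Finally, the ``in particular'' assertion is the instance $\aaA=\{\zero,\unit\}$, $\mu(\unit)=e(\Sigma)$ of what has been proved, together with \PRO{PN-any-spectrum} for its converse.

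Everything hinges on one \emph{extension lemma}, which I expect to be the main obstacle: a measure compatible with $\Sigma$ on a finite subalgebra with atoms $a_1,\ldots,a_N$ admits a compatible extension after adjoining an arbitrary $b\in\bbB$, and, more precisely, the prescribed admissible splits demanded by (A) and (B) can be performed while preserving compatibility. Reducing as in the proof of \THM{PN-ext-4EP} to the splitting of a single atom, the refined partition is the common refinement of the old one with a two-block cut, and the task is to verify that every grouping of the refined atoms into three blocks gives a triple in $\Sigma$. Groupings that keep the two halves of a split atom together reduce to the old, already compatible, partition; for the groupings that separate them, the needed stability of $\Sigma$ under common refinements is precisely what (Sp4b) encodes, with (Sp3) and (Sp4a) absorbing the degenerate sub-cases in which an overlap piece carries value $0_S$ (these are the cases recorded by the sets $\Sigma\cup(\Sigma'\times\{0_S\})$ and $\Sigma\cup(\{0_S\}\times\Sigma')$ appearing in (Sp4b)). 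The delicate point is the bookkeeping that lifts the two-block statement (Sp4b) to the compatibility of an entire multi-atom common refinement; assembling these local applications into the global invariant is where the real work of the proof lies.
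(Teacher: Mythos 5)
Your outline reproduces the paper's strategy exactly: the paper likewise treats (i)\(\Rightarrow\)(ii) as routine, enumerates the proper elements of \(\bbB\), the triples of \(\Sigma\) and the pairs of \(\Sigma'\), and dovetails precisely your requirements (A) and (B) (its conditions \((3_n)\) and \((4_n)\)) into an increasing chain of finite measures kept compatible with \(\Sigma\); it then extracts \(\Sigma\subseteq\Sigma(\lambda)\) and (4EP) from those requirements just as you do, and derives the ``in particular'' clause from the case \(\aaA=\{\zero,\unit\}\) together with \PRO{PN-any-spectrum}. So the skeleton and the final verifications are all correct and match the paper.

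The gap is that the extension lemma you defer is not bookkeeping to be absorbed at the end --- it is essentially the entire proof (the paper spends Lemmas \ref{lem:PN-aux0}--\ref{lem:PN-aux2} on it), and your one-sentence sketch of it is inaccurate in one important respect. Two genuinely different extension tasks must be separated. The first (adjoining an arbitrary \(b\in\bbB\), and realizing a pair of \(\Sigma'\) as in your (A)) really does reduce to cutting individual atoms: for each atom \(e_j\) met properly by \(b\) one chooses a triple \((\beta_j,\gamma_j,\delta_j)\in\Sigma\) with \(\beta_j+\gamma_j=\nu(e_j)\) and \(\delta_j=\nu(\BooD{\unit}{e_j})\) (available by \LEM{PN-aux0}) and checks that the refined partition stays compatible; that check is \LEM{PN-aux1} and uses only (Sp1) and (Sp3) --- not (Sp4b), contrary to your attribution. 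The second task --- your requirement (B), splitting an element \(d\) that is a join of several atoms into two pieces with \emph{globally} prescribed values \(\alpha,\beta\) --- does not reduce to cutting a single atom, because one must decide how to distribute \(\alpha\) and \(\beta\) among the atoms of \(d\) consistently with the existing values. The paper's \LEM{PN-aux2} does this by induction on the number of atoms in \(d\): it merges two atoms, applies the inductive hypothesis to the coarser algebra, and only then invokes (Sp4b) to manufacture values for the four overlap pieces of the two competing splits (this is exactly where the asymmetric alternatives \(\Sigma\cup(\Sigma'\times\{0_S\})\) versus \(\Sigma\cup(\{0_S\}\times\Sigma')\) are consumed), feeding the result back into \LEM{PN-aux1}. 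Until that two-stage argument is actually carried out, the recursion you describe cannot be run, so the proposal as written identifies the right plan but does not yet constitute a proof.
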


The proof of the above result is quite elementary but a little bit complicated
and needs some preparations. We proceed it by three auxiliary lemmas. In each of
them we assume \(\Sigma\) is a \homspec{3} in \(S\).

\begin{lem}{PN-aux0}
\begin{enumerate}[\upshape(A)]
\item If \((a,b) \in \Sigma'\), then \((b,a) \in \Sigma'\) as well.
\item If \(\mu\) is an \(S\)-valued measure defined on a Boolean subalgebra
 \(\aaA\) of \(\bbB\) that is compatible with \(\Sigma\), then:
 \begin{itemize}
 \item \(\mu(\unit) = e(\Sigma)\);
 \item \((\mu(a),\mu(b)) \in \Sigma'\) for any partition \(\{a,b\} \subset
  \aaA\) of \(\bbB\).
 \end{itemize}
\end{enumerate}
\end{lem}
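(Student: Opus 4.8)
The plan is to handle the two parts separately, establishing part (A) first so that it can supply the order-symmetry that part (B) needs.

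For part (A), I would start from the hypothesis $(a,b) \in \Sigma'$, which by the definition of $\Sigma'$ means there is a triple $(a,g,h) \in \Sigma$ with $g+h = b$. The key move is to apply axiom (Sp4a) to this triple: it splits the first coordinate and collapses the last two, producing some $(x,y,z) \in \Sigma$ with $x+y = a$ and $z = g+h = b$. Then the symmetry axiom (Sp1) puts the permuted triple $(z,x,y) = (b,x,y)$ into $\Sigma$, whence $(b,x+y) = (b,a) \in \Sigma'$. The only point demanding care is choosing the correct instance of (Sp4a)—one must split the coordinate equal to $a$ and amalgamate the pair summing to $b$, not the reverse.

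For the first bullet of part (B), namely $\mu(\unit) = e(\Sigma)$, I would run through the three clauses of \DEF{PN-comp-spec}. If $\aaA = \{\zero,\unit\}$ the identity is the definition of compatibility itself. If $\aaA = \{\zero,\unit,a_0,\BooD{\unit}{a_0}\}$, compatibility gives $(\mu(a_0),\mu(\BooD{\unit}{a_0})) \in \Sigma'$, so some $(\mu(a_0),g,h) \in \Sigma$ has $g+h = \mu(\BooD{\unit}{a_0})$; then finite additivity together with (Sp2) gives $\mu(\unit) = \mu(a_0)+\mu(\BooD{\unit}{a_0}) = \mu(a_0)+g+h = e(\Sigma)$. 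If $|\aaA| > 4$, then $\aaA$, being a finite Boolean algebra of cardinality at least $8$, has at least three atoms, so some three-element partition $\{a,b,c\} \subset \aaA$ of $\bbB$ exists; compatibility places $(\mu(a),\mu(b),\mu(c))$ in $\Sigma$, and additivity with (Sp2) again yields $\mu(\unit) = \mu(a)+\mu(b)+\mu(c) = e(\Sigma)$.

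For the second bullet I would fix a partition $\{a,b\} \subset \aaA$ of $\bbB$ and show $(\mu(a),\mu(b)) \in \Sigma'$ for either labelling—part (A) being exactly what makes this order-independent. When $\aaA = \{\zero,\unit\}$ there is no two-element partition, so the claim is vacuous; when $\aaA$ is a four-point algebra the only two-element partition is $\{a_0,\BooD{\unit}{a_0}\}$ and compatibility gives $(\mu(a_0),\mu(\BooD{\unit}{a_0})) \in \Sigma'$ directly, with the reversed order covered by (A). When $|\aaA| > 4$, since $a$ is proper and $\aaA$ has at least three atoms, pigeonhole forces at least one of $a,b$ to dominate two atoms and hence to be a join of two nonzero elements of $\aaA$; splitting that element refines $\{a,b\}$ to a three-element partition of $\bbB$ inside $\aaA$. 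Compatibility then places the corresponding triple in $\Sigma$, and I read off $(\mu(a),\mu(b)) \in \Sigma'$—immediately if the unsplit element occupies the first coordinate, and otherwise after moving it there with (Sp1) and, if necessary, swapping coordinates via part (A). This lemma is essentially bookkeeping, so there is no deep obstacle; the two spots worth watching are the precise invocation of (Sp4a) in part (A) and, in the last case, the elementary observation that once $\aaA$ has three or more atoms a proper element and its complement cannot both be atoms, which is what permits the refinement to a three-element partition.
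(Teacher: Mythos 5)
Your proposal is correct and follows essentially the same route as the paper: part (A) is the identical application of (Sp4a) followed by (Sp1), and part (B) rests on the same key idea of refining the two-element partition $\{a,b\}$ to a three-element partition inside $\aaA$ and then reordering via (Sp1) and part (A). The only cosmetic difference is that you locate the refinement by counting atoms of $\aaA$, whereas the paper intersects $a$ with a fixed three-element partition $\{p,q,r\}\subset\aaA$; both yield the same splitting of one of $a$, $b$ into two non-zero elements of $\aaA$.
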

\begin{proof}
To show (A), take \((p,q,r) \in \Sigma\) such that \(a = p\) and \(b = q+r\) and
apply (Sp4a) to find \((x,y,z) \in \Sigma\) for which \(p = x+y\) and \(z =
q+r\). Then \((z,x,y) \in \Sigma\) as well (by (Sp1)) and hence \((z,x+y) =
(b,a) \in \Sigma'\).\par
We pass to (B). Since \(p+q = e(\Sigma)\) for any \((p,q) \in \Sigma'\), we
easily get that \(\mu(\unit) = e(\Sigma)\) (cf. (Sp2)). So, if \(|\aaA| < 8\),
we have nothing more to do. Thus, assume that \(\aaA\) has at least \(8\)
elements. This means that there exists a partition of \(\bbB\) consisting of
three elements of \(\aaA\), say \(\{p,q,r\} \subset \aaA\). Then for any
partition \(\{a,b\} \subset \aaA\) of \(\bbB\), at least one of the elements
\(\BooW{a}{p},\BooW{a}{q},\BooW{a}{r}\) is non-zero, say \(a' \df \BooW{a}{p}\).
If \(a' \neq a\), then \(\{b,a',\BooD{a}{a'}\}\) is a partition of \(\bbB\)
contained in \(\aaA\) and it follows from compatibility of \(\mu\) with
\(\Sigma\) that \((\mu(b),\mu(a'),\mu(\BooD{a}{a'})) \in \Sigma\). Consequently,
\((\mu(b),\mu(a')+\mu(\BooD{a}{a'})) = (\mu(b),\mu(a))\) belongs to
\(\Sigma'\) and an application of part (A) finishes the proof in that case.\par
Finally, assume \(a' = a\). Then \(\BooW{(\BooV{q}{r})}{b} = \BooV{q}{r}\). So,
\(\BooW{q}{b} = q\) and \(\{a,q,\BooD{b}{q}\}\) is a partition of \(\bbB\)
contained in \(\aaA\), and thus (similarly as in the previous case) \((\mu(a),
\mu(q)+\mu(\BooD{b}{q})) = (\mu(a),\mu(b))\) belongs to \(\Sigma'\).
\end{proof}

\begin{lem}{PN-aux1}
Let \(\nu\) be an \(S\)-valued measure compatible with \(\Sigma\) that is
defined on a finite Boolean subalgebra \(\eeE\) of \(\bbB\) generated by
a partition \(\{e_0,\ldots,e_N\}\ (N \geq 0)\). Further, let \(J\) be
a non-empty subset of \(\{0,\ldots,N\}\) and for each \(j \in J\):
\begin{itemize}
\item let \(b_j\) and \(c_j\) be two disjoint proper elements of \(\bbB\) such
 that \(\BooV{b_j}{c_j} = e_j\);
\item if \(N > 0\), let \((\beta_j,\gamma_j,\delta_j) \in \Sigma\) \UP(resp. let
 \((\beta_j,\gamma_j) \in \Sigma'\) provided that \(N = 0\)\UP) be such that
 \(\beta_j+\gamma_j = \nu(e_j)\) and, if \(N > 0\), \(\delta_j =
 \nu(\BooD{\unit}{e_j})\).
\end{itemize}
Then a unique measure \(\tilde{\nu}\dd \ddD \to S\) where \(\ddD\) is
the Boolean algebra generated by \(\eeE \cup \{b_j,c_j\dd\ j \in J\}\) that
extends \(\nu\) and satisfies \(\tilde{\nu}(b_j) = \beta_j\) and
\(\tilde{\nu}(c_j) = \gamma_j\) \UP(for all \(j \in J\)\UP) is compatible with
\(\Sigma\) as well.
\end{lem}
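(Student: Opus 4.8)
The plan is to reduce everything to a single split and there to apply axiom (Sp3) twice. Note first that \(\ddD\) is generated by the partition consisting of the atoms \(e_j\) (\(j\notin J\)) together with the pairs \(b_j,c_j\) (\(j\in J\)); since \(\beta_j+\gamma_j=\nu(e_j)\), declaring \(\tilde{\nu}(b_j)=\beta_j\), \(\tilde{\nu}(c_j)=\gamma_j\) and \(\tilde{\nu}(e_j)=\nu(e_j)\) on these atoms does define a unique measure extending \(\nu\), so only compatibility with \(\Sigma\) is at issue. I would prove this by induction on \(|J|\), removing one index \(j_0\) at a time: if \(\eeE_1\) denotes the algebra generated by \(\eeE\cup\{b_{j_0},c_{j_0}\}\) and \(\nu_1\) the associated one-split extension, then it suffices to know that \(\nu_1\) is compatible (the base case below) and to reapply the lemma to \(\nu_1\), \(\eeE_1\) and \(J\setminus\{j_0\}\). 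The remaining data transfer because \(\nu_1\) extends \(\nu\) and \(\nu_1(b_{j_0})+\nu_1(c_{j_0})=\nu(e_{j_0})\): each \(e_j\) (\(j\ne j_0\)) is still an atom with \(\nu_1(e_j)=\nu(e_j)\) and \(\nu_1(\BooD{\unit}{e_j})=\nu(\BooD{\unit}{e_j})=\delta_j\).

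For a single split \(J=\{j_0\}\) the case \(N=0\) is immediate: \(\ddD=\{\zero,\unit,b_{j_0},c_{j_0}\}\) has four elements and compatibility is precisely the hypothesis \((\beta_{j_0},\gamma_{j_0})\in\Sigma'\). If \(N\ge 1\), then \(\ddD\) has at least three atoms, so \(|\ddD|>4\) and by \DEF{PN-comp-spec} it suffices to verify \((\tilde{\nu}(P),\tilde{\nu}(Q),\tilde{\nu}(R))\in\Sigma\) for every three-element partition \(\{P,Q,R\}\subset\ddD\) of \(\bbB\). I would merge \(b_{j_0}\) and \(c_{j_0}\) back into \(e_{j_0}\): if they lie in one and the same part, then \(\{P,Q,R\}\) is already a partition inside \(\eeE\) (necessarily \(|\eeE|>4\)) on which \(\tilde{\nu}=\nu\), and compatibility of \(\nu\) settles this case.

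Otherwise, permuting by (Sp1), I may assume \(b_{j_0}\le P\) and \(c_{j_0}\le Q\), and write \(P=\BooV{P'}{b_{j_0}}\), \(Q=\BooV{Q'}{c_{j_0}}\), \(R=R'\) with \(P',Q',R'\in\eeE\) and \(\BooV{\BooV{P'}{Q'}}{R'}=\BooD{\unit}{e_{j_0}}\); the goal is \((\nu(P')+\beta_{j_0},\,\nu(Q')+\gamma_{j_0},\,\nu(R'))\in\Sigma\). This is where (Sp3) does the work, in two steps. Applying it to \((\gamma_{j_0},\beta_{j_0},\nu(P')+\nu(Q')+\nu(R'))\) — which is \((\beta_{j_0},\gamma_{j_0},\delta_{j_0})\in\Sigma\) up to (Sp1) — and to \((\nu(e_{j_0}),\nu(P'),\nu(Q')+\nu(R'))\) — the \(\nu\)-value of the \(\eeE\)-partition \(\{e_{j_0},P',\BooV{Q'}{R'}\}\) — gives, after (Sp1), the triple \((\nu(P')+\beta_{j_0},\gamma_{j_0},\nu(Q')+\nu(R'))\in\Sigma\). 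Feeding this together with \((\nu(\BooV{P'}{e_{j_0}}),\nu(Q'),\nu(R'))\) — the \(\nu\)-value of the \(\eeE\)-partition \(\{\BooV{P'}{e_{j_0}},Q',R'\}\) — into (Sp3) once more yields exactly \((\nu(P')+\beta_{j_0},\nu(Q')+\gamma_{j_0},\nu(R'))\in\Sigma\), as wanted.

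The only delicate point, and the step I expect to cost the most care, is the degenerate bookkeeping in the last paragraph when \(P'\) or \(Q'\) equals \(\zero\) (note \(R'\ne\zero\), being a whole part). Then one or both of the coarsenings \(\{e_{j_0},P',\BooV{Q'}{R'}\}\) and \(\{\BooV{P'}{e_{j_0}},Q',R'\}\) ceases to be a genuine three-element partition, so the corresponding appeal to compatibility of \(\nu\) must be replaced. If \(P'=Q'=\zero\) the desired membership is literally \((\beta_{j_0},\gamma_{j_0},\delta_{j_0})\in\Sigma\); if exactly one of them vanishes, one invokes the two-part form of compatibility, i.e.\ \(\Sigma'\)-membership, via \LEM{PN-aux0}. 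These reductions are routine once the generic computation above is in place.
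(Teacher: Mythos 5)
Your proof is correct and follows essentially the same route as the paper's: reduce to a single split by induction on \(|J|\), handle \(N=0\) directly, and for \(N\geq 1\) analyse a three-element partition of \(\ddD\) according to where \(b_{j_0}\) and \(c_{j_0}\) fall, chaining \UP{(Sp1)} and \UP{(Sp3)} twice against the hypothesis triple \((\beta_{j_0},\gamma_{j_0},\delta_{j_0})\) and the \(\nu\)-values of coarsened \(\eeE\)-partitions. The only cosmetic difference is in the degenerate cases (\(P'\) or \(Q'\) equal to \(\zero\)), which the paper resolves by the same two-step \UP{(Sp3)} scheme with the vanished input replaced directly by the hypothesis triple, rather than by any appeal to \(\Sigma'\)-membership via \LEM{PN-aux0}.
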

\begin{proof}
By a simple induction argument (on the size of \(J\)), we may and do assume that
\(J\) is a singleton, and moreover that \(J = \{0\}\). For simplicity, we will
write \(b,c,\beta,\gamma,\delta\) instead of (resp.) \(b_0,c_0,\beta_0,\gamma_0,
\delta_0\). We consider two separate cases. If \(N = 0\), then \(\eeE = \{\zero,
\unit\}\) and \(\BooV{b}{c} = \unit = e_0\). So, \(c = \BooD{\unit}{b}\) and
thus the conclusion easily follows (thanks to \LEM{PN-aux0}). Below we assume
that \(N > 0\).\par
When \(N > 0\), the set \(F \df \{b,c,e_1,\ldots,e_N\}\) is a unique (up to
arrangement of its elements) partition of \(\bbB\) that generates \(\ddD\). What
is more, for any non-zero element \(x\) of \(\ddD\) there exists a unique
non-empty set \(I = I(x) \subset F\) such that \(x = \bigvee_{z \in I} z\). So,
we infer that \(\tilde{\nu}\dd \ddD \to S\) defined by the rules:
\begin{itemize}
\item \(\tilde{\nu}(\zero) = 0_S\) and \(\tilde{\nu}(e_k) = \nu(e_k)\) for \(k =
 1,\ldots,N\),
\item \(\tilde{\nu}(b) = \beta\) and \(\tilde{\nu}(c) = \gamma\),
\item \(\tilde{\nu}(x) = \sum_{w \in I(x)} \tilde{\nu}(w)\) for non-zero \(x \in
 \ddD \setminus F\)
\end{itemize}
is a correctly defined \(S\)-valued measure that extends \(\nu\) (because
\(\beta+\gamma = \nu(e_0)\)). So, we only need to check that it is compatible
with \(\Sigma\). To this end, note that \(|\ddD| > 4\) and fix a partition
\(\{x,y,z\} \subset \ddD\) of \(\bbB\). If each of the sets \(I(x),I(y),I(z)\)
either is disjoint from \(\{b,c\}\) or contains both \(b\) and \(c\), then \(x,
y,z \in \eeE\) and the conlusion follows from compatibility of \(\nu\) with
\(\Sigma\). Hence, we may and do assume (thanks to (Sp1)) that \(b \in I(x)\)
and \(c \in I(y)\). Set \(X \df I(x) \setminus \{b\}\), \(Y \df I(y) \setminus
\{c\}\) and \(Z \df I(z)\) and observe that \(X,Y,Z\) are pairwise disjoint and
their union coincides with \(\{e_1,\ldots,e_N\}\) (however, each of \(X\) and
\(Y\) can be empty). For simplicity, set \(x' \df \bigvee_{w \in X} w\) and \(y'
\df \bigvee_{w \in Y} w\) and \(p \df \tilde{\nu}(x')\), \(q \df
\tilde{\nu}(y')\) and \(r \df \tilde{\nu}(z)\) (note that \(x', y', z \in \eeE\)
and hence \((p,q,r) = (\nu(x'),\nu(y'),\nu(z))\)). Observe that \(p+q+r =
\delta\). We consider three cases:
\begin{itemize}
\item If \(X = Y = \varempty\), then \(x = b\), \(y = c\), \(z =
 \BooD{\unit}{e_0}\) and, consequently,
 \[(\tilde{\nu}(x),\tilde{\nu}(y),\tilde{\nu}(z)) = (\beta,\gamma,\delta)\]
 belongs to \(\Sigma\) (by the assumption of the lemma).
\item If \(X \neq \varempty = Y\), then \(x' \neq \zero = y'\), \(q = 0_S\) and
 both \((\gamma,\beta,p+r) = (\gamma,\beta,\delta)\) and \((\gamma+\beta,p,r) =
 (\nu(e_0),\nu(x'),\nu(z))\) belong to \(\Sigma\) (thanks to (Sp1) and
 compatibility of \(\nu\) with \(\Sigma\)). So, it follows from (Sp3) that
 \((\gamma,\beta+p,r) \in \Sigma\) as well, but \((\gamma,\beta+p,r) =
 (\tilde{\nu}(y),\tilde{\nu}(x),\tilde{\nu}(z))\). Thus, another application of
 (Sp1) yields \((\tilde{\nu}(x),\tilde{\nu}(y),\tilde{\nu}(z)) \in \Sigma\).
 A similar argument shows the last relation when \(X = \varempty \neq Y\).
\item Finally, assume that both \(X\) and \(Y\) are non-empty. Then both \(x'\)
 and \(y'\) are proper, and both \((\beta,\gamma,q+(p+r)) = (\beta,\gamma,
 \delta)\) and \((\beta+\gamma,q,p+r) = (\nu(e_0),\nu(y'),\nu(\BooV{x'}{z}))\)
 belong to \(\Sigma\). So, it follows from (Sp3) that \((\beta,\gamma+q,p+r) \in
 \Sigma\). Consequently, \((r+p,\beta,\gamma+q) \in \Sigma\) (by (Sp1)). But
 also \((r,p,\beta+(\gamma+q)) = (\nu(z),\nu(x'),\nu(\BooV{e_0}{y'}))\) belongs
 to \(\Sigma\) (since \(\nu\) is compatible with it). Therefore, we conclude
 from (Sp3) that \((r,p+\beta,\gamma+q) \in \Sigma\), and from (Sp1) that
 \((\tilde{\nu}(x),\tilde{\nu}(y),\tilde{\nu}(z)) = (p+\beta,q+\gamma,r) \in
 \Sigma\) as well.
\end{itemize}
All the above cases have led us to the same conclusion, which finishes
the proof.
\end{proof}

\begin{lem}{PN-aux2}
Let \(\rho\) be an \(S\)-valued measure compatible with \(\Sigma\) that is
defined on a finite Boolean subalgebra \(\aaA\) of \(\bbB\). Further, let \(w\)
be a non-zero element of \(\aaA\) and:
\begin{itemize}
\item if \(w\) is proper, let \((\alpha,\beta,\gamma) \in \Sigma\) be
 such that \(\alpha+\beta = \rho(w)\) and \(\gamma = \rho(\BooD{\unit}{w})\); or
\item if \(w = \unit\), let \((\alpha,\beta)\) be an arbitrary element of
 \(\Sigma'\).
\end{itemize}
Then \(\rho\) can be extended to an \(S\)-valued measure \(\tilde{\rho}\)
defined on a finite Boolean subalgebra \(\ddD \supset \aaA\) of \(\bbB\) such
that \(\tilde{\rho}\) is compatible with \(\Sigma\) and there exist two disjoint
proper elements \(u, v \in \ddD\) for which \(w = \BooV{u}{v}\) and
\((\tilde{\rho}(u),\tilde{\rho}(v)) = (\alpha,\beta)\).
\end{lem}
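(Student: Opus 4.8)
The plan is to reduce the statement to a purely additive ``distribution'' problem and then realise it geometrically through a single application of \LEM{PN-aux1}. Write the atoms of \(\aaA\) as \(\{a_1,\dots,a_n\}\) (the coarsest partition of \(\bbB\) inside \(\aaA\)) and let \(K\subseteq\{1,\dots,n\}\) collect the indices with \(a_j\le w\), so \(w=\bigvee_{j\in K}a_j\). I would first prove the \textbf{distribution claim}: there are \(p_j,q_j\in S\) \((j\in K)\) with \(p_j+q_j=\rho(a_j)\), \(\sum_{j\in K}p_j=\alpha\), \(\sum_{j\in K}q_j=\beta\), and such that for each \(j\) with \(p_j\neq0\neq q_j\) one has \((p_j,q_j,\rho(\BooD{\unit}{a_j}))\in\Sigma\) (respectively \((p_j,q_j)\in\Sigma'\) in the exceptional case \(a_j=\unit\)). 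Granting this, put \(J=\{j\in K: p_j\neq0\neq q_j\}\); using atomlessness of \(\bbB\) choose disjoint proper \(b_j,c_j\) with \(\BooV{b_j}{c_j}=a_j\) for \(j\in J\), and feed \LEM{PN-aux1} with this \(J\) and the data \((\beta_j,\gamma_j,\delta_j)=(p_j,q_j,\rho(\BooD{\unit}{a_j}))\). This yields a compatible extension \(\tilde\rho\) to the algebra \(\ddD\) generated by \(\aaA\) and these pieces, with \(\tilde\rho(b_j)=p_j\), \(\tilde\rho(c_j)=q_j\). Finally set \(u=\bigl(\bigvee_{j\in K\setminus J,\,q_j=0}a_j\bigr)\vee\bigvee_{j\in J}b_j\) and \(v=\bigl(\bigvee_{j\in K\setminus J,\,p_j=0}a_j\bigr)\vee\bigvee_{j\in J}c_j\); these are disjoint elements of \(\ddD\) with \(\BooV{u}{v}=w\) and, by additivity, \((\tilde\rho(u),\tilde\rho(v))=(\alpha,\beta)\).

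The distribution claim I would establish by induction on \(|K|\). When \(|K|=1\), \(w=a_j\) is an atom of \(\aaA\), \(\rho(\BooD{\unit}{a_j})=\gamma\), and the hypothesis \((\alpha,\beta,\gamma)\in\Sigma\) gives \((p_j,q_j)=(\alpha,\beta)\) outright (the case \(\aaA=\{\zero,\unit\}\) is the \(\Sigma'\)-branch). For \(|K|\ge2\) and \(w\neq\unit\), pick \(k\in K\) and set \(w''=\bigvee_{j\in K\setminus\{k\}}a_j\). Since \(\rho\) is compatible with \(\Sigma\), the partition \(\{a_k,w'',\BooD{\unit}{w}\}\subset\aaA\) yields \((\rho(a_k),\rho(w''),\gamma)\in\Sigma\), and \(\rho(a_k)+\rho(w'')=\rho(w)=\alpha+\beta\) with common last coordinate \(\gamma\). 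This is precisely the hypothesis of (Sp4b) for the pair \((\alpha,\beta,\gamma)\) and \((\rho(a_k),\rho(w''),\gamma)\).

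Applying (Sp4b) I read off \(x,y,z,u^{\ast},v^{\ast},w^{\ast}\in S\) with \(x+z=\rho(a_k)\), \(u^{\ast}+w^{\ast}=\rho(w'')\), \(\alpha=x+u^{\ast}\), \(\beta=z+w^{\ast}\), together with \((x,y,z),(u^{\ast},v^{\ast},w^{\ast})\in\Sigma\cup(\Sigma'\times\{0_S\})\cup(\{0_S\}\times\Sigma')\); by additivity \(y=\rho(\BooD{\unit}{a_k})\) and \(v^{\ast}=\rho(\BooD{\unit}{w''})\). Interpreting the first triple via (Sp1), \((x,z,\rho(\BooD{\unit}{a_k}))\) records an admissible split of \(a_k\) into a \(u\)-part of value \(x\) and a \(v\)-part of value \(z\), so I put \((p_k,q_k)=(x,z)\); when that triple falls into a degenerate \(\Sigma'\)-branch (\(x=0\) or \(z=0\)) the atom is not split but assigned wholesale. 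The second triple, again after (Sp1), gives \((u^{\ast},w^{\ast},\rho(\BooD{\unit}{w''}))\in\Sigma\), i.e.\ \((\alpha',\beta',\gamma')\df(u^{\ast},w^{\ast},\rho(\BooD{\unit}{w''}))\in\Sigma\) with \(\alpha'+\beta'=\rho(w'')\). Thus \((\alpha',\beta',\gamma')\) is a legitimate input of the claim for \(w''\) (which has \(|K|-1\) atoms and non-empty complement \(\BooD{\unit}{w''}\supseteq a_k\)), and \(\alpha=p_k+\alpha'\), \(\beta=q_k+\beta'\). The inductive hypothesis supplies \((p_j,q_j)_{j\in K\setminus\{k\}}\) summing to \((\alpha',\beta')\), and adjoining \((p_k,q_k)\) finishes the induction. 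The single remaining entry point is \(w=\unit\) with \(|K|\ge2\): here the first peeling uses the two-part partition \(\{a_k,\BooD{\unit}{a_k}\}\), which gives \((\rho(a_k),\rho(\BooD{\unit}{a_k}))\in\Sigma'\) by \LEM{PN-aux0}(B), so one invokes the \(r=0_S\) clause of (Sp4b); thereafter every element has non-empty complement and the above applies verbatim.

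The step I expect to be most delicate is the bookkeeping around (Sp4b)'s degenerate branches and the requirement that \(u,v\) be \emph{proper}. Because \(S\) need not be cancellative, one cannot read off \(x=0\) from \(\alpha=0\); hence whether an atom is genuinely split must be decided by the \(\Sigma\)-versus-\(\Sigma'\) alternative returned by (Sp4b), not by inspecting the values. Non-emptiness of \(u\) and \(v\) is then topological: as soon as some atom is genuinely split, both \(u\) and \(v\) inherit a non-empty clopen piece, so they are proper; in the degenerate situation where all atoms are assigned to one side (which can only occur with \(\alpha=0\) or \(\beta=0\)), one secures properness by carving from a single atom a non-empty piece of value \(0_S\) together with its complement—its admissibility being exactly the pertinent \(\Sigma'\)-branch supplied by \((\alpha,\beta,\gamma)\in\Sigma\)—without disturbing the computed totals. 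All remaining verifications (that \(\tilde\rho\) is well defined and compatible with \(\Sigma\)) are routine and delegated to \LEM{PN-aux1}.
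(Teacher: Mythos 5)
Your organisation is genuinely different from the paper's, even though the two key tools are the same ((Sp4b) plus \LEM{PN-aux1}). The paper also inducts on the atoms of \(\aaA\) below \(w\), but it merges two atoms into one, applies the induction hypothesis to produce an \emph{actual} pair \((u,v)\) inside an actual compatible extension \(\rho'\), and then uses (Sp4b) to refine the decomposition \(\{u',v'\}\) of the merged atom against \(\{a_{M-1},a_M\}\), repositioning \(u',v'\) by a Boolean isomorphism before splitting them via \LEM{PN-aux1}; Boolean elements and measures are thus built at every level. You instead do all the work inside \(S\) first (the distribution claim, where (Sp4b) refines the pair \((\alpha,\beta)\) against \((\rho(a_k),\rho(w''))\)) and realise the answer geometrically by one application of \LEM{PN-aux1} at the very end. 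This separation of the combinatorial content from the Boolean-algebra content is arguably cleaner and avoids the paper's repositioning step.

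Two points need repair, both with ideas already on your page. First, your assertion that the second (Sp4b) triple ``gives \((u^{\ast},w^{\ast},\rho(\BooD{\unit}{w''}))\in\Sigma\)'' is false when that triple lands in \(\Sigma'\times\{0_S\}\) or \(\{0_S\}\times\Sigma'\); since \(S\) need not be cancellative you cannot argue these branches away, and in them you must not recurse but assign all of \(w''\) wholesale to one side --- exactly the treatment you already give the first triple. Second, your closing worry about the ``all atoms on one side'' degeneracy, and the proposed fix of carving out a zero-measure piece, are both unnecessary once membership in \(J\) is decided branch-wise (as you yourself insist it must be): (Sp4b) pairs its degenerate branches \emph{oppositely} --- one output triple lies in \(\Sigma\cup(\Sigma'\times\{0_S\})\) and the other in \(\Sigma\cup(\{0_S\}\times\Sigma')\) --- so at each peeling step either some atom is genuinely split (serving both \(u\) and \(v\) with a proper piece) or the peeled atom and the remainder are sent to opposite sides; and every base case that is actually reached has a genuine element of \(\Sigma\) (resp.\ \(\Sigma'\)) as input and therefore splits its atom into two proper pieces. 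An easy induction then shows \(u\) and \(v\) are always proper. This matters because the admissibility of your proposed carving step is only asserted, not proved, and establishing \((\rho(a_{j}),0_S,\rho(\BooD{\unit}{a_{j}}))\in\Sigma\) from the hypotheses would require a separate (Sp1)/(Sp3) derivation (and is genuinely delicate when \(w=\unit\)); better to observe that the case never occurs.
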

\begin{proof}
Since \(\aaA\) is finite, there is a partition \(\{a_0,\ldots,a_M\} \subset
\aaA\) of \(\bbB\) (where \(M \geq 0\)) that generates \(\aaA\). Further, there
exists a non-empty set \(Q \subset \{0,\ldots,M\}\) such that \(w =
\bigvee_{j \in Q} a_j\). Rearranging the elements \(a_0,\ldots,a_M\) (if
needed), we may and do assume that \(Q = \{M-k,\ldots,M\}\) where \(0 \leq k
\leq M\). The proof goes by induction on the size of \(Q\).\par
First assume that \(k = 0\) (that is, that \(Q\) is a singleton). Then \(w =
a_M\). Take two arbitrary disjoint proper elements \(u\) and \(v\) such that \(w
= \BooV{u}{v}\). Now the assertion follows from \LEM{PN-aux1} (applied with
\((b_M,c_M,\beta_M,\gamma_M,\delta_M) = (u,v,\alpha,\beta,\gamma)\);
\(\delta_M\) is involved only when \(w \neq \unit\)).\par
Now assume \(k > 0\) and that the result is valid when \(Q\) has \(k-1\)
elements. Let \(a_{M-1}' \df \BooV{a_{M-1}}{a_M}\) and denote by \(\aaA'\)
the Boolean subalgebra of \(\bbB\) generated by \(a_0,\ldots,a_{M-2},a_{M-1}'\).
Then \(w \in \aaA' \subset \aaA\). It follows from the induction hypothesis that
there exist
\begin{itemize}
\item an \(S\)-valued measure \(\rho'\) defined on a finite Boolean subalgebra
 \(\ddD' \supset \aaA'\) of \(\bbB\) that is compatible with \(\Sigma\) and
 extends \(\rho\restriction{\aaA'}\); and
\item two disjoint proper elements \(u\) and \(v\) of \(\ddD'\) such that \(w
 = \BooV{u}{v}\) and \((\rho'(u),\rho'(v)) = (\alpha,\beta)\).
\end{itemize}
Reducing \(\ddD'\) if necessary, we may and do assume that:
\begin{itemize}
\item[\((\star)\)] \(\ddD'\) is generated by \(a_0,\ldots,a_{M-2},a_{M-1}',u,
 v\).
\end{itemize}
Now set \(u' \df \BooW{u}{a_{M-1}'}\), \(v' \df \BooW{v}{a_{M-1}'}\), \(p \df
\rho'(u')\) and \(q \df \rho'(v')\). Then \(\BooV{u'}{v'} = \BooW{w}{a_{M-1}'}
= a_{M-1}'\) (where the last equality follows from the inclusion \(\{M-1,M\}
\subset Q\)). Additionally, set \(r \df \rho(\BooD{\unit}{a_{M-1}'})\).\par
First assume that \(u' = \zero\) (resp. \(v' = \zero\)). Then \(v' = a_{M-1}'\)
(resp. \(u' = a_{M-1}'\)) and it follows from \((\star)\) that \(a_{M-1}'\) is
a minimal non-zero element of \(\ddD'\) and therefore \(a_{M-1}'\) belongs to
a (unique) partition \(\{d_0,\ldots,d_s\} \subset \ddD'\) of \(\bbB\) that
generates \(\ddD'\), say \(d_s = a_{M-1}'\). In particular, \LEM{PN-aux1} can be
applied with the following data: \((\eeE,\nu) = (\ddD',\rho')\),
\((e_0,\ldots,e_N) = (d_0,\ldots,d_s)\), \(J = \{s\}\) and \((b_s,c_s,\beta_s,
\gamma_s,\delta_s) = (a_{M-1},a_M,\rho(a_{M-1}),\rho(a_M),r)\) (\(\delta_s\) is
involved only when \(a_{M-1}' \neq \unit\)). In this way we obtain an extension
\(\tilde{\rho}\) of \(\rho'\) that extends also \(\rho\), which finishes
the proof in that case.\par
Finally, we assume that both \(u'\) and \(v'\) are proper. We infer that
\begin{equation}\label{eqn:PN-aux21}\begin{cases}
p+q = \rho(a_{M-1}') = \rho(a_{M-1})+\rho(a_M)\\
(p,q,r),(\rho(a_{M-1}),\rho(a_M),r) \in \Sigma & \UP{if } a_{M-1}' \neq \unit\\
(p,q),(\rho(a_{M-1}),\rho(a_M)) \in \Sigma' \UP{ and } r = 0_S & \UP{if }
a_{M-1}' = \unit
\end{cases}.\end{equation}
Since \(\BooV{u'}{v'} = a_{M-1}'\), it follows from \((\star)\) that \(u'\) and
\(v'\) belong to a (unique) partition \(\{d_0,\ldots,d_s\}\) of \(\bbB\) that
generates \(\ddD'\), say \(d_0 = u'\) and \(d_1 = v'\) (so, \(s > 0\)).
A combination of \eqref{eqn:PN-aux21} and (Sp4b) yields two triples \((\beta_0,
\delta_0,\gamma_0)\) and \((\beta_1,\delta_1,\gamma_1)\) from \(\Sigma \cup
(\{0_S\} \times \Sigma') \cup (\Sigma' \times \{0_S\})\) such that one of these
triples is from \(\Sigma \cup (\{0_S\} \times \Sigma')\) and the other from
\(\Sigma \cup (\Sigma' \times \{0_S\})\) and
\begin{equation}\label{eqn:PN-aux20}\begin{cases}
\beta_0+\gamma_0 = p\\
\beta_1+\gamma_1 = q\\
\beta_0+\beta_1 = \rho(a_{M-1})\\
\gamma_0+\gamma_1 = \rho(a_M)\\
\delta_0 = q+r\\
\delta_1 = p+r
\end{cases}.\end{equation}
Without loss of generality, we may and do assume that
\begin{equation}\label{eqn:PN-aux22}
(\beta_0,\delta_0,\gamma_0) \in \Sigma \cup (\Sigma' \times \{0_S\})
\qquad \UP{and} \qquad
(\beta_1,\delta_1,\gamma_1) \in \Sigma \cup (\{0_S\} \times \Sigma').
\end{equation}
Now if we take two arbitrary disjoint proper elements \(u''\) and \(v''\) of
\(\bbB\) such that \(\BooV{u''}{v''} = a_{M-1}'\), then the assignments
\[\begin{cases}
d_j \mapsto d_j & \UP{for } 1 < j \leq s\\
u' \mapsto u''\\
v' \mapsto v''
\end{cases}\]
extend to a Boolean algebra isomorphism \(\Phi\) from \(\ddD'\) onto some
\(\ddD''\) such that \(\Phi(x) = x\) for all \(x \in \aaA'\). Hence a measure
\(\rho''\dd \ddD'' \ni y \mapsto \rho'(\Phi^{-1}(y)) \in S\) is compatible with
\(\Sigma\) and extends \(\rho\restriction{\aaA'}\) as well. So, we may work with
\(\rho''\) instead of \(\rho'\). In this way we may and do assume (as
\(\BooV{u'}{v'} = \BooV{a_{M-1}}{a_M}\)) that the four elements
\[b_0 = \BooW{u'}{a_{M-1}}, \quad c_0 = \BooW{u'}{a_M}, \quad
b_1 = \BooW{v'}{a_{M-1}}, \qquad c_1 = \BooW{v'}{a_M}\]
satisfy all the respective conditions:
\begin{itemize}
\item if \((\beta_0,\delta_0,\gamma_0) \in \Sigma\), then
 \(b_0\) and \(c_0\) are non-zero; and
\item similarly, if \((\beta_1,\delta_1,\gamma_1) \in \Sigma\), then \(b_1\) and
 \(c_1\) are non-zero; and
\item if \((\beta_0,\delta_0,\gamma_0) \notin \Sigma\), then
 \(b_0 = u'\) and \(c_0 = \zero\) (cf. \eqref{eqn:PN-aux22}); and
\item similarly, if \((\beta_1,\delta_1,\gamma_1) \notin \Sigma\), then \(b_1 =
 \zero\) and \(c_1 = v'\) (cf. \eqref{eqn:PN-aux22}).
\end{itemize}
Now let \(I\) consist of all \(j \in \{0,1\}\) such that both \(b_j\) and
\(c_j\) are non-zero. If \(I = \varempty\), then \(u' = a_{M-1}\), \(v' = a_M\)
and \(\rho'(u') = \rho(a_{M-1})\) and \(\rho'(v') = \rho(a_M)\). Thus, in such
a case the proof is finished by setting \(\ddD = \ddD'\) and \(\tilde{\rho} =
\rho'\). Therefore it remains to prove the lemma when \(I \neq \varempty\). In
that case we apply \LEM{PN-aux1} with the following data: \((\eeE,\nu) = (\ddD',
\rho')\), \((e_0,\ldots,e_N) = (d_0,\ldots,d_s)\), \(J = I\), and for any \(j
\in I\), \(b_j\), \(c_j\), \(\beta_j\), \(\gamma_j\) and \(\delta_j\) are as
specified above (note that \(\BooV{b_j}{c_j} = d_j\) and \((\beta_j,\gamma_j,
\delta_j) \in \Sigma\) for \(j \in I\)). In this way we obtain a measure
\(\tilde{\rho}\dd \ddD \to S\) compatible with \(\Sigma\) that extends
\(\rho'\). Observe that, whatever \(I\) is, each of \(b_0,c_0,b_1,c_1\) belongs
to \(\ddD\) and \(\tilde{\rho}(b_j) = \beta_j\) and \(\tilde{\rho}(c_j) =
\gamma_j\) for \(j=0,1\) (thanks to \eqref{eqn:PN-aux20}). Consequently,
\(a_{M-1} = \BooV{b_0}{b_1}\) and \(a_M = \BooV{c_0}{c_1}\) are in \(\ddD\) as
well, which yields that \(\aaA \subset \ddD\). Finally, \(\tilde{\rho}(a_{M-1})
= \tilde{\rho}(b_0)+\tilde{\rho}(b_1) = \beta_0+\beta_1 = \rho(a_{M-1})\), by
\eqref{eqn:PN-aux20}. Similarly, \(\tilde{\rho}(a_M) = \rho(a_M)\), and we are
done.
\end{proof}

We are now ready to give

\begin{proof}[Proof of \THM{PN-ext-spec}]
To shorten statements, we will call (here in this proof) an \(S\)-valued measure
\emph{\(\Sigma\)-finite} if it is compatible with \(\Sigma\) and defined on
a finite Boolean subalgebra of \(\bbB\).\par
We only need to show that (i) is implied by (ii). To this end, we fix
a \(\Sigma\)-finite measure \(\mu\) and arrange:
\begin{itemize}
\item all proper elements of \(\bbB\) into a sequence \((a_n)_{n=1}^{\infty}\);
\item all triples from \(\Sigma\) into \((\alpha_n,\beta_n,
 \gamma_n)_{n=1}^{\infty}\);
\item all pairs from \(\Sigma'\) into \((\eta_n,\nu_n)_{n=1}^{\infty}\)
\end{itemize}
(we do not require that these sequences are one-to-one). Starting from
\(\aaA_0 \df \aaA\) and \(\mu_0 \df \mu\) we will now construct two sequences
\(\aaA_1,\aaA_2,\ldots\) and \(\mu_1,\mu_2,\ldots\) such that all the following
conditions are fulfilled:
\begin{enumerate}[(1\({}_n\))]
\item \(\aaA_n\) is a finite Boolean subalgebra of \(\bbB\) such that \(\aaA_n
 \supset \aaA_{n-1} \cup \{a_1,\ldots,a_n\}\);
\item \(\mu_n\) is a \(\Sigma\)-finite measure extending \(\mu_{n-1}\) whose
 domain coincides with \(\aaA_n\);
\item for any \(k=1,\ldots,n\) there is a proper element \(f \in \aaA_n\) such
 that \(\mu(f) = \eta_k\) and \(\mu(\BooD{\unit}{f}) = \nu_k\);
\item for any \(j,k=1,\ldots,n\), if \(\mu_n(a_j) = \alpha_k+\beta_k\) and
 \(\mu_n(\BooD{\unit}{a_j}) = \gamma_k\), then there are disjoint proper
 elements \(c,d \in \aaA_n\) for which \(\BooV{c}{d} = a_j\), \(\mu(c) =
 \alpha_k\) and \(\mu(d) = \beta_k\).
\end{enumerate}
So, assume \(n > 0\) and \(\aaA_{n-1}\) and \(\mu_{n-1}\) have already been
defined. To construct \(\aaA_n\) and \(\mu_n\), it is sufficient to describe how
to extend a \(\Sigma\)-finite measure to a \(\Sigma\)-finite measure:
\begin{enumerate}[(ext1)]
\item whose domain contains an arbitrarily given element of \(\bbB\) (in this
 way we will first take care of \((1_n)\)); or
\item such that \((3_n)\) holds for a fixed \(k\) (in this way, after a finite
 number of steps, we will take care of \((3_n)\)); or
\item such that \((4_n)\) holds for fixed \(j\) and \(k\) (similarly---in this
 way, after a finite number of steps we will take care of \((4_n)\) and we will
 call our final Boolean subalgebra and \(\Sigma\)-finite measure \(\aaA_n\) and
 \(\mu_n\), respectively).
\end{enumerate}
To this end, assume \(\rho\) is a \(\Sigma\)-finite measure whose domain is
\(\eeE\). Fix a partition \(\{e_0,\ldots,e_N\} \subset \eeE\) of \(\bbB\) which
generates \(\eeE\) (\(N \geq 0\)). Note that both (ext2) and (ext3) are covered
by \LEM{PN-aux2}. So, we only need to show (ext1), which we do below.\par
Fix arbitrary \(w \in \bbB \setminus \eeE\) and let \(J\) consist of all \(j \in
\{0,\ldots,N\}\) such that \(b_j \df \BooW{w}{e_j}\) is non-zero and differs
from \(e_j\). (\(J\) is non-empty, as \(w \notin \eeE\).) Observe that
\begin{equation}\label{eqn:PN-aux30}
\BooD{w}{(\bigvee_{j \in J} b_j)} \in \eeE.
\end{equation}
For \(j \in J\) set \(c_j \df \BooD{e_j}{b_j}\) and note that both \(b_j\) and
\(c_j\) are proper and \(\BooV{b_j}{c_j} = e_j\). If \(e_j = \unit\) (that is,
if \(N = 0\)), we use (Sp0) to conclude that there exists a pair \((\beta_j,
\gamma_j)\) in \(\Sigma'\). Otherwise (that is, if \(e_j \neq \unit\)), we apply
\LEM{PN-aux0} to infer that \((\rho(\BooD{\unit}{e_j}),\rho(e_j)) \in \Sigma'\)
and, consequently, there is a triple \((\delta_j,\beta_j,\gamma_j) \in \Sigma\)
such that \(\rho(\BooD{\unit}{e_j}) = \delta_j\) and \(\rho(e_j) = \beta_j +
\gamma_j\). Then also \((\beta_j,\gamma_j,\delta_j) \in \Sigma\), by (Sp1).\par
Now we apply \LEM{PN-aux1} to get a \(\Sigma\)-finite measure \(\rho'\) that
extends \(\rho\) and whose domain contains all \(b_j\) and \(c_j\) where \(j \in
S\). (More precisely, \(\rho'(b_j) = \beta_j\) and \(\rho'(c_j) = \gamma_j\).)
Then also \(w\) is in this domain (thanks to \eqref{eqn:PN-aux30}), which
finishes the proof of (ext1).\par
In this way we have constructed the sequences \((\aaA_n)_{n=0}^{\infty}\) and
\((\mu_n)_{n=0}^{\infty}\) such that conditions \((1_n)\)--\((4_n)\) hold for
all \(n > 0\). Properties \((1_n)\)--\((2_n)\) allow us to define correctly
an \(S\)-valued measure \(\lambda\) on \(\bbB\) that extends all \(\mu_n\). (In
particular, \(\lambda\) extends \(\mu\).) It is clear that \(\lambda\) is
compatible with \(\Sigma\), which means that \(\Sigma(\lambda) \subset \Sigma\).
So, it remains to verify the reverse inclusion and that \(\lambda\) has (4EP)
(by \THM{PN-4EP}).\par
Let \((p,q,r)\) be an arbitrary element of \(\Sigma\). Then there exist indices
\(k\) and \(n\) such that \((p,q,r) = (\alpha_k,\beta_k,\gamma_k)\) and
\((p+q,r) = (\eta_n,\nu_n)\) (cf. (Sp1) and \LEM{PN-aux0}). An application of
\((3_n)\) gives us a proper element \(f \in \bbB\) such that \(\lambda(f) =
p+q\) and \(\lambda(\BooD{\unit}{f}) = r\). Then there is \(j\) such that \(f =
a_j\). Finally, an application of \((4_p)\) with \(p = \max(j,k)\) yields two
disjoint proper elements \(c, d \in \bbB\) such that \(\BooV{c}{d} = f\) and
\((\lambda(c),\lambda(d)) = (p,q)\). We conclude that \(\{c,d,
\BooD{\unit}{f}\}\) is a partition of \(\bbB\) such that \((\lambda(c),
\lambda(d),\lambda(\BooD{\unit}{f})) = (p,q,r)\), which shows that \(\Sigma =
\Sigma(\lambda)\).\par
Finally, let \(\{a,b,c\}\) be a partition of \(\bbB\) and \(d\) be a proper
element of \(\bbB\) such that \eqref{eqn:PN-4EP} holds. Then \((\lambda(a),
\lambda(b),\lambda(c)) \in \Sigma\) and there are two indices \(j\) and \(k\)
such that \(d = a_j\) and \((\lambda(a),\lambda(b),\lambda(c)) = (\alpha_k,
\beta_k,\gamma_k)\). Condition \eqref{eqn:PN-4EP} enables us to make use of
\((4_n)\) with \(n = \max(j,k)\), from which it follows that there are two
disjoint proper elements \(p\) and \(q\) such that \(\BooV{p}{q} = d\) and
\((\lambda(p),\lambda(q)) = (\alpha_k,\beta_k)\ (= (\lambda(a),\lambda(b)))\),
which proves (4EP) and finishes the whole proof of the theorem.
\end{proof}

As a consequence, we obtain the following generalisation of
\cite[Proposition~3.7]{DKMN1}.

\begin{thm}{PN-generic}
Let \(S\) be a semigroup, equipped with the discrete topology, and \(\Sigma\) be
a \homspec{3} in \(S\). Further, let \(M(\Sigma)\) be the set of all
\(S\)-valued measures on \(\bbB\) that are compatible with \(\Sigma\). Finally,
let \(\lambda \in M(\Sigma)\) be a fixed homogeneous measure with
\(\Sigma(\lambda) = \Sigma\) \UP(whose existence was established in
\THM{PN-ext-spec}\UP). Then:
\begin{enumerate}[\upshape(A)]
\item The space \(S^{\bbB}\) of all functions from \(\bbB\) into \(S\), when
 equipped with the pointwise convergence topology, is completely metrizable
 and its covering dimension is 0.
\item The set \(M(\Sigma)\) is closed in \(S^{\bbB}\) and \(M(\Sigma)\) is
 a Polish space.
\item \(M(\Sigma)\) is invariant under any Boolean algebra automorphism \(\Phi\)
 of \(\bbB\); that is, if \(\mu \in M(\Sigma)\), then \(\mu \circ \Phi \in
 M(\Sigma)\) as well.
\item All measures of the form \(\lambda \circ \Phi\) where \(\Phi\) runs over
 all Boolean automorphisms of \(\bbB\) form a dense \(\ggG_{\delta}\) set \(H\)
 in \(M(\Sigma)\). Moreover, \(H\) consists precisely of all homogeneous
 measures \(\mu\) such that \(\Sigma(\mu) = \Sigma\).
\end{enumerate}
In particular, for any \(\mu \in M(\Sigma)\), the conjugacy class \(\{\mu \circ
\Phi\dd\ \Phi \in \OPN{Aut}(\bbB)\}\) of \(\mu\) is a dense \(\ggG_{\delta}\)
set in \(M(\Sigma)\) iff \(\mu\) is homogeneous and \(\Sigma(\mu) = \Sigma\).
\end{thm}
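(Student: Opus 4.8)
The plan is to dispose of parts (A)--(C) by soft topological arguments and to put all the weight on part (D), from which the concluding ``in particular'' clause will follow by a Baire-category argument.

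For (A), the point is simply that $\bbB$ is countable and $S$ is discrete, so $S^{\bbB}$ is a countable product of completely metrizable (indeed discrete) spaces and hence completely metrizable; the finite cylinders form a basis of clopen sets, so $S^{\bbB}$ is metrizable with a clopen basis and therefore has covering dimension $0$. For (B), I would observe that each defining condition of $M(\Sigma)$ is closed in the pointwise topology: $\mu(\zero)=0_S$ and $\mu(a\vee b)=\mu(a)+\mu(b)$ (one clause per disjoint pair) each depend on finitely many coordinates valued in discrete $S$, and compatibility for a measure on all of $\bbB$ reduces to requiring $(\mu(a),\mu(b),\mu(c))\in\Sigma$ for every partition $\{a,b,c\}$, again a clopen condition on three coordinates; the intersection of these countably many closed sets is closed. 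Separability comes from \LEM{PN-aux0}: every $\mu\in M(\Sigma)$ has $\mu(\unit)=e(\Sigma)$ and $(\mu(a),\mu(\setminus a))\in\Sigma'$, so all values lie in the countable set $\{0_S,e(\Sigma)\}\cup\{f\dd (f,g)\in\Sigma'\}$, placing $M(\Sigma)$ inside a countable product of countable discrete spaces. A separable, completely metrizable space is Polish. Part (C) is immediate, since a Boolean automorphism carries partitions to partitions, whence $\mu\circ\Phi$ is again a measure with $\Sigma(\mu\circ\Phi)=\Sigma(\mu)$, so $\mu\circ\Phi\in M(\Sigma)$.

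For (D) I would first identify $H$ with the set of homogeneous measures of spectrum $\Sigma$. Each $\lambda\circ\Phi$ has $\Sigma(\lambda\circ\Phi)=\Sigma(\lambda)=\Sigma$ and satisfies (4EP) (transporting a (4EP)-witness for $\lambda$ back through $\Phi^{-1}$), hence is homogeneous by \THM{PN-4EP}. Conversely, if $\mu\in M(\Sigma)$ is homogeneous with $\Sigma(\mu)=\Sigma$, then $\mu$ and $\lambda$ both have (4EP) and equal spectra, so the final clause of \THM{PN-ext-4EP} supplies a Boolean automorphism $\Phi$ with $\lambda=\mu\circ\Phi$, i.e.\ $\mu=\lambda\circ\Phi^{-1}\in H$. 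Using \THM{PN-4EP} to equate homogeneity with (4EP), this gives $H=\{\mu\in M(\Sigma)\dd \mu\text{ has }\textup{(4EP)}\text{ and }\Sigma(\mu)=\Sigma\}$. Density of $H$ is the clean payoff of \THM{PN-ext-spec}: a basic neighbourhood of $\mu\in M(\Sigma)$ fixes $\mu$ on a finite $F\subset\bbB$, and the restriction of $\mu$ to the finite subalgebra $\aaA$ generated by $F$ is compatible with $\Sigma$ (its partition-triples are partition-triples of $\bbB$, hence lie in $\Sigma(\mu)\subset\Sigma$, with \LEM{PN-aux0} covering the two- and four-element clauses), so it extends to a homogeneous measure of spectrum $\Sigma$ agreeing with $\mu$ on $\aaA\supseteq F$; this extension lies in $H$. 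To see $H$ is $\ggG_{\delta}$ I would write it as $\bigcap_{(p,q,r)\in\Sigma}W_{(p,q,r)}\cap\bigcap_{d,s,t}V_{d,s,t}$, where $W_{(p,q,r)}$ is the open set of $\mu$ for which some partition $\{a,b,c\}$ realizes $(\mu(a),\mu(b),\mu(c))=(p,q,r)$ (so $\bigcap W$ forces the reverse inclusion $\Sigma\subseteq\Sigma(\mu)$), and, for each nonzero $d\in\bbB$ and each pair $(s,t)$ occurring among the first two coordinates of $\Sigma$, $V_{d,s,t}$ consists of those $\mu$ for which \emph{either} the clopen hypothesis ``$s+t=\mu(d)$ and $(s,t,\mu(\setminus d))\in\Sigma$'' fails \emph{or} some disjoint proper $p,q$ with $p\vee q=d$ satisfy $\mu(p)=s$, $\mu(q)=t$; each $V_{d,s,t}$ is then a clopen set united with an open set, hence open. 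The main obstacle is precisely this encoding: one must check that $\bigcap W\cap\bigcap V$ equals $H$ rather than something smaller. The inclusion $\supseteq$ is the verification that $\bigcap V$ captures (4EP) (given a genuine (4EP)-input $\{a,b,c\},d$, the pair $(s,t)=(\mu(a),\mu(b))$ triggers the hypothesis of $V_{d,s,t}$); the inclusion $\subseteq$ uses that on $\bigcap W$ one already has $\Sigma(\mu)=\Sigma$, so every triple appearing in a hypothesis of $V_{d,s,t}$ is realized by an actual partition and thus feeds genuinely into (4EP). This bookkeeping, keeping the fixed set $\Sigma$ in the hypotheses (to retain openness) while recovering realizing partitions from the $W$-conditions, is the delicate part.

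Finally, the ``in particular'' clause follows by Baire category. If $\mu$ is homogeneous with $\Sigma(\mu)=\Sigma$, then $\mu\in H$; writing $\mu=\lambda\circ\Psi$ shows $\{\mu\circ\Phi\dd\Phi\in\OPN{Aut}(\bbB)\}=\{\lambda\circ(\Psi\circ\Phi)\}=H$, a dense $\ggG_{\delta}$. Conversely, if the conjugacy class $C(\mu)=\{\mu\circ\Phi\dd\Phi\in\OPN{Aut}(\bbB)\}$ is a dense $\ggG_{\delta}$, then since $M(\Sigma)$ is Polish, hence Baire, the two comeager sets $C(\mu)$ and $H$ must meet; any $\nu\in C(\mu)\cap H$ is homogeneous with $\Sigma(\nu)=\Sigma$, and writing $\nu=\mu\circ\Phi$ transports homogeneity and the spectrum back to $\mu$, completing the proof.
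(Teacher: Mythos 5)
Your proposal is correct and follows essentially the same route as the paper: parts (A)--(C) by soft topological observations, the identification of \(H\) with the homogeneous measures of spectrum \(\Sigma\) via Theorems~\ref{thm:PN-4EP} and \ref{thm:PN-ext-4EP}, density via \THM{PN-ext-spec}, and the \(\ggG_{\delta}\) claim via the decomposition of \(H\) into the (4EP)-measures intersected with the measures satisfying \(\Sigma \subset \Sigma(\mu)\) (your \(\bigcap V\) and \(\bigcap W\) are exactly the paper's sets \(Y\) and \(Z\)). The only difference is one of detail: the paper merely asserts that these two sets are \(\ggG_{\delta}\) and that \(H = Y \cap Z\), whereas you spell out the open encoding and the two inclusions, which is the part genuinely requiring care and which you handle correctly.
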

\begin{proof}
Parts (A)--(C) are obvious (recall that both \(\bbB\) and \(\Sigma\) are
countable, by (Sp0)). To prove (D), first use Theorems~\ref{thm:PN-4EP} and
\ref{thm:PN-ext-4EP} to conclude that \(H\) consists precisely of all
homogeneous measures whose trinary spectrum coincides with \(\Sigma\). Then
it is a direct consequence of \THM{PN-ext-spec} that \(H\) is dense in
\(X \df M(\Sigma)\). Finally, to convince oneself that \(H\) is
\(\ggG_{\delta}\), observe that
\begin{itemize}
\item all measures from \(X\) that have (4EP) form a \(\ggG_{\delta}\) set \(Y\)
 in \(X\); and
\item similarly, all measures \(\mu \in X\) with \(\Sigma \subset \Sigma(\mu)\)
 form a \(\ggG_{\delta}\) set \(Z\) in \(X\); and
\item \(H = Y \cap Z\).
\end{itemize}
The additional claim of the theorem easily follows from (D) (as there can be
only one conjugacy class that is both dense and \(\ggG_{\delta}\)).
\end{proof}

\begin{exm}{PN-delta-k}
Let \(\mu\) be a homogeneous \(S\)-valued measure and \(\Sigma \df
\Sigma(\mu)\). For each \(k > 0\) let \(\Delta_k(\mu) = \Delta_k(\Sigma)\)
consist of all \(k\)-tuples of the form
\[(\mu(a_1),\ldots,\mu(a_k))\]
where \(\{a_1,\ldots,a_k\}\) is a partition of \(\bbB\). It is clear that:
\begin{itemize}
\item \(\Delta_k(\mu)\) is a symmetric subset of \(S^k\) (that is, membership
 of a \(k\)-tuple to \(\Delta_k(\mu)\) is independent of the order of
 the entries of this \(k\)-tuple);
\item \(\Delta_1(\mu) = \{e(\Sigma)\}\), \(\Delta_2(\mu) = \Sigma'\) and
 \(\Delta_3(\mu) = \Sigma\).
\end{itemize}
We claim that for \(N > 3\) an \(N\)-tuple \((b_1,\ldots,b_N) \in S^N\) belongs
to \(\Delta_N(\mu)\) iff
\[\Bigl(\sum_{j=1}^{k-1} b_j,b_k,\sum_{j=k+1}^N b_j\Bigr) \in \Sigma\]
for \(k=2,\ldots,N-1\). Indeed, if \(\{a_1,\ldots,a_N\}\) is a partition of
\(\bbB\), then so are
\[\Bigl\{\bigvee_{j=1}^{k-1} a_j,a_k,\bigvee_{j=k+1}^N a_j\Bigr\} \qquad
(k=2,\ldots,N-1),\]
which shows the `only if' part. To see the `if' one, we use (4EP) and induction.
There exists a partition \(\{a_1,a_2,c_1\}\) of \(\bbB\) such that \((\mu(a_1),
\mu(a_2),\mu(c_1)) = (b_1,b_2,\sum_{j=3}^N b_j)\). Now assume we have already
defined a partition
\[\{a_1,a_2,\ldots,a_k,c_{k-1}\}\] such that \(\mu(a_j) =
b_j\) for \(j=1,\ldots,k\) and \(\mu(c_{k-1}) = \sum_{j=k+1}^N b_j\) (where \(2
\leq k < N-1\); if \(k = N-1\), it is sufficient to set \(a_N = c_{N-2}\) to
finish the proof). Since \((x,y,z) \df (\sum_{j=1}^k b_j,b_{k+1},\sum_{j=k+2}^N
b_j)\) belongs to \(\Sigma(\mu)\), we infer that there is a partition \(\{d_1,
d_2,d_3\}\) of \(\bbB\) such that \((x,y,z) = (\mu(d_1),\mu(d_2),\mu(d_3))\).
Observe that \(\mu(d_1) = \mu(\bigvee_{j \leq k} a_j) =
\mu(\BooD{\unit}{c_{k-1}})\) and \(\mu(c_{k-1}) = \mu(d_2)+\mu(d_3)\). So, (4EP)
implies that there are two disjoint proper elements \(a_{k+1}\) and \(c_k\) for
which \(c_{k-1} = \BooV{a_{k+1}}{c_k}\) and
\(\mu(a_{k+1}) = b_{k+1}\) and \(\mu(c_k) = \sum_{j=k+2}^N b_j\), and we may
continue this procedure to get a final partition \(\{a_1,\ldots,a_N\}\).\par
The above notation (that is, \(\Delta_k(\Sigma)\)) and formulas for
\(\Delta_k(\Sigma)\) will be used in further results of this paper.
\end{exm}

For any \(S\)-valued measure \(\mu\) on \(\bbB\) and a non-zero element \(j \in
\bbB\), we will use the following notation:
\begin{itemize}
\item \(\bbB\restriction{j} \df \{\BooW{x}{j}\dd\ x \in \bbB\}\);
 \(\bbB\restriction{j}\) is a Boolean algebra isomorphic to \(\bbB\) (with
 \(j\) playing role of \(\unit\));
\item \(\mu\restriction{j}\) is the restriction of \(\mu\) to
 \(\bbB\restriction{j}\); \(\mu\restriction{j}\) is an \(S\)-valued measure as
 well.
\end{itemize}

\begin{pro}{PN-homo-inherit}
Let \(\mu\) and \(j\) be a homogeneous \(S\)-valued measure on \(\bbB\) and,
respectively, a proper element of \(\bbB\), and let \(\Sigma \df \Sigma(\mu)\)
and \(d \df \mu(\BooD{\unit}{j})\). Then \(\mu\restriction{j}\) is homogeneous
as well and for \(\Sigma_j \df \Sigma(\mu\restriction{j})\) we have:
\[\begin{cases}
\Sigma_j = \{(a,b,c)\dd\ a+b+c = \mu(j),\ (a,b,c+d) \in \Sigma,\ (a+b,c,d) \in
\Sigma\}\\
(\Sigma_j)' = \{(a,b)\dd\ a+b = \mu(j),\ (a,b,d) \in \Sigma\}\\
e(\Sigma_j) = \mu(j)
\end{cases}.\]
\end{pro}
\begin{proof}
Homogeneity of \(\nu \df \mu\restriction{j}\) simply follows from
ultrahomogeneity of \(\mu\). Indeed, if \(\aaA_0\) is a finite Boolean
subalgebra of \(\bbB_0 \df \bbB\restriction{j}\) and \(\phi_0\dd \aaA_0 \to
\bbB_0\) is a partial \(\nu\)-isomorphism, then it extends to a partial
\(\mu\)-isomorphism \(\phi\dd \aaA \to \bbB\) defined on the Boolean subalgebra
\(\aaA\) of \(\bbB\) generated by \(\aaA_0\) by setting \(\phi(\BooD{\unit}{j})
= \BooD{\unit}{j}\). Since \(\aaA\) is finite, it follows from \THM{PN-4EP} that
\(\phi\) extends to \(\Phi \in \OPN{Aut}(\mu)\). Then \(\Phi\restriction{\bbB_0}
\in \OPN{Aut}(\nu)\) extends \(\phi_0\) and therefore \(\nu\) is
homogeneous.\par
Now if \(\{f,g,h\}\) is a partition of \(\bbB_0\), then \(\{f,g,h,
\BooD{\unit}{j}\}\) is a partition of \(\bbB\) and thus
\[(\mu(f),\mu(g),\mu(h),\mu(\BooD{\unit}{j})) \in \Delta_4(\Sigma)\]
(cf. \EXM{PN-delta-k}). Additionally, \(\mu(f)+\mu(g)+\mu(h) = \mu(j)\), which
shows that \(\Sigma_j\) is contained in the set appearing on the right-hand side
of the postulated formula for \(\Sigma_j\) (thanks to the characterisation given
in the example cited above). Conversely, if \((a,b,c)\) satisfies all conditions
specified in this formula, then \((a,b,c,d) \in \Delta_4(\mu)\) and it follows
from (4EP) that there exists a partition \(\{p,q,r,s\}\) of \(\bbB\) such that
\((\mu(p),\mu(q),\mu(r),\mu(s)) = (a,b,c,d)\) and \(s = \BooD{\unit}{j}\)
(because \(a+b+c = \mu(j)\)). Consequently, \(\{p,q,r\}\) is a partition of
\(\bbB_0\) and therefore \((a,b,c) \in \Sigma_j\). The remaining formulas easily
follow.
\end{proof}

The following example is taken from \cite{DKMN1}.

\begin{exm}{PN-homo-bad}
Although (ultra)homogeneity introduced in \DEF{PN-homo-meas} has a quite natural
definition which may lead to a supposition that the automorphism group of
a homogeneous measure is rich, actually this property does not even imply that
this group is nontrivial. To convince oneself of that, observe that there exists
a full non-atomic probability Borel measure \(\mu\) on the \Cantor\ such that on
\(\bbB\) it is both rational-valued and one-to-one. Then each partial
\(\mu\)-isomorphism coincides with the identity map and hence \(\mu\) is
homogeneous. However, \(\OPN{Aut}(\mu\restriction{j})\) is trivial for any
non-zero \(j \in \bbB\). Such a pathology motivates us to distinguish certain
subclass of homogeneous measures that we introduce in the next section. As we
will see, to get quite a rich automorphism group, it is enough to make a small
modification in axiom (Sp4b).
\end{exm}

\begin{exm}{PN-homo-bad2}
Denote by \(\mu\) the measure from the previous example. So, \(\mu\) is
a rational-valued measure on \(\bbB\) that is a one-to-one function, and
\(\mu(\unit) = 1\). It is not difficult to construct, in a similar manner,
a measure \(\nu\dd \bbB \to [0,1]\) such that \(\nu(\unit) = 1\) and \(\nu(j)-
\nu(k) \notin \QQQ\) for any two distinct non-zero elements \(j, k \in \bbB\).
Now take any partition \(\{a,b\}\) of \(\bbB\) and define \(\rho\dd \bbB \to
[0,1]\) as follows: \(\rho(x) = \frac12\mu(\BooW{x}{a})+ \frac12\nu(\BooW{x}
{b})\). It is not difficult to check that for two \textbf{different} elements
\(x\) and \(y\) of \(\bbB\) the following equivalence holds:
\begin{equation}\label{eqn:aux170}
\rho(x) = \rho(y) \iff \{x,y\} = \{a,b\}.
\end{equation}
Now let \(j\) be a non-zero element in \(\bbB\), \(\aaA\) be an arbitrary
Boolean subalgebra of \(\bbB_j \df \bbB\restriction{j}\) and \(\phi\dd \aaA \to
\bbB_j\) a partial \(\rho'\)-isomorphism where \(\rho' \df
\rho\restriction{j}\). Observe that:
\begin{itemize}
\item If \(j \neq \unit\), then \(\rho'\) is one-to-one and hence \(\phi(x) =
 x\) for each \(x \in \aaA\) (thanks to \eqref{eqn:aux170}). In particular,
 \(\rho'\) is homogeneous unless \(j \neq \unit\).
\item If \(j = \unit\), then---similarly, by \eqref{eqn:aux170}---\(\phi(x) =
 x\) for all \(x \in \aaA \setminus \{a,b\}\). It follows that if \(\aaA \neq
 \aaA_o \df \{\zero,\unit,a,b\}\), then \(\phi\) is the identity on \(\aaA\).
\item Finally, a function \(\psi\dd \aaA_o \to \aaA_o\) such that \(\psi(a) =
 b\), \(\psi(b) = a\) and \(\psi(x) = x\) for \(x \in \{\zero,\unit\}\) is
 a partial \(\rho\)-isomorphism.
\end{itemize}
The above remarks yield that \(\rho\restriction{j}\) is homogeneous for any
proper element \(j\) of \(\bbB\) and the above \(\psi\) is a unique partial
\(\rho\)-isomorphism that does not coincide with the identity map. In
particular, \(\OPN{Aut}(\rho\restriction{j})\) is the trivial group for any
non-zero element \(j\) of \(\bbB\), and \(\rho\) is not homogeneous.\par
The above example shows that the property that \(\mu\restriction{j}\) is
homogeneous for each proper \(j \in \bbB\) is, in general, insufficient for
a measure \(\mu\) to be homogeneous as well.
\end{exm}

\section{Solid measures}

Topological transitivity (sometimes called \emph{almost transitivity}) in
dynamical systems belongs to classical notions responsible for, roughly
speaking, ubiquity. A most common definition says that a group action (on
a topological space) is (topologically) transitive if the orbit of at least one
point is dense in the space. If the action is continuous and the underlying
topological space is Polish, this condition may equivalently be formulated in
terms of open sets, which for the \Cantor\ may easily be translated to
the context of its Boolean algebra as we do below.

\begin{dfn}{PN-transitive}
We say a subgroup \(G\) of the Boolean algebra automorphism group of \(\bbB\)
acts \emph{transitively} on \(\bbB\) (or that the action of \(G\) is
\emph{transitive} on \(\bbB\)) if for any two proper elements \(a\) and \(b\) of
\(\bbB\) there exists \(\phi \in G\) such that \(\BooW{\phi(a)}{b} \neq
\zero\).\par
An \(S\)-valued measure \(\mu\) on \(\bbB\) is said to be \emph{solid} if
\(\mu\) is homogeneous and the group \(\OPN{Aut}(\mu)\) acts transitively on
\(\bbB\).\par
A \homspec{3} \(\Sigma\) in \(S\) is said to be \emph{solid} if a homogenous
measure whose trinary spectrum coincides with \(\Sigma\) is solid (we have
already known that there exists such a measure and it is unique up to Boolean
algebra automorphism---so, the property of being a solid \homspec{3} is
independent of the choice of such a measure).
\end{dfn}

For further applications, we also distinguish a subclass of solid \homspec{3}s
that is related to minimal group actions. Recall that the (natural) action of
a homeomorphism group \(G\) of the \Cantor\ \(\Can\) is minimal if each orbit
is dense; that is, if for any \(a \in \Can\) the set \(\{g(a)\dd\ g \in G\}\) is
dense in \(\Can\). Also this property can be characterised in terms of clopen
sets and thus can be translated to the context of \(\bbB\), as we do below.

\begin{dfn}{PN-mini}
We say the (natural) action of a subgroup \(G\) of the Boolean algebra
automorphism group of \(\bbB\) is \emph{minimal} if for any proper element
\(a \in \bbB\) there are a finite number of automorphisms \(\phi_1,\ldots,
\phi_n \in G\) such that \(\bigvee_{k=1}^n \phi_k(a) = \unit\).\par
We call an \(S\)-valued measure \(\mu\) on \(\bbB\) \emph{supersolid} if \(\mu\)
is homogeneous and the action of \(\OPN{Aut}(\mu)\) is minimal (on
\(\bbB\)).\par
A \homspec{3} \(\Sigma\) in \(S\) is said to be \emph{supersolid} if
a homogenous measure whose trinary spectrum coincides with \(\Sigma\) is
supersolid.
\end{dfn}

As for homogeneity (for measures), the property of being solid or supersolid is
hereditary, as shown by

\begin{pro}{PN-solid-hered}
For a semigroup-valued measure \(\mu\dd \bbB \to S\) \tfcae
\begin{enumerate}[\upshape(i)]
\item the action of \(H \df \OPN{Aut}(\mu)\) is transitive \UP(resp.
 minimal\UP) on \(\bbB\);
\item for any proper \(j \in \bbB\), the action of \(H_j \df
 \OPN{Aut}(\mu\restriction{j})\) is transitive \UP(resp. minimal\UP) on \(\bbB_j
 \df \bbB\restriction{j}\).
\end{enumerate}
\end{pro}
\begin{proof}
We start from the following useful observation:
\begin{itemize}
\item[\((\star)\)] if \(f \in \bbB\) and \(\phi \in H\) are such that \(f\) and
 \(\phi(f)\) are disjoint, then there is \(\psi \in H\) satisfying \(\psi(f) =
 \phi(f)\) and \(\psi(x) = x\) for any \(x \in \bbB\) disjoint from both \(f\)
 and \(\phi(f)\).
\end{itemize}
(Indeed, in the above situation, define \(\psi\dd \bbB \to \bbB\) by:
\[\psi(x) = \BooV{\phi(\BooW{x}{f})}{\BooV{\phi^{-1}(\BooW{x}{\phi(f)})}{(
\BooD{x}{(\BooV{f}{\phi(f)})})}}.)\]
First assume the action of \(H\) is transitive, and fix proper \(j \in \bbB\)
as well as two non-zero elements \(a\) and \(b\) of \(\bbB_j\). We may and do
assume that \(a\) and \(b\) are disjoint. It follows from the assumption that
there is \(\phi \in H\) such that \(\BooW{\phi(a)}{b} \neq \zero\).
Equivalently, there is non-zero \(c\) such that \(\BooW{c}{a} = c\) and
\(\BooW{\phi(c)}{b} = \phi(c)\). In particular, \(c\) and \(\phi(c)\) are
disjoint. So, we infer from \((\star)\) that there exists \(\psi \in H\) for
which \(\psi(c) = \phi(c)\) and \(\psi(\BooD{\unit}{j}) = \BooD{\unit}{j}\).
Consequently, \(\psi' \df \psi\restriction{\bbB_j}\) belongs to \(H_j\) and
\(\BooW{\psi'(a)}{b} \neq \zero\).\par
Now assume the group \(H_j\) acts transitively on \(\bbB_j\) for any proper \(j
\in \bbB\), and fix arbitrary proper elements \(a\) and \(b\) of \(\bbB\). To
show that there exists \(\phi \in \OPN{Aut}(\mu)\) with \(\BooW{\phi(a)}{b} \neq
\zero\), we may reduce \(a\) and \(b\) and thus we may (and do) assume that \(c
\df \BooV{a}{b}\) differs from \(\unit\). Then there is non-zero \(d \in \bbB\)
that is disjoint from \(c\) and satisfies \(j \df \BooV{c} {d} \neq \unit\).
Note that \(j\) is proper in \(\bbB\) and both \(a\) and \(b\) are proper
elements of \(\bbB_j\). So, it follows from our assumption that there is \(\psi
\in H_j\) such that \(\BooW{\psi(a)}{b} \neq \zero\). A note that \(\psi\)
extends to \(\phi \in H\) (by declaring that \(\phi(x) = x\) for any \(x \in
\bbB\) that is disjoint from \(j\)) concludes the proof of that part.\par
Now assume the action of \(H\) is minimal and fix proper \(j \in \bbB\) as well
as non-zero \(a \in \bbB_j\). We may and do assume that \(a' \df \BooD{j}{a}\)
is non-zero as well. It follows from the assumption that there exists
a partition \(\{c_1,\ldots,c_N\}\) of \(\bbB_{a'}\), non-zero elements \(b_1,
\ldots,b_N \in \bbB_a\) and automorphisms \(\phi_1,\ldots,\phi_N \in H\) such
that \(\phi_k(b_k) = c_k\ (k=1,\ldots,N)\). Using \((\star)\) separately for
each \(k=1,\ldots,N\), we get automorphisms \(\psi_1,\ldots,\psi_N \in H\) such
that \(\psi_k(b_k) = c_k\) and \(\psi_k(\BooD{\unit}{j}) = \BooD{\unit}{j}\). In
particular, the restriction \(\psi_k'\) of \(\psi_k\) to \(\bbB_j\) belongs to
\(H_j\), and \(j = \bigvee_{k=0}^N \psi_k'(a)\) where \(\psi_0'\) is
the identity on \(\bbB_j\).\par
Finally, assume the action of \(H_j\) is minimal on \(\bbB_j\) for any proper
\(j \in \bbB\), and fix a proper element \(a\) of \(\bbB\). Take a partition
\(\{p,q,r,s\}\) of \(\bbB\) such that \(a = \BooV{p}{q}\) and set \(j \df
\BooV{p}{r}\) and \(j' \df \BooD{\unit}{j}\). Observe that both \(j\) and \(j'\)
are proper in \(\bbB\), \(p\) is proper in \(\bbB_j\) and \(q\) is proper in
\(\bbB_{j'}\). So, we conclude that there are \(\phi_1,\ldots,\phi_N \in H_j\)
and \(\psi_1,\ldots,\psi_M \in H_{j'}\) such that \(j = \bigvee_{k=1}^N
\phi_k(p)\) as well as \(j' = \bigvee_{\ell=1}^M \psi_{\ell}(q)\). Now for any
\(k = 1,\ldots,N\) and \(\ell = 1,\ldots,M\) let \(\xi_{k\ell} \in H\) denote
a unique automorphism that extends both \(\phi_k\) and \(\psi_{\ell}\). Then
\(\bigvee_{k,\ell} \xi_{k,\ell}(a) = \unit\), as \(a = \BooV{p}{q}\).
\end{proof}

The above result, combined with \PRO{PN-homo-inherit}, yields

\begin{cor}{PN-solid-hered}
If \(\mu\dd \bbB \to S\) is solid \UP(resp. supersolid\UP), then for any
non-zero \(j \in \bbB\) the measure \(\mu\restriction{j}\) is solid \UP(resp.
supersolid\UP) as well.
\end{cor}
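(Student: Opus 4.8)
The plan is to read off \COR{PN-solid-hered} as a direct combination of the two results that precede it, exploiting the fact that, by \DEF{PN-transitive} and \DEF{PN-mini}, a measure is solid (resp.\ supersolid) exactly when it is both homogeneous and has an automorphism group acting transitively (resp.\ minimally). So for a fixed non-zero \(j \in \bbB\) I would check these two ingredients of \(\mu\restriction{j}\) one at a time.

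First I would dispatch the case \(j = \unit\): then \(\bbB\restriction{j} = \bbB\) and \(\mu\restriction{j} = \mu\), so the conclusion holds trivially. This leaves the case of a \emph{proper} \(j\), which is precisely the situation to which both \PRO{PN-homo-inherit} and \PRO{PN-solid-hered} apply.

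For proper \(j\), I would first invoke \PRO{PN-homo-inherit} to see that \(\mu\restriction{j}\) is homogeneous. Then, since \(\mu\) is solid (resp.\ supersolid), the group \(H = \OPN{Aut}(\mu)\) acts transitively (resp.\ minimally) on \(\bbB\); applying the implication (i)\(\Rightarrow\)(ii) of \PRO{PN-solid-hered} yields that \(H_j = \OPN{Aut}(\mu\restriction{j})\) acts transitively (resp.\ minimally) on \(\bbB_j = \bbB\restriction{j}\). Having both the homogeneity of \(\mu\restriction{j}\) and the transitivity (resp.\ minimality) of its automorphism group, I would conclude from the definition that \(\mu\restriction{j}\) is solid (resp.\ supersolid).

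I do not expect any genuine obstacle, as all the substance has been absorbed into the two cited propositions: \PRO{PN-solid-hered} already performs the transfer between the action on \(\bbB\) and the actions on the relative algebras (through the conjugation gadget \((\star)\) built there), while \PRO{PN-homo-inherit} already shows that homogeneity descends to restrictions. The one point deserving a moment's attention is purely a matter of quantifiers: the two propositions speak of \emph{proper} \(j\), whereas the corollary claims the result for every \emph{non-zero} \(j\); this discrepancy is absorbed entirely by the trivial \(j = \unit\) case handled at the outset.
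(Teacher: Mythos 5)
Your proof is correct and matches the paper's own argument, which likewise obtains the corollary by combining \PRO{PN-homo-inherit} (homogeneity descends to restrictions) with the implication (i)\(\Rightarrow\)(ii) of \PRO{PN-solid-hered}. Your explicit handling of the trivial case \(j = \unit\) is a sensible tidying-up of the quantifier discrepancy that the paper leaves implicit.
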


Our nearest aim is to give characterisations of solid and supersolid
\homspec{3}s.

\begin{thm}{PN-solid}
A \homspec{3} \(\Sigma\) in \(S\) is solid iff condition \UP{(Sp4b)} can always
be fulfilled with some \((x,y,z)\) and \((u,v,w)\) from \(\Sigma\). In other
words, \(\Sigma\) is solid iff
\begin{enumerate}[\upshape(Sp1)]\addtocounter{enumi}{3}
\item for any \(a,b,p,q,r \in S\) such that \(a+b = p+q\), and both \((a,b,r)\)
 and \((p,q,r)\) belong to \(\Sigma\) or both belong to \(\Sigma' \times
 \{0_S\}\) there are \(x,y,u,v \in S\) for which
 \begin{equation}\label{eqn:PN-Sp4}
 \begin{cases}
 (x,y,q+r) \in \Sigma, & (u,v,p+r) \in \Sigma,\\
 p = x+y, & q = u+v,\\
 a = x+u, & b = y+v.
 \end{cases}
 \end{equation}
\end{enumerate}
\end{thm}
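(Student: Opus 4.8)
The plan is to read \UP{(Sp4)} as a statement about transverse partitions and then handle the two implications separately. The dictionary is this: realise \((a,b,r)\) by a partition \(\{k,\ell,m\}\) of \(\bbB\) and put \(w \df \BooV{k}{\ell} = \BooD{\unit}{m}\). For a fixed instance, the conclusion of \UP{(Sp4)} is equivalent to the existence of a partition \(\{f,g\}\) of \(w\) with \(\mu(f)=p\), \(\mu(g)=q\) that is \emph{transverse} to \(\{k,\ell\}\), i.e.\ all four meets \(\BooW{f}{k}\), \(\BooW{f}{\ell}\), \(\BooW{g}{k}\), \(\BooW{g}{\ell}\) are non-zero: letting \(x,y,u,v\) be the \(\mu\)-values of these four pieces gives \(p=x+y\), \(q=u+v\), \(a=x+u\), \(b=y+v\) at once, while \((x,y,q+r)\in\Sigma\) and \((u,v,p+r)\in\Sigma\) are witnessed by the partitions \(\{\BooW{f}{k},\BooW{f}{\ell},\BooV{g}{m}\}\) and \(\{\BooW{g}{k},\BooW{g}{\ell},\BooV{f}{m}\}\). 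The pair case (the second alternative in \UP{(Sp4b)}) is identical with \(w=\unit\) and \(m=\zero\). So everything reduces to producing, or obstructing, transverse partitions of prescribed value.

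For ``\UP{(Sp4)} \(\Rightarrow\) solid'' I would show \(\OPN{Aut}(\mu)\) acts transitively. Given proper \(a_0,b_0\) we may assume \(\BooW{a_0}{b_0}=\zero\), and by homogeneity it suffices to find \(c\) with \((\mu(c),\mu(\BooD{\unit}{c}))=(\mu(a_0),\mu(\BooD{\unit}{a_0}))\) and \(\BooW{c}{b_0}\neq\zero\). Feeding \((\mu(b_0),\mu(\BooD{\unit}{b_0}))\) as the pair \((a,b)\) and \((\mu(a_0),\mu(\BooD{\unit}{a_0}))\) as \((p,q)\) into \UP{(Sp4)} with \(r=0_S\) (both are in \(\Sigma'\)) yields \(x,y,u,v\) with \((x,y,\mu(\BooD{\unit}{a_0})),(u,v,\mu(a_0))\in\Sigma\) and \(\mu(b_0)=x+u\), \(\mu(\BooD{\unit}{b_0})=y+v\), \(\mu(a_0)=x+y\). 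The key point is that \((x,y,u,v)\in\Delta_4(\mu)\): using \UP{(Sp1)} and the characterisation in \EXM{PN-delta-k}, the two required triples are \((x,y,u+v)\) and \((x+y,u,v)\), which are exactly the triples just produced up to permutation. Reading the pieces \((x,u,y+v)\) and \((y,v,x+u)\) off such a four-part partition and invoking \PRO{PN-homo-inherit} shows that \(b_0\) splits into non-zero pieces of values \(x,u\) and \(\BooD{\unit}{b_0}\) into non-zero pieces of values \(y,v\); gluing the \(x\)-piece to the \(y\)-piece produces \(c\), which meets \(b_0\) in its \(x\)-piece.

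For ``solid \(\Rightarrow\) \UP{(Sp4)}'' I would fix an instance, realise \(\{k,\ell,m\}\) as above, use \UP{(4EP)} to get a partition \(\{f_0,g_0\}\) of \(w\) of values \(p,q\), and then upgrade it to a transverse one inside \(w\), where \(\mu\restriction{w}\) is solid by \COR{PN-solid-hered}. This transversality step is the main obstacle: transitivity only delivers a \emph{single} non-zero meet, whereas all four pieces must be non-zero at once. I would resolve it with the transposition trick \((\star)\) from the proof of \PRO{PN-solid-hered}. If \(\{f_0,g_0\}\) is not transverse then, after swapping the roles of \(f_0,g_0\) and of \(k,\ell\), we are in the aligned case \(\BooW{f_0}{\ell}=\zero\), so \(f_0\leq k\). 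Transitivity gives \(\chi\in\OPN{Aut}(\mu\restriction{w})\) with \(\BooW{\chi(f_0)}{\ell}\neq\zero\); choose a non-zero \(e\) with \(e<\ell\) and \(e\leq\BooW{\chi(f_0)}{\ell}\), and set \(s\df\chi^{-1}(e)\), so \(\zero\neq s<f_0\), \(\chi(s)=e\), and \(s,e\) are disjoint (one lies under \(k\), the other under \(\ell\)). Applying \((\star)\) to \(s\) and \(\chi\) gives \(\psi\in\OPN{Aut}(\mu\restriction{w})\) fixing everything disjoint from \(\BooV{s}{e}\) with \(\psi(s)=e\); then \(f\df\psi(f_0)=\BooV{e}{(\BooD{f_0}{s})}\) retains the foothold \(\BooD{f_0}{s}\) in \(k\) and acquires \(e\) in \(\ell\), while \(\mu(f)=\mu(f_0)=p\) since \(\psi\) is a \(\mu\)-automorphism. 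Thus \(f\) and its complement meet both \(k\) and \(\ell\), and the dictionary of the first paragraph returns the \(x,y,u,v\) demanded by \UP{(Sp4)}.

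The one point I would treat with care, rather than gloss over, is exactly this localisation: replacing \(f_0\) by the global image \(\chi(f_0)\) would in general destroy overlaps already secured, so it is essential that \((\star)\) relocates \emph{only} the sub-piece \(s\) and fixes its complement \(\BooD{f_0}{s}\) in \(k\); and choosing \(e\) to be a proper non-zero part of \(\ell\) (possible by atomlessness, both when \(\BooW{\chi(f_0)}{\ell}\) is proper in \(\ell\) and when it equals \(\ell\)) simultaneously forces \(\BooD{\ell}{e}\neq\zero\) and \(\BooD{f_0}{s}\neq\zero\), which are precisely the two remaining meets \(\BooW{g}{\ell}\neq\zero\) and \(\BooW{g}{k}\neq\zero\). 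Once transversality is in hand, the remaining verifications are the routine partition bookkeeping indicated in the first two paragraphs.
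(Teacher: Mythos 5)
Your proof is correct, and one half of it takes a genuinely different route from the paper's. The forward direction (\UP{(Sp4)} implies solidity) is essentially the paper's argument in mirror image: the paper uses \UP{(4EP)} to split \(s\) and \(\BooD{\unit}{s}\) into pieces of values \(x,y\) and \(u,v\) and recombines them into an element with the same measure data as \(t\); you split \(b_0\) and \(\BooD{\unit}{b_0}\) via a \(\Delta_4\)-partition and recombine --- the same computation (and the splitting step is really a direct application of \UP{(4EP)} rather than of \PRO{PN-homo-inherit}, a citation quibble only). The converse is where you diverge: the paper first establishes the standalone general-position \LEM{PN-aux10} --- for a solid measure any two proper elements can be moved by an automorphism so that all four Boolean combinations are proper --- via a lengthy case analysis, and then applies it to \(\phi(s)\) and \(f\). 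You avoid that lemma entirely by a local surgery: in the \(2\times 2\) table of meets of \(\{f_0,g_0\}\) against \(\{k,\ell\}\) at most a diagonal pair of entries can vanish, so a single transposition (via \((\star)\) from the proof of \PRO{PN-solid-hered}) of a small piece \(s\) below \(f_0\) with a small piece \(e\) below \(\ell\) repairs both missing overlaps at once while fixing \(\BooD{f_0}{s}\), hence preserving the overlaps already present and the value \(\mu(f)=p\). This is shorter than the paper's route and sound; the one point to tighten is the choice of \(e\): take \(e\) to be a non-zero element strictly below \(\BooW{\chi(f_0)}{\ell}\) (atomlessness), which forces both \(\BooD{\ell}{e}\neq\zero\) and \(s\neq f_0\) simultaneously, rather than merely \(e\) proper in \(\ell\). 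What the paper's approach buys in exchange is the reusable general-position statement itself; what yours buys is economy, since that lemma is not invoked elsewhere.
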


We draw the reader's attention that, in comparison to (Sp4b), in (Sp4) we have
changed the role of the letters \(x,y,u\) and \(v\) (and, in addition, \(z\) and
\(w\) are not present in (Sp4)).\par
We precede the proof of the above theorem by a key lemma.

\begin{lem}{PN-aux10}
If \(\mu\) is a solid \(S\)-valued measure on \(\bbB\), then for any two proper
elements \(a\) and \(b\) of \(\bbB\) there exists \(\phi \in \OPN{Aut}(\mu)\)
such that each of the elements \(\BooW{\phi(a)}{b}\), \(\BooD{\phi(a)}{b}\),
\(\BooD{b}{\phi(a)}\) and \(\BooV{\phi(a)}{b}\) is proper.
\end{lem}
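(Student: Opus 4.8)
The plan is to turn the four ``properness'' demands into four ``meeting'' demands and then to reach them from plain transitivity by a single measure-preserving surgery. Write $c \df \phi(a)$. Since $b$ and $c$ are proper, $\BooW{c}{b} \leq b \neq \unit$, $\BooD{b}{c} \leq b \neq \unit$, and $\BooD{c}{b} = \unit$ would force $c = \unit$; so the only non-automatic ``$\neq \unit$'' clause is that $\BooV{c}{b} \neq \unit$. Hence the statement is equivalent to asking that the four cells
\[
R_1 \df \BooW{c}{b}, \quad R_2 \df \BooW{c}{(\setminus b)}, \quad R_3 \df \BooW{(\setminus c)}{b}, \quad R_4 \df \BooW{(\setminus c)}{(\setminus b)}
\]
be all non-zero, i.e. that both $c$ and $\setminus c$ meet both $b$ and $\setminus b$. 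As producing such a $c$ in the $\OPN{Aut}(\mu)$-orbit of $a$ is the same as producing the desired $\phi$, I will look for this $c$, building it as $c = \psi(\phi_0(a))$ with $\phi_0,\psi \in \OPN{Aut}(\mu)$.

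First I would use transitivity on the proper pair $(a,b)$ to get $\phi_0$ with $R_1 \neq \zero$, and rename $c = \phi_0(a)$. The remaining job is to destroy any degeneracy among $R_2,R_3,R_4$. A one-line bookkeeping using properness of $b$ and $c$ (namely $R_2 \vee R_4 = \setminus b \neq \zero$ and $R_3 \vee R_4 = \setminus c \neq \zero$) shows that $\{R_2,R_4\}$, $\{R_3,R_4\}$ and $\{R_2,R_3,R_4\}$ cannot all vanish. The symmetries $a \mapsto \setminus a$ and $b \mapsto \setminus b$ (each an instance of ``general position'', and together permuting $R_1,\dots,R_4$ transitively) let me assume that the deficient cell is $R_2$, i.e. that $c \leq b$.

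Now comes the engine: a measure-preserving transposition of two equal-measure fragments. I apply transitivity to the disjoint proper pair $(c,\setminus b)$ to obtain $\eta \in \OPN{Aut}(\mu)$ with $\BooW{\eta(c)}{(\setminus b)} \neq \zero$, and set $v \df \BooW{\eta(c)}{(\setminus b)}$ and $u \df \eta^{-1}(v)$. Then $u \leq c$, $v \leq \setminus b$ are disjoint and, crucially, $\mu(u) = \mu(\eta(u)) = \mu(v)$ for free; by atomlessness I shrink so that $\zero \neq u \neq c$ and $\zero \neq v \neq \setminus b$. Feeding $\eta$ and $f = u$ into the surgery $(\star)$ from the proof of \PRO{PN-solid-hered} yields $\psi \in \OPN{Aut}(\mu)$ that transposes $u$ and $v$ and fixes every element disjoint from $\BooV{u}{v}$. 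Since $\BooD{c}{u}$ is disjoint from both $u$ and $v$, one computes $\psi(c) = \BooV{(\BooD{c}{u})}{v}$, whose four cells against $b$ are $\BooD{c}{u}$, $v$, $\BooV{(\BooD{b}{c})}{u}$ and $\BooD{(\setminus b)}{v}$ — all non-zero by the choices above (and the same computation also covers the special case $c = b$, in which $R_2$ and $R_3$ are created simultaneously). Thus $\psi \circ \phi_0$ is the required automorphism.

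The main obstacle is exactly the production of $u$ and $v$: in a general, possibly non-cancellative, semigroup $S$ one cannot prescribe masses such as $\mu(R_1)+\mu(R_2) = \mu(a)$ and expect to realise them by clopen subsets, so it is essential that $u$ and $v$ be delivered \emph{together} by one automorphism $\eta$ (whence automatically of equal measure) — and this is precisely what transitivity, rather than mere homogeneity, supplies. The secondary, purely combinatorial, point is the cell bookkeeping confirming that the surgery fills the deficient cell without emptying any other; here the explicit ``identity outside $\BooV{u}{v}$'' clause of $(\star)$ is what makes the computation of $\psi(c)$ exact.
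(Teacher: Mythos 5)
Your proof is correct, and it takes a noticeably different route from the paper's. The paper proceeds by a two-stage case analysis: it first secures \(\BooW{\xi(a)}{b}\neq\zero\neq\BooD{\xi(a)}{b}\) by splitting \(a=\BooV{p}{q}\) and recombining images of the two halves under two different automorphisms into a candidate \(a_o=\BooV{\phi(p)}{\psi(q)}\), verifying \(\mu(a_o)=\mu(a)\) and \(\mu(\BooD{\unit}{a_o})=\mu(\BooD{\unit}{a})\) by hand and invoking homogeneity to realise \(a_o\) as \(\xi(a)\); it then treats the remaining degeneracies (\(\BooD{b}{a}=\zero\) or \(\BooV{a}{b}=\unit\)) by two further ad hoc recombinations of the same kind, each with its own measure computation. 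You instead collapse the case analysis via the complement symmetries \(a\mapsto\BooD{\unit}{a}\), \(b\mapsto\BooD{\unit}{b}\) (which act as a Klein four-group on the cells \(R_1,\dots,R_4\), and under which the vanishing set is constrained to a single cell or the pair \(\{R_2,R_3\}\)), and then repair the one remaining configuration \(c\leq b\) with a single transposition of equal-measure fragments \(u\leq c\), \(v\leq\BooD{\unit}{b}\) built from \((\star)\) of \PRO{PN-solid-hered}. The two arguments share the essential point you flag explicitly — in a non-cancellative semigroup the equal-measure pieces must be delivered by an automorphism rather than prescribed arithmetically — but your version buys automatic measure preservation (the modification \(\psi\) is itself a \(\mu\)-automorphism, so no complement computation is needed) at the cost of the symmetry bookkeeping, while the paper's is longer but self-contained in each case. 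All the details I checked (the shrinking of \(v\) so that \(u\neq c\) and \(v\neq\BooD{\unit}{b}\), the computation of the four cells of \(\psi(c)\), and the fact that the third cell \(\BooV{(\BooD{b}{c})}{u}\) stays non-zero via \(u\neq\zero\) even when \(b=c\)) go through.
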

\begin{proof}
First we claim that there is \(\xi \in \OPN{Aut}(\mu)\) such that both
\(\BooW{\xi(a)}{b}\) and \(\BooD{\xi(a)}{b}\) are non-zero. To prove this
property, we take two automorphisms \(\phi, \psi \in \OPN{Aut}(\mu)\) such that
\(\BooW{\phi(a)}{b}\) and \(\BooD{\psi(a)}{b}\) are non-zero (to find \(\psi\),
substitute \(\BooD{\unit}{b}\) in place of \(b\)). If one of
\(\BooD{\phi(a)}{b}\) or \(\BooW{\psi(a)}{b}\) is non-zero, the claim is proved.
Thus, assume \(a' \df \phi(a)\) is disjoint from \(\BooD{\unit}{b}\) and \(a''
\df \psi(a)\) is disjoint from \(b\). Express \(a\) in the form \(a =
\BooV{p}{q}\) where \(p\) and \(q\) are two disjoint proper elements, and
additionally set \((p',q',p'',q'') \df (\phi(p),\phi(q),\psi(p),\psi(q))\),
\(z \df \BooD{\unit}{(\BooV{a'}{a''})}\) and \(a_o \df \BooV{p'}{q''}\). Note
that \(\BooW{a_o}{b} = p'\) and \(\BooD{a_o}{b} = q''\) are non-zero, and
\(\mu(a_o) = \mu(a)\) and
\begin{multline*}
\mu(\BooD{\unit}{a_o}) = \mu(q')+\mu(p'')+\mu(z) =
\mu(q)+\mu(p)+\mu(z) = \mu(a)+\mu(z) = \mu(a')+\mu(z) \\= \mu(\BooV{a'}{z}) =
\mu(\BooD{\unit}{a''}) = \mu(\psi(\BooD{\unit}{a})) = \mu(\BooD{\unit}{a}).
\end{multline*}
So, we infer from homogeneity of \(\mu\) that there is \(\xi \in
\OPN{Aut}(\mu)\) such that \(\xi(a) = a_o\), which shows the first claim of this
proof.\par
Going further, we may replace \(a\) by \(\xi(a)\) and therefore we do assume
that \(\xi(a) = a\). For simplicity, set \(b' \df \BooD{\unit}{b}\). We have
already known that \(\BooW{b}{a}\) and \(\BooW{b'}{a}\) are non-zero. If also
\(\BooD{b}{a}\) and \(\BooD{b'}{a}\) are non-zero, we have nothing to do. Thus,
we assume one of these elements coincides with \(\zero\). By symmetry, we may
and do assume that \(\BooD{b}{a} = \zero\). Set \(c \df \BooD{a}{b}\) (then
\(c\) is non-zero and \(a = \BooV{b}{c}\)). Now applying the first claim of
the proof with switched \(a\) and \(b\), we get an automorphism \(\eta \in
\OPN{Aut}(\mu)\) such that \(\BooW{\eta^{-1}(b)}{a} \neq \zero \neq
\BooD{\eta^{-1}(b)}{a}\). Equivalently, \(\BooW{b}{\eta(a)} \neq \zero \neq
\BooD{b}{\eta(a)}\). If also both \(\BooW{b'}{\eta(a)}\) and
\(\BooD{b'}{\eta(a)}\) are non-zero, then the proof is finished. So, again, we
assume one of these elements coincides with \(\zero\). We distinguish two
separate cases.\par
First assume \(\BooW{b'}{\eta(a)} = \zero\). Then we set \(d \df
\BooD{b}{\eta(a)}\) (so, \(d\) is non-zero and \(b = \BooV{\eta(a)}{d}\)).
Observe that \(a = \BooV{\eta(a)}{\BooV{c}{d}} =
\BooV{\eta(b)}{\BooV{c}{\BooV{d}{\eta(c)}}}\) and \(\{\eta(b),c,d,\eta(c),
\BooD{\unit}{a}\}\) is a partition of \(\bbB\). So, for \(a' \df
\BooV{\eta(b)}{c}\) we have \(\mu(a') = \mu(\eta(b))+\mu(c) = \mu(b)+\mu(c) =
\mu(a)\) and
\begin{multline*}
\mu(\BooD{\unit}{a'}) = \mu(\BooV{d}{\BooV{\eta(c)}{(\BooD{\unit}{a})}}) =
\mu(d)+\mu(c)+\mu(\BooD{\unit}{a}) = \mu(\BooV{d}{\BooV{c}{(\BooD{\unit}{a})}})
\\= \mu(\BooD{\unit}{(\BooV{\eta(b)}{\eta(c)})}) = \mu(\eta(\BooD{\unit}{a})) =
\mu(\BooD{\unit}{a}).
\end{multline*}
We infer that there exists \(\tau \in \OPN{Aut}(\mu)\) such that \(\tau(a) =
a'\). But \(\BooW{b}{a'} = \eta(b)\), \(\BooW{b'}{a'} = c\), \(\BooD{b}{a'} =
\BooV{d}{\eta(c)}\) and \(\BooD{b'}{a'} = \BooD{\unit}{a}\), which finishes
the proof in that case.\par
Finally, assume \(\BooD{b'}{\eta(a)} = \zero\) and set \(d \df
\BooW{\eta(a)}{b}\) (then \(d\) is non-zero and \(\eta(a) = \BooV{b'}{d}\)). We
claim that \(\{c,d,\BooD{\unit}{a},\eta(\BooD{\unit}{a})\}\) is a partition of
\(\bbB\) such that
\begin{equation}\label{eqn:PN-aux40}
b = \BooV{\eta(\BooD{\unit}{a})}{d} \qquad \UP{and} \qquad
b' = \BooV{(\BooD{\unit}{a})}{c}.
\end{equation}
Indeed:
\begin{itemize}
\item \(\eta(\BooD{\unit}{a}) = \BooW{b}{(\BooD{\unit}{d})}\) and hence
 \(\eta(\BooD{\unit}{a})\) is disjoint from \(c\), \(d\) and \(\BooD{\unit}{a}\)
 (as \(a = \BooV{b}{c}\) and \(\BooW{b}{c} = \zero\));
\item \(\BooD{\unit}{a}\) is disjoint from both \(c\) and \(d\) (as
 \(\BooW{c}{a} = c\) and \(\BooW{d}{a} = d\)) and \(c\) and \(d\) are disjoint,
 since \(\BooW{b}{c} = \zero\);
\item \(\BooV{c}{\BooV{d}{\BooV{(\BooD{\unit}{a})}{\eta(\BooD{\unit}{a})}}} =
 \BooW{(\BooV{c}{\BooV{d}{\BooV{(\BooD{\unit}{a})}{b}}})}{(\BooV{c}{\BooV{d}%
 {\BooV{(\BooD{\unit}{a})}{(\BooD{\unit}{d})}}})} = \unit\) (as \(a =
 \BooV{b}{c}\));
\item the latter formula in \eqref{eqn:PN-aux40} is a direct consequence of
 the following two relations: \(a = \BooV{b}{c}\) and \(\BooD{b}{c} = \zero\);
 the former of them now follows, since \(\{c,d,\BooD{\unit}{a},
 \eta(\BooD{\unit}{a})\}\) is a partition.
\end{itemize}
Now we express \(\BooD{\unit}{a}\) as \(\BooV{p}{q}\) where \(p\) and \(q\) are
disjoint and proper. Set \((p',q') \df (\eta(p),\eta(q))\) and \(a' \df
\BooV{c}{\BooV{p}{\BooV{q'}{d}}}\). Since \(\{c,d,p,q,p',q'\}\) is a partition,
we conclude that \(\mu(a') = \mu(a)\) (as \(\mu(p) = \mu(p')\) and \(a =
\BooV{p'}{\BooV{q'}{\BooV{c}{d}}}\), by \eqref{eqn:PN-aux40}). and, similarly,
\(\mu(\BooD{\unit}{a'}) = \mu(\BooD{\unit}{a})\). So, there is \(\tau \in
\OPN{Aut}(\mu)\) such that \(\tau(a) = a'\). But \eqref{eqn:PN-aux40} implies
that \(\BooW{b}{a'} = \BooV{q'}{d}\), \(\BooW{b'}{a'} = \BooV{p}{c}\),
\(\BooD{b}{a'} = p'\) and \(\BooD{b'}{a'} = q\) are all non-zero, and we are
done.
\end{proof}

\begin{proof}[Proof of \THM{PN-solid}]
First assume (Sp4) holds. Fix two proper elements \(s\) and \(t\) of \(\bbB\).
This means that both \(s' \df \BooD{\unit}{s}\) and \(t' \df \BooD{\unit}{t}\)
are non-zero as well. Note that both \((p,q) \df (\mu(s),\mu(s'))\) and \((a,b)
\df (\mu(t),\mu(t'))\) belong to \(\Sigma'\). So, we infer from (Sp4) that there
are \(x,y,u,v \in S\) such that \eqref{eqn:PN-Sp4} holds with \(r = 0_S\). Note
that then \((x,y,q),(u,v,p) \in \Sigma\). Now since \(\mu\) is homogeneous, it
follows from (4EP) that there are two pairs \((d,f),(g,h)\) of disjoint proper
elements such that \(s = \BooV{d}{f}\), \(s' = \BooV{g}{h}\) and
\((\mu(d),\mu(f)) = (x,y)\) and \((\mu(g),\mu(h)) = (u,v)\). Then \(\mu(t) =
a = x+u = \mu(d)+\mu(g) = \mu(\BooV{d}{g})\) and similarly \(\mu(t') = b = y+v =
\mu(\BooV{f}{h}) = \mu(\BooD{j}{(\BooV{d}{g})})\). Since \(\mu\) is homogeneous,
there exists \(\phi \in \OPN{Aut}(\mu)\) for which \(\phi(t) = \BooV{d}{g}\).
But then \(\BooW{\phi(t)}{s} \neq \zero\) and hence \(\OPN{Aut}(\mu)\) acts
transitively on \(\bbB\).\par
Now assume \(\mu\) is solid and take arbitrary \(p,q,r,a,b \in S\) that satisfy
all the conditions specified in (Sp4). For a while we distinguish two separate
cases:
\begin{itemize}
\item If both \((p,q,r)\) and \((a,b,r)\) belong to \(\Sigma\), we take
 a partition \(\{s,t,w\}\) of \(\bbB\) such that \((\mu(s),\mu(t),\mu(w)) = (p,
 q,r)\). Then we use (4EP) (for the element \(\BooV{s}{t}\) and a partition of
 \(\bbB\) corresponding to \((a,b,r)\)) to find two disjoint proper elements
 \(f\) and \(g\) such that \(\BooV{f}{g} = \BooV{s}{t}\) and \((\mu(f),\mu(g)) =
 (a,b)\).
\item When \((p,q,r),(a,b,r) \in \Sigma' \times \{0_S\}\), there is a partition
 \(\{s,t\}\) of \(\bbB\) for which \((\mu(s),\mu(t)) = (p,q)\). We additionally
 set \(w \df \zero\) (note that \(\mu(w) = r\) and \(\BooV{\BooV{p}{q}}{w} =
 \unit\)). Since \((a,b) \in \Sigma'\), there is a partition \(\{f,g\}\) of
 \(\bbB\) such that \((\mu(f),\mu(g)) = (a,b)\). Note that \(\BooV{f}{g} =
 \BooV{s}{t}\).
\end{itemize}
Now we set \(j \df \BooV{s}{t}\) and use \LEM{PN-aux10} for the measure \(\nu
\df \mu\restriction{j}\) (\(\nu\) is solid by \COR{PN-solid-hered}). We conclude
that there is \(\phi \in \OPN{Aut}(\nu)\) such that \(\alpha \df
\BooW{\phi(s)}{f}\), \(\beta \df \BooW{\phi(s)}{g}\),\(\gamma \df
\BooW{\phi(t)}{f}\) and \(\delta \df \BooW{\phi(t)}{g}\) are all proper. Now we
set \((x,y,u,v) \df (\mu(\alpha),\mu(\beta),\mu(\gamma),\mu(\delta))\). We only
need to check that \((x,y,q+r)\) and \((u,v,p+r)\) belong to \(\Sigma\) (all
the remaining formulas from \eqref{eqn:PN-Sp4} are immediate consequences of
the invariance of \(\mu\) under \(\phi\)), which is immediate as these two
triples correspond to the partitions \(\{\alpha,\beta,\BooV{\phi(t)}{w}\}\) and,
respectively, \(\{\gamma,\delta,\BooV{\phi(s)}{w}\}\).
\end{proof}

\begin{rem}{PN-Sp4}
Observe that (Sp4) covers both (Sp4a) and (Sp4b). Indeed, it is readily seen
that (Sp4) is stronger than (Sp4b). And to conclude (Sp4a) from (Sp4), take
arbitrary \((a,b,c) \in \Sigma\), set \(p = a, q = b\) and \(r = c\) and apply
(Sp4) to get \(x, y \in S\) such that \(a = p = x+y\) and \((x,y,b+c) \in
\Sigma\). So, a set \(\Sigma \subset S^3\) is a solid \homspec{3} iff it
satisfies (Sp0)--(Sp3) and (Sp4).
\end{rem}

\begin{exm}{PN-good-super}
Good measures on the \Cantor\ are supersolid. More specifically, if \(\mu\) is
such a measure (represented here as a function on \(\bbB\)) and \(I = \mu(\bbB
\setminus \{\zero,\unit\})\), then \(\Sigma(\mu) = \{(a,b,c) \in I^3\dd\ a+b+c =
1\}\). It is not difficult to check that the set appearing on the right-hand
side of this equation is a supersolid \homspec{3} (cf. \THM{PN-supersolid}
below). In \THM{PN-super-no-good} (see also \REM{PN-uncount}) we will give
an example of a probability Borel measure on the \Cantor\ that is supersolid but
not good. (As shown in \PRO{PN-super-full-nonatom} below, all such measures are
automatically full and non-atomic).
\end{exm}

\begin{pro}{PN-super-full-nonatom}
Let \(\mu\) be a positive Borel measure on the \Cantor. If the action of
\(\OPN{Aut}(\mu)\) is minimal, then \(\mu\) is both full and non-atomic.\par
In particular, supersolid positive Borel measures are full and non-atomic.
\end{pro}
\begin{proof}
Let \(Q\) stand for the (closed) support of \(\mu\). It is well-known that \(Q\)
is fully invariant for any homeomorphism \(h\) preserving \(\mu\) (that is,
\(h(Q) = Q\)). In particular, the set \(\{g(a)\dd\ g \in \OPN{Aut}(\mu)\}\) is
contained in \(Q\) for any \(a \in Q\). So, we conclude from the assumption of
the proposition that \(Q = \Can\). Similarly, if \(a \in \Can\) was an atom for
\(\mu\) (that is, if \(\mu(\{a\}) > 0\)), then the set \(F\) of all \(x \in
\Can\) satisfying \(\mu(\{x\}) = \mu(\{a\})\) would be finite and fully
invariant for any \(h \in \OPN{Aut}(\mu)\), which would contradict minimality
of the action.
\end{proof}

\begin{rem}{PN-solid-without-full-nonatom}
The previous proof works perfectly also for supersolid signed Borel measures on
the \Cantor; that is, if \(\mu\) is such a measure, then the variation \(|\mu|\)
of \(\mu\) is both full and non-atomic, as atoms of \(|\mu|\) coincide with
atoms of \(\mu\) and the (closed) support of \(|\mu|\) is the least closed set
\(F\) such that \(\mu\) vanishes on each clopen set that is disjoint from
\(F\).\par
As an arbitrary Dirac's measure (that is, a probability measure that is
supported on a single point) shows, analogous properties does not hold, in
general, for solid probability Borel measures. However, it may easily be shown
that the (closed) support \(K\) of each such a (last mentioned) measure \(\mu\)
is either nowhere dense or equal to \(\Can\). (Indeed, a point with dense orbit
belongs either to \(\Can \setminus K\) or to \(K\) and both these sets are
invariant for \(\OPN{Aut}(\mu)\).) It is also possible to
construct such a measure that is full and has atoms. We leave the details to
interested readers.
\end{rem}

Since minimal group actions are transitive, it follows that supersolid measures
(as well as supersolid \homspec{3}s) are solid. Our nearest aim is to give
a characterisation of supersolid \homspec{3}s. We precede it by

\begin{lem}{PN-mini}
Let \(\mu\) be a homogeneous \(S\)-valued measure and \(\Sigma = \Sigma(\mu)\).
For two proper elements \(a\) and \(b\) of \(\bbB\) \tfcae
\begin{enumerate}[\upshape(i)]
\item there exists \(\phi \in \OPN{Aut}(\mu)\) such that \(\BooW{\phi(a)}{b} =
 b \neq \phi(a)\);
\item there exists \(\psi \in \OPN{Aut}(\mu)\) for which \(\BooV{\psi(b)}{a} =
 a \neq \psi(b)\);
\item there exists \(x \in S\) such that \((\mu(b),x,\mu(\BooD{\unit}{a})) \in
 \Sigma\) and \(x+\mu(b) = \mu(a)\) and \(x+\mu(\BooD{\unit}{a}) =
 \mu(\BooD{\unit}{b})\).
\end{enumerate}
\end{lem}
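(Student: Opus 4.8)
The plan is to run the equivalences as (i)$\Leftrightarrow$(ii), proved for free by passing to inverse automorphisms, together with (i)$\Leftrightarrow$(iii), which carries all the content. The key observation unlocking (i)$\Leftrightarrow$(ii) is that the condition $\BooW{\phi(a)}{b} = b \neq \phi(a)$ says nothing more than $b \subsetneq \phi(a)$, while $\BooV{\psi(b)}{a} = a \neq \psi(b)$ says $\psi(b) \subsetneq a$. Since $\OPN{Aut}(\mu)$ is a group, applying $\phi^{-1}$ to the inclusion $b \subsetneq \phi(a)$ gives $\phi^{-1}(b) \subsetneq a$, so $\psi \df \phi^{-1}$ witnesses (ii); the argument reverses verbatim with $\psi^{-1}$. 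So I would dispose of (i)$\Leftrightarrow$(ii) in one line and spend the real effort on the dictionary with the trinary spectrum.

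For (i)$\Rightarrow$(iii), given $\phi$ as in (i) I would set $c \df \BooD{\phi(a)}{b}$ (non-zero, since $b \neq \phi(a)$) and $x \df \mu(c)$, so that $\phi(a) = \BooV{b}{c}$. Because $a$ is proper, so is $\phi(a)$, and hence $\{b, c, \BooD{\unit}{\phi(a)}\}$ is a genuine partition of $\bbB$ into non-zero elements. As $\phi$ preserves $\mu$ one has $\mu(\BooD{\unit}{\phi(a)}) = \mu(\BooD{\unit}{a})$, so this partition witnesses $(\mu(b), x, \mu(\BooD{\unit}{a})) \in \Sigma$. Additivity on $\phi(a)$ yields $x + \mu(b) = \mu(a)$, and the Boolean identity $\BooD{\unit}{b} = \BooV{c}{(\BooD{\unit}{\phi(a)})}$ (a disjoint union, using $b \subseteq \phi(a)$) together with $\mu$-invariance of $\phi$ gives $x + \mu(\BooD{\unit}{a}) = \mu(\BooD{\unit}{b})$. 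That is exactly (iii).

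The substantive direction is (iii)$\Rightarrow$(i). Here I would first use (Sp1) to read the triple as $(x, \mu(\BooD{\unit}{a}), \mu(b)) \in \Sigma$, and then invoke (4EP) — available since $\mu$ is homogeneous, by \THM{PN-4EP} — to split $\BooD{\unit}{b}$. Taking $d' \df \BooD{\unit}{b}$, the identity $x + \mu(\BooD{\unit}{a}) = \mu(\BooD{\unit}{b})$ together with that membership verifies the hypotheses of (4EP), producing disjoint proper $q, r$ with $\mu(q) = x$, $\mu(r) = \mu(\BooD{\unit}{a})$ and $\BooV{q}{r} = \BooD{\unit}{b}$. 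Then $\{b, q, r\}$ is a partition, and $d \df \BooV{b}{q}$ is proper and satisfies $\mu(d) = \mu(b) + x = \mu(a)$ and $\mu(\BooD{\unit}{d}) = \mu(r) = \mu(\BooD{\unit}{a})$. The two-element form of homogeneity then furnishes $\phi \in \OPN{Aut}(\mu)$ with $\phi(a) = d = \BooV{b}{q} \supsetneq b$, which is precisely (i).

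I do not expect a genuine obstacle: the lemma is essentially a translation between the existence of a $\mu$-automorphism realising one of the two strict inclusions and the combinatorial datum $x$ recording the measure of the intermediate piece. The only points needing care are checking that the relevant elements are proper or non-zero so that (4EP) and homogeneity genuinely apply, and keeping the Boolean bookkeeping (such as $\BooD{\unit}{b} = \BooV{c}{(\BooD{\unit}{\phi(a)})}$) correct.
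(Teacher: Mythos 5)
Your proposal is correct and follows essentially the same route as the paper: the trivial (i)$\Leftrightarrow$(ii) via inverses, the partition $\{b,\BooD{\phi(a)}{b},\BooD{\unit}{\phi(a)}\}$ (the paper's $\{\psi(b),\BooD{a}{\psi(b)},\BooD{\unit}{a}\}$) for the easy direction, and (4EP) plus the two-element form of homogeneity for the converse. The only cosmetic difference is that you apply (4EP) to split $\BooD{\unit}{b}$ and build an enlarged copy of $a$ around $b$, whereas the paper splits $a$ to find a copy of $b$ inside it — mirror images of the same step, both valid.
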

\begin{proof}
Conditions (i) and (ii) are simply equivalent (just set \(\psi = \phi^{-1}\) and
conversely: \(\phi = \psi^{-1}\)). Further, if \(\psi\) is as specified in (ii),
set \(c \df \BooD{a}{\psi(b)}\) and note that \(c\) is proper and \(\{\psi(b),c,
\BooD{\unit}{a}\}\) is a partition of \(\bbB\) such that \(a =
\BooV{\psi(b)}{c}\). So, it suffices to set \(x \df \mu(c)\) to get (iii).\par
Conversely, if \(x\) satisfies (iii), then there is a partition \(\{u,v,w\}\)
such that
\[((\mu(u),\mu(v),\mu(w)) = (\mu(b),x,\mu(\BooD{\unit}{a})).\]
Then it follows from (4EP) that there exist two disjoint proper elements \(c\)
and \(d\) such that \(a = \BooV{c}{d}\) and \((\mu(c),\mu(d)) = (\mu(u),\mu(v))\
(=(\mu(b),x))\). Finally, since \(\mu(c) = \mu(b)\) and \(\mu(\BooD{\unit}{b}) =
x+\mu(\BooD{\unit}{a}) = \mu(\BooD{\unit}{c})\), we conclude that there exists
\(\psi \in \OPN{Aut}(\mu)\) with \(\psi(b) = c\), and we are done.
\end{proof}

\begin{thm}{PN-supersolid}
A \homspec{3} \(\Sigma\) in \(S\) is supersolid iff the following condition is
fulfilled:
\begin{enumerate}[\upshape(Sp1)]\addtocounter{enumi}{4}
\item For any \((x,y) \in \Sigma'\) there exist \(b_1,\ldots,b_N \in S\) \UP(for
 some \(N \geq 3\) or, equivalently, for all sufficiently large \(N\)\UP) and
 \(w_1,\ldots,w_N \in S\) such that all the following conditions are fulfilled:
 \begin{enumerate}[\upshape(Sp5a)]
 \item \((\sum_{s=1}^{k-1} b_s,b_k,\sum_{s=k+1}^N b_s) \in \Sigma\) whenever
  \(1 < k < N\);
 \item \(b_k+w_k = x\), \(w_k+y = \sum_{s \neq k} b_s\) and \((b_k,w_k,y) \in
  \Sigma\) for all \(k=1,\ldots,N\).
 \end{enumerate}
\end{enumerate}
\end{thm}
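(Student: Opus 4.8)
The plan is to reduce minimality of the action of \(\OPN{Aut}(\mu)\) to a spectral condition via a single \emph{enveloping translate} lemma and then match it against (Sp5). Throughout I fix a homogeneous \(S\)-valued measure \(\mu\) with \(\Sigma(\mu)=\Sigma\) (one exists by \THM{PN-ext-spec}), so that \(\Sigma\) is supersolid exactly when the action of \(\OPN{Aut}(\mu)\) is minimal on \(\bbB\). The lemma I would establish first is: for every non-zero proper \(B\in\bbB\), writing \(b\df\mu(B)\), and every proper \(a\) with \(x\df\mu(a)\) and \(y\df\mu(\BooD{\unit}{a})\), there is \(\phi\in\OPN{Aut}(\mu)\) with \(B\subsetneq\phi(a)\) if and only if there is \(w\in S\) with \((b,w,y)\in\Sigma\), \(b+w=x\) and \(w+y=\mu(\BooD{\unit}{B})\). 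For the ``if'' part I would feed \((b,w,y)\in\Sigma\) into \PRO{PN-homo-inherit} applied to \(j\df\BooD{\unit}{B}\) (proper, since \(B\) is proper): its formula for \((\Sigma_j)'\) together with (Sp1) gives \((w,y)\in(\Sigma_j)'=\Delta_2(\mu\restriction{j})\), so \(\BooD{\unit}{B}\) splits into a partition \(\{W,R\}\) with \(\mu(W)=w\) and \(\mu(R)=y\); then \(d\df\BooV{B}{W}\) is proper, satisfies \(\mu(d)=x\) and \(\mu(\BooD{\unit}{d})=y\), and by homogeneity \(d=\phi(a)\) for some \(\phi\in\OPN{Aut}(\mu)\), with \(B\subsetneq d\) as \(W\neq\zero\). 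The ``only if'' part is read off the partition \(\{B,\BooD{\phi(a)}{B},\BooD{\unit}{\phi(a)}\}\), all of whose parts are non-zero.

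Granting supersolidity, I would fix \((x,y)\in\Sigma'=\Delta_2(\mu)\) and choose a proper \(a\) with \(\mu(a)=x\) and \(\mu(\BooD{\unit}{a})=y\). Minimality supplies \(\phi_1,\ldots,\phi_n\in\OPN{Aut}(\mu)\) with \(\bigvee_{k}\phi_k(a)=\unit\); the greedy refinement \(B_k\df\BooD{\phi_k(a)}{\bigvee_{s<k}\phi_s(a)}\) is a partition with each \(B_k\subseteq\phi_k(a)\). Here lies the one real obstacle: a refined piece may coincide with its translate (this is unavoidable for \(k=1\)), so that the overlap \(\BooD{\phi_k(a)}{B_k}\) is \(\zero\) and no genuine triple of \(\Sigma\) is produced. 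I would dispose of it by \emph{splitting}: since \(\bbB\) is atomless, any such full piece can be cut into two non-zero parts, each of which is then \emph{strictly} contained in the same translate \(\phi_k(a)\). After splitting (and splitting further, if necessary, to reach \(N\geq3\)) I obtain a partition \(\{B_1,\ldots,B_N\}\) with \(B_k\subsetneq\phi_k(a)\) for every \(k\). Setting \(b_k\df\mu(B_k)\) and \(w_k\df\mu(\BooD{\phi_k(a)}{B_k})\), the ``only if'' half of the bridge lemma gives \((b_k,w_k,y)\in\Sigma\), while \(b_k+w_k=\mu(\phi_k(a))=x\) and \(w_k+y=\mu(\BooD{\unit}{B_k})=\sum_{s\neq k}b_s\) are immediate; this is (Sp5b). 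Finally \((b_1,\ldots,b_N)\in\Delta_N(\mu)\) because \(\{B_1,\ldots,B_N\}\) is a partition, which by \EXM{PN-delta-k} is precisely (Sp5a).

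For the converse I would assume (Sp5) and check minimality at an arbitrary proper \(a\); since \((\mu(a),\mu(\BooD{\unit}{a}))\in\Sigma'\), (Sp5) yields \(b_k\) and \(w_k\). By \EXM{PN-delta-k} condition (Sp5a) says exactly \((b_1,\ldots,b_N)\in\Delta_N(\mu)\), so there is a partition \(\{B_1,\ldots,B_N\}\) of \(\bbB\) with \(\mu(B_k)=b_k\); note that \(\BooD{\unit}{B_k}\) has measure \(\sum_{s\neq k}b_s=w_k+y\). For each \(k\) the three relations in (Sp5b) are exactly the hypotheses of the ``if'' half of the bridge lemma applied to \(B=B_k\), producing \(\phi_k\in\OPN{Aut}(\mu)\) with \(B_k\subsetneq\phi_k(a)\); hence \(\bigvee_k\phi_k(a)\supseteq\bigvee_k B_k=\unit\) and minimality holds. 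The parenthetical equivalence of ``some \(N\geq3\)'' with ``all sufficiently large \(N\)'' I would settle by the same splitting move: cutting a single \(B_k\) in two keeps both halves inside \(\phi_k(a)\), so a solution for some \(N\geq3\) upgrades to one for every larger \(N\), and the construction above already delivers arbitrarily large \(N\). The lower bound \(N\geq3\) is needed only to make (Sp5a) non-vacuous and to let it coincide with membership in \(\Delta_N\) (recall \(\Delta_3(\mu)=\Sigma\)).
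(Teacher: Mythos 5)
Your proposal is correct and takes essentially the same route as the paper: your ``enveloping translate'' lemma is exactly the equivalence of conditions (i) and (iii) in the paper's \LEM{PN-mini}, and both directions of your argument (refine the covering \(\bigvee_k\phi_k(a)=\unit\) to a partition whose members are \emph{strictly} contained in translates of \(a\), resp.\ realise \((b_1,\ldots,b_N)\in\Delta_N(\mu)\) as a partition and apply the lemma to each piece) mirror the paper's proof step for step. The only cosmetic difference is that you derive the lemma from \PRO{PN-homo-inherit} rather than from (4EP) directly.
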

\begin{proof}
Fix a homogeneous measure \(\mu\) such that \(\Sigma(\mu) = \Sigma\). First
assume (Sp5) is fulfilled and fix a proper element \(a\) of \(\bbB\). Set
\((x,y) \df (\mu(a),\mu(\BooD{\unit}{a}))\). Then \((x,y) \in \Sigma'\). So, let
\(b_1,\ldots,b_N\) and \(w_1,\ldots,w_N\) be as specified in (Sp5). We infer
from (Sp5a) that \((b_1,\ldots,b_N) \in \Delta_N(\Sigma)\) (cf.
\EXM{PN-delta-k}). So, there exists a partition \(\{c_1,\ldots,c_N\}\) of
\(\bbB\) such that \(\mu(c_k) = b_k\) for \(k=1,\ldots,N\). Further, (Sp5b) says
that
\begin{equation}\label{eqn:PN-aux41}
(\mu(c_k),w_k,\mu(\BooD{\unit}{a})) \in \Sigma
\end{equation}
for \(k=1,\ldots,N\). The above relation, combined with the first two equations
in (Sp5b), shows that condition (iii) of \LEM{PN-mini} is fulfilled for \(b =
c_k\) and \(x = w_k\) (note that \(w_k+\mu(\BooD{\unit}{a}) = w_k+y =
\sum_{s \neq k} \mu(c_s) = \mu(\BooD{\unit}{c_k})\)). Thus, we conclude that
there exists \(\phi_k \in \OPN{Aut}(\mu)\) such that \(\BooW{\phi_k(a)}{c_k} =
c_k\). Since \(\{c_1,\ldots,c_N\}\) is a partition of \(\bbB\), we get
\(\bigvee_{k=1}^M \phi_k(a) = \unit\), which proves that \(\mu\) is
supersolid.\par
Now assume \(\mu\) is supersolid and fix \((x,y) \in \Sigma'\). There exists
a partition \(\{a,b\}\) of \(\bbB\) such that \((\mu(a),\mu(b)) = (x,y)\). It
follows from our assumption that there is a finite number of automorphisms
\(\phi_1,\ldots,\phi_m \in \OPN{Aut}(\mu)\) such that \(\bigvee_{k=1}^m
\phi_k(a) = \unit\). It is now clear that for all sufficiently large numbers \(N
(\geq 3)\) there is a partition \(\{p_1,\ldots,p_N\}\) of \(\bbB\) such that for
any \(k \leq N\) there exists \(s \in \{1,\ldots,m\}\) satisfying
\(\BooW{p_k}{\phi_s(a)} = p_k \neq \phi_s(a)\). So, it follows from
\LEM{PN-mini} that for each such \(k\) there is \(w_k \in S\) such that
\begin{equation}\label{eqn:PN-aux43}
\begin{cases}
(\mu(p_k),w_k,\mu(\BooD{\unit}{a})) \in \Sigma,\\
w_k+\mu(p_k) = \mu(a), \\ w_k+\mu(\BooD{\unit}{a}) = \mu(\BooD{\unit}{p_k}).
\end{cases}
\end{equation}
Finally, we set \(b_k = \mu(p_k)\) for \(k=1,\ldots,N\). Observe that (Sp5a)
follows from the characterisation of \(\Delta_N(\Sigma)\) described in
\EXM{PN-delta-k}, whereas (Sp5b) is a reformulation of \eqref{eqn:PN-aux43}.
\end{proof}

Now it is a good moment to state the following result whose simple proof is left
to the reader.

\begin{pro}{PN-homspec-chain}
The union of an increasing sequence of \homspec{3}s \UP(resp. solid or
supersolid \homspec{3}s\UP) in a semigroup \(S\) is a \homspec{3} \UP(resp.
a solid or supersolid \homspec{3}\UP) in \(S\) as well.
\end{pro}

To formulate our next result, we introduce an auxiliary notion. Let
\(\{S_t\}_{t \in T}\) be a non-empty collection of semigroups and let \(Q \df
\prod_{t \in T} S_t\) stand for the full Cartesian product of these semigroups.
A subset \(\Sigma\) of \(Q^3\) is said to have the \emph{replacement property}
if for any two its elements \(((x_t)_{t \in T}, (y_t)_{t \in T},
(z_t)_{t \in T})\) and \(((p_t)_{t \in T},(q_t)_{t \in T},(r_t)_{t \in T})\) and
any index \(s \in T\) also \(((u_t)_{t \in T},(v_t)_{t \in T},(w_t)_{t \in T})\)
belongs to \(\Sigma\) where \((u_t,v_t,w_t) = \begin{cases}(x_t,y_t,z_t) &
\UP{if } t \neq s\\(p_s,q_s,r_s) & \UP{if } t = s\end{cases}\). Additionally, in
the above situation, for any \(s \in T\), \(\pi_s\dd Q \to S_s\) and
\(\pi_s^{[3]}\dd Q^3 \to S_s^3\) are homomorphisms given by:
\[\pi_s((x_t)_{t \in T}) = x_s, \qquad
\pi_s^{[3]}((x_t)_{t \in T},(y_t)_{t \in T},(z_t)_{t \in T}) = (x_s,y_s,z_s).\]

\begin{pro}{PN-coord}
Let \(\{S_t\}_{t \in T}\) be a non-empty collection of semigroups and \(\Sigma\)
be a \homspec{3} in \(Q \df \prod_{t \in T} S_t\) that has the replacement
property. For each \(s \in T\) set \(\Sigma_s = \pi_s^{[3]}(\Sigma)\). Further,
let \(\mu\) be a \(Q\)-valued homogeneous measure on \(\bbB\) with \(\Sigma(\mu)
= \Sigma\) and for \(s \in S\) let \(\mu_s \df \pi_s \circ \mu\dd \bbB \to S_s\)
\UP(that is, \(\mu(x) = (\mu_t(x))_{t \in T}\) for \(x \in \bbB\)\UP). Then for
each \(s \in T\):
\begin{enumerate}[\upshape(a)]
\item \(\Sigma_s\) is a \homspec{3}; and \(\Sigma_s\) is solid \UP(resp.
 supersolid\UP) provided so is \(\Sigma\);
\item \(\mu_s\) is a homogeneous measure such that \(\Sigma(\mu_s) = \Sigma_s\).
\end{enumerate}
\end{pro}
\begin{proof}
First we will show item (b), which will cover the statement that \(\Sigma_s\) is
a \homspec{3} and in fact is a stronger claim. It is clear that \(\Sigma(\mu_s)
= \Sigma_s\). So, it is sufficient to show that \(\mu_s\) has (4EP) (as it is
trivial that \(\mu_s\) is an \(S_s\)-valued measure). To this end, we fix
a partition \(\{a,b,c\}\) of \(\bbB\) and a proper element \(d\) of \(\bbB\)
such that \eqref{eqn:PN-4EP} holds for \(\mu_s\). Express \(d\) in the form \(d
= \BooV{f}{g}\) where \(f\) and \(g\) are disjoint and proper, and set \(d' \df
\BooD{\unit}{d}\). Denote \((\mu(f),\mu(g),\mu(d')) = ((x_t)_{t \in T},
(y_t)_{t \in T},(z_t)_{t \in T})\). Since \((\mu(f),\mu(g),\mu(d'))\) and
\((\mu(a),\mu(b),\mu(c))\) belong to \(\Sigma\), the replacement property
implies that also \(((u_t)_{t \in T},(v_t)_{t \in T},(w_t)_{t \in T})\) belongs
to \(\Sigma\) where \((u_t,v_t,w_t) = (x_t,y_t,z_t)\) for \(t \neq s\) and
\((u_s,v_s,w_s) = (\mu_s(a),\mu_s(b),\mu_s(c))\). So, there is a partition
\(\{k,\ell,m\}\) of \(\bbB\) such that \((\mu(k),\mu(\ell),\mu(m)) =
((u_t)_{t \in T},(v_t)_{t \in T},(w_t)_{t \in T})\). Now the equalities
\(\mu_s(a)+\mu_s(b) = \mu_s(d)\) and \(\mu(f)+\mu(g) = \mu(d)\) imply that
\(\mu(k)+\mu(\ell) = \mu(d)\). Moreover, \(\mu(m) = (z_t)_{t \in T} = \mu(d')\).
We conclude from (4EP) of \(\mu\) that \(d\) may be written in the form \(d =
\BooV{p}{q}\) where \(p\) and \(q\) are two disjoint proper elements of \(\bbB\)
such that \((\mu(p),\mu(q)) = (\mu(k),\mu(\ell))\). Then \((\mu_s(p),\mu_s(q)) =
(\mu_s(a),\mu_s(b))\) and we are done.\par
Finally, to show that (super)solidity is preserved in this context, instead of
checking (Sp4)--(Sp5) axioms we will verify the defining conditions. To this
end, assume \(\mu\) is (super)solid and note that \(\OPN{Aut}(\mu) \subset
\OPN{Aut}(\mu_s)\) (for each \(s \in S\)). So, the action of the latter group is
transitive (resp. minimal) if so is the action of the former group. This
completes the proof.
\end{proof}

The above theorem says that, roughly speaking, if the trinary spectrum of
a homogeneous measure that takes values in the product of semigroups is
sufficiently rich, then all its coordinate functions are also homogeneous
measures. In the next section we will establish certain results in the reverse
direction---that is, we will discuss the question of when the direct (or
similar) product of solid \homspec{3}s is a \homspec{3} as well.

\section{Products of [super]solid \homspec{3}s}\label{sec:products}

\begin{dfn}{PN-product}
Everywhere below \(\Sigma_s\) (whatever \(s\) is) denotes a subset of \(S_s^3\)
(where \(S_s\) is a semigroup).\par
A \emph{skew product} of \(\Sigma_1,\ldots,\Sigma_N\) is the subset
\(\overline{\oplus}_{k \leq N} \Sigma_k\) of \((S_1 \times \ldots \times
S_N)^3\) defined as follows:
\[\overline{\bigoplus}_{k \leq N} \Sigma_k \df \{((x_k)_{k=1}^N,(y_k)_{k=1}^N,
(z_k)_{k=1}^N)\dd\ (x_k,y_k,z_k) \in \Sigma_k\ (k=1,\ldots,N)\}.\]
So, the skew product consists only of rearranging the coordinates of
the standard Cartesian product.\par
We call a sequence \(\Sigma_1,\Sigma_2,\ldots\) \emph{null} if \((0_{S_n},
0_{S_n},0_{S_n}) \in \Sigma_n\) for all sufficiently large \(n\). If \(\Sigma_1,
\Sigma_2,\ldots\) is null, we define its \emph{skew direct product} as
the subset \(\overline{\oplus}_{n \geq 1} \Sigma_n\) of \((\prod_{n=1}^{\infty}
S_n)^3\) consisting of all triples of the form
\begin{equation}\label{eqn:PN-aux80}
((x_n)_{n=1}^{\infty},(y_n)_{n=1}^{\infty},(z_n)_{n=1}^{\infty})
\end{equation}
where \((x_n,y_n,z_n) \in
\Sigma_n\) for all \(n\) and \((x_n,y_n,z_n) = (0_{S_n},0_{S_n},0_{S_n})\) for
all sufficiently large \(n\).\par
Finally, for any \(\Sigma\) we define its \emph{skew countable power} as
the subset \(\Sigma^{\bar{\omega}}\) of \(S^{\omega}\) consisting of all
eventually constant sequences of the form \eqref{eqn:PN-aux80} where
\((x_n,y_n,z_n) \in \Sigma\)
for all \(n\) (`eventually constant' means that \((x_n,y_n,z_n) = (x_N,y_N,
z_N)\) for some \(N\), that depends on these three sequences, and all \(n \geq
N\)).
\end{dfn}

In Section~\ref{sec:0} we will give a full classification/characterisation of
\homspec{3}s \(\Sigma\) in \(S\) such that \((0_S,0_S,0_S) \in \Sigma\) (consult
\COR{PN-zero-h3s} below).

\begin{rem}{PN-coord-finite}
If \(\Sigma\) is a subset of \(Q^3\) where \(Q\) is the full product of
a \textbf{finite} collection of semigroups \(S_1,\ldots,S_N\) and \(\Sigma_k =
\pi_k^{[3]}(\Sigma)\), then \(\Sigma\) has the replacement property iff \(\Sigma
= \overline{\oplus}_{k \leq N} \Sigma_k\).
\end{rem}

\begin{rem}{PN-prod-non-homspec}
The next result says that all kinds of `product-like' constructions
introduced in \DEF{PN-product} preserve the property of being solid (resp.
supersolid) \homspec{3}s. It is an intriguing phenomenon that a counterpart of
this result for all \homspec{3}s is false in general. Namely, there exists
a full non-atomic probability Borel measure \(\mu\) on the \Cantor\ that is
homogeneous---so, its trinary spectrum \(\Sigma\) is a \homspec{3}---but
\(\Sigma \bar{\oplus} \Sigma\) is not a \homspec{3}. (Indeed, as \(\mu\) one may
take the measure described in \EXM{PN-homo-bad}---we leave the details to
interested readers.) So, a slight difference between (Sp4) and (Sp4b) makes
a big difference.
\end{rem}

\begin{pro}{PN-prod-solid}
Each of the following sets is a \UP(super\UP)solid \homspec{3} with
the replacement property:
\begin{enumerate}[\upshape(a)]
\item the skew product of a finite number of \UP(super\UP)solid \homspec{3}s;
\item the skew product of a null sequence of \UP(super\UP)solid \homspec{3}s;
\item the skew countable power of a \UP(super\UP)solid \homspec{3}.
\end{enumerate}
\end{pro}
\begin{proof}
A verification of each of (Sp0)--(Sp5) can be made coordinatewise (when checking
(Sp5), one needs to use the version with ``all sufficiently large \(N\)'').
The details are left as an exercise.
\end{proof}

\begin{dfn}{PN-independent}
A finite number of semigroup-valued measures \(\mu_1,\ldots,\mu_N\) on \(\bbB\)
(each of which can take values in a different semigroup) are called
\emph{independent} (in the sense of homogeneity) if their `diagonal' \(a \mapsto
(\mu_1(a),\ldots,\mu_N(a))\) is a homogenous measure and \(\Sigma(\mu_1,\ldots,
\mu_N) = \bar{\oplus}_{k \leq N} \Sigma(\mu_k)\).\par
We call a sequence \(\mu_1,\mu_2,\ldots\) of measures on \(\bbB\) \emph{null} if
the corresponding sequence
\[\Sigma(\mu_1),\Sigma(\mu_2),\ldots\]
of \homspec{3}s is null. A countable number of measures \(\mu_1,\mu_2,\ldots\)
that form a null sequence are called \emph{independent} if their diagonal \(a
\mapsto (\mu_1(a),\mu_2(a),\ldots)\) is a homogeneous measure and
\(\Sigma(\mu_1,\mu_2,\ldots) = \bar{\oplus}_{n \geq 1} \Sigma(\mu_n)\).
\end{dfn}

The following is partially a consequence of \PRO{PN-coord}. We skip its easy
proof.

\begin{pro}{PN-independ-WK}
Let \(\mu_n\) be an \(S_n\)-valued measure on \(\bbB\) and let \(\lambda\) stand
for the `diagonal' measure of all \(\mu_n\) \UP(that is, \(\lambda(a) =
(\mu_n(a))_n\) for \(a \in \bbB\)\UP).
\begin{enumerate}[\upshape(A)]
\item If \(\mu_1,\ldots,\mu_N\) are independent, then all these measures are
 homogeneous. If \(\lambda\) is solid or supersolid, so are all \(\mu_n\).
\item If \(\mu_1,\mu_2,\ldots\) form a null sequence and are independent, then:
 \begin{itemize}
 \item for each \(a \in \bbB\) there exists \(m\) such that \(\mu_n(a) =
  0_{S_n}\) for all \(n \geq m\);
 \item all these measures are homogeneous;
 \item if \(\lambda\) is solid or supersolid, so are all \(\mu_n\).
 \end{itemize}
\end{enumerate}
\end{pro}

The following may be considered as the converse (in a sense) to the above
result.

\begin{pro}{PN-made-independ}
Below \(\lambda\) and \(\mu_n\) denote solid measures on \(\bbB\).
\begin{enumerate}[\upshape(A)]
\item For a finite number of measures \(\mu_1,\ldots,\mu_N\) there exist
 Boolean algebra automorphisms \(\phi_k\) of \(\bbB\) \((k=1,\ldots,N)\) such
 that the measures \(\mu_1 \circ \phi_1,\ldots,\mu_N \circ \phi_N\) are
 independent.
\item If \(\mu_1,\mu_2,\ldots\) form a null sequence, then there are Boolean
 algebra automorphisms \(\phi_n\) of \(\bbB\) \((n \geq 1)\) such that \(\mu_1
 \circ \phi_1,\mu_2 \circ \phi_2,\ldots\) are independent.
\item There exists a sequence \((\phi_n)_{n=1}^{\infty}\) of Boolean algebra
 automorphisms of \(\bbB\) such that the measure \(\mu(a) \df
 (\lambda(\phi_n(a)))_{n=1}^{\infty}\) is solid and \(\Sigma(\mu) =
 (\Sigma(\lambda))^{\bar{\omega}}\).
\end{enumerate}
\end{pro}
\begin{proof}
We carry out a simultaneous proof of all the three items. In (A) and (B) we take
as \(\Sigma\) the skew product of all \(\Sigma(\mu_n)\), whereas in (C) let
\(\Sigma\) denote the skew countable power of \(\Sigma(\lambda)\)---in this
case, we set \(\mu_n \df \lambda\) for all \(n \geq 1\). We infer from
\PRO{PN-prod-solid} that \(\Sigma\) is a solid \homspec{3} with the replacement
property. So, there exists a solid measure \(\rho\) on \(\bbB\) such that
\(\Sigma(\rho) = \Sigma\). Denote \(\rho = (\rho_n)_n\). It follows from
\PRO{PN-coord} that each of \(\rho_n\) is solid and \(\Sigma(\rho_n) =
\Sigma(\mu_n)\). So, an application of \THM{PN-ext-4EP} yields a Boolean algebra
automorphism \(\phi_n\) of \(\bbB\) such that \(\rho_n = \mu_n \circ \phi_n\),
and the conclusion of the respective item follows.
\end{proof}

\begin{exm}{PN-intersection}
Assume \(\mu_1,\ldots,\mu_N\) (or \(\mu_1,\mu_2,\ldots\) form a null sequence
and) are independent (super)solid measures. Then their `diagonal' measure \(\mu
= (\mu_n)_n\) is (super)solid as well. However, as it is readily seen,
\(\OPN{Aut}(\mu) = \bigcap_n \OPN{Aut}(\mu_n)\). So, not only the action of each
of \(\OPN{Aut}(\mu_n)\) on \(\bbB\) is transitive (resp. minimal), but also
the action of the intersection of these groups is so.\par
Now if \(\mu_1,\ldots,\mu_N\) (or \(\mu_1,\mu_2,\ldots\) form a null sequence
and) are (super)solid measures, it follows from \PRO{PN-made-independ} that
there are Boolean algebra automorphisms \(\phi_n\) of \(\bbB\) such that
the measures \(\mu_n \circ \phi_n\) are independent. Observe that
\(\OPN{Aut}(\mu_n \circ \phi_n) = \phi_n^{-1} \OPN{Aut}(\mu_n) \phi_n\). So, it
follows from the first paragraph that the action of \(\bigcap_n \phi_n^{-1}
\OPN{Aut}(\mu_n) \phi_n\) is transitive (resp. minimal) on \(\bbB\).\par
The above remarks about finite collections apply, in particular, to good
measures.
\end{exm}

The following basic result appears to be very useful. We leave its proof to
the reader.

\begin{lem}{PN-1-1}
Let \(\mu\) be an \(S\)-valued \UP(super\UP)solid measure on \(\bbB\) and let
\(u\dd S \to Q\) be a one-to-one homomorphism. Then \(u \circ \mu\) is
a \(Q\)-valued measure that is \UP(super\UP)solid as well. Moreover,
\(\OPN{Aut}(u \circ \mu) = \OPN{Aut}(\mu)\).
\end{lem}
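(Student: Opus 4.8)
The plan is to reduce the entire statement to a single identity of automorphism groups, namely $\OPN{Aut}(u \circ \mu) = \OPN{Aut}(\mu)$, after which every remaining claim transfers almost for free. First I would record that $u \circ \mu$ is a $Q$-valued measure. Since $u$ is a homomorphism (and hence, by the standing convention of \DEF{PN-S-measure}, preserves neutral elements), we have $(u \circ \mu)(\zero) = u(0_S) = 0_Q$, and for disjoint $a, b \in \bbB$ one computes $(u \circ \mu)(\BooV{a}{b}) = u\bigl(\mu(a) + \mu(b)\bigr) = (u \circ \mu)(a) + (u \circ \mu)(b)$. This uses nothing beyond $u$ being a homomorphism, so additivity is immediate.

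Next I would establish $\OPN{Aut}(u \circ \mu) = \OPN{Aut}(\mu)$. The inclusion $\OPN{Aut}(\mu) \subseteq \OPN{Aut}(u \circ \mu)$ holds for \emph{any} homomorphism $u$: if $\phi$ satisfies $\mu(\phi(a)) = \mu(a)$ for all $a$, then applying $u$ gives $(u \circ \mu)(\phi(a)) = (u \circ \mu)(a)$. The reverse inclusion is where the hypothesis that $u$ is one-to-one is indispensable: if $\phi \in \OPN{Aut}(u \circ \mu)$, then $u(\mu(\phi(a))) = u(\mu(a))$, and injectivity of $u$ cancels to yield $\mu(\phi(a)) = \mu(a)$, so $\phi \in \OPN{Aut}(\mu)$. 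This equality of groups is really the heart of the lemma; everything else is formal.

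With the groups identified, the remaining properties descend. Homogeneity of $u \circ \mu$ follows from that of $\mu$ via the reformulation in \DEF{PN-homo-meas}: for proper $a, b$ the equalities $(u \circ \mu)(a) = (u \circ \mu)(b)$ and $(u \circ \mu)(\BooD{\unit}{a}) = (u \circ \mu)(\BooD{\unit}{b})$ are, by injectivity of $u$, equivalent to $\mu(a) = \mu(b)$ and $\mu(\BooD{\unit}{a}) = \mu(\BooD{\unit}{b})$, so homogeneity of $\mu$ supplies some $\phi \in \OPN{Aut}(\mu) = \OPN{Aut}(u \circ \mu)$ with $\phi(a) = b$. (Alternatively, one checks directly that $u \circ \mu$ inherits (4EP) from $\mu$ by cancelling $u$, and invokes \THM{PN-4EP}.) Finally, transitivity and minimality are properties of the subgroup of $\OPN{Aut}(\bbB)$ and of how it acts on $\bbB$; the Definitions \DEF{PN-transitive} and \DEF{PN-mini} make no reference whatsoever to the measure values. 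Since $\OPN{Aut}(u \circ \mu)$ and $\OPN{Aut}(\mu)$ are literally the same subgroup, the action of one is transitive (resp.\ minimal) exactly when that of the other is, and therefore $u \circ \mu$ is solid (resp.\ supersolid) whenever $\mu$ is. I do not anticipate a genuine obstacle here; the one point to keep in mind is that injectivity of $u$ is used in precisely two equivalent guises—to obtain the inclusion $\OPN{Aut}(u \circ \mu) \subseteq \OPN{Aut}(\mu)$ and to transport the equality relations among measure values—and that without it homogeneity could fail to descend.
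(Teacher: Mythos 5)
Your proof is correct and is evidently the intended argument: the paper leaves this lemma's proof to the reader, and your route (additivity of $u \circ \mu$ from the homomorphism property, $\OPN{Aut}(u \circ \mu) = \OPN{Aut}(\mu)$ via injectivity, then transfer of homogeneity and of transitivity/minimality through the identical automorphism group) is exactly the standard one. No gaps.
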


As a consequence, we obtain

\begin{cor}{PN-new-solid}
Let \(\mu_1,\ldots,\mu_N\) be independent \UP(super\UP)solid \(\QQQ\)-valued
measures on \(\bbB\) and let \(s_1,\ldots,s_N \in \RRR\) be linearly independent
over \(\QQQ\). Then \(\sum_{k=1}^N s_k \mu_k\) is \UP(super\UP)solid as well.
\end{cor}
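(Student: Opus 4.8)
The plan is to reduce the statement to the one-variable transfer lemma \LEM{PN-1-1} by passing through the joint (``diagonal'') measure. First I would form the \(\QQQ^N\)-valued measure \(\mu \df (\mu_1,\ldots,\mu_N)\), that is, \(\mu(a) = (\mu_1(a),\ldots,\mu_N(a))\) for \(a \in \bbB\), where \(\QQQ^N\) is regarded as the product of \(N\) copies of the group \(\QQQ\). By the hypothesis that \(\mu_1,\ldots,\mu_N\) are independent in the sense of \DEF{PN-independent}, the measure \(\mu\) is homogeneous and its trinary spectrum is the skew product \(\Sigma(\mu) = \bar{\oplus}_{k \leq N} \Sigma(\mu_k)\).

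Next I would check that \(\mu\) itself is \UP(super\UP)solid. Since each \(\mu_k\) is \UP(super\UP)solid, each \(\Sigma(\mu_k)\) is a \UP(super\UP)solid \homspec{3}, and therefore, by part (a) of \PRO{PN-prod-solid}, the skew product \(\bar{\oplus}_{k \leq N} \Sigma(\mu_k)\) is again a \UP(super\UP)solid \homspec{3}. As \(\mu\) is a homogeneous measure whose trinary spectrum coincides with this \homspec{3}, it follows directly from the definition of a \UP(super\UP)solid \homspec{3} (see \DEF{PN-transitive} and \DEF{PN-mini}, together with the remark made there that this property does not depend on the chosen measure) that \(\mu\) is \UP(super\UP)solid.

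Then I would introduce the evaluation map \(u \dd \QQQ^N \to \RRR\) given by \(u(q_1,\ldots,q_N) = \sum_{k=1}^N s_k q_k\). This is plainly an additive homomorphism sending \(0\) to \(0\), and it is one-to-one: if \(u(q) = 0\) for some \(q = (q_1,\ldots,q_N) \in \QQQ^N\), then \(\sum_{k=1}^N s_k q_k = 0\) is a \(\QQQ\)-linear dependence among \(s_1,\ldots,s_N\), so all \(q_k\) vanish by the assumed linear independence of \(s_1,\ldots,s_N\) over \(\QQQ\). This injectivity is the only point at which the hypothesis on the \(s_k\) enters, and it is really the crux of the argument; everything else is formal.

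Finally, observing that \(\sum_{k=1}^N s_k \mu_k = u \circ \mu\), I would invoke \LEM{PN-1-1}: since \(\mu\) is \UP(super\UP)solid and \(u\) is a one-to-one homomorphism, \(u \circ \mu = \sum_{k=1}^N s_k \mu_k\) is \UP(super\UP)solid as well (and moreover \(\OPN{Aut}(\sum_{k=1}^N s_k \mu_k) = \OPN{Aut}(\mu)\)), which is exactly the claim. I do not expect any genuine obstacle here: the two substantive inputs---that independence forces the diagonal to inherit \UP(super\UP)solidity, and that \(\QQQ\)-linear independence of the \(s_k\) makes \(u\) injective---are both immediate consequences of the cited results, so the corollary follows by a short composition once these are in hand.
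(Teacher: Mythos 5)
Your proposal is correct and follows essentially the same route as the paper: form the diagonal measure \((\mu_1,\ldots,\mu_N)\dd \bbB \to \QQQ^N\), note it is \UP(super\UP)solid (which the paper asserts tersely and you justify, as intended, via \DEF{PN-independent} and \PRO{PN-prod-solid}), and compose with the injective homomorphism \(u(q) = \sum_k s_k q_k\) using \LEM{PN-1-1}. No gaps.
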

\begin{proof}
Let \(\lambda\dd \bbB \ni a \mapsto (\mu_1(a),\ldots,\mu_N(a)) \in \QQQ^N\) and
\(u\dd \QQQ^N \ni (w_1,\ldots,w_N) \mapsto \sum_{k=1}^N s_k w_k \in \RRR\). It
follows that \(\lambda\) is (super)solid and \(u\) is a one-to-one group
homomorphism. Since \(\sum_{k=1}^N s_k \mu_k = u \circ \lambda\), the conclusion
of the result follows from \LEM{PN-1-1}.
\end{proof}

\begin{thm}{PN-super-no-good}
There exist probability Borel measures on the \Cantor\ that are supersolid and
not good. There also exist supersolid signed Borel measures on the \Cantor\
\UP(that are neither nonnegative nor nonpositive\UP).
\end{thm}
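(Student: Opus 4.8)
The plan is to build both families of measures as $\RRR$-linear combinations of independent copies of a single good $\QQQ$-valued probability measure, where the coefficients are chosen linearly independent over $\QQQ$. \COR{PN-new-solid} guarantees that such combinations stay supersolid, while the irrationality of the coefficients is exactly what prevents the resulting probability measure from being good (first assertion) and what forces the resulting signed measure to take both signs (second assertion).

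First I would set up the building block. By \EXM{PN-good-super} together with \THM{PN-supersolid}, the set $\Sigma_0 \df \{(a,b,c) \in I^3 \dd a+b+c = 1\}$, where $I \df \QQQ \cap (0,1)$, is a supersolid \homspec{3} in $\QQQ$; hence \THM{PN-ext-spec} yields a homogeneous $\QQQ$-valued measure $\nu$ on $\bbB$ with $\Sigma(\nu) = \Sigma_0$. This $\nu$ is supersolid, has $\nu(\unit) = e(\Sigma_0) = 1$, and $(p,1-p) \in \Sigma(\nu)'$ for every $p \in I$. Applying \PRO{PN-made-independ}(A) to the pair $\nu,\nu$ produces $\phi_1, \phi_2 \in \OPN{Aut}(\bbB)$ such that $\mu_1 \df \nu \circ \phi_1$ and $\mu_2 \df \nu \circ \phi_2$ are independent; each $\mu_k$ is again supersolid, $\QQQ$-valued, with $\Sigma(\mu_k) = \Sigma_0$ and $\mu_k(\unit) = 1$. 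Writing $\lambda(a) = (\mu_1(a),\mu_2(a))$ for their diagonal, independence gives $\Sigma(\lambda) = \Sigma_0 \bar{\oplus} \Sigma_0$, from which it follows that for every $p,q \in I$ there is a clopen $U$ with $\lambda(U) = (p,q)$ (because $((p,q),(1-p,1-q)) \in \Sigma(\lambda)'$).

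For the first assertion I would take an irrational $s_1 \in (0,1)$, say $s_1 = \sqrt{2}-1$, and put $s_2 = 1 - s_1 = 2 - \sqrt{2}$. Then $s_1, s_2 > 0$, $s_1 + s_2 = 1$, and $\{s_1,s_2\}$ is linearly independent over $\QQQ$; so \COR{PN-new-solid} makes $\mu \df s_1 \mu_1 + s_2 \mu_2$ supersolid, and as a convex combination of probability measures it extends to a probability Borel measure (which is then automatically full and non-atomic by \PRO{PN-super-full-nonatom}). To see that $\mu$ is not good, choose clopen sets $U,V$ with $\lambda(U) = (\tfrac34,\tfrac14)$ and $\lambda(V) = (\tfrac14,\tfrac34)$. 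Then $\mu(V) - \mu(U) = \tfrac12(s_2 - s_1) > 0$, so $\mu(U) < \mu(V)$; yet if some clopen $W \subseteq V$ satisfied $\mu(W) = \mu(U)$, then writing $\lambda(W) = (a,b)$ with $a,b \in \QQQ$ the relation $s_1(a - \tfrac34) + s_2(b - \tfrac14) = 0$ would force $a = \tfrac34$ by $\QQQ$-linear independence of $s_1,s_2$, contradicting $a = \mu_1(W) \le \mu_1(V) = \tfrac14$. Thus the defining property of a good measure fails.

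For the second assertion I would keep $\mu_1,\mu_2$ but switch to $\QQQ$-independent coefficients of opposite sign, e.g.\ $s_1 = \sqrt2$ and $s_2 = -1$; then \COR{PN-new-solid} makes $\mu \df \sqrt2\,\mu_1 - \mu_2$ a supersolid $\RRR$-valued measure, whose Borel extension is a finite signed measure. Evaluating on clopen sets realizing $\lambda(U) = (\tfrac12,\tfrac12)$ and $\lambda(V) = (\tfrac14,\tfrac34)$ gives $\mu(U) = \tfrac{\sqrt2-1}{2} > 0$ and $\mu(V) = \tfrac{\sqrt2}{4} - \tfrac34 < 0$, so $\mu$ is neither nonnegative nor nonpositive. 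The only step that is not pure bookkeeping is the non-goodness argument, and its crux is that over the two-dimensional $\QQQ$-vector space $s_1\QQQ + s_2\QQQ$ of admissible values a single scalar coincidence $\mu(W) = \mu(U)$ already determines the whole vector $\lambda(W) = \lambda(U)$, which then collides with the monotonicity forced by $W \subseteq V$; everything else is supplied directly by \COR{PN-new-solid}, \PRO{PN-made-independ} and the spectrum computation.
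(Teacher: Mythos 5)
Your proposal is correct and follows essentially the same route as the paper: both build two independent copies of a rational-valued good probability measure via \PRO{PN-made-independ} and form an \(\RRR\)-linear combination with \(\QQQ\)-linearly independent coefficients (one convex, one with mixed signs), invoking \COR{PN-new-solid} for supersolidity. The only minor divergence is in certifying non-goodness: the paper argues that the group generated by \(\nu_a(\bbB)\) contains points of \((0,1)\) outside \(\nu_a(\bbB)\), whereas you verify directly that the subset condition in the definition of goodness fails, using that a single scalar equality \(\mu(W)=\mu(U)\) forces \(\lambda(W)=\lambda(U)\) by \(\QQQ\)-independence --- a self-contained variant that avoids appealing to the characterisation of clopen values sets of good measures.
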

\begin{proof}
Let \(\mu\) be a probability Borel measure that is good and rational-valued on
\(\bbB\). We conclude from \PRO{PN-made-independ} that there exists a Boolean
algebra automorphism \(\phi\) such that \(\mu\) and \(\mu \circ \phi\) are
independent. Since both these measures are supersolid and rational-valued on
\(\bbB\), it follows from \COR{PN-new-solid} that \(\nu_a \df (1-a) \mu + a \mu
\circ \phi\) is supersolid for any irrational \(a\). Observe that \(\nu_a\) is
probability if \(a \in (0,1) \setminus \QQQ\). What is more, for any such \(a\),
\(\nu_a\) is not a good measure, as the group generated by \(\nu_a(\bbB)\)
contains numbers from \((0,1)\) that are not in \(\nu_a(\bbB)\) (indeed, there
are rational numbers \(p < 0\) and \(q > 0\) such that \(r \df (1-a)p+aq\)
belongs to \((0,1)\) and then \(r\) is not in \(\nu_a(\bbB)\)). Finally, if
\(a \in (1,2) \setminus \QQQ\), then \(\nu_a\) is neither nonnegative nor
nonpositive.
\end{proof}

\begin{rem}{PN-uncount}
The above proof can be improved to show that the number of measures specified
in \THM{PN-super-no-good} is uncountable. Indeed, take an arbitrary good measure
\(\mu\) on \(\Can\). Then a simple set-theoretic argument shows that there
exists a real number \(a \in (0,1)\) such that the line \((1-a)x+ay = 0\)
intersects the additive subgroup of \(\RRR^2\) generated by \(I \times I\) only
at the origin where \(I \df \mu(\bbB)\). Then \(\hat{\mu} \df (1-a)\mu+a\mu\)
is supersolid and not good. Moreover, it is easy to show that for uncountably
many choices of \(\mu\) the corresponding measures \(\hat{\mu}\) are not
conjugate to each other. The details are left to the reader.
\end{rem}

\section{Homogeneous measures attaining zero value}\label{sec:0}

This section is devoted to homogeneous measures \(\mu\dd \bbB \to S\) for which
\(0_S \in \mu(\bbB \setminus \{\zero,\unit\})\). We will show that each such
a measure is automatically solid and, roughly speaking, ``almost supersolid''
(consult \PRO{PN-zero-solid} below). The main goal of this section is
\THM{PN-zero-super} below which gives a full classification of all such
supersolid measures.\par
In what follows, we will denote by \(\OPN{grp}(S)\) the group of all invertible
elements of \(S\); that is, \(x \in S\) belongs to \(\OPN{grp}(S)\) if there is
\(y \in S\) such that \(x+y = 0_S\). This element \(y\) is unique and we will
denote it by \(-x\) (for \(x \in \OPN{grp}(S)\)).

\begin{dfn}{PN-G-filling}
Let \(G\) be a countable (or finite) group. A measure \(\rho\dd \bbB \to G\) is
said to be \emph{\(G\)-filling} if for all \(g \in G\) and any proper non-zero
\(a \in \bbB\) there is non-zero \(b \in \bbB\) such that \(\BooW{a}{b} = b \neq
a\) and \(\rho(b) = g\). A \(G\)-filling measure \(\nu\) with \(\nu(\unit) = E\)
is also called \emph{\(G\)-filling \(E\)-measure}.
\end{dfn}

Since \(G\)-filling measures satisfy (4EP), it follows that all of them are
homogeneous. Moreover, as it may be easily checked, all they are supersolid.
One shows (using, e.g., the results of the previous sections) that for any
countable (or finite) group \(G\) and each \(E \in G\) there exists
a \(G\)-filling \(E\)-measure. The main aim of this section is to prove
the following

\begin{thm}{PN-zero-super}
For a measure \(\mu\dd \bbB \to S\) \tfcae
\begin{enumerate}[\upshape(i)]
\item \(G = \mu(\bbB)\) is a subgroup of \(\OPN{grp}(S)\) and \(\mu\) is
 \(G\)-filling;
\item \(\mu\) is supersolid and \(0_S \in \mu(\bbB \setminus \{\zero,\unit\})\).
\end{enumerate}
\end{thm}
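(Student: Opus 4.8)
The plan is to treat the two implications separately; (i)$\Rightarrow$(ii) is short and is essentially the content of the remarks preceding the theorem, while (ii)$\Rightarrow$(i) carries the real weight. For (i)$\Rightarrow$(ii) I would first note that a $G$-filling measure has (4EP): given a partition $\{a,b,c\}$ and a proper $d$ with $\mu(a)+\mu(b)=\mu(d)$, filling $d$ with the value $\mu(a)\in G$ produces a proper $p<d$ with $\mu(p)=\mu(a)$, so that $q:=\BooD{d}{p}$ completes the requirement (the boundary case $d=\unit$ is handled by first splitting off a $\mu$-null sliver using $0_S\in G$). By \THM{PN-4EP} this makes $\mu$ homogeneous. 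Filling also forces $\Sigma(\mu)=\{(g_1,g_2,g_3)\in G^3:\ g_1+g_2+g_3=e(\Sigma)\}$, and for such a ``full'' spectrum condition (Sp5) holds trivially, so $\mu$ is supersolid by \THM{PN-supersolid}; finally $0_S$ is attained on a proper element by filling any proper set with $0_S$.

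For (ii)$\Rightarrow$(i) I would fix a proper $a_0$ with $\mu(a_0)=0_S$ and set $E:=e(\Sigma)=\mu(\unit)$. Splitting $\BooD{\unit}{a_0}$ into two proper pieces gives a partition whose spectrum triple is $(0_S,\alpha,\beta)\in\Sigma$ with $\alpha+\beta=E$. The decisive step is that (Sp1) together with (Sp3), applied with $(a,b,c,d)=(0_S,\alpha,\beta,0_S)$ (so that $(0_S,\alpha,\beta)$ and $(\alpha,\beta,0_S)$ are the two hypotheses), yields $(0_S,E,0_S)\in\Sigma$. In other words there is a partition $\{m_1,m_0,m_2\}$ with $\mu(m_1)=\mu(m_2)=0_S$, i.e. \emph{two disjoint proper $\mu$-null elements}. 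I would then let $V$ be the set of values of $\mu$ on subelements of a proper $\mu$-null element; this is well defined because any two proper $\mu$-null elements are conjugate by homogeneity (equal measure $0_S$ and equal co-measure $E$). Now $0_S\in V$, $V=-V$ (take complements inside a null element), and most importantly $V+V\subseteq V$: to realize $v_1+v_2$ one realizes $v_1$ below $m_1$ and $v_2$ below $m_2$, the two witnesses being disjoint and their join lying below the $\mu$-null element $\BooV{m_1}{m_2}$. Hence $V$ is a subgroup of $\OPN{grp}(S)$.

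Minimality enters exactly once: applied to $a_0$ it covers $\unit$ by finitely many $\mu$-null sets $\phi_k(a_0)$, and expanding an arbitrary $b\in\bbB$ along the disjointification of this cover writes $\mu(b)$ as a finite sum of elements of $V$, hence $\mu(b)\in V$ since $V$ is additively closed. Thus $G=\mu(\bbB)=V$ is a subgroup of $\OPN{grp}(S)$. To obtain fullness of $\Sigma$ I would take $g_1,g_2,g_3\in G$ summing to $E$, realize $g_1$ below $m_1$ and $g_2$ below $m_2$ (taking the whole reservoir when the value is $0_S$); the witnesses $p_1,p_2$ are disjoint and $\BooV{p_1}{p_2}\leq\BooV{m_1}{m_2}\neq\unit$, so the complement is a nonzero element of measure $g_3$ and $\{p_1,p_2,\BooD{\unit}{(\BooV{p_1}{p_2})}\}$ witnesses $(g_1,g_2,g_3)\in\Sigma$. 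Consequently, for any proper $c$ and any $g\in G$ the triple $(g,\mu(c)-g,E-\mu(c))$ lies in $\Sigma$, and (4EP) (available since $\mu$ is homogeneous) splits $c=\BooV{p}{q}$ with $\mu(p)=g$; the element $p$ is the required proper subelement, so $\mu$ is $G$-filling.

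The step I expect to be the main obstacle is showing simultaneously that $G$ is a group and that $\Sigma$ is full: each of these reduces, at first sight, to a filling-type statement, so a direct attack runs in circles. The device that breaks the circle is the derivation of $(0_S,E,0_S)\in\Sigma$ from the homogeneity axioms alone, which manufactures two \emph{disjoint} $\mu$-null reservoirs and thereby makes additive closure of $V$ immediate; minimality is then needed only to identify $G$ with $V$. The remaining points are routine: the conjugacy of null elements, the reduction of filling to a single spectrum membership via (4EP), and the small edge cases where some $g_i=0_S$ (cleanly absorbed by the observation that $\BooV{m_1}{m_2}\neq\unit$, which keeps the third block of every constructed partition nonzero).
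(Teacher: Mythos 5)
Your proof is correct, and its skeleton coincides with the paper's: manufacture two disjoint proper \(\mu\)-null elements, use them to show that the set \(V\) of values attained below a null reservoir is a group, invoke minimality exactly once (covering \(\unit\) by null copies of \(a_0\)) to get \(\mu(\bbB)=V\), and then deduce the filling property. The execution differs in two places, both sound. First, the paper produces the second null reservoir by a conjugation argument (\LEM{PN-zero-2}, resting on \LEM{PN-zero-1}(b): equal invertible values of \(\BooD{a}{b}\) and \(\BooD{b}{a}\) force \(a\) and \(b\) to be conjugate), whereas you derive \((0_S,E,0_S)\in\Sigma\) directly from (Sp1) and (Sp3); your version is shorter and isolates the combinatorial content of the step. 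Second, the paper reaches the filling property by an explicit chain of automorphisms pushing \(\zZ\) into an arbitrary non-zero \(j_0\) (\LEM{PN-zero-3}), while you first establish fullness of \(\Sigma\) over \(G\) and then let (4EP) perform the splitting of a given proper \(c\); the cancellations you need (e.g.\ \(\mu(q)=\mu(c)-g\) and \(\mu(\BooD{\unit}{(\BooV{p_1}{p_2})})=g_3\)) are legitimate precisely because the entries being cancelled lie in \(G\subset\OPN{grp}(S)\), which you correctly exploit. The direction (i)\(\Rightarrow\)(ii) is dispatched in both treatments by the same routine remarks. Net effect: your route trades the paper's automorphism-chasing for the spectrum/(4EP) formalism, which makes the argument somewhat more self-contained at the level of axioms but otherwise buys no extra generality.
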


From now on to the end of this section, \(\mu\dd \bbB \to S\) is a fixed
homogeneous measure and \(\zZ \in \bbB\) is a fixed proper element such that
\(\mu(\zZ) = 0_S\). Additionally, we use \(H\) to denote the automorphism group
of \(\mu\) and we set
\[\OPN{grp}(\mu) \df \{\mu(a)\dd\ a \in \bbB\restriction{\zZ}\}.\]
We will see in the sequel that \(\OPN{grp}(\mu)\) is a subgroup of
\(\OPN{grp}(S)\). The next result implies that \(\OPN{grp}(S)\) does not depend
on the choice of \(\zZ\).

\begin{lem}{PN-zero-1}
Let \(a\) and \(b\) be two proper elements of \(\bbB\). In each of the following
cases there exists \(h \in H\) such that \(h(a) = b\):
\begin{enumerate}[\upshape(a)]
\item \(\mu(a) = \mu(b) = x \in \OPN{grp}(S)\); or
\item \(\mu(\BooD{a}{b}) = \mu(\BooD{b}{a}) = y \in \OPN{grp}(S)\).
\end{enumerate}
\end{lem}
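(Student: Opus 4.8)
The plan is to reduce both cases to the reformulation of homogeneity recorded in \DEF{PN-homo-meas}: for proper elements $a,b \in \bbB$ with $\mu(a) = \mu(b)$ and $\mu(\BooD{\unit}{a}) = \mu(\BooD{\unit}{b})$ there exists $h \in \OPN{Aut}(\mu) = H$ with $h(a) = b$. Since $a$ and $b$ are assumed proper in both parts, it therefore suffices in each case to verify these two equalities of measures; no further use of ultrahomogeneity or of the trinary spectrum is required.

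For case (a) the first equality $\mu(a) = \mu(b) = x$ is exactly the hypothesis, so only $\mu(\BooD{\unit}{a}) = \mu(\BooD{\unit}{b})$ remains. Applying additivity to the partitions $\{a,\BooD{\unit}{a}\}$ and $\{b,\BooD{\unit}{b}\}$ of $\bbB$ gives $x + \mu(\BooD{\unit}{a}) = \mu(\unit) = x + \mu(\BooD{\unit}{b})$. This is the one genuinely semigroup-theoretic point of the lemma: cancellation is unavailable in an arbitrary semigroup, which is precisely why the invertibility hypothesis $x \in \OPN{grp}(S)$ is imposed. Adding $-x$ on the left to both sides yields $\mu(\BooD{\unit}{a}) = \mu(\BooD{\unit}{b})$, and homogeneity then produces the desired $h$.

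For case (b) I would introduce the common part $c \df \BooW{a}{b}$ and the exterior part $d \df \BooD{\unit}{(\BooV{a}{b})}$, so that $\{c,\BooD{a}{b},\BooD{b}{a},d\}$ is a (possibly degenerate, i.e.\ some entries may be $\zero$) partition of $\bbB$ with $a = \BooV{c}{(\BooD{a}{b})}$ and $b = \BooV{c}{(\BooD{b}{a})}$. Additivity then gives $\mu(a) = \mu(c) + y = \mu(b)$ immediately; and since $\BooD{\unit}{a} = \BooV{(\BooD{b}{a})}{d}$ while $\BooD{\unit}{b} = \BooV{(\BooD{a}{b})}{d}$, one also reads off $\mu(\BooD{\unit}{a}) = y + \mu(d) = \mu(\BooD{\unit}{b})$. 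Both hypotheses of homogeneity hold, so again $h \in H$ with $h(a) = b$ exists. I expect no real obstacle here: the computation uses only finite additivity and never cancels, so in fact the stated invertibility $y \in \OPN{grp}(S)$ is not needed for this route—the two complement-measures are forced to agree by the single assumption $\mu(\BooD{a}{b}) = \mu(\BooD{b}{a})$, since the two set-differences are simply interchanged between $\BooD{\unit}{a}$ and $\BooD{\unit}{b}$.

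In summary, the only delicate step is the absence of cancellation in $S$, which is exactly what the invertibility assumption in case (a) circumvents; case (b) avoids the difficulty structurally. I would also note that the distinguished zero element $\zZ$ plays no role in proving the lemma itself—it enters only in the surrounding discussion, for instance through case (a) with $x = 0_S$, which lets one carry $\zZ$ onto any other proper element of measure $0_S$ by a member of $H$.
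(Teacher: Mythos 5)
Your argument is correct and coincides with the paper's own proof: both parts are reduced to the two measure equalities required by the definition of homogeneity, with invertibility of \(x\) used to cancel in case (a) and pure additivity over the common refinement \(\{\BooW{a}{b},\BooD{a}{b},\BooD{b}{a},\BooD{\unit}{(\BooV{a}{b})}\}\) handling case (b). Your side remark that the invertibility of \(y\) is never actually used in (b) is accurate and is implicit in the paper's computation as well.
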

\begin{proof}
To show (a), just observe that then \(\mu(\BooD{\unit}{a}) = \mu(\unit)+(-x) =
\mu(\BooD{\unit}{b})\) and apply homogeneity of \(\mu\). Similarly, if (b)
holds, then \(\mu(a) = \mu(\BooW{a}{b})+y = \mu(b)\) as well as
\(\mu(\BooD{\unit}{a}) = \mu(\BooD{\unit}{(\BooV{a}{b}}))+y =
\mu(\BooD{\unit}{b})\), and homogeneity of \(\mu\) finishes the proof.
\end{proof}

\begin{lem}{PN-zero-2}
There exists a proper element \(\zZ' \in \bbB\) such that \(\BooW{\zZ}{\zZ'} =
\zero\), \(\BooV{\zZ}{\zZ'} \neq \unit\) and \(\mu(\zZ') = 0_S\).
\end{lem}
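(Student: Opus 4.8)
The plan is to reduce the statement to producing a proper, measure-zero element sitting strictly inside the complement $w \df \BooD{\unit}{\zZ}$, and then to manufacture such an element by a single application of (4EP) in which the given zero-set $\zZ$ is fed in as one block of a partition while a piece of $w$ plays the role of the element to be split.

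First I would record the elementary reduction. Since $\zZ$ is proper, $w = \BooD{\unit}{\zZ}$ is non-zero, and additivity together with $\mu(\zZ) = 0_S$ gives $\mu(w) = \mu(\unit)$. Observe that \emph{any} non-zero $\zZ' \le w$ with $\zZ' \neq w$ automatically meets all requirements: it is disjoint from $\zZ$ (as $\zZ' \le \BooD{\unit}{\zZ}$), and $\BooV{\zZ}{\zZ'} \neq \unit$ because $\BooD{w}{\zZ'} \neq \zero$. So it suffices to find a proper element below $w$, different from $w$, on which $\mu$ vanishes.

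The key step is the following use of (4EP), which is available by \THM{PN-4EP} since $\mu$ is homogeneous. Using atomlessness of $\bbB$, split $w = \BooV{w_1}{w_2}$ into two disjoint non-zero elements, so that $\{w_1,\zZ,w_2\}$ is a partition of $\bbB$. Now I would apply (4EP) to this partition in the guise $(a,b,c) = (w_1,\zZ,w_2)$ together with the element $d \df w_1$. The hypotheses hold: $\mu(a)+\mu(b) = \mu(w_1)+\mu(\zZ) = \mu(w_1) = \mu(d)$ (here $\mu(\zZ) = 0_S$ is used as the neutral element), while $\mu(c) = \mu(w_2) = \mu(\zZ)+\mu(w_2) = \mu(\BooD{\unit}{w_1}) = \mu(\BooD{\unit}{d})$. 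Hence (4EP) yields disjoint proper $p, q$ with $\BooV{p}{q} = w_1$, $\mu(p) = \mu(w_1)$ and $\mu(q) = \mu(\zZ) = 0_S$. Setting $\zZ' \df q$ then finishes the proof: $q$ is proper, $q \le w_1 \le w$ (so in particular $q \neq w$, as $w_2 \neq \zero$), and $\mu(q) = 0_S$.

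The main obstacle to watch for is circularity. Asking (4EP) directly to carve a zero-block out of $w$ (i.e.\ taking $d = w$) would require a partition of $\bbB$ that \emph{already} contains a zero-block inside $w$, which is exactly what we are trying to build. The device that breaks the loop is to let $d$ be a proper part $w_1$ of $w$ and to supply the pre-existing zero-set $\zZ$ as a \emph{separate} block of the partition; the ``virtual'' splitting of $w_1$ into measures $\mu(w_1)$ and $0_S$ then matches the genuine partition $\{w_1,\zZ,w_2\}$, which is not a refinement of $w_1$, and (4EP) converts it into an honest internal split of $w_1$ carrying a zero-piece. Reserving the second part $w_2$ is precisely what secures $\BooV{\zZ}{\zZ'} \neq \unit$.
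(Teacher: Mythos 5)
Your proof is correct. Every application is legitimate: $\{w_1,\zZ,w_2\}$ is a genuine three-element partition (the $w_i$ are non-zero and disjoint from $\zZ$), the two hypotheses of (4EP) for $d=w_1$ reduce to $\mu(\zZ)=0_S$ exactly as you compute, and the resulting $q$ is proper, lies below $w_1$ (hence is disjoint from $\zZ$ and satisfies $\BooV{\zZ}{q}\neq\unit$ because $w_2$ survives in the complement), and has measure $0_S$. The route is genuinely different from the paper's, though. The paper picks an arbitrary proper $a$ disjoint from $\zZ$ with $b\df\BooV{\zZ}{a}\neq\unit$, invokes item (b) of \LEM{PN-zero-1} (whose hypothesis holds since $\BooD{b}{a}=\zZ$ has measure $0_S\in\OPN{grp}(S)$) to obtain $h\in H$ with $h(b)=a$, and sets $\zZ'=h(\zZ)$; that is, it transports the given zero-set by a global $\mu$-automorphism manufactured from homogeneity. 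You instead never leave the level of the local splitting axiom: a single application of (4EP), with $\zZ$ supplied as an external block of the partition, carves a fresh zero-piece out of a proper part of the complement. Your version is marginally more self-contained (it bypasses \LEM{PN-zero-1} entirely and needs only \THM{PN-4EP}), while the paper's is a one-line corollary of a lemma it has already proved and continues to reuse throughout the section; the two arguments are of essentially the same depth.
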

\begin{proof}
Take an arbitrary proper element \(a \in \bbB\) such that \(\BooW{\zZ}{a} =
\zero\) and \(b \df \BooV{\zZ}{a}\) differs from \(\unit\). It follows from item
(b) of \LEM{PN-zero-1} that there is \(h \in H\) such that \(h(b) = a\). To end
the proof, set \(\zZ' = h(\zZ)\).
\end{proof}

\begin{lem}{PN-zero-3}
The set \(G \df \OPN{grp}(\mu)\) is a subgroup of \(\OPN{grp}(S)\) and for any
non-zero \(j \in \bbB\) such that \(\mu(j) \in G\) the measure
\(\mu\restriction{j}\) is \(G\)-filling.
\end{lem}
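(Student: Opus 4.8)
The plan is to prove the two assertions in turn, reducing everything to a single ``splitting'' statement inside $\zZ$ and one spectral computation.

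First I would check that $G \df \OPN{grp}(\mu)$ is a subgroup of $\OPN{grp}(S)$. Every element is invertible: if $\BooW{a}{\zZ} = a$, then $\BooD{\zZ}{a}$ also lies below $\zZ$ and $\mu(a) + \mu(\BooD{\zZ}{a}) = \mu(\zZ) = 0_S$, so $\mu(a) \in \OPN{grp}(S)$ with $-\mu(a) = \mu(\BooD{\zZ}{a}) \in G$; this gives $G \subseteq \OPN{grp}(S)$, $0_S = \mu(\zero) \in G$, and closure under inverses. For closure under addition take $a, b \le \zZ$ and use \LEM{PN-zero-2} to get a proper zero-measure $\zZ'$ disjoint from $\zZ$; by \LEM{PN-zero-1}(a) there is $h \in H$ with $h(\zZ) = \zZ'$, whence $h(b) \le \zZ'$ is disjoint from $a$ and $\mu(\BooV{a}{h(b)}) = \mu(a) + \mu(b)$. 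Since $w \df \BooV{\zZ}{\zZ'}$ is a proper zero-measure element containing $\BooV{a}{h(b)}$, another application of \LEM{PN-zero-1}(a) (moving $w$ onto $\zZ$) transports $\BooV{a}{h(b)}$ below $\zZ$, so $\mu(a) + \mu(b) \in G$.

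The core is the following splitting claim: for every nonzero $a \le \zZ$ and every $g \in G$ there are disjoint proper $p, q$ with $\BooV{p}{q} = a$ and $\mu(p) = g$. Granting this, the $G$-filling property of $\mu\restriction{\zZ}$ is immediate (take $b = p$, which is proper and $\neq a$ since $q$ is nonzero). To prove the claim I would show $(g, \mu(a) - g, \mu(\BooD{\unit}{a})) \in \Sigma$ and then invoke (4EP) (available by \THM{PN-4EP}) with $d = a$. For the spectral membership, note first that $(0_S, \mu(a), \mu(\BooD{\unit}{a})) \in \Sigma$, witnessed by the partition $\{\zZ', a, \BooD{\unit}{(\BooV{\zZ'}{a})}\}$: here $\zZ'$ is disjoint from $a$ because $a \le \zZ$, and $\BooV{\zZ'}{a} \le w \neq \unit$ keeps the last entry nonzero. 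If $g = 0_S$ this already is the desired triple. If $g \neq 0_S$, pick $e \le \zZ$ with $\mu(e) = g$ (so $e$ and $\BooD{\zZ}{e}$ are nonzero); then $\{e, \BooD{\zZ}{e}, \BooD{\unit}{\zZ}\}$ shows $(g, -g, \mu(\unit)) \in \Sigma$, and feeding the two triples $(g, -g, \mu(a) + \mu(\BooD{\unit}{a}))$ and $(g + (-g), \mu(a), \mu(\BooD{\unit}{a}))$ into axiom (Sp3) yields precisely $(g, \mu(a) - g, \mu(\BooD{\unit}{a})) \in \Sigma$.

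Finally I would remove the restriction $j \le \zZ$. For proper $j$ with $\mu(j) \in G$, choose proper $c \le \zZ$ with $\mu(c) = \mu(j)$; since $\mu(j)$ is invertible the complements satisfy $\mu(\BooD{\unit}{j}) = \mu(\unit) - \mu(j) = \mu(\BooD{\unit}{c})$, so homogeneity supplies $h \in H$ with $h(j) = c$, and $h$ carries the splitting statement (hence $G$-filling and $G$-valuedness) from $\bbB\restriction{c}$ back to $\bbB\restriction{j}$. The case $j = \unit$ (when $\mu(\unit) \in G$) is treated separately: writing $x = \BooV{(\BooW{x}{\zZ})}{(\BooW{x}{(\BooD{\unit}{\zZ})})}$ and mapping the top piece $\BooD{\unit}{\zZ}$ (which has measure $\mu(\unit)$ and complement-measure $0_S$, as does some $c_0 \le \zZ$) into $\zZ$ via homogeneity shows $\mu$ is $G$-valued; the splitting then proceeds as above, the only extra point being that a zero-measure subelement of a general $a$ is produced by transporting $a$ into $\zZ$ and applying the $g = 0_S$ case there. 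The main obstacle is exactly the spectral computation — landing $(g, \mu(a) - g, \mu(\BooD{\unit}{a}))$ in $\Sigma$ through the right instance of (Sp3) — together with keeping the $g = 0_S$ degeneracy and the reductions to $j \le \zZ$ free of circularity.
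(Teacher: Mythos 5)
Your proof is correct, and the skeleton (subgroup structure first, then the filling property for elements below \(\zZ\), then transport to general \(j\) and finally to \(j=\unit\)) matches the paper's. The engine for the filling property, however, is genuinely different. The paper works entirely with automorphisms supplied by \LEM{PN-zero-1}: given non-zero \(j_0\) below \(j\), it first manufactures a zero-measure element \(b\) inside \(j_0\) (by an automorphism absorbing \(\zZ'\) into \(h^{-1}(j_0)\)), then maps \(\zZ\) onto \(b\) and pushes elements of \(\bbB\restriction{\zZ}\) of arbitrary measure into \(j_0\). You instead split the target element \(a\) directly: you verify \((g,\mu(a)-g,\mu(\BooD{\unit}{a}))\in\Sigma(\mu)\) by combining the two witnessed triples \((0_S,\mu(a),\mu(\BooD{\unit}{a}))\) and \((g,-g,\mu(\unit))\) through axiom (Sp3), and then invoke (4EP) with \(d=a\). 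Both routes rest on homogeneity (via \THM{PN-4EP}), but yours trades the paper's chain of automorphism transports for a single spectral computation; this makes the mechanism more explicit and shows directly which triples of \(\Sigma(\mu)\) are responsible for the filling property, at the modest cost of having to track the degenerate case \(g=0_S\) and the invertibility needed to cancel \(\mu(j)\) when matching complements. All the delicate points (properness of \(\BooD{\unit}{(\BooV{\zZ'}{a})}\) via \(\BooV{\zZ}{\zZ'}\neq\unit\), existence of a proper \(c\le\zZ\) with \(\mu(c)=\mu(j)\) even when \(\mu(j)=0_S\), and the final bootstrap for \(j=\unit\)) are handled correctly.
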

\begin{proof}
Let \(\zZ'\) be as specified in \LEM{PN-zero-2}. It follows from \LEM{PN-zero-1}
that
\begin{equation}\label{eqn:PN-zero-1}
\OPN{grp}(\mu) = \{\mu(b)\dd\ b \in \bbB\restriction{\zZ'}\}
= \{\mu(x)\dd\ x \in \bbB\restriction{(\BooV{\zZ}{\zZ'})}\}.
\end{equation}
If \(g \in G\), then \(g = \mu(a)\) for some \(g \in \bbB_0 \df
\bbB\restriction{\zZ}\). Then \(0_S = \mu(a)+\mu(\BooD{\zZ}{a})\) and therefore
\(g \in \OPN{grp}(G)\) and \(-g = \mu(\BooD{\zZ}{a})\) belongs to \(G\).
Further, if \(p, q \in G\), then \(p = \mu(a)\) and \(q = \mu(b)\) for some \(a
\in \bbB\restriction{\zZ}\) and \(b \in \bbB\restriction{\zZ'}\) (by
\eqref{eqn:PN-zero-1}). Then \(p+q = \mu(\BooV{a}{b})\) and \(\BooV{a}{b} \in
\bbB\restriction{(\BooV{\zZ}{\zZ'})}\). Consequently, \(p+q \in G\) (again
thanks to \eqref{eqn:PN-zero-1}) and thus \(G\) is a group.\par
Now let \(j \in \bbB\) be non-zero such that \(\mu(j) \in G\). First assume
\(j\) differs from \(\unit\). Take non-zero \(a \in \bbB\restriction{\zZ}\) such
that \(\mu(j) = \mu(a)\) (such \(a\) exists even if \(\mu(j) = 0_S\)). We infer
from \LEM{PN-zero-1} that \(j = h(a)\) for some \(h \in H\). In particular,
\(\mu(\bbB\restriction{j}) \subset G\). Fix a non-zero element \(j_0\) of
\(\bbB\restriction{j}\). There exists \(u \in H\) such that
\(u(\BooV{h^{-1}(j_0)}{\zZ'}) = h^{-1}(j_0)\) (by the same lemma). Then \(b \df
h(u(\zZ'))\) belongs to \(\bbB\restriction{j_0}\) and satisfies \(\mu(b) =
0_S\). So, there is \(k \in H\) such that \(k(\zZ) = b\). Now for any \(g \in
G\) there is \(x \in \bbB\restriction{\zZ}\) such that \(\mu(x) = g\) and then
\(k(x) \in \bbB\restriction{j_0}\) is such that \(\mu(k(x)) = g\), which shows
that \(\mu\restriction{j}\) is \(G\)-filling.\par
Finally, assume \(j = \unit\). Then \(\mu(\BooD{\unit}{\zZ}) = \mu(\unit)\)
belongs to \(G\) as well. So, it follows from the previous paragraph that both
\(\mu\restriction{\zZ}\) and \(\mu\restriction{(\BooD{\unit}{\zZ})}\) are
\(G\)-filling. Hence \(\mu(\bbB) \subset G\) and, by the same part of the proof,
\(\mu\restriction{w}\) is \(G\)-filling for each proper \(w \in \bbB\).
Consequently, \(\mu\) is \(G\)-filling, and we are done.
\end{proof}

\begin{lem}{PN-zero-4}
For any non-zero element \(j\) of \(\bbB\) there is non-zero \(x \in
\bbB\restriction{j}\) such that \(\mu(x) = 0_S\).
\end{lem}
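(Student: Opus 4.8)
The plan is to split on whether \(\zZ\) already lies below \(j\). If \(\BooW{\zZ}{j} = \zZ\) (that is, \(\zZ \le j\)), then \(x \df \zZ\) does the job, since \(\zZ\) is non-zero and \(\mu(\zZ) = 0_S\). So the whole difficulty is concentrated in the case \(\zZ \not\le j\), where the naive idea of transporting \(\zZ\) into \(j\) by homogeneity fails: homogeneity would require a zero-valued target already sitting inside \(j\), which is exactly what we are trying to produce. Instead I would first manufacture a non-zero zero-valued element lying in the complement \(\BooD{\unit}{j}\), and then \emph{fold} it into \(j\) by a single application of \LEM{PN-zero-1}(b).

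Concretely, assume \(\zZ \not\le j\). Then \(k \df \BooD{\zZ}{j}\) is a non-zero element of \(\bbB\restriction{\zZ}\). Since \(\mu(\zZ) = 0_S \in G \df \OPN{grp}(\mu)\), \LEM{PN-zero-3} tells us that \(\mu\restriction{\zZ}\) is \(G\)-filling. Picking (by atomlessness of \(\bbB\)) a proper non-zero element \(a\) of \(\bbB\restriction{\zZ}\) with \(a \le k\), and applying the \(G\)-filling property with the value \(0_S \in G\), I obtain a non-zero \(m \le a\) with \(m \ne a\) and \(\mu(m) = 0_S\). By construction \(m \le k \le \BooD{\unit}{j}\), so \(m\) is disjoint from \(j\); moreover \(m \ne \BooD{\unit}{j}\) (as \(m\) is a proper part of \(a \le \BooD{\unit}{j}\)), whence \(\BooV{j}{m} \ne \unit\).

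Now I fold: set \(a' \df \BooV{j}{m}\) and \(b' \df j\). Both are proper (\(b' = j\) is proper because \(\zZ \not\le j\) forces \(j \ne \unit\), and \(a' \ne \unit\) by the previous step), and \(\BooD{a'}{b'} = m\), \(\BooD{b'}{a'} = \zero\) have the common \(\mu\)-value \(0_S \in \OPN{grp}(S)\). Hence \LEM{PN-zero-1}(b) (with \(y = 0_S\)) yields \(h \in H\) with \(h(\BooV{j}{m}) = j\). As \(h\) is a Boolean isomorphism and \(m \le \BooV{j}{m}\), the element \(x \df h(m)\) satisfies \(x \le j\), \(x \ne \zero\) and \(\mu(x) = \mu(m) = 0_S\), which is exactly what is required.

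The one genuinely delicate point — and the reason for shrinking \(\BooD{\zZ}{j}\) to a proper part \(m\) rather than folding \(\BooD{\zZ}{j}\) itself — is guaranteeing \(\BooV{j}{m} \ne \unit\), i.e.\ the properness that \LEM{PN-zero-1}(b) demands; this is what rules out the degenerate configuration \(j = \BooD{\unit}{\zZ}\). Everything else I expect to be routine bookkeeping. I would also emphasize that the argument is non-circular: it uses only the existence of the fixed \(\zZ\), the \(G\)-filling conclusion of \LEM{PN-zero-3}, and the transport statement \LEM{PN-zero-1}(b); in particular it does \emph{not} presuppose transitivity (solidity) of the action of \(H\), which is established later and would otherwise make the reasoning circular.
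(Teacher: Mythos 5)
Your proof is correct and uses the same two ingredients as the paper's: \LEM{PN-zero-3} to get that \(\mu\restriction{\zZ}\) is \(G\)-filling, and \LEM{PN-zero-1}(b) to transport a zero-valued element into \(j\). The paper organises the case split differently: it sets \(w \df \BooW{\zZ}{j}\) and, if \(w \neq \zero\), applies the \(G\)-filling property directly to \(w\) to find a non-zero \(x \leq w \leq j\) of measure \(0_S\); if \(w = \zero\), it applies \LEM{PN-zero-1}(b) to the pair \((\BooV{\zZ}{j},\, j)\) to obtain \(h\) with \(h(\BooV{\zZ}{j}) = j\) and sets \(x = h(\zZ)\). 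Your extra step of first shrinking to a strictly smaller zero-valued \(m\) below \(\BooD{\zZ}{j}\) before folding is not wasted caution: the paper's second case tacitly needs \(\BooV{\zZ}{j} \neq \unit\) (both arguments of \LEM{PN-zero-1} must be proper, and no automorphism sends \(\unit\) to a proper element), which fails precisely when \(j = \BooD{\unit}{\zZ}\); your version covers that degenerate configuration, so on this point it is tighter than the printed proof. Your closing remark on non-circularity (no appeal to solidity, which is only established afterwards) is also apt.
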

\begin{proof}
If \(w \df \BooW{\zZ}{j}\) is non-zero, then there exists non-zero \(x \in
\bbB\restriction{w}\) such that \(\mu(x) = 0_S\), as \(\mu\restriction{\zZ}\) is
\(G\)-filling (by \LEM{PN-zero-3}). On the other hand, if \(w = \zero\), then
there is \(h \in H\) such that \(h(\BooV{\zZ}{j}) = j\) (thanks to
\LEM{PN-zero-1}). Then \(\mu(x) = 0_S\) for \(x = h(\zZ)\).
\end{proof}

Below we summarize all that have been established in this section. We recall
that a subset \(\eeE\) of \(\bbB\) is an \emph{ideal} if it is non-empty and
\(\BooV{a}{b} \in \eeE\) as well as \(\bbB\restriction{a} \subset \eeE\) for all
non-zero \(a, b \in \eeE\).

\begin{pro}{PN-zero-solid}
The measure \(\mu\) is solid and the set \(G = \OPN{grp}(\mu)\) is a subgroup of
\(\OPN{grp}(S)\). The collection \(\ddD\) of all \(a \in \bbB\) such that
\(\mu(a) \in G\) is an ideal in \(\bbB\) such that every non-zero member of
\(\bbB\) contains a non-zero element of \(\ddD\). Moreover, for any non-zero \(j
\in \ddD\) the measure \(\mu\restriction{j}\) is \(G\)-filling \UP(in
particular, it is supersolid\UP).
\end{pro}
\begin{proof}
We only need to show that \(\mu\) is solid and \(\BooV{a}{b} \in \ddD\) for all
\(a, b \in \ddD\). The former of these properties follows from \LEM{PN-zero-4}
and item (a) of \LEM{PN-zero-1}, whereas the latter simply follows from
\LEM{PN-zero-3}, as \(\mu(\BooV{a}{b}) = \mu(a)+\mu(\BooD{b}{a})\).
\end{proof}

\begin{rem}{PN-zero-sigma}
It easily follows from \LEM{PN-zero-4} that \(\Sigma' \times \{0_S\} \subset
\Sigma\) for \(\Sigma \df \Sigma(\mu)\). Thus, in this case (Sp4) is equivalent
to (Sp4b), which yields an indirect proof that \(\mu\) is solid.
\end{rem}

\begin{proof}[Proof of \THM{PN-zero-super}]
Thanks to \PRO{PN-zero-solid}, we only need to show that \(\mu(\unit) \in
\OPN{grp}(\mu)\) for any measure satisfying (ii); or, equivalently, that \(\unit
\in \ddD\) where \(\ddD\) is as specified in the last cited result. To this end,
we keep the notation of this section and take a finite number of automorphisms
\(\phi_1,\ldots,\phi_N \in \OPN{Aut}(\mu)\) such that \(\unit = \bigvee_{k=1}^N
\phi_k(\zZ)\). Then \(\phi_k(\zZ) \in \ddD\) and also \(\unit \in \ddD\) (by
\PRO{PN-zero-solid}), and we are done.
\end{proof}

\begin{cor}{PN-zero-h3s}
A \homspec{3} \(\Sigma\) in \(S\) contains \((0_S,0_S,0_S)\) iff
\begin{equation}\label{eqn:PN-zero-h3s}
\Sigma = \{(x,y,z) \in G^3\dd\ x+y+z = 0_S\}
\end{equation}
for some subgroup \(G\) of \(\OPN{grp}(S)\). Moreover, if
\eqref{eqn:PN-zero-h3s} holds and \(\mu\dd \bbB \to S\) is a homogeneous measure
such that \(\Sigma(\mu) = \Sigma\), then \(\mu\) is a \(G\)-filling
\(0_G\)-measure.
\end{cor}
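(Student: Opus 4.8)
The plan is to realise \(\Sigma\) as the trinary spectrum of a concrete homogeneous measure and to extract both the group \(G\) and the exact shape of \(\Sigma\) from the structural results of this section, with \PRO{PN-zero-solid} doing the heavy lifting. The direction ``\(\Leftarrow\)'' is trivial: if \(\Sigma = \{(x,y,z) \in G^3\dd\ x+y+z = 0_S\}\) for a subgroup \(G\) of \(\OPN{grp}(S)\), then \(0_S \in G\) and \(0_S+0_S+0_S = 0_S\), so \((0_S,0_S,0_S) \in \Sigma\).

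For the direction ``\(\Rightarrow\)'', assume \((0_S,0_S,0_S) \in \Sigma\) and use \THM{PN-ext-spec} to fix a homogeneous measure \(\mu\dd \bbB \to S\) with \(\Sigma(\mu) = \Sigma\). Two remarks start the engine. First, a partition witnessing \((0_S,0_S,0_S) \in \Sigma\) provides a proper element \(\zZ\) with \(\mu(\zZ) = 0_S\), so \(\mu\) and \(\zZ\) satisfy the standing hypotheses of this section. Second, axiom (Sp2) forces \(e(\Sigma) = 0_S+0_S+0_S = 0_S\), whence \(\mu(\unit) = 0_S\). Putting \(G \df \OPN{grp}(\mu)\), \PRO{PN-zero-solid} tells us that \(G\) is a subgroup of \(\OPN{grp}(S)\) and that \(\mu\restriction{j}\) is \(G\)-filling for every non-zero \(j\) in the ideal \(\ddD = \{a \in \bbB\dd\ \mu(a) \in G\}\). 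Since \(\mu(\unit) = 0_S \in G\), we have \(\unit \in \ddD\), hence \(\ddD = \bbB\); this gives at once \(\mu(\bbB) = G\) and, on taking \(j = \unit\), that \(\mu\) itself is \(G\)-filling (and so supersolid).

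It remains to match \(\Sigma = \Sigma(\mu)\) with \(\{(x,y,z) \in G^3\dd\ x+y+z = 0_S\}\). The inclusion ``\(\subseteq\)'' is free, since every \((x,y,z) \in \Sigma\) arises from a partition \(\{a,b,c\}\) consisting of three proper elements, forcing \(x,y,z \in \mu(\bbB) = G\) and \(x+y+z = \mu(\unit) = 0_S\). For ``\(\supseteq\)'' I would argue constructively from the \(G\)-filling property: given \((x,y,z) \in G^3\) with \(x+y+z = 0_S\), split off (via \(G\)-filling applied to any proper element) a proper \(a\) with \(\mu(a) = x\), then apply \(G\)-filling to the proper element \(\BooD{\unit}{a}\) to obtain a non-zero \(b\) strictly below \(\BooD{\unit}{a}\) with \(\mu(b) = y\); the remainder \(c \df \BooD{\unit}{(\BooV{a}{b})}\) is non-zero and, because \(G\) is a group, \(\mu(c) = \mu(\unit)-x-y = -(x+y) = z\). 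Thus the partition \(\{a,b,c\}\) witnesses \((x,y,z) \in \Sigma\), and the equality follows.

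For the ``Moreover'' clause, \(\Sigma(\mu) = \Sigma\) again yields a proper zero-value element and forces \(\mu(\unit) = e(\Sigma) = 0_S\), so the ``\(\Rightarrow\)'' argument shows verbatim that \(\mu\) is \(\mu(\bbB)\)-filling with \(\mu(\bbB) = \OPN{grp}(\mu)\) a subgroup of \(\OPN{grp}(S)\); it remains to see \(\mu(\bbB) = G\). From the explicit form of \(\Sigma\) one computes \(\Sigma' = \{(f,-f)\dd\ f \in G\}\), so \(\mu\) sends every proper element into \(G\) (giving \(\mu(\bbB) \subseteq G\)), while each \(g \in G\) is attained because \((g,-g,0_S) \in \Sigma\) (giving \(G \subseteq \mu(\bbB)\)); together with \(\mu(\unit) = 0_S = 0_G\), this exhibits \(\mu\) as a \(G\)-filling \(0_G\)-measure. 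The main obstacle I anticipate is the seemingly innocuous step \(\unit \in \ddD\): without the (Sp2)-consequence \(\mu(\unit) = 0_S\), \PRO{PN-zero-solid} guarantees only that \(\mu\) is solid and \(G\)-filling on the ideal \(\ddD\), and one could not upgrade this to global \(G\)-filling; it is precisely \(0_S \in G\) that propagates the behaviour from \(\ddD\) to all of \(\bbB\).
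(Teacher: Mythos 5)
Your proposal is correct and follows essentially the same route as the paper: realise \(\Sigma\) as \(\Sigma(\mu)\) for a homogeneous \(\mu\), observe that \(0_S\) is attained on a proper element, and invoke \PRO{PN-zero-solid}. The paper compresses the remaining steps into a single sentence, whereas you spell out the details it leaves implicit (that (Sp2) forces \(\mu(\unit)=0_S\), hence \(\unit\in\ddD\) and \(\mu\) is globally \(G\)-filling, and the explicit two-inclusion computation of the spectrum of a \(G\)-filling \(0_G\)-measure); these are exactly the right details and are consistent with how the paper uses \LEM{PN-zero-3} elsewhere.
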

\begin{proof}
The `if' part is trivial. To prove the `only if' one, assume \((0_S,0_S,0_S) \in
\Sigma\) and take a homogeneous measure \(\mu\) whose trinary spectrum coincides
with \(\Sigma\). Observe that then \(0_S \in \mu(\bbB \setminus
\{\zero,\unit\})\) and apply \PRO{PN-zero-solid} to get \eqref{eqn:PN-zero-h3s}
and the additional claim of the result.
\end{proof}

\begin{rem}{PN-zero-E}
We wish to emphasize that the results of this section do not apply to
homogeneous measures \(\mu\) satisfying \(\mu(\unit) = 0_S\). Although in such
cases \(\mu(\bbB) \subset \OPN{grp}(S)\), it may happen that \(0_S \notin
\mu(\bbB \setminus \{\zero,\unit\})\)---as the following general example shows:
If \(\rho\dd \bbB \to [0,1]\) is a good measure on \(\Can\), then \(\mu \df \pi
\circ \rho\) is a supersolid measure such that \(\mu(\unit) = 0_{\RRR/\ZZZ}\)
and \(\mu\) is one-to-one on \(\bbB \setminus \{\zero\}\) where \(\pi\dd \RRR
\to \RRR/\ZZZ\) is the quotient epimorphism. (In particular, \(\OPN{Aut}(\mu) =
\OPN{Aut}(\rho)\).)
\end{rem}

\section{Group-valued homogeneous and solid measures}\label{sec:group}

When \(S = G\) is a group (or, more generally, when \(S\) is a cancellative
semigroup and one embeds \(S\) into a group \(G\)), axioms (Sp0)--(Sp5) may
be simplified thanks to (Sp2). In this section we describe how to define
\textbf{binary} spectra of group-valued homogeneous measures and formulate
simpler axioms than the aforementioned. The fact that the situation is becoming
simpler is evidenced by the result below.\par
From now on to the end of this section \(G\) denotes an Abelian group (again, we
use additive notation and denote the neutral element of \(G\) by \(0_G\)).\par
Let \(\mu\) be a \(G\)-valued measure on \(\bbB\). Observe that the function
\[\Sigma(\mu) \ni (x,y,z) \mapsto (x,y) \in G \times G\]
is one-to-one, as \(\mu(c) = \mu(\unit)-\mu(a)-\mu(b)\) for any partition
\(\{a,b,c\}\) of \(\bbB\). This remark motivates us to work with a binary
spectrum of \(\mu\) instead of \(\Sigma(\mu)\). However, \(e(\Sigma(\mu))\) is
an ``invariant'' of \(\mu\) that we also need to remember. Therefore we
introduce

\begin{dfn}{PN-binary-spec}
For any \(G\)-valued measure \(\mu\) we use \(E(\mu)\) to denote \(\mu(\unit)\)
and \(\Lambda(\mu)\) is defined as follows:
\[\Lambda(\mu) \df \{(x,y)\dd\ (x,y,z) \in \Sigma(\mu)\}.\]
We call \(\Lambda(\mu)\) the \emph{binary spectrum} of \(\mu\). Additionally, we
set \(\Lambda'(\mu) \df \{a\dd\ (a,b) \in \Lambda(\mu)\}\). Note that:
\begin{itemize}
\item \(\Lambda(\mu)\) consists precisely of all pairs of the form
 \((\mu(a),\mu(b))\) where \(a\) and \(b\) are arbitrary disjoint proper
 elements of \(G\) such that \(\BooV{a}{b} \neq \unit\);
\item \(\Lambda'(\mu) = \mu(\bbB \setminus \{\zero,\unit\})\).
\end{itemize}
A pair \((E,\Lambda)\) is said to be a \emph{\homspec{2}} (in \(G\)) if all
the following conditions are fulfilled:
\begin{enumerate}[(GrSp1a)]
\item[(GrSp0)] \(\Lambda \subset G \times G\) is non-empty and at most
 countable, and \(E \in G\).
\item[(GrSp1)] If \((a,b) \in \Lambda\), then both \((b,a)\) and \((a,E-a-b)\)
 belong to \(\Lambda\) as well.
\item[(GrSp2)] If \(a,b,c \in G\) are such that \((a,b),(a+b,c) \in \Lambda\),
 then also \((a,b+c) \in \Lambda\).
\item[(GrSp3a)] For any \((a,b) \in \Lambda\) there exists \((x,y) \in \Lambda\)
 such that \(a = x+y\).
\item[(GrSp3b)] For any
 \begin{itemize}
 \item \(a, b, c \in G\) such that \((a,b),(c,a+b-c) \in \Lambda\); or
 \item \(a, c \in \Lambda' \df \{p\dd\ (p,q) \in \Lambda\}\) and \(b \df E-a\)
 \end{itemize}
 there exists \(x \in G\) such that one of \((x,a-x)\) and \((c-x,b-c+x)\)
 belongs to \(\Lambda \cup (\Lambda' \times \{0_G\})\) and the other to
 \(\Lambda \cup (\{0_G\} \times \Lambda')\).
\end{enumerate}
For any \homspec{2} \((E,\Lambda)\) the set \(\Lambda'\) is defined as specified
in (GrSp3b). A \emph{solid \homspec{2}} is a pair \((E,\Lambda)\) that satisfies
(GrSp0)--(GrSp2) and (GrSp3) formulated below.
\begin{enumerate}[(GrSp1)]\addtocounter{enumi}{2}
\item For any
 \begin{itemize}
 \item \(a, b, c \in G\) such that \((a,b),(c,a+b-c) \in \Lambda\); or
 \item \(a, c \in \Lambda'\) and \(b \df E-a\)
 \end{itemize}
 there exists \(x \in G\) such that \((x,a-x),(c-x,b-c+x) \in \Lambda\).
\end{enumerate}
Finally, a \emph{supersolid \homspec{2}} is a solid \homspec{2} \((E,\Lambda)\)
which satisfies the following condition:
\begin{enumerate}[(GrSp1)]\addtocounter{enumi}{3}
\item For any \(x \in \Lambda'\) there exist \(b_1,\ldots,b_N \in G\) (for
 some \(N \geq 3\) or, equivalently, for all sufficiently large \(N\)) such that
 \(\sum_{k=1}^N b_k = E\) and all the following conditions are fulfilled:
 \begin{enumerate}[\upshape(GrSp4a)]
 \item \((\sum_{s=1}^{k-1} b_s,b_k) \in \Lambda\) whenever \(1 < k < N\);
 \item \((b_k,x-b_k) \in \Lambda\) for all \(k=1,\ldots,N\).
 \end{enumerate}
\end{enumerate}
\end{dfn}

The proof of the following technical result is left to the reader.

\begin{pro}{PN-hom2=hom3}
Let \(G\) be a group and \(E \in G\).
\begin{itemize}
\item A set \(\Sigma \subset G^3\) such that \(a+b+c = E\) for all \((a,b,c) \in
 \Sigma\) is a \homspec{3} \UP(resp. a solid or supersolid \homspec{3}\UP) iff
 the pair \((E,\Lambda)\) is a \homspec{2} \UP(resp. a solid or supersolid
 \homspec{2}\UP) where \(\Lambda \df \{(a,b)\dd\ (a,b,c) \in \Sigma\}\).
\item Conversely, if \(\Lambda \subset G^2\), then \((E,\Lambda)\) is
 a \homspec{2} \UP(resp. a solid or a supersolid \homspec{2}\UP) iff the set
 \(\Sigma \df \{(a,b,E-a-b)\dd\ (a,b) \in \Lambda\}\) is a \homspec{3} \UP(resp.
 a solid or supersolid \homspec{3}\UP).
\end{itemize}
\end{pro}

\begin{exm}{PN-G-E}
\begin{enumerate}[\upshape(A)]
\item Let \(G\) be at most countable and \(E \in G\) be arbitrary. The pair
 \((E,G^2)\) is a supersolid \homspec{2}. We call it \emph{\(E\)-full
 \homspec{2}}. A \(G\)-valued measure \(\mu\) on \(\bbB\) is homogeneous and
 satisfies \((E(\mu),\Lambda(\mu)) = (E,G^2)\) iff \(\mu\) is a \(G\)-filling
 \(E\)-measure.
\item Let \(G\) be a two-point group and let \(E\) denote the unique element
 of \(G\) different from \(0_G\). We leave it as a simple exercise that the only
 \homspec{2}s in \(G\) are: \((0_G,\{(0_G,0_G)\})\) (supersolid, a homogeneous
 measure corresponding to it is constantly equal to \(0_G\)); \(0_G\)-full
 \homspec{2} and \(E\)-full \homspec{2}.
\item Let \(\mu\) be a good measure on the \Cantor\ and let \(I = \mu(\bbB
 \setminus \{\zero,\unit\})\). Then \(\mu\) corresponds to the \homspec{2}
 \((1,\{(a,b) \in I^2\dd\ a+b<1\})\).
\end{enumerate}
\end{exm}

Although axioms from the family (GrSp*) look much simpler than from the family
(Sp*), it is still hard to work with them in practice. Actually, they serve as
a `minimal' set of conditions that guarantee the existence of a homogeneous
measure with a prescribed trinary/binary spectrum. We could now deduce further
consequences of the defining axioms, but instead we will introduce new
structures that induce group-valued homogeneous measures and, in our opinion,
are more handy. As measures that attain a zero value on a proper element of
\(\bbB\) have been well studied in the previous section, and also for
simplicity, here we restrict our investigations only to group-valued solid
measures that do not have the last mentioned property.\par
In what follows, partial orders are understood as strict (sharp); that is, they
are (by definition) anti-symmetric (which means that if \(x < y\), then
necessarily \(y \nless x\)) and transitive (that is, \(x < y\) and \(y < z\)
imply \(x < z\)).\par
As we do not demand homogeneous measures to assume non-zero value on \(\unit\),
we introduce the following concept to simplify the presentation of our results.
We assume that \(\varempty\) represents an element that does not belong to any
(semi)group and we add it to all of them as a new neutral element (in this way
a group is no longer a group). More specifically, if \(Q\) is a (semi)group,
then \(\varempty \notin Q\), \(Q^{\circ} = Q \cup \{\varempty\}\) and
\(g+\varempty = \varempty+g = g-\varempty = g\) for any \(g \in Q^{\circ}\).
When \(Q\) is a group, subtraction of distinct elements of \(Q^{\circ}\) is
possible and does not lead to a confusion. (From now on to the end of
the section, we will always subtract only different elements of a group).

\begin{dfn}{PN-pospec}
Let \(G\) be a group. A \emph{solid partially ordered spectrum} (in short,
a \emph{solid pospec}) in \(G\) is a set \(J \subset G^{\circ}\) equipped with
a strict partial order `\(\prec\)' such that all the following conditions are
fulfilled:
\begin{enumerate}[(PoSp1)]\addtocounter{enumi}{-1}
\item \(J\) is countably infinite and there is \(\Omega \in G\) such that
 \(\varempty\) and \(\Omega\) belong to \(J\) and satisfy \(\varempty \prec x
 \prec \Omega\) for all \(x \in I \df J \setminus \{\varempty,\Omega\}\). (So,
 \(\varempty\) and \(\Omega\) are, respectively, the least and the greatest
 elements of the poset \((J,\prec)\).)
\item If \(a, b \in J\), \(a \prec b\) and \((a,b) \neq (\varempty,\Omega)\),
 then \(b-a \in I\).
\item If \(a, b, c \in J\) satisfy \(a \prec b \prec c\), then \(c-b \prec c-a\)
 and \(b-a \prec c-a\).
\item If \(a, b, c \in J\) are such that \(a \prec c\) and \(\varempty \neq b
 \prec c-a\), then \(a+b \in I\) and \(a \prec a+b \prec c\).
\item For all \(a, b, c \in J\) satisfying \(\varempty \prec a \prec c\) and
 \(\varempty \prec b \prec c\) there is \(x \in I\) such that \(x \prec a\) and
 \(x \prec b\) and \(a-x \prec c-b\).
\end{enumerate}
\end{dfn}

\begin{rem}{PN-pospec}
Let us list some further consequences of axioms (PoSp1)--(PoSp4). Continuing
the notation introduced in the previous definition, we have:
\begin{enumerate}[(PoSp1')]
\item \(\Omega-a \in I\) for each \(a \in I\); \(0_G \notin I\); and \(b-a \in
 J\) whenever \(a, b \in J\) satisfy \(a \prec b\). (However, it may happen that
 \(\Omega = 0_G\).)
\item If \(\varempty \prec b \prec c\), then \(c-b \prec c\) (because it is
 sufficient to apply the former property from (PoSp2) with \(a = \varempty\)).
\addtocounter{enumi}{1}
\item The set \(I\) is both downward and upward directed; that is, for all \(x,
 y \in I\) there are \(a, b \in I\) such that \(a \prec x \prec b\) and \(a
 \prec y \prec b\). In particular, \(I\) has no minimal nor maximal elements.
\end{enumerate}
Indeed, to convince oneself that \(I\) is upward directed, take any two elements
\(a\) and \(b\) from \(I\) and note that both \(a' \df \Omega-a\) and \(b' \df
\Omega-b\) belong to \(I\) as well (by (PoSp1')). Since \(a' \prec \Omega\) and
\(b' \prec \Omega\), it follows from (PoSp4) that there is \(x' \in I\) for
which \(x' \prec a'\) and \(x' \prec b'\). Then \(x \df \Omega-x'\) belongs to
\(I\) and satisfies \(a \prec x\) and \(b \prec x\), thanks to (PoSp2).\par
We also remark that the latter claim of (PoSp2) is a consequence of the former
one. Indeed, if we assume that \(a \prec b \prec c\) implies that \(c-b \prec
c-a\), then (PoSp2') is valid. Hence \(c-b \prec c-a\) is followed by \(b-a =
(c-a)-(c-b) \prec c-a\).
\end{rem}

The role of solid pospecs is explained by the next result, where we keep
the notation introduced in \DEF{PN-pospec}.

\begin{thm}{PN-pospec}
Let \(G\) be a group.
\begin{enumerate}[\upshape(A)]
\item If \((E,\Lambda)\) is a solid \homspec{2} such that \(0_G \notin
 \Lambda'\), then \(E \notin \Lambda'\) and the set \(J \df \{\varempty,E\} \cup
 \Lambda'\) is a solid pospec when `\(\prec\)' is defined as follows:
 \[a \prec b \iff a = \varempty\ \lor\ b = E\ \lor\ (a,b-a) \in \Lambda \qquad
 (a,b \in J,\ a \neq b).\]
 \UP(Moreover, in the above situation, \(\Omega = E\) and \(I = \Lambda'\).\UP)
\item Conversely, if \((J,\prec)\) is a solid pospec, then \((E,\Lambda)\) is
 a solid \homspec{2} such that \(0_G \notin \Lambda'\) where \(E \df \Omega\)
 and \(\Lambda\) consists precisely of all those pairs \((a,b)\) from \(I \times
 I\) such that \(a+b \in I\) and \(a \prec a+b\). \UP(Moreover, \(\Lambda' =
 I\).\UP)
\end{enumerate}
The constructions described in items \UP{(A)--(B)} above are mutually inverse to
each other.
\end{thm}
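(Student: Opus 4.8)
The plan is to treat the two directions separately and then to observe that the constructions invert one another. Throughout I freely identify a solid \homspec{2} \((E,\Lambda)\) with the associated solid \homspec{3} \(\Sigma = \{(a,b,E-a-b)\dd\ (a,b) \in \Lambda\}\) via \PRO{PN-hom2=hom3}, and I realise it by a solid homogeneous measure \(\mu\dd \bbB \to G\) with \(\Sigma(\mu) = \Sigma\) furnished by \THM{PN-ext-spec}. For such a \(\mu\) one has \(E = \mu(\unit)\) and \(\Lambda' = \mu(\bbB \setminus \{\zero,\unit\})\). For part \UP{(A)} I would first record that \(E \notin \Lambda'\): if \(\mu(a) = E\) for a proper \(a\), then \(\mu(\BooD{\unit}{a}) = 0_G\) with \(\BooD{\unit}{a}\) proper, so \(0_G \in \Lambda'\), against the hypothesis. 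Hence, setting \(\Omega \df E\), we get \(I = J \setminus \{\varempty,\Omega\} = \Lambda'\). The conceptual core is the geometric reading of the order: extending \(\mu\) to \(\mu^*\) by \(\mu^*(\zero) = \varempty\) and \(\mu^*(\unit) = E\) (so that \(\mu^{*-1}(\varempty) = \{\zero\}\) and, since \(E \notin \Lambda'\), \(\mu^{*-1}(E) = \{\unit\}\)), one checks straight from the definition that for distinct \(a,b \in J\) one has \(a \prec b\) if and only if there are \(U \subsetneq V\) in \(\bbB\) with \(\mu^*(U) = a\) and \(\mu^*(V) = b\). With this dictionary every axiom becomes a statement about nested clopen sets.

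Granting the dictionary, (PoSp0) follows because \(\Lambda'\) is countable (GrSp0) and has no \(\prec\)-minimal element---by (GrSp3a) every \(a \in \Lambda'\) splits as \(a = x+y\) with \((x,y) \in \Lambda\), so \(x \prec a\)---whence \(I\) is countably infinite, while \(\varempty\) and \(\Omega\) are the extreme elements by construction. Transitivity of \(\prec\) is exactly (GrSp2) (on \(I\): from \((a,b-a),(b,c-b) \in \Lambda\) one gets \((a,c-a) \in \Lambda\)), and antisymmetry is the one genuinely nonformal point: if \(a \prec b\) and \(b \prec a\) with \(a,b \in I\), then, writing \(c = b-a\), both \((a,c)\) and \((a+c,-c)\) lie in \(\Lambda\), so (GrSp2) yields \((a,0_G) \in \Lambda\) and hence \(0_G \in \Lambda'\), a contradiction. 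Axioms (PoSp1)--(PoSp3) are then read off the dictionary (for (PoSp1), \(\Omega - a = \mu(\BooD{\unit}{U}) \in \Lambda'\) when \(a = \mu(U)\); for (PoSp2) and (PoSp3) one first uses homogeneity to align the two hypothesised relations into a single nested chain \(U \subsetneq V \subsetneq W\) and then reads the conclusions off the complements). The decisive axiom is (PoSp4): realising \(a \prec c\) and \(b \prec c\) by \(A, B \subsetneq W\) inside a common \(W\) with \(\mu^*(W) = c\), I would apply \LEM{PN-aux10} to the solid measure \(\mu\restriction{W}\) (solid by \COR{PN-solid-hered}) to move \(A\) into general position with \(B\), so that \(\BooW{A}{B}\), \(\BooD{A}{B}\), \(\BooD{B}{A}\) and \(\BooD{W}{(\BooV{A}{B})}\) are all proper; then \(x \df \mu(\BooW{A}{B})\) satisfies \(x \prec a\), \(x \prec b\) and \(a-x = \mu(\BooD{A}{B}) \prec \mu(\BooD{W}{B}) = c-b\).

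For part \UP{(B)} I would argue purely algebraically from (PoSp0)--(PoSp4), first noting \(\Lambda' = I\) (upward directedness from \REM{PN-pospec} gives, for each \(a \in I\), some \(b \in I\) with \(a \prec a+b\), i.e.\ \(a \in \Lambda'\)). The implication \(a \prec a+b \Rightarrow b \prec a+b\) (apply (PoSp2) to \(\varempty \prec a \prec a+b\)) is used repeatedly. Then (GrSp0) is clear; (GrSp1) follows from (PoSp2) together with (PoSp1') (e.g.\ \((a,E-a-b) \in \Lambda\) reduces to \(a \prec E-b\), obtained by applying (PoSp2) to \(b \prec a+b \prec \Omega\)); (GrSp2) is obtained from (PoSp3) applied with base \(c\) and increment \(b\), using \(b \prec a+b\) to meet its hypothesis; and the solid axiom (GrSp3) is exactly (PoSp4) applied to the triple \((c,a,a+b)\)---the point being that the required membership \((c-x,b-c+x) \in \Lambda\) collapses to the single relation \(c-x \prec b = (a+b)-a\), because \(b-c+x \in I\) is then automatic by (PoSp1).

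Finally, the two constructions invert one another by direct substitution: starting from \((E,\Lambda)\) one recovers \(\Lambda\) as \(\{(a,b) \in I^2\dd\ a+b \in I,\ a \prec a+b\}\) since \(a \prec a+b \iff (a,b) \in \Lambda\), and starting from \((J,\prec)\) one recovers \(\prec\) because on \(I\) the reconstructed order merely appends the clause \(b-a \in I\), which is automatic by (PoSp1) (on \(\{\varempty,\Omega\}\) both orders agree, these being the extreme elements). I expect the main obstacles to be the antisymmetry of \(\prec\), where the hypothesis \(0_G \notin \Lambda'\) is essential, and the matching of the solidity axioms (PoSp4) \(\leftrightarrow\) (GrSp3), which in the forward direction rests squarely on the general-position \LEM{PN-aux10}.
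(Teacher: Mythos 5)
Your plan is correct, and for most of the theorem it follows the paper's own route: in part (A) you realise the solid \homspec{2} by a solid measure \(\mu\) and translate \(a \prec b\) into the existence of nested clopen sets of measures \(a\) and \(b\) (this is exactly the paper's observation \((*)\), including the need for \(E \notin \Lambda'\) so that \(\mu^{*-1}(E) = \{\unit\}\)); your antisymmetry argument via \(0_G \notin \Lambda'\) and your reading of (PoSp1)--(PoSp3) off nested chains coincide with the paper's; and part (B) is the same purely axiomatic verification, with (GrSp1) obtained from (PoSp2) applied to \(\varempty \prec a \prec a+b\) and \(b \prec a+b \prec \Omega\), and (GrSp3) obtained by applying (PoSp4) to the triple \((c,a,a+b)\) with (PoSp1) supplying the remaining memberships in \(I\) --- precisely as in the paper (which leaves the mutual-inverse check to the reader, as you essentially do, correctly). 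The one genuine divergence is the verification of (PoSp4) in part (A), which the paper identifies as the hardest step. There, the paper uses solidity only to show that \((I,\prec)\) is downward directed and then runs a seven-case combinatorial analysis on the relative position of the two realising sets inside \(W\) (whether their meet is zero, equals one of them, whether their join is all of \(W\), etc.), repeatedly falling back on (PoSp1)--(PoSp3) to resolve the degenerate configurations. You instead apply \LEM{PN-aux10} to \(\mu\restriction{W}\) (solid by \COR{PN-solid-hered}) to put the two sets in general position, after which only the paper's final non-degenerate case survives and \(x = \mu(\BooW{A}{B})\) works at once; the three required relations then follow directly from the nesting dictionary, since \(\BooW{A}{B} \subsetneq A\), \(\BooW{A}{B} \subsetneq B\) and \(\BooD{A}{B} \subsetneq \BooD{W}{B}\) are all strict with proper pieces. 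This is a legitimate and noticeably shorter argument --- \LEM{PN-aux10} is proved in Section~3, well before this theorem, so there is no circularity --- and it makes transparent that (PoSp4) is just the general-position property of solid measures, at the cost of invoking the full strength of that lemma where the paper gets by with downward directedness plus case analysis.
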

\begin{proof}
First we will prove (A). So, assume \((E,\Lambda)\) is a solid \homspec{2} such
that \(0_G \notin \Lambda'\). Then also \(E \notin \Lambda'\), thanks to (GrSp1)
and (GrSp3a). Indeed, if \(a \in \Lambda'\), then it follows from (GrSp3a) that
\(a = x+y\) for some \((x,y) \in \Lambda\), and then---by (GrSp1)---also
\((x,E-a)\) as well as \((E-a,x)\) belong to \(\Lambda\), which gives
\begin{equation}\label{eqn:PN-aux45}
E-a \in \Lambda' \qquad (a \in \Lambda').
\end{equation}
We will now show that `\(\prec\)' is a strict partial order on \(J\). So, assume
\(a \prec b\) and \(b \prec c\) (where \(a,b,c \in J\)). If \(a = \varempty\) or
\(c = E\), then surely \(a \prec c\). Hence, we further assume that \(a \neq
\varempty\) and \(c \neq E\). Then, since \(a \prec b\) and \(b \prec c\), we
infer that both the pairs \((a,b-a)\) and \((b,c-b)\) belong to \(\Lambda\).
Then (GrSp2) implies that \((a,c-a) \in \Lambda\) (indeed, setting \(x \df a\),
\(y \df b-a\) and \(z \df c-b\), we have \((x,y),(x+y,z) \in \Lambda\) and
therefore \((x,y+z) \in \Lambda\), by (GrSp2)). This implies that \(a \neq c\)
(as \(0_G \notin \Lambda\)) and \(a \prec c\). Transitivity of `\(\prec\)'
immediately implies that it is anti-symmetric (because if \(a \prec b\) and \(b
\prec a\), then \(a \prec a\), which does not hold, by the very definition of
`\(\prec\)'). Now it is clear that \(\varempty\) and \(\Omega \df E\) are,
respectively, the least and the greatest elements of \((J,\prec)\); and that \(I
= \Lambda'\). So, (PoSp0) holds (it will follow from (PoSp4) that \(J\) is
infinite). (PoSp1) is a direct consequence of \eqref{eqn:PN-aux45}: if \(a \prec
b\) for \((a,b) \in (J \times J) \setminus \{(\varempty,\Omega)\}\), then either
\(a = \varempty\) (and then \(b-a = b \in I\)) or \(b = \Omega\) (and then \(b-a
= E-a \in I\), by \eqref{eqn:PN-aux45}), or \((a,b-a) \in \Lambda\) (and then
\(b-a \in I\)). To establish the remaining axioms, we now take a solid measure
\(\mu\dd \bbB \to G\) such that \(\mu(\unit) = E\) and \(\Lambda(\mu) =
\Lambda\). Observe that for any \(a, b \in J\):
\begin{itemize}
\item[\((*)\)] \(\varempty \neq a \prec b\) iff there are two disjoint non-zero
 elements \(\alpha, \beta \in \bbB\) such that \((a,b) =
 (\mu(\alpha),\mu(\alpha)+\mu(\beta))\).
\end{itemize}
(The above equivalence can simply be verified separately when \(b = E\) and when
\(b \neq E\). We leave it to the reader.) Moreover, it follows from homogeneity
of \(\mu\) that if \(\alpha \in \bbB\) is fixed so that \(\mu(\alpha) = a\),
then we can always find (under the assumption that \(a \prec b\)) non-zero
\(\beta \in \bbB\) disjoint from \(\alpha\) such that \(b = \mu(\alpha)+
\mu(\beta)\). With the aid of \((*)\), axioms (PoSp2)--(PoSp4) may briefly be
explained as follows. If \(a \prec b \prec c\), then there are disjoint
\(\alpha, \beta, \gamma \in \bbB\) such that \(\beta\) and \(\gamma\) are
non-zero (and \(\alpha\) is non-zero iff \(a \neq \varempty\)), and \(a+0_G =
\mu(\alpha)\) (in this way we cover the case when \(a = \varempty\)), \(b =
\mu(\alpha)+\mu(\beta)\) and \(c = \mu(\alpha)+\mu(\beta)+\mu(\gamma)\). Then
\(c-b = \mu(\gamma)\), \(c-a = \mu(\beta)+\mu(\gamma)\) and \(b-a =
\mu(\beta)\), which implies (thanks to \((*)\)) that \(c-b \prec c-a\) as well
as \(b-a \prec c-a\) (this shows (PoSp2)). Now we pass to (PoSp3). So, assume
\(a \prec c\) and \(\varempty \neq b \prec c-a\). If \(a = \varempty\), then
trivially
\begin{equation}\label{eqn:PN-aux46}
a+b \in I \qquad \UP{and} \qquad a \prec a+b \prec c.
\end{equation}
So, we assume \(a \neq \varempty\). Then there are two disjoint non-zero
elements \(\alpha, \beta \in \bbB\) such that \((a,c) = (\mu(\alpha),
\mu(\alpha)+\mu(\beta))\) (by \((*)\). Further, since \(\varempty \neq b \prec
c-a = \mu(\beta)\), homogeneity of \(\mu\) implies that there are non-zero
\(\gamma, \delta \in \bbB\) for which \(b = \mu(\gamma)\) and \(\beta =
\BooV{\alpha}{\BooV{\gamma}{\delta}}\) (again by \((*)\)). So, \(a =
\mu(\alpha)\), \(a+b = \mu(\alpha)+\mu(\gamma) = \mu(\BooV{\alpha}{\gamma})\)
and \(c = \mu(\alpha)+\mu(\beta)+\mu(\gamma)\), which yield \eqref{eqn:PN-aux46}
and prove (PoSp3). Finally, we pass to (PoSp4). (This is the only part where we
will use the property that \(\mu\) is solid.) As this is the hardest part of
the theorem, we divide the proof of (PoSp4) into a few steps.\par
Firstly, we will show that \((I,\prec)\) is downward directed. To this end, fix
\(a, b \in I\) and choose proper elements \(\alpha, \beta \in \bbB\) such that
\((a,b) = (\mu(\alpha),\mu(\beta))\). Since \(\mu\) is solid, we conclude that
there exists a non-zero \(\delta \in \bbB\) and an automorphism \(\phi \in
\OPN{Aut}(\mu)\) such that
\begin{equation}\label{eqn:PN-aux47}
\BooW{\delta}{\alpha} = \delta \neq \alpha \qquad \UP{and} \qquad
\BooW{\phi(\delta)}{\beta} = \phi(\delta) \neq \beta.
\end{equation}
Then \(d \df \mu(\delta)\) belongs to \(I\) and coincides with
\(\mu(\phi(\delta))\). Consequently, \(d \prec a\) (by the former relation in
\eqref{eqn:PN-aux47}) as well as \(d \prec b\) (by the latter one therein),
which finishes the proof that \(I\) is downward directed.\par
Now, as specified in (PoSp4), we fix \(a,b,c \in J \setminus \{\varempty\}\)
such that \(a \prec c\) and \(b \prec c\). As usual, homogeneity of \(\mu\)
combined with \((*)\) gives us three non-zero elements \(\alpha, \beta, \gamma
\in \bbB\) such that \((a,b,c) = (\mu(\alpha),\mu(\beta),\mu(\gamma))\) and
\(\BooW{\alpha}{\gamma} = \alpha \neq \gamma\) and \(\BooW{\beta}{\gamma} =
\beta \neq \gamma\). It follows from (PoSp1) and (PoSp2') that both \(c-a\) and
\(c-b\) belong to \(I\) and satisfy \(c-a \prec c\) and \(c-b \prec c\). Set
\(\delta \df \BooW{\alpha}{\beta}\). We distinguish seven distinct cases:
\begin{itemize}
\item Assume \(\delta = \zero\) and \(\BooV{\alpha}{\beta} = \gamma\). Then
 \(c-b = a\) and, since \(I\) is downward directed, there is \(x \in I\) such
 that \(x \prec a\) and \(x \prec b\). We infer from (PoSp2') that \(a-x \prec
 a = c-b\), and we are done.
\item Now assume \(\delta = \zero\) and \(\BooV{\alpha}{\beta} \neq \gamma\).
 Then \(\BooW{\alpha}{(\BooD{\gamma}{\beta})} = \alpha \neq
 \BooD{\gamma}{\beta}\) and hence \(a = \mu(a) \prec \mu(\BooD{\gamma}{\beta}) =
 c - b\). Choose \(x \in I\) such that \(x \prec a\) and \(x \prec b\). Then
 \(a-x \prec a\) (by (PoSp2')) and, consequently, \(a-x \prec c-b\), as we
 wished.
\item This time assume \(\delta = \alpha = \beta\). Then \(a = b\) and there is
 \(y \in I\) such that \(y \prec a\) and \(y \prec c-b\). Then \(x \df a-y\)
 belongs to \(I\) (by (PoSp1)) and \(x \prec a = b\) (by (PoSp2')) and \(a-x = y
 \prec c-b\), and we are done.
\item Now assume \(\delta = \alpha \neq \beta\). In particular, \(a =
 \mu(\delta) \prec \mu(\beta) = b\). Choose \(y \in I\) such that \(y \prec a\)
 and \(y \prec c-b\), and set \(x \df a-y\). Then \(x \in I\) and \(x \prec a
 \prec b\) and \(a-x = y \prec c-b\), as we wished.
\item This time assume \(\delta = \beta \neq \alpha\). In particular, \(b =
 \mu(\delta) \prec \mu(\alpha) = a\). It follows from the previous case that
 there is \(x \in I\) such that \(x \prec b \prec a\) and \(b-x \prec c-a\).
 Since \(b \prec a \prec c\), we conclude from (PoSp2) that \(a-b \prec c-b\).
 Finally, (PoSp3) applied to the case \((a-b) \prec (c-b)\) and \((b-x) \prec
 (c-b)-(a-b)\) yields \((a-x =)\ (a-b)+(b-x) \in I\) and \(a-x \prec c-b\).
\item Now assume \(\delta \neq \zero\) and \(\BooV{\alpha}{\beta} = \gamma\).
 Then \(\eta \df \BooD{\gamma}{\alpha}\) is non-zero and satisfies
 \(\BooW{\eta}{\beta} = \eta \neq \beta\), which implies that \((c-a =)\
 \mu(\eta) \prec \mu(\beta)\ (= b)\). By symmetry, \(c-b \prec a\). Both these
 relations imply that \(d \df a+b-c\) belongs to \(I\) and satisfies \(d \prec
 a\) and \(d \prec b\), by (PoSp2'). Since \(a-d = c-b\) and \(b-d = c-a\)
 belong to \(I\), there is \(y \in I\) such that \(y \prec a-d\) and \(y \prec
 b-d\). Then \(d \prec a\) and \(y \prec a-d\), so (thanks to (PoSp3)) \(x \df
 d+y\) belongs to \(I\) and satisfies \(d \prec x \prec a\). Similarly, \(d
 \prec b\) and \(y \prec b-d\) give \(x \prec b\). Finally, (PoSp2) applied to
 \(d \prec x \prec a\) yields \(a-x \prec a-d = c-b\), and we are done.
\item Finally, assume none of the above cases holds. This means that \(\delta\)
 is non-zero and differs from both \(\alpha\) and \(\beta\), and that
 \(\BooV{\alpha}{\beta} \neq \gamma\). Then \(x \df \mu(\delta) \prec
 \mu(\alpha) = a\) as well as \(x \prec \mu(\beta) = b\). Finally, \(\xi \df
 \BooD{\alpha}{\delta}\) and \(\rho \df \BooD{\gamma}{\beta}\) satisfy
 \(\BooW{\xi}{\rho} = \xi \neq \rho\), which yields \(a-x = \mu(\xi) \prec
 \mu(\rho) = c-b\), as we wished.
\end{itemize}
So, (PoSp4) is fulfilled.\par
Now we pass to (B). So, assume \((J,\prec)\) is a solid pospec and define
\(\Lambda\) and \(E\) as specified therein. Fix for a while \(a \in I\). It
follows from (PoSp4') that there is \(c \in I\) such that \(a \prec c\). Then,
by (PoSp1), \(b \df c-a\) belongs to \(I\). Consequently, \((a,b) \in \Lambda\)
and thus \(\Lambda \neq \varempty\) and \(\Lambda' = I\) (in particular, \(0_G
\notin \Lambda'\), by (PoSp1')). So, it is clear that (GrSp0) is satisfied.
Further, if \((a,b) \in \Lambda\), then clearly \((b,a) \in \Lambda\) as well.
Moreover, we infer from (PoSp1') that both \(\Omega-(a+b)\) and \(\Omega-b\)
belong to \(I\) (as \(b, a+b \in I\)). So, \((a,E-a-b) \in \Lambda\), which
yields (GrSp1).\par
Now assume \((a,b)\) and \((a+b,c) \in \Lambda\). Then \(a, b, c, a+b, a+b+c\)
are all in \(I\) and \(a \prec a+b \prec a+b+c\). So, \(a \prec a+b+c\) and
therefore \((a,b+c) \in \Lambda\), which proves (GrSp2). It remains to check
(GrSp3). To this end, we fix \(a,b,c \in G\) such that both \((a,b),(c,a+b-c)\)
belong to \(\Lambda\), or \(a,c \in \Lambda'\) and \(b = E-a\). Then \(d \df
a+b\) belongs to \(J\), \(b \in I\) and \(a \prec d\) and \(c \prec d\) (these
properties hold even if \(b = E-a\), as then \(d = E = \Omega\), and \(b \in I\)
thanks to (PoSp1')). Now it follows from (PoSp4) that there is \(x \in I\) such
that \(x \prec c\), \(x \prec a\) and \(c-x \prec d-a\). By applying (PoSp1), we
obtain that \(a-x\), \(c-x\) and \((d-a)-(c-x) = b-c+x\) belong to \(I\).
Consequently, \((x,a-x) \in \Lambda\) (as \(x, a-x, x+(a-x) \in I\)) and,
similarly, \((c-x,b-c+x) \in \Lambda\), and we are done.\par
A verification of the additional claim of the theorem is left to the reader.
\end{proof}

\begin{rem}{PN-pospec-measure}
It follows from the above result that for every solid pospec \((J,\prec)\) in
a group \(G\) there exists a solid measure \(\mu\dd \bbB \to G\) such that
\(\mu(\unit) = \Omega\) and \(\Lambda(\mu) = \{(a,b-a)\dd\ a, b \in I,\ a \prec
b\}\). The latter property can be formulated equivalently as follows: \(\mu(\bbB
\setminus \{\zero,\unit\}) = I\) and for any proper element \(a \in \bbB\) and
an element \(g \in I\) there exists a proper element \(b\) of
\(\bbB\restriction{a}\) such that \(\mu(b) = g\) if and only if \(g \prec
\mu(a)\). We call each such a measure \(\mu\) \emph{induced by \((J,\prec)\)}.
\end{rem}

\begin{dfn}{PN-super-pospec}
We call a solid pospec \emph{supersolid} if a measure induced by this pospec is
supersolid.
\end{dfn}

\begin{thm}{PN-super-pospec}
A solid pospec \((J,\prec)\) in a group \(G\) is supersolid iff the following
condition is fulfilled.
\begin{enumerate}[\upshape(PoSp1)]\addtocounter{enumi}{4}
\item For any \(x \in J \setminus \{\varempty\}\) there are a finite number of
 elements \(w_0,\ldots,w_N \in J\) \UP(where \(N > 0\)\UP) such that \(\varempty
 = w_0 \prec w_1 \prec \ldots \prec w_N = \Omega\) and \(w_k-w_{k-1} \prec x\)
 for \(k = 1,\ldots,N\).
\end{enumerate}
\end{thm}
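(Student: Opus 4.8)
The plan is to realise the pospec by a concrete measure. Fix a measure $\mu\dd \bbB \to G$ induced by $(J,\prec)$ in the sense of \REM{PN-pospec-measure}, so that by definition $(J,\prec)$ is supersolid iff $\mu$ is supersolid; as $\mu$ is already solid (hence homogeneous), this amounts to minimality of the action of $H \df \OPN{Aut}(\mu)$ on $\bbB$. Thus it suffices to prove that \emph{this minimality is equivalent to} (PoSp5). Throughout I would lean on the two properties of an induced measure recorded in \REM{PN-pospec-measure}: $\mu(\bbB \setminus \{\zero,\unit\}) = I$, and for proper $a \in \bbB$ and $g \in I$ there is a proper element of $\bbB\restriction{a}$ of measure $g$ iff $g \prec \mu(a)$ (the ``embedding property''). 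I would also rephrase \LEM{PN-mini} as: for proper $a,p$ there is $\phi \in H$ with $\BooW{\phi(a)}{p} = p \neq \phi(a)$ iff $\mu(p) \prec \mu(a)$ --- this follows by taking $y \df \mu(a)-\mu(p)$ in condition (iii) of that lemma and checking, via $E=\Omega$ and $\mu(\BooD{\unit}{a}) = \Omega-\mu(a)$, that $(\mu(p),y,\mu(\BooD{\unit}{a})) \in \Sigma(\mu)$ precisely because $(\mu(p),y) \in \Lambda$.

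For (PoSp5) $\Rightarrow$ minimality I would fix a proper $a$, set $x \df \mu(a) \in I$, and apply (PoSp5) to $x$ to get a chain $\varempty = w_0 \prec \cdots \prec w_N = \Omega$ with $b_k \df w_k - w_{k-1} \prec x$. Since $\Omega \not\prec x$, necessarily $N \geq 2$, so by (PoSp1) and (PoSp1') every $b_k$ lies in $I$ (the two end increments $w_1$ and $\Omega - w_{N-1}$ included). For $1 < k < N$ the triple $(w_{k-1},b_k,\Omega - w_k)$ lies in $\Sigma(\mu)$, because its pair $(w_{k-1},b_k)$ is in $\Lambda$ (indeed $w_{k-1},b_k,w_k \in I$ and $w_{k-1}\prec w_k$). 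By the description of $\Delta_N(\mu)$ in \EXM{PN-delta-k}, together with $\Delta_2 = \Sigma'$ and $\Delta_3 = \Sigma$ for the short chains (which hold here automatically), there is a partition $\{p_1,\ldots,p_N\}$ of $\bbB$ with $\mu(p_k) = b_k$. As $\mu(p_k) = b_k \prec x = \mu(a)$, the reformulation of \LEM{PN-mini} yields $\phi_k \in H$ with $\BooW{\phi_k(a)}{p_k} = p_k$; since the $p_k$ partition $\bbB$, we get $\bigvee_{k} \phi_k(a) = \unit$, i.e.\ minimality.

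For the converse I would fix a proper $a$, put $x \df \mu(a) \in I$, and use minimality to obtain $\phi_1,\ldots,\phi_n \in H$ with $\bigvee_k \phi_k(a) = \unit$. Disjointifying the copies $\phi_k(a)$ and then, using atomlessness of $\bbB$, subdividing the resulting blocks, I would produce a partition $\{p_1,\ldots,p_N\}$ of $\bbB$ in which each $p_k$ is a \emph{proper} subelement of some $\phi_s(a)$; this guarantees $\mu(p_k) \prec \mu(\phi_s(a)) = x$ for every $k$. Writing $P_k \df \bigvee_{s\le k} p_s$ and $w_k \df \mu(P_k)$, with $w_0 \df \varempty$ and $w_N = \mu(\unit) = \Omega$, the embedding property applied to the proper inclusions $P_{k-1} \subsetneq P_k$ gives $w_{k-1} \prec w_k$ for interior $k$, while $\varempty \prec w_1$ and $w_{N-1} \prec \Omega$ are automatic; finally $w_k - w_{k-1} = \mu(p_k) \prec x$. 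This is exactly (PoSp5) for $x \in I$. The remaining instance $x = \Omega$ of (PoSp5) I would settle directly: choosing any $w_1 \in I$ (nonempty since $I = \Lambda'$), the chain $\varempty \prec w_1 \prec \Omega$ has both increments in $I$, hence $\prec \Omega$ by (PoSp0).

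I expect the forward direction to be the main obstacle: a bare covering $\bigvee_k \phi_k(a) = \unit$ must be refined into a partition all of whose blocks are \emph{proper} subelements of copies of $a$, since a full copy would have measure $x$ and violate the strict inequality $w_k - w_{k-1} \prec x$; atomlessness of $\bbB$ is precisely what makes this refinement (and the consequent enlargement of $N$) possible. A secondary point needing care is the bookkeeping that identifies the identity endpoint $\mu(\zero) = 0_G$ with the formal least element $\varempty$, and verifies that subtraction by $\varempty$ acts as subtraction by $0$, so that the first increment $w_1 - w_0$ indeed equals $\mu(p_1)$.
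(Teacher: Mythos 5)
Your proof is correct, but it takes a genuinely different route from the paper's. The paper never touches the measure or the group action: it passes through the correspondence of \THM{PN-pospec} between solid pospecs and solid \homspec{2}s and then verifies, by pure bookkeeping with partial sums \(w_k = \sum_{s\le k} b_s\), that (PoSp5) is equivalent to the already-established supersolidity axiom (GrSp4) (the only nontrivial point there being the padding of short chains to length \(N\ge 3\) via (PoSp4') and (PoSp2'), which your direct construction renders unnecessary). You instead unwind the definition all the way down to minimality of the action of \(\OPN{Aut}(\mu)\) for an induced measure \(\mu\), using \LEM{PN-mini} as an embedding criterion and the \(\Delta_N\) characterisation of \EXM{PN-delta-k} to realise the increments \(b_k\) as a partition; in the converse direction you disjointify and subdivide a covering \(\bigvee_k\phi_k(a)=\unit\) into a partition by proper subcopies of \(a\). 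This is in effect a re-proof, specialised to pospecs, of the content of \THM{PN-supersolid} (whose proof already contains exactly your two steps: realising \((b_1,\ldots,b_N)\in\Delta_N\) as a partition and refining the covering into a partition of pieces properly contained in translates of \(a\)); it is more self-contained and concrete, at the cost of duplicating machinery the paper can simply cite. All the delicate points you flag --- properness of the refined blocks, \(N\ge 2\) from \(\Omega\not\prec x\), membership of the end increments in \(I\) via (PoSp1), and the \(\varempty\leftrightarrow 0_G\) bookkeeping --- are handled correctly.
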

\begin{proof}
Let \((E,\Lambda)\) be a solid \homspec{2} corresponding to \((J,\prec)\) as
stated by \THM{PN-pospec}. Then the pospec is supersolid iff the \homspec{2} is
so. First assume the latter structure is supersolid, and fix arbitrary \(x \in
J \setminus \{\varempty\}\). Since each element of \(J \setminus \{\Omega\}\) is
less that \(\Omega\), we may and do assume that \(x \neq \Omega\). Then \(x \in
\Lambda'\) and it follows from (GrSp4) that there are \(b_1,\ldots,b_N \in G\)
whose sum equals \(E = \Omega\) and (GrSp4a) and (GrSp4b) are satisfied.
The former of these two conditions implies that \(w_k \df \sum_{s=1}^k b_s\
(k=1,\ldots,N)\) belongs to \(J\), and that \(w_{k-1} \prec w_k\) where \(w_0
\df \varempty\) (cf. item (B) of \THM{PN-pospec}); whereas the latter shows that
\(w_k-w_{k-1} = b_k \prec x\), which proves (PoSp5). Conversely, if (PoSp5)
holds and \(x \in \Lambda'\ (\subset J \setminus \{\varempty\})\), then there
are \(w_0,\ldots,w_N \in J\) witnessing (PoSp5). If \(N < 3\), we extend
the system \(w_0,\ldots,w_N\) by taking arbitrary \(z \in J \setminus
\{\varempty\}\) such that \(z \prec w_1\) (thanks to (PoSp4')) and inserting
\(z\) between \(w_0\) and \(w_1\) (we may continue this process as long as we
wish; here we use (PoSp2') to get the desired relation of the form
``\(w_k-w_{k-1} \prec x\)''). So, we may and do assume that \(N \geq 3\). It
follows from (PoSp1) that \(b_k \df w_k-w_{k-1}\) belongs to \(\Lambda'\). Note
that \(\sum_{s=1}^k b_s = w_k\) for any \(k = 1,\ldots,N\). Consequently, all
\(b_k\) sum up to \(\Omega = E\) and (GrSp4a) is fulfilled (as \(w_{k-1} \prec
w_k\)). Finally, (GrSp4b) also holds, since it is covered by the relations
\(w_k-w_{k-1} \prec x\), and we are done.
\end{proof}

\begin{exm}{PN-pospec-good}
A good measure \(\mu\) is induced by a supersolid pospec \(J \df \mu(\bbB)\)
with the natural strict order between reals as `\(\prec\)', when \(0\) is
identified with \(\varempty\).
\end{exm}

In case of linearly ordered solid pospecs, some of axioms (PoSp0)--(PoSp4) can
be simplified, as shown by the next result, where we keep the notation of
\DEF{PN-pospec}.

\begin{pro}{PN-pospec-lin}
Let \(J \subset G^{\circ}\) be equipped with a strict linear order `\(\prec\)'
such that \UP{(PoSp0)--(PoSp1)} and \UP{(PoSp3)} hold. Then \((J,\prec)\) is
a solid pospec iff \(I\) has no least element.
\end{pro}
\begin{proof}
Thanks to (PoSp4'), we only need to show that if \(I\) has no least element,
then \((J,\prec)\) satisfies (PoSp2) and (PoSp4). We start from the remark that
\(0_G \notin I\), by (PoSp1') (which is a consequence of (PoSp1)). Now we will
show (PoSp2'). So, assume \(\varempty \neq a \prec b\). If \(b = \Omega\), then
\(\Omega-a \in I\) (by (PoSp1)) and \(\Omega-a \prec b\) (thanks to (PoSp0)).
So, we may assume \(b \neq \Omega\). We argue by a contradiction. If \(b-a
\not\prec b\), then \(b \prec b-a\) (as \(a \neq 0_G\) and the order is linear).
So, we infer from (PoSp1) that \(-a \in I\). Note that \(b-a \neq -a\) (as \(b
\in I\)) and therefore \(b-a \prec -a\) (because otherwise \(-a \prec b-a\),
which combined with \(a \prec b\) would yield that \(a-a = 0_G \in I\), by
(PoSp3)). So, it follows (again from (PoSp1)) that \(-b \in I\). Moreover, as
\(-a \neq -b\), we must have \(-a \prec -b\) (because otherwise \(-b \prec -a\),
which combined with \(b \prec (-a)-(-b)\) would again yield that \(0_G \in I\),
thanks to (PoSp3)). Consequently, \(a-b \in I\). Moreover, \(-b \prec a-b\)
(because otherwise \(a-b \prec -b\), which combined with \(b-a \prec
(-b)-(a-b)\) would give \(0_G \in I\)). Finally, we get \(a \prec b \prec b-a
\prec -a \prec -b \prec a-b\) and hence \(a \prec a-b\) which combined with \(-a
\prec (a-b)-a\) gives \(0_G \in I\), which is a contradiction. Thus \(b-a \prec
b\), as we have claimed.\par
According to the last paragraph of \REM{PN-pospec}, to verify (PoSp2) we only
need to check its former claim. To this end, assume \(a \prec b \prec c\) and
note that then \(a \neq b\). If \(a = \varempty\), then our conclusion is
covered by (PoSp2') (that has already been shown). So, we assume that \(a \in
I\). If \(c-b \not\prec c-a\), then (by linearity of the order) \(c-a \prec
c-b\) (note also that both \(c-a\) and \(c-b\) belong to \(I\), by (PoSp1)). But
then \(a-b = (c-b)-(c-a)\) belongs to \(I\) and
\begin{equation}\label{eqn:PN-aux48}
a-b \prec c-b,
\end{equation}
by (PoSp2'). Since \(b \in I\), we infer that \(a \prec a-b\) (because otherwise
\(a-b \prec a = b-(b-a)\) and also \(b-a \prec b\), by (PoSp2'), which would
yield that \(0_G = (a-b)+(b-a)\) belongs to \(I\), thanks to (PoSp3)).
Consequently, \(-b \in I\) and, moreover, \(-b \prec a-b\), by (PoSp2').
The last relation, combined with \eqref{eqn:PN-aux48}, yields that \(c \neq
0_G\) (which was possible as \(c\) can coincide with \(\Omega\)). But then \(c-b
\prec -b\) (as otherwise \(-b \prec c-b\) which combined with \(b \prec c\)
would give, thanks to (PoSp3), \(0_G \in I\)), which, together with the previous
``\(-b \prec a-b\)'', leads to \(c-b \prec a-b\), contradictory to
\eqref{eqn:PN-aux48}. So, \(c-b \prec c-a\), and thus (PoSp2) is fulfilled.\par
Now we pass to (PoSp4). So, assume \(a, b \in I\) and \(c \in J\) are such that
\(a \prec c\) and \(b \prec c\). First assume \(a \prec b\) or \(a = b\), and
take arbitrary \(y \in I\) such that \(y \prec a\) and \(y \prec c-b\) (recall
that \(c-b \in I\) and the order is linear and \(I\) has no least element). Set
\(x \df a-y\) and observe that \(a \in I\) (by (PoSp1)) and \(x \prec a\)
(thanks to (PoSp2')). In particular, \(x \prec b\) as well. Finally, \(a-x = y
\prec c-b\), which finishes the proof of (PoSp4) in that case.\par
It remains to consider the case when \(b \prec a\). Then take arbitrary \(y \in
I\) such that \(y \prec b\) and \(y \prec c-a\). Set \(x \df b-y\) and note that
(similarly as in the preious case) \(x \in I\) and \(x \prec b\). In particular,
\(x \prec a\). Finally, since \(b \prec a \prec c\), we infer from (PoSp2) that
\(a-b \prec c-b\). But also \(b-x = y \prec (c-b)-(a-b)\). So, we may apply
(PoSp3) to get that \((a-x =)\ (a-b)+(b-x) \prec c-b\), and we are done.
\end{proof}

With the aid of the above result, we can now give many examples of solid pospecs
that are linearly ordered.

\begin{dfn}{PN-G-order}
A strict linear order `\(<\)' on a group \(G\) is said to be \emph{compatible}
(with \(G\)) if \(a < b\) implies \(a+c < b+c\) (for all \(a,b,c \in G\)). We
call an element \(a\) of \(G\) \emph{positive} if \(0_G < a\).\par
A group \(G\) is said to be \emph{densely ordered by `\(<\)'} if `\(<\)' is
a compatible linear order on \(G\) such that for any \(a > 0_G\) there is \(b >
0_G\) satisfying \(b < a\).
\end{dfn}

It may easily be shown that each group admitting a compatible linear order is
torsion-free. A little bit harder is to show the converse (that is, that any
torsion-free group admits a compatible linear order---consult the example given
below).\par
A construction introduced in the example below is common in Lie algebra theory
(when working with simple roots of a simple Lie algebra).

\begin{exm}{PN-lin-ord}
Let \(V\) be a non-trivial vector group over \(\QQQ\). Fix any (Hamel) basis
\(\{e_t\}_{t \in T}\) of \(V\) and a linear order `\(\prec\)' on the set \(T\)
of indices. So, for any vector \(v \in V\) there exists a unique finitely
supported function \(\alpha_v\dd T \to \QQQ\) such that \(v = \sum_{t \in T}
\alpha_v(t) e_t\). If \(v\) is non-zero, we will denote by \(\qQ(v)\) the first
(w.r.t. `\(\prec\)') non-zero coefficient of \(v\); that is, \(\qQ(v) =
\alpha_v(s)\) where \(s = \min_{\prec}\{t \in T\dd\ \alpha_v(t) \neq 0\}\).
Finally, we define a strict order `\(<\)' on \(V\) as follows:
\[u < w \iff u \neq w\ \land\ \qQ(u-w) < 0 \qquad (u,w \in V).\]
It is straightforward to check that `\(<\)' is a compatible strict linear order
on \(V\). Moreover, \(V\) is densely ordered by `\(<\)' (as there is no least
positive rational number).\par
When \(V = \QQQ^d\) (where \(d\) is finite), we may take as the above basis
the canonical one and as the order on the indices \(\{1,\ldots,d\}\) the natural
order. Then the compatible linear order on \(\QQQ^d\) introduced above coincides
with the so-called lexicographic order; that is, a non-zero vector \(v = (v_1,
\ldots,v_d)\) is positive iff \(v_j > 0\) where \(j \in \{1,\ldots,d\}\) is
the first index such that \(v_j \neq 0\). In this case, we call this order
\emph{canonical}.\par
Finally, as each torsion-free group admits a one-to-one homomorphism into
a vector space over \(\QQQ\), we see that each such a group admits a compatible
linear order. Note also that a countable torsion-free group can be densely
ordered iff it is non-isomorphic to \(\ZZZ\). (The assumption of countability of
the group may be skipped, but the proof in a general case is more subtle. As in
this paper we deal only with countable groups, we omit the proof for the general
case.) To convince oneself about that, note that each such a (countable!) group
\(G\) admits a one-to-one homomorphism \(\phi\) into the reals and the image of
\(\phi\) is a dense subgroup of \(\RRR\), provided \(G\) is non-cyclic.
\end{exm}

\begin{pro}{PN-lin-ord}
\begin{enumerate}[\upshape(A)]
\item Let `\(<\)' be a compatible linear order on a countable group \(G\) and
 let \(\Omega\) be a positive element of \(G\). Set \(J \df \{\varempty,\Omega\}
 \cup I\) where \(I\) consists of all positive elements \(g \in G\) such that
 \(g < \Omega\). Let `\(\prec\)' be a strict linear order on \(J\) induced by
 `\(<\)'; that is, for two distinct elements \(x, y \in J\), \(x \prec y\) iff
 \(x = \varempty\) or both these elements differ from \(\varempty\) and satisfy
 \(x < y\). Then \((J,\prec)\) is a solid pospec iff \(G\) is densely ordered by
 `\(<\)'.
\item If \(G = \QQQ^d\) \UP(where \(d > 1\) is finite\UP), `\(<\)' is
 the canonical order on \(G\) and \(\Omega = (1,0,\ldots,0) \in G\), then
 the poset \((J,\prec)\) defined in this case in a way specified in \UP{(A)} is
 a solid pospec that is not supersolid.
\end{enumerate}
\end{pro}
\begin{proof}
We start from (A). A verification that `\(\prec\)' is a strict linear order on
\(J\) as well as that axioms (PoSp0)--(PoSp3) hold (with a single exception that
\(J\) may not be infinite) is left to the reader. So, according to
\PRO{PN-pospec-lin}, \((J,\prec)\) is a solid pospec iff \(I\) has no least
element (note that the latter condition implies that \(J\) is infinite). But
\(I\) has no least element iff \(G\) is densely ordered by `\(<\)', which proves
(A). To show (B), recall that \(\QQQ^d\) is densely ordered by the canonical
order, and hence \((J,\prec)\) (defined in (B)) is a solid pospec (by (A)). To
convince oneself that it is not supersolid, it is enough (thanks to
\THM{PN-super-pospec}) to explain why (PoSp5) does not hold. But this is
immediate, as \(x \df (0,\ldots,0,1)\) belongs to \(I\) and any vector \(v\)
from \(I\) such that \(v \prec x\) has first coordinate equal to \(0\). So, if
\(\varempty = w_0 \prec w_1 \prec \ldots \prec w_N\) satisfy \(w_k-w_{k-1} \prec
x\) (cf. (PoSp5) in \THM{PN-super-pospec}), then the first coordinate of \(w_N\)
is \(0\) and thus \(w_N \neq \Omega\).
\end{proof}

\begin{dfn}{PN-good-like}
Let \(G\) be a countable group that is densely ordered by `\(<\)', let
\(\Omega\) be a positive element in \(G\), and let \((J,\prec)\) be a solid
pospec defined in this case in a way specified in item (A) of \PRO{PN-lin-ord}.
A homogeneous measure induced by \((J,\prec)\) is called a \emph{good-like}
\(G\)-valued measure.
\end{dfn}

\begin{rem}{PN-lin-ord-super}
Instead of giving an `abstract' example of a supersolid linearly ordered pospec,
we inform the reader that for each such a pospec \((J,\prec)\) (in a group
\(G\)) a homogeneous measure \(\mu\) induced by \((J,\prec)\) is equivalent to
a good measure in the following sense: there exist a (unique up to conjugacy)
good measure \(\rho\) on \(\Can\) and a unique homomorphism \(\phi\) from
the subgroup of \(\RRR\) generated by \(V \df \rho(\bbB)\) into \(G\) such that
\(\mu(a) = (\phi \circ \rho)(a)\) for any \(a \in \bbB\), and \(\rho\) is
one-to-one on \(V \setminus \{0\}\). So, roughly speaking, supersolid good-like
measures are precisely good measures. This result is a topic of our subsequent
paper.
\end{rem}

\section{Universal invariant measures}

In this section we identify \(\bbB\) with the algebra of all clopen subsets of
\(\Can\), and for any group-valued measure \(\mu\) on \(\bbB\) we identify
\(\OPN{Aut}(\mu)\) with a subgroup of the homeomorphism group
\(\OPN{Homeo}(\Can)\) of \(\Can\).

\begin{dfn}{PN-univ-meas}
Let \(H\) be any subset of \(\OPN{Homeo}(\Can)\). A group-valued measure
\(\mu\dd \bbB \to G\) is said to be a \emph{universal \(H\)-invariant
group-valued measure} if both the following conditions are met:
\begin{itemize}
\item \(\mu\) is \(H\)-invariant; that is, \(H \subset \OPN{Aut}(\mu)\) or,
 equivalently, each \(h \in H\) preserves \(\mu\)---that is, \(\mu(h[a]) =
 \mu(a)\) for any \(a \in \bbB\) (where \(h[a]\) denotes the image of \(a\) via
 \(h\)); and
\item for any \(H\)-invariant group-valued measure \(\rho\dd \bbB \to Q\) there
 exists a unique homomorphism \(u\dd G \to Q\) such that \(\rho(a) = (u \circ
 \mu)(a)\) for all \(a \in \bbB\).
\end{itemize}
(In particular, if \(\mu\) is a universal \(H\)-invariant group-valued measure,
then for any group \(Q\) there is a one-to-one correspondence between
\(Q\)-valued \(H\)-invariant measures and homomorphisms from \(G\) into
\(Q\).)\par
Similarly, we call a probability Borel measure \(\lambda\) on \(\Can\)
a \emph{universal \(H\)-invariant probability measure} if
\begin{itemize}
\item \(\mu\) is \(H\)-invariant; and, denoting by \(W_{\lambda}\)
 the \(\QQQ\)-linear span of \(\lambda(\bbB)\) in \(\RRR\),
\item for any \(H\)-invariant probability Borel measure \(\nu\) on \(\Can\)
 there exists a unique \(\QQQ\)-linear operator \(v\dd W_{\lambda} \to \RRR\)
 such that \(\nu(a) = (v \circ \lambda)(a)\) for all \(a \in \bbB\).
\end{itemize}
(In particular, if \(\lambda\) is a universal \(H\)-invariant probability
measure, then there is a one-to-one correspondence between \(H\)-invariant
probability measures and \(\QQQ\)-linear operators from \(W_{\lambda}\) into
\(\RRR\) that are non-negative on \(\lambda(\bbB)\) and fix \(1\).)\par
It is worth underlying here that the property of being universal is not
an `intrinsic' property of a group-valued measure; that is, if \(\mu\dd \bbB \to
G\) is such a measure and \(G \subsetneq G'\) (as a subgroup), then \(\mu\),
considered as a \(G'\)-valued measure is \textbf{not} universal. (So, formally
we should define universal measures as pairs \((\mu,G)\).)\par
It is also worth noticing that if \(\mu\) is a probability Borel measure on
the \Cantor\ and \(G\) stands for the subgroup of \(\RRR\) generated by
\(\mu(\bbB)\), then \(\mu\) is a universal \(H\)-invariant probability measure
unless it is a universal \(H\)-invariant group-valued measure (as each
homomorphism from a subgroup of \(\RRR\) into \(\RRR\) admits a unique extension
to a \(\QQQ\)-linear operator from the \(\QQQ\)-linear span of this subgroup
into \(\RRR\)).
\end{dfn}

We are mostly interested in cases when universal measures defined above are
homogeneous. However, we begin with

\begin{thm}{PN-univ-exist}
Let \(H\) be a subset of \(\OPN{Homeo}(\Can)\).
\begin{enumerate}[\upshape(A)]
\item There exists a universal \(H\)-invariant group-valued measure.
\item If \(\mu\dd \bbB \to G\) is a universal \(H\)-invariant group-valued
 measure, then a group-valued measure \(\nu\dd \bbB \to Q\) is a universal
 \(H\)-invariant group-valued measure iff there exists an isomorphism \(\Phi\dd
 G \to Q\) such that \(\nu = \Phi \circ \mu\). \UP(Moreover, if \(\Phi\) exists,
 it is unique.\UP) In particular, \(\OPN{Aut}(\mu) = \OPN{Aut}(\rho)\) for any
 universal \(H\)-invariant group-valued measure \(\rho\).
\item There exists a universal \(H\)-invariant probability measure iff there
 exists an \(H\)-invariant probability Borel measure on \(\Can\).
\item If \(\mu\) is a universal \(H\)-invariant probability measure and
 \(W_{\mu}\) is the \(\QQQ\)-linear span of \(\mu(\bbB)\), then a probability
 Borel measure \(\nu\) on the \Cantor\ is a universal \(H\)-invariant
 probability measure iff there exists a \(\QQQ\)-linear isomorphism \(\Phi\dd
 W_{\mu} \to W_{\nu}\) such that \(\nu(a) = (\Phi \circ \mu)(a)\) for all \(a
 \in \bbB\). \UP(Moreover, if \(\Phi\) exists, it is unique.\UP) In particular,
 \(\OPN{Aut}(\mu) = \OPN{Aut}(\rho)\) for any universal \(H\)-invariant
 probability measure \(\rho\).
\end{enumerate}
\end{thm}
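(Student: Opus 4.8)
The plan is to realise the universal group-valued measure as a group of coinvariants and then to deduce everything else by the usual universal-property formalism, the only genuinely analytic point being the construction in part~(C).

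For (A), recall from \DEF{PN-S-measure} that $Q$-valued measures on $\bbB$ correspond bijectively to homomorphisms $C(\Can,\ZZZ)\to Q$, a measure $\rho$ being recovered from its homomorphism $\tilde\rho$ through $\rho(a)=\tilde\rho(\chi_a)$. Each $h\in H$ acts on $C(\Can,\ZZZ)$ by $f\mapsto f\circ h^{-1}$, and $\chi_{h[a]}=\chi_a\circ h^{-1}$, so a measure is $H$-invariant precisely when the corresponding $\tilde\rho$ kills every difference $f-f\circ h^{-1}$. I would therefore let $N$ be the subgroup of $C(\Can,\ZZZ)$ generated by all such differences, set $G\df C(\Can,\ZZZ)/N$ with quotient map $q$, and define $\mu(a)\df q(\chi_a)$. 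That $\mu$ is an $H$-invariant $G$-valued measure is immediate; its universality follows because an $H$-invariant $\tilde\rho$ annihilates each generator $\chi_a-\chi_a\circ h^{-1}$ of $N$ (the $\chi_a$ generate $C(\Can,\ZZZ)$ and $f\mapsto f-f\circ h^{-1}$ is additive), hence factors uniquely as $\tilde\rho=u\circ q$, yielding the required unique $u\dd G\to Q$ with $\rho=u\circ\mu$.

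Part (B) is then pure universal-property nonsense: if $\mu\dd\bbB\to G$ and $\nu\dd\bbB\to Q$ are both universal, universality of each supplies homomorphisms $u\dd G\to Q$ and $v\dd Q\to G$ with $\nu=u\circ\mu$ and $\mu=v\circ\nu$; applying the uniqueness clause to the factorisation $\mu=(v\circ u)\circ\mu$, whose unique solution is the identity, forces $v\circ u=\OPN{id}_G$, and symmetrically $u\circ v=\OPN{id}_Q$, so $\Phi\df u$ is the desired (unique) isomorphism. The identity $\OPN{Aut}(\mu)=\OPN{Aut}(\rho)$ for a universal $\rho=\Phi\circ\mu$ follows since $\Phi$ is injective, whence $\mu\circ\sigma=\mu\iff\rho\circ\sigma=\rho$ for every $\sigma\in\OPN{Aut}(\bbB)$.

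For (C) the forward implication is trivial, so assume an $H$-invariant probability Borel measure exists. Every such measure is determined by its restriction to $\bbB$, an $\RRR$-valued $H$-invariant measure, hence equals $u\circ\mu$ for a unique homomorphism $u\dd G\to\RRR$; conversely each homomorphism with $u(\mu(\unit))=1$ and $u(\mu(a))\ge0$ yields, by the extension fact quoted in the introduction, such a measure. Writing $V\df G\otimes_{\ZZZ}\QQQ$ (a countable-dimensional $\QQQ$-space, as $\bbB$ is countable) and identifying these homomorphisms with $\QQQ$-linear functionals on $V$, let $P$ be the resulting convex set and $K\df\bigcap_{u\in P}\ker u$. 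Since $\mu(\bbB)$ generates $G$, a candidate $\lambda=u_*\circ\mu$ has $W_\lambda\cong V/\ker u_*$, and an arbitrary $\nu=u_\nu\circ\mu$ factors $\QQQ$-linearly through $\lambda$ exactly when $\ker u_*\subseteq\ker u_\nu$; as $\ker u_*\supseteq K$ holds automatically, universality of $\lambda$ is equivalent to $\ker u_*=K$. Thus everything reduces to producing a single $u_*\in P$ of minimal kernel $K$, and this is the main obstacle. Here the crucial observation is that on the generators $\mu(a)$ all functionals in $P$ take values in $[0,1]$, so each $u\in P$ is uniformly bounded on every fixed vector of $V$ and $P$ is closed under convergent convex combinations. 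Passing to $V/K$, whose nonzero vectors I enumerate as $w_1,w_2,\dots$, I would pick for each $n$ some $u^{(n)}\in P$ with $u^{(n)}(w_n)\ne0$ and build $u_*$ by a greedy perturbation: given $u\in P$ with $u(w_i)\ne0$ for $i<n$, replace $u$ by $(1-\varepsilon)u+\varepsilon u^{(n)}$ for a small $\varepsilon>0$ avoiding the finitely many bad values (each affine map $\varepsilon\mapsto(1-\varepsilon)u(w_i)+\varepsilon u^{(n)}(w_i)$ vanishes for at most one $\varepsilon$, while the target coordinate becomes $\approx\varepsilon u^{(n)}(w_n)\ne0$). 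Choosing the successive $\varepsilon$'s summable and so small that later corrections never undo an already-secured inequality, the boundedness above makes the process converge to $u_*\in P$ with $u_*(w_n)\ne0$ for all $n$, i.e. $\ker u_*=K$; then $\lambda\df u_*\circ\mu$ is universal. Finally, part (D) is proved exactly as (B), with $\QQQ$-linear maps between the spans $W_\mu,W_\nu$ in place of group homomorphisms: mutually inverse factorisations $\nu=\Phi\circ\mu$ and $\mu=\Psi\circ\nu$ are forced to compose to identities by the uniqueness clause, so $\Phi$ is the required unique $\QQQ$-linear isomorphism, and injectivity of $\Phi$ again gives $\OPN{Aut}(\mu)=\OPN{Aut}(\rho)$. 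I expect the perturbation argument in (C) to be the only step needing real care; (A), (B) and (D) are formal.
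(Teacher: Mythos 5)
Your proposal is correct, but both of its substantive parts take genuinely different routes from the paper's. For (A), the paper forms a \emph{set} of representatives \(\{\mu_t\dd \bbB \to G_t\}_{t\in T}\) of all \(H\)-invariant group-valued measures (using that there are only set-many countable Abelian groups), takes their diagonal into \(\prod_t G_t\) and restricts to the subgroup generated by the image; your realisation of \(G\) as the coinvariants \(C(\Can,\ZZZ)/N\), with \(N\) generated by the differences \(\chi_a-\chi_a\circ h^{-1}\), is cleaner and exhibits the universal object canonically rather than through a choice of representatives — the two constructions agree up to the unique isomorphism of (B). For (C), the paper takes a weak\(^*\)-dense sequence \((\mu_n)\) in the compact metrisable set \(\Delta\) of \(H\)-invariant probability measures, maps \(\bbB\) into \(\ell_\infty\) by \(a\mapsto(\mu_n(a))_n\), and invokes a Baire-category argument (\LEM{PN-l-infty}) in \(\prod_n[3^{-n},2^{-n}]\) to find weights \(a_n>0\) summing to \(1\) for which \(\sum_n a_n\mu_n\) is injective on the relevant \(\QQQ\)-span; your reformulation — a universal probability measure is exactly \(u_*\circ\mu\) for \(u_*\in P\) with \(\ker u_*=\bigcap_{u\in P}\ker u\) — followed by the summable greedy convex perturbation produces such a \(u_*\) directly, trading compactness and separability of \(\Delta\) for an elementary fusion argument; the uniform bound of \(P\) on each fixed vector of \(V\), which you correctly isolate, is precisely what makes the limit functional land in \(P\). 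Both methods are, at bottom, "generic convex combination" arguments, yours being the hands-on version. Parts (B) and (D) are left to the reader in the paper, and your universal-property argument is the intended one (only the trivial converse direction of the "iff" — that \(\Phi\circ\mu\) is again universal when \(\Phi\) is an isomorphism — is left implicit).
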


As we will see, the `group-valued' part of \THM{PN-univ-exist} is an easy
observation.\par
We precede the proof of the above result by the following lemma, in which
\(\ell_{\infty}\) denotes the Banach space of all bounded real-valued sequences
whose indices start from 1. This result is surely well-known. For the reader's
convenience, we give its short proof.

\begin{lem}{PN-l-infty}
Let \(Q\) be a countable subset of \(\ell_{\infty}\). There exists a sequence
\(a_1,a_2,\ldots\) of positive real numbers such that \(\sum_{n=1}^{\infty} a_n
= 1\) and the function
\begin{equation}\label{eqn:PN-aux49}
Q \ni (x_n)_{n=1}^{\infty} \mapsto \sum_{n=1}^{\infty} a_n x_n \in \RRR
\end{equation}
is one-to-one.
\end{lem}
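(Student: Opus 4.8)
The plan is to rephrase injectivity of the map \eqref{eqn:PN-aux49} as a separation condition on the differences of elements of \(Q\), and then to produce the weights from a single one-parameter analytic family, exploiting the fact that a nonzero power series has only isolated zeros. First I would observe that the map in \eqref{eqn:PN-aux49} is one-to-one on \(Q\) precisely when \(\sum_{n=1}^{\infty} a_n (x_n - y_n) \neq 0\) for every pair of distinct points \(x = (x_n)\), \(y = (y_n)\) of \(Q\). Writing \(D\) for the set of all differences \(d = x - y\) with \(x, y \in Q\) and \(x \neq y\), the set \(D\) is a countable subset of \(\ell_{\infty} \setminus \{0\}\) (since \(Q\) is countable), so it suffices to find positive weights \(a_n\) with \(\sum_n a_n = 1\) such that \(\sum_{n=1}^{\infty} a_n d_n \neq 0\) for every \(d \in D\). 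Each such series converges absolutely because \((a_n) \in \ell_1\) and \(d \in \ell_{\infty}\).

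Next I would introduce the explicit family of candidate weights \(a_n(t) \df (1-t) t^{n-1}\) for \(t \in (0,1)\) and \(n \geq 1\). For each such \(t\) these are positive and \(\sum_{n=1}^{\infty} a_n(t) = 1\), so it remains only to select a single value of \(t\) that works for all \(d \in D\) simultaneously. For a fixed \(d = (d_n) \in D\) I consider
\[
g_d(t) \df \sum_{n=1}^{\infty} a_n(t)\, d_n = (1-t)\, F_d(t), \qquad \text{where} \quad F_d(z) \df \sum_{n=1}^{\infty} d_n\, z^{n-1}.
\]

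The key step is the analytic-function argument. Since \(d\) is bounded, the power series \(F_d\) has radius of convergence at least \(1\), so \(F_d\) is analytic on the open unit disc; and since \(d \neq 0\), at least one coefficient \(d_n\) is nonzero, so \(F_d\) is not identically zero. A nonzero analytic function on a connected open set has only isolated zeros, whence the real zero set of \(F_d\) in \((-1,1)\)---and in particular in \((0,1)\)---is at most countable. As \(1 - t \neq 0\) on \((0,1)\), the ``bad'' set \(B_d \df \{t \in (0,1)\dd\ g_d(t) = 0\}\) is countable. Taking the union \(\bigcup_{d \in D} B_d\) over the countable family \(D\) then yields a countable subset of \((0,1)\); since \((0,1)\) is uncountable, I may fix some \(t_0 \in (0,1) \setminus \bigcup_{d \in D} B_d\) and set \(a_n \df a_n(t_0)\). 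Then \(\sum_n a_n = 1\), each \(a_n > 0\), and \(\sum_{n=1}^{\infty} a_n d_n = g_d(t_0) \neq 0\) for every \(d \in D\), which gives the required injectivity.

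The only genuinely non-routine point is the passage through analytic function theory to conclude that each \(g_d\) vanishes on at most a countable set of parameters; the remaining verifications (absolute convergence of the series, positivity and normalisation of the weights, and the countable-union bound) are immediate. An alternative to the analyticity argument---an inductive construction choosing the \(a_n\) with sufficiently rapid decay so as to handle the differences in \(D\) one at a time---is also available, but the analytic family yields the cleanest proof.
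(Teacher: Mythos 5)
Your proposal is correct, and it reaches the conclusion by a genuinely different route from the paper. You reduce injectivity to the non-vanishing of \(\sum_n a_n d_n\) over the countable set \(D\) of non-zero differences, restrict attention to the explicit one-parameter family \(a_n(t) = (1-t)t^{n-1}\), and observe that for each fixed \(d \in D\) the function \(t \mapsto \sum_n a_n(t) d_n\) is \((1-t)\) times a power series with bounded coefficients, hence analytic and not identically zero on the unit disc; its zero set in \((0,1)\) is therefore at most countable, and a cardinality count over the countable family \(D\) leaves an admissible parameter \(t_0\). The paper instead replaces \(Q\) by its \(\QQQ\)-linear span, works in the compact product \(X = \prod_{n}[3^{-n},2^{-n}]\), shows that for each non-zero \(q\) the set \(U_q = \{x \in X\dd\ \sum_n q_n x_n \neq 0\}\) is open and dense (by perturbing a single coordinate), and applies the Baire category theorem before normalising the resulting weights. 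Both arguments are ``generic choice'' arguments, but of different flavours: the paper's is topological (a dense \(\ggG_{\delta}\) in an infinite-dimensional parameter space) and needs no structure on the weights beyond positivity and summability, whereas yours trades the Baire theorem for elementary complex analysis and has the advantage of producing the weights from a single real parameter in a completely explicit family. All the steps you flag as routine (absolute convergence, normalisation, the countable union bound, and the fact that a non-zero analytic function has discrete, hence countable, zero set) are indeed routine, so there is no gap.
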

\begin{proof}
Replacing \(Q\) by its \(\QQQ\)-linear span, we may and do assume that \(Q\) is
a non-trivial countable \(\QQQ\)-linear subspace of \(\ell_{\infty}\).\par
Consider the space \(X \df \prod_{n=1}^{\infty} [3^{-n},2^{-n}]\) with
the product topology. It is a compact metrisable space and for every \(z =
(z_n)_{n=1}^{\infty} \in \ell_{\infty}\) the function \(\sigma_z\dd X \to \RRR\)
given by \(\sigma_z(x) = \sum_{n=1}^{\infty} z_n x_n\) is continuous where \(x =
(x_n)_{n=1}^{\infty} \in X\). Now for any non-zero \(q \in Q\) let \(U_q\) stand
for the set of all \(x \in X\) such that \(\sigma_q(x) \neq 0\). It is clear
that \(U_q\) is an open subset of \(X\). It is also dense in \(X\), as if
\(\sigma_q(x) = 0\) for some \(x = (x_n)_{n=1}^{\infty} \in X\) and \(k > 0\) is
an index such that \(q_k \neq 0\), then \(x_k = -\frac{1}{q_k} \sum_{n \neq k}
q_n x_n\) and we can approximate \(x\) by elements of \(U_q\) by changing only
the \(k\)th coordinate of \(x\).\par
Now it follows from the Baire's category theorem that there is
\((b_n)_{n=1}^{\infty} \in X\) that belongs to
\(\bigcap_{q \in Q \setminus \{0\}} U_q\). Then setting \(B \df
\sum_{n=1}^{\infty} b_n\) and \(a_n \df \frac{b_n}{B}\), we obtain the sequence
we sarched for (as for this specific sequence the kernel of \eqref{eqn:PN-aux49}
is trivial).
\end{proof}

\begin{proof}[Proof of \THM{PN-univ-exist}]
We start from (A). As there are at most \(2^{\aleph_0}\) (at most) countable
Abelian groups, there exists a \underline{set} \(\Gamma =
\{\mu_t\dd \bbB \to G_t\}_{t \in T}\) of group-valued \(H\)-invariant measures
such that for any \(t \in T\), \(G_t\) is generated (as a group) by
\(\mu_t(\bbB)\) and
\begin{itemize}
\item[\((\dag)\)] for any group-valued \(H\)-invariant measure \(\nu\dd \bbB \to
 Q\) there exists \(t \in T\) and a homomorphism \(\xi\dd G_t \to Q\) such that
 \(\nu = \xi \circ \mu_t\).
\end{itemize}
The set \(T\) is non-empty as the measure constantly equal to \(0_{\RRR}\) is
\(H\)-invariant. Now define \(\mu\dd \bbB \to \tilde{G} \df \prod_{t \in T}
G_t\) as the diagonal of \(\Gamma\); that is, \(\mu(a) = (\mu_t(a))_{t \in T}\).
Let \(G\) be the subgroup of \(\tilde{G}\) generated by \(\mu(\bbB)\). It is now
easy to see that \(\mu\dd \bbB \to G\) is a universal \(H\)-invariant
group-valued measure---to show the existence of a suitable homomorphism (to
represent a given \(H\)-invariant measure) one uses projections \(\pi_s\dd G \ni
(x_t)_{t \in T} \mapsto x_s \in G_s\ (s \in T)\) and \((\dag)\), and its
uniqueness follows from the definition of \(G\). The details are left to
the reader.\par
We now pass to a more subtle item (C). We only need to show the `if' part.
Denote by \(\Delta\) the set of all \(H\)-invariant probability Borel measures
on \(\Can\). By our assumption in (C), \(\Delta\) is non-empty. Recall that
the weak* topology of the set \(P\) of all probability Borel measures on
\(\Can\) is a metrisable topology in which measures \(\mu_1,\mu_2,\ldots\) from
\(P\) tend to \(\rho \in P\) iff \(\lim_{n\to\infty} \mu_n(a) = \rho(a)\) for
any \(a \in \bbB\). In this topology \(P\) is compact and \(\Delta\) is closed.
In particular, \(\Delta\) is separable. So, fix a sequence \(\mu_1,\mu_2,
\ldots\) of measures from \(\Delta\) that form a dense set in \(\Delta\). Now
set \(M\dd \bbB \ni a \mapsto (\mu_n(a))_{n=1}^{\infty} \in \ell_{\infty}\) and
denote by \(Q\) the \(\QQQ\)-linear span of \(M(\bbB)\) in \(\ell_{\infty}\).
Further, denote by \(\Lambda\) the set of all \(\QQQ\)-linear operators \(F\dd
Q \to \RRR\) such that \(F(1,1,1,\ldots) = 1\) and \(F(M(\bbB)) \subset [0,1]\).
Since \(Q\) is generated (as a vector space over \(\QQQ\)) by \(M(\bbB)\), we
infer that \(\Lambda\) is compact when equipped with the pointwise convergence
topology. Observe that \(M\) is \(H\)-invariant and hence \(F \circ M\dd \bbB
\to [0,1]\) is a non-negative \(H\)-invariant (abstract) measure that sends
\(\unit\) to \(1\). As such, it uniquely extends to a probability Borel measure
from \(\Delta\). In this way we obtain a continuous function \(\Psi\dd \Lambda
\to \Delta\) (that assigns to each \(F\) from \(\Lambda\) a unique measure
\(\nu\) from \(\Delta\) such that \(\nu(a) = (F \circ M)(a)\) for all \(a \in
\bbB\)). Note that all projections \(\pi_m\dd Q \ni (x_n)_{n=1}^{\infty} \mapsto
x_m \in \RRR\) are in \(\Lambda\) and therefore \(\mu_m = \Psi(\pi_m) \in
\Psi(\Lambda)\). So, we conclude that \(\Psi(\Lambda) = \Delta\). Finally, we
apply \LEM{PN-l-infty} to obtain a sequence \(a_1,a_2,\ldots\) of positive reals
that sum up to \(1\) and for which the function \(\Theta\dd Q \to \RRR\) given
by \eqref{eqn:PN-aux49} is one-to-one. Note that \(\Theta \in \Lambda\) and,
consequently, \(\mu \df \Psi(\Theta)\) is an \(H\)-invariant probability Borel
measure on \(\Can\). We can now easily show that it is universal: if \(\nu \in
\Delta\), then there is \(F \in \Lambda\) such that \(\nu = \Psi(F)\), which
means that \(\nu(a) = F(M(a)) = (F \circ \Theta^{-1})(\Theta(M(a))) = (F \circ
\Theta^{-1})(\mu(a))\) for all \(a \in \bbB\) (and the domain of \(\Theta^{-1}\)
coincides with the \(\QQQ\)-linear span of \(\mu(\bbB)\)). As in
the group-valued case, uniqueness of \(u = F \circ \Theta^{-1}\) is
automatic.\par
As parts (B) and (D) are straightforward, we leave their proofs to the reader.
\end{proof}

The following is a direct consequence of parts (B) and (D) of
\THM{PN-univ-exist} (cf. \LEM{PN-1-1}). We skip its simple proof.

\begin{cor}{PN-univ-homo}
A given universal \(H\)-invariant group-valued \UP(resp. probability\UP) measure
is homogeneous iff all such measures are homogeneous.
\end{cor}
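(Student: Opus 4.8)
The plan is to show that homogeneity is an invariant of the whole isomorphism class of universal $H$-invariant measures, so that it either holds for every such measure or for none; the equivalence in the corollary is then immediate. The engine is the elementary observation (already implicit in \LEM{PN-1-1}) that post-composing a measure with an \emph{injective} homomorphism changes neither its partial isomorphisms nor its automorphism group.

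First I would treat the group-valued case. Fix a universal $H$-invariant group-valued measure $\mu\dd \bbB \to G$ and let $\nu\dd \bbB \to Q$ be any other such measure. By part (B) of \THM{PN-univ-exist} there is an isomorphism $\Phi\dd G \to Q$ with $\nu = \Phi \circ \mu$. Since $\Phi$ is injective, for every Boolean algebra homomorphism $h\dd \aaA \to \bbB$ and every $a \in \aaA$ one has $\nu(h(a)) = \nu(a)$ if and only if $\mu(h(a)) = \mu(a)$; consequently the partial $\mu$-isomorphisms coincide \emph{exactly} with the partial $\nu$-isomorphisms, and in particular $\OPN{Aut}(\mu) = \OPN{Aut}(\nu)$. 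As homogeneity (\DEF{PN-homo-meas}) is formulated solely in terms of these two data---partial isomorphisms on finite (equivalently, $4$-point) subalgebras together with the requirement that they extend inside the automorphism group---it follows at once that $\mu$ is homogeneous if and only if $\nu$ is. Letting $\nu$ range over all universal measures shows that homogeneity of the fixed $\mu$ forces homogeneity of every universal measure, and conversely; this is precisely the asserted equivalence.

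The probability case will be handled identically, now invoking part (D) of \THM{PN-univ-exist}: any two universal $H$-invariant probability measures $\mu$ and $\nu$ satisfy $\nu(a) = (\Phi \circ \mu)(a)$ for all $a \in \bbB$, where $\Phi\dd W_{\mu} \to W_{\nu}$ is a $\QQQ$-linear isomorphism. A $\QQQ$-linear isomorphism is in particular an injective additive homomorphism, so the very same computation gives $\nu(h(a)) = \nu(a) \iff \mu(h(a)) = \mu(a)$, whence again the partial isomorphisms and the automorphism groups of $\mu$ and $\nu$ agree and homogeneity transfers between them.

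There is essentially no genuine obstacle here; the only point requiring care is to notice that merely the injectivity of $\Phi$ (not its surjectivity) is used, which is exactly why the relevant input is the identity $\OPN{Aut}(u \circ \mu) = \OPN{Aut}(\mu)$ for injective $u$ recorded in \LEM{PN-1-1}. Concretely, one should observe that homogeneity depends only on the fibre relation $\{(a,b)\dd \mu(a) = \mu(b)\}$ of the measure and on $\OPN{Aut}(\mu)$, both of which are preserved when $\mu$ is replaced by $u \circ \mu$ for an injective homomorphism $u$; the isomorphisms supplied by parts (B) and (D) are such maps, so the corollary reduces to this single remark.
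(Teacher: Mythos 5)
Your proof is correct and follows exactly the route the paper intends: it invokes parts (B) and (D) of \THM{PN-univ-exist} to relate any two universal measures by an injective homomorphism (a group isomorphism, resp.\ a \(\QQQ\)-linear isomorphism) and then transfers homogeneity via the observation underlying \LEM{PN-1-1} that partial isomorphisms and automorphism groups are unchanged under post-composition with an injective homomorphism. The paper skips this proof as "simple," citing precisely these ingredients, so nothing further is needed.
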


It is straightforward to check that the measure \(\nu\dd \bbB \to C(\Can,\ZZZ)\)
that assigns to a clopen set its characteristic function is a group-valued
homogeneous measure such that any group-valued measure on \(\bbB\) has the form
\(\phi \circ \nu\) where \(\phi\) is a homomorphism from \(C(\Can,\ZZZ)\) into
a group. A less trivial counterpart of this observation for probability Borel
measures on the \Cantor\ is formulated below.

\begin{cor}{PN-univ-meas}
There exists a homogeneous probability Borel measure \(\mu\) on \(\Can\) such
that all probability Borel measures on \(\Can\) are precisely unique extensions
\UP(to measures defined the \(\sigma\)-algebra of all Borel subsets of
the \Cantor\UP) of all set functions of the form \(\phi \circ
(\mu\restriction{\bbB})\) where \(\phi\) is a \(\QQQ\)-linear operator from
the \(\QQQ\)-linear span of \(\mu(\bbB)\) into \(\RRR\) that fixes \(1\) and is
non-negative on \(\mu(\bbB)\).
\end{cor}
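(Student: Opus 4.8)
The plan is to deduce the result directly from the machinery of universal invariant measures, specialised to the \emph{trivial} acting set. Concretely, I would take $H := \{\mathrm{id}_{\Can}\}$, for which the condition of $H$-invariance is vacuous: \emph{every} probability Borel measure on $\Can$ is $H$-invariant. Since such measures certainly exist, \THM{PN-univ-exist}(C) produces a universal $H$-invariant probability measure $\mu$. Writing $W$ for the $\QQQ$-linear span of $\mu(\bbB)$, the universal property recorded in \DEF{PN-univ-meas} says precisely that every probability Borel measure $\nu$ satisfies $\nu(a)=(v\circ\mu)(a)$ for all $a\in\bbB$ for a unique $\QQQ$-linear $v\colon W\to\RRR$, and—by the parenthetical bijection in that definition—these $v$ are exactly the $\QQQ$-linear operators with $v(1)=1$ that are non-negative on $\mu(\bbB)$. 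Because a non-negative finitely additive set function on $\bbB$ extends uniquely to a Borel measure (the fact recalled in the introduction, cf.\ \cite{BieKuWa19}), both inclusions of the asserted correspondence follow at once: each $\nu$ is the unique Borel extension of $v\circ(\mu\restriction\bbB)$; and conversely, for any admissible $\phi$ the set function $\phi\circ(\mu\restriction\bbB)$ is non-negative, finitely additive, and assigns $\phi(\mu(\unit))=\phi(1)=1$ to $\unit$, hence extends uniquely to a probability Borel measure.

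The only point requiring genuine argument is that this $\mu$ is homogeneous, and here I would prove the stronger fact that $\mu$ is one-to-one on $\bbB$. Granting injectivity, homogeneity is immediate exactly as in the rigid measure of \EXM{PN-homo-bad}: if $\mu(a)=\mu(b)$ forces $a=b$, then the only proper elements meeting the hypothesis of \DEF{PN-homo-meas} have $a=b$, and $\phi=\mathrm{id}\in\OPN{Aut}(\mu)$ witnesses homogeneity (indeed $\OPN{Aut}(\mu)$ is then trivial). To establish injectivity, suppose $a,b\in\bbB$ satisfy $\mu(a)=\mu(b)$ with $a\neq b$. Then $c:=(\BooD{a}{b})\vee(\BooD{b}{a})$ is a non-zero element of $\bbB$, i.e.\ a non-empty clopen subset of $\Can$, so it contains a point $x$; by symmetry say $x\in\BooD{a}{b}$. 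The Dirac measure $\delta_x$ is a probability Borel measure, so by universality there is a $\QQQ$-linear $v$ with $\delta_x(a)=v(\mu(a))=v(\mu(b))=\delta_x(b)$, contradicting $\delta_x(a)=1\neq 0=\delta_x(b)$. Hence $\mu$ is injective on $\bbB$, and therefore homogeneous, which completes the proof.

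The hard part will be confirming homogeneity, and I expect the obstacle to dissolve once one notices that universality already \emph{forces} injectivity on clopen sets, via the (non-full) Dirac measures $\delta_x$. In particular this route bypasses the explicit $\ell_\infty$-construction used in the proof of \THM{PN-univ-exist}(C) as well as any appeal to \COR{PN-univ-homo}: the key structural input is simply that distinct clopen sets are separated by some probability Borel measure, which the universal representation then transports into injectivity of $\mu$.
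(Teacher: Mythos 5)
Your proof is correct and follows the same overall route as the paper: specialise \THM{PN-univ-exist}(C) to the trivial group \(H\), observe that \(H\)-invariance is then vacuous, and read off the representation of arbitrary probability Borel measures from the universal property together with the standard unique-extension fact for non-negative finitely additive set functions on \(\bbB\). The only substantive step is homogeneity, and here you diverge slightly: the paper obtains it by combining \EXM{PN-homo-bad} with \COR{PN-univ-homo}, i.e.\ it relies on the fact that some universal measure (essentially the one built via the \(\ell_\infty\)-construction, which is one-to-one on \(\bbB\) because probability measures separate distinct clopen sets) is homogeneous, and then transfers this to all universal measures. You instead prove directly that \emph{any} universal \(\{\mathrm{id}\}\)-invariant probability measure is injective on \(\bbB\), by pushing a Dirac measure \(\delta_x\) (for \(x\) in the symmetric difference of two clopen sets with equal \(\mu\)-value) through the universal representation \(\delta_x = v\circ\mu\) to get a contradiction; injectivity then yields homogeneity exactly as in \EXM{PN-homo-bad}. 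Your version is self-contained and arguably cleaner, since it needs neither \COR{PN-univ-homo} nor any inspection of the explicit construction in the proof of \THM{PN-univ-exist}(C); what it exploits is the same underlying fact (distinct clopen sets are separated by some probability measure), just transported through universality rather than through the construction. Both arguments are sound.
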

\begin{proof}
Take as \(\mu\) a universal \(H\)-invariant probability measure where \(H\) is
the trivial group. It follows from \EXM{PN-homo-bad} and \COR{PN-univ-homo} that
\(\mu\) is homogeneous, whereas the other property of \(\mu\) (formulated in
the result) follows from its universality.
\end{proof}

\begin{rem}{PN-intersec-aut-gr}
The arguments involved in the proofs of parts (A) and (C) of \THM{PN-univ-exist}
can be used to show the following interesting properties:
\begin{itemize}
\item Let \(\aaA\) consist of all subgroups of \(\OPN{Homeo}(\Can)\) of the form
 \(\OPN{Aut}(\mu)\) where \(\mu\) is a group-valued measure on \(\bbB\) (resp.
 where \(\mu\) is a probability Borel measure on \(\Can\)). Then for any
 non-empty collection \(\{G_s\}_{s \in S}\) of groups from \(\aaA\) its
 intersection \(\bigcap_{s \in S} G_s\) belongs to \(\aaA\) as well.
\item For any \(G \in \aaA\) there exists a universal \(G\)-invariant
 group-valued (resp. probability) measure \(\mu\) and it satisfies
 \(\OPN{Aut}(\mu) = G\).
\end{itemize}
We leave the details to interested readers.
\end{rem}

\begin{rem}{PN-signed-univ}
If \(H\) is a subset of \(\OPN{Homeo}(\Can)\) such that there exists a non-zero
signed Borel measure \(\nu\) on \(\Can\) that is \(H\)-invariant, then there
exists a universal \(H\)-invariant probability measure \(\mu\) and for any
\(H\)-invariant signed Borel measure \(\rho\) there exists a unique
\(\QQQ\)-linear operator \(u\dd W_{\mu} \to \RRR\) such that \(\rho(a) =
(u \circ \mu)(a)\) for all \(a \in \bbB\). To convince oneself of that, note
that if \(\nu\) is \(H\)-invariant, then the variation \(|\nu|\) of \(\nu\) is
\(H\)-invariant as well (as \(H\) consists of homeomorphisms). So,
\(\frac{|\nu|}{|\nu|(\Can)}\) is a probability Borel measure that is
\(H\)-invariant. Consequently, the above \(\mu\) exists. Moreover, \(\nu\) is
a linear combination of two \(H\)-invariant probability Borel measures and hence
it is of the form \(u \circ \mu\) on \(\bbB\) for some \(\QQQ\)-linear \(u\).
\end{rem}

\begin{pro}{PN-univ-homo}
Let \(H\) be the automorphism group of a homogeneous group-valued measure
\(\mu\) on \(\bbB\) \UP(resp. of a homogeneous probability Borel measure \(\mu\)
on \(\Can\)\UP). Then a universal \(H\)-invariant group-valued measure \UP(resp.
a universal \(H\)-invariant probability measure\UP) \(\lambda\) is homogeneous
as well and satisfies \(\OPN{Aut}(\lambda) = H\).
\end{pro}
\begin{proof}
It follows from \THM{PN-univ-exist} that \(\lambda\) exists. So, \(H \subset
\OPN{Aut}(\lambda)\) (by the very definition of a universal measure). Moreover,
it follows from the universal property of \(\lambda\) that
\begin{equation}\label{eqn:PN-aux50}
\mu(a) = (\phi \circ \lambda)(a) \qquad (a \in \bbB)
\end{equation}
for some homomorphism defined on a respective group, which implies that
\(\OPN{Aut}(\lambda) \subset \OPN{Aut}(\mu)\ (= H)\) and therefore
\(\OPN{Aut}(\lambda) = H\). Finally, a partial \(\lambda\)-isomorphism defined
on a finite Boolean subalgebra of \(\bbB\) is a partial \(\mu\)-isomorphism as
well (thanks to \eqref{eqn:PN-aux50}) and hence it extends to a member of \(H\)
(by homogeneity of \(\mu\)), which shows that \(\lambda\) is homogeneous.
\end{proof}

\begin{rem}{PN-univ-homo}
Continuing the proof of \PRO{PN-univ-homo}, if \(\phi\) satisfies
\eqref{eqn:PN-aux50}, then it is one-to-one on \(\lambda(\bbB \setminus \{\zero,
\unit\})\) as otherwise we would get two proper elements \(a\) and \(b\) of
\(\bbB\) such that \(\lambda(a) \neq \lambda(b)\) and \(\mu(a) = \mu(b)\), and
then homogeneity of \(\mu\) would give us an automorphism \(h \in H\) that sends
\(a\) onto \(b\) and therefore \(h \notin \OPN{Aut}(\lambda)\).\par
Conversely, if \(\lambda\dd \bbB \to G\) is a universal measure (in the sense of
\DEF{PN-univ-meas}) and \(\phi\dd G \to Q\) is a homomorphism that is one-to-one
on \(\lambda(\bbB \setminus \{\zero,\unit\})\), then the measure \(\mu\) given
by \eqref{eqn:PN-aux50} is homogeneous and satisfies \(\OPN{Aut}(\mu) =
\OPN{Aut}(\lambda)\).
\end{rem}

To formulate our next result (which is one of the main goals of the paper), we
introduce the following notation (cf. \cite{IM16}). For a minimal homeomorphism
\(h\) of the \Cantor, we use \([h]\) to denote the so-called \emph{full group of
\(h\)} consisting of all \(g \in \OPN{Homeo}(\Can)\) such that for any \(x \in
\Can\) there is an integer \(k \in \ZZZ\) satisfying \(g(x) = h^k(x)\). Below
\(\overline{[h]}\) stands for the closure of \([h]\) in the uniform topology of
\(\OPN{Homeo}(\Can)\).

\begin{thm}{PN-homeo-mini}
Let \(h\dd \Can \to \Can\) be a minimal homeomorphism. Then a universal
\(h\)-invariant probability measure \(\mu\) is supersolid. Moreover,
\(\OPN{Aut}(\mu) = \overline{[h]}\); and \(\mu\) is good if and only if \(h\) is
uniquely ergodic.
\end{thm}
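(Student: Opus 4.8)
The plan is to read off the structure of the universal measure from the proof of \THM{PN-univ-exist}, deduce supersolidity, identify the automorphism group, and finally settle the good/uniquely-ergodic dichotomy. Since \(\Can\) is compact, the minimal \(h\) admits an \(h\)-invariant probability Borel measure (Krylov--Bogolyubov), so \THM{PN-univ-exist}(C) provides a universal \(h\)-invariant probability measure \(\mu\). The first step is to extract from the construction there the key separation property: writing \(\mu=\Psi(\Theta)\) with \(\Theta\) injective on the span of \(M(\bbB)\) and \(M(a)=(\mu_n(a))_n\) for a weak\(^*\)-dense sequence \((\mu_n)\) of \(h\)-invariant measures, density together with continuity of \(\nu\mapsto\nu(a)\) gives, for clopen \(a,b\), that \(\mu(a)=\mu(b)\) iff \(\nu(a)=\nu(b)\) for every \(h\)-invariant probability measure \(\nu\). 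An immediate consequence is the intrinsic description \(\OPN{Aut}(\mu)=\{g\in\OPN{Homeo}(\Can):g\text{ preserves every }h\text{-invariant measure}\}\); in particular \([h]\subseteq\OPN{Aut}(\mu)\), since every element of the full group preserves all \(h\)-invariant measures.

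Next I would prove \(\mu\) supersolid. For homogeneity, take proper \(a,b\) with \(\mu(a)=\mu(b)\) (so also \(\mu(\unit\setminus a)=\mu(\unit\setminus b)\) automatically); by the above this means \(a\) and \(b\) have equal measure for every \(h\)-invariant measure, so the Glasner--Weiss equidecomposition theorem \cite{GW95} produces \(g\in[h]\subseteq\OPN{Aut}(\mu)\) with \(g(a)=b\), which is exactly the condition in \DEF{PN-homo-meas}; hence \(\mu\) is homogeneous (and ultrahomogeneous by \THM{PN-4EP}). For minimality of the action, fix a nonempty proper clopen \(a\); minimality of \(h\) gives \(\bigcup_k h^k(a)=\Can\), so by compactness \(\bigvee_k h^{m_k}(a)=\unit\) for finitely many \(m_k\), and each \(h^{m_k}\in[h]\subseteq\OPN{Aut}(\mu)\), so the action of \(\OPN{Aut}(\mu)\) is minimal in the sense of \DEF{PN-mini}. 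Thus \(\mu\) is supersolid, and in particular full and non-atomic by \PRO{PN-super-full-nonatom}.

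Now for \(\OPN{Aut}(\mu)=\overline{[h]}\). The inclusion \(\overline{[h]}\subseteq\OPN{Aut}(\mu)\) follows once one checks that \(\OPN{Aut}(\mu)\) is uniformly closed: for a fixed clopen \(a\) the map \(g\mapsto g(a)\) is locally constant on \(\OPN{Homeo}(\Can)\) (if \(g'\) is uniformly close enough to \(g\), then, using also \(g'^{-1}\to g^{-1}\) uniformly by compactness, one gets \(g'(a)=g(a)\)), so a uniform limit of \(\mu\)-preserving maps is \(\mu\)-preserving. For the reverse inclusion, fix \(g\in\OPN{Aut}(\mu)\) and \(\epsi>0\), choose a clopen partition \(\{d_1,\ldots,d_r\}\) so fine that every \(d_i\) and every \(g(d_i)\) has diameter \(<\epsi\), and note \(\mu(d_i)=\mu(g(d_i))\) for all \(i\). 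The \emph{partition} form of equidecomposition then yields \(g'\in[h]\) with \(g'(d_i)=g(d_i)\) for every \(i\); since \(g(x),g'(x)\) then lie in the common set \(g(d_i)\) of diameter \(<\epsi\) whenever \(x\in d_i\), we get \(\sup_x d(g(x),g'(x))\le\epsi\), so \(g\in\overline{[h]}\). This simultaneous (partition) equidecomposition is the step I expect to be the main obstacle: it is the genuinely dynamical input, which I would invoke from \cite{GW95} (equivalently from the dimension-group machinery of Cantor minimal systems) or else reprove by a Kakutani--Rokhlin tower argument matching the towers over \(\{d_i\}\) and \(\{g(d_i)\}\) level by level.

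Finally I would settle ``\(\mu\) good \(\iff\) \(h\) uniquely ergodic''. If \(h\) is uniquely ergodic with invariant measure \(\lambda\), the separation property collapses to a single measure and forces \(\mu=\lambda\); being the invariant measure of a minimal uniquely ergodic system, \(\lambda\) is good by the Glasner--Weiss/Akin characterisation \cite{GW95,Akin05}. Conversely, if \(\mu\) is good, the same characterisation yields a minimal uniquely ergodic \(h'\) preserving \(\mu\); then \(h'\) preserves every \(h\)-invariant measure, so \(h'\in\OPN{Aut}(\mu)=\overline{[h]}\). Hence any \(h\)-invariant measure \(\lambda\)—being preserved by all of \(\overline{[h]}\) (again by uniform closedness of the stabiliser of \(\lambda\)) and in particular by \(h'\)—coincides with the unique \(h'\)-invariant measure \(\mu\); thus \(h\) has \(\mu\) as its only invariant measure and is uniquely ergodic, completing the argument.
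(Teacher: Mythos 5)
Your proof is correct, and its first half (existence via Krylov--Bogolyubov, the separation property \(\mu(a)=\mu(b)\iff\nu(a)=\nu(b)\) for all invariant \(\nu\), homogeneity via Glasner--Weiss equidecomposition, and supersolidity from \(h\in\OPN{Aut}(\mu)\)) is essentially the paper's argument. You diverge in two places. First, for \(\OPN{Aut}(\mu)=\overline{[h]}\) the paper simply invokes the Giordano--Putnam--Skau theorem (\THM{PN-G-W}(B), cited as \cite{GPS99}), which states outright that \(\overline{[h]}\) is the set of all homeomorphisms preserving every \(h\)-invariant measure; combined with density of \((\rho_n)\) this gives the identification in two lines. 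What you flag as ``the main obstacle''---the simultaneous partition equidecomposition plus the uniform approximation argument---is in effect a sketch of the nontrivial inclusion of that theorem, so you are reproving a known black box rather than citing it; your sketch is sound (the partition version does follow from the single-set version of \cite{GW95} by induction on clopen pieces, using that \([h]\) is closed under clopen piecewise gluing, and your local-constancy argument for uniform closedness of \(\OPN{Aut}(\mu)\) is fine), but the clean reference is \cite{GPS99}, not \cite{GW95}. Second, for ``\(\mu\) good \(\Rightarrow h\) uniquely ergodic'' the paper argues through the universal property: goodness forces \(\mu(\bbB)+\ZZZ\) to be a group, hence the identity is the only \(\QQQ\)-linear functional on the span of \(\mu(\bbB)\) fixing \(1\) and nonnegative on \(\mu(\bbB)\), so \(\Delta=\{\mu\}\). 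You instead realise the good \(\mu\) by a uniquely ergodic minimal \(h'\) (Akin/Glasner--Weiss) and push every \(h\)-invariant measure through \(\overline{[h]}\ni h'\) to conclude it equals \(\mu\); this is a genuinely different and valid route, though it depends on the already-established identity \(\OPN{Aut}(\mu)=\overline{[h]}\), whereas the paper's argument is self-contained given universality. One small presentational point: in your last paragraph the clause ``then \(h'\) preserves every \(h\)-invariant measure, so \(h'\in\OPN{Aut}(\mu)\)'' has the implications in the wrong order---\(h'\in\OPN{Aut}(\mu)\) follows from \(h'\) preserving \(\mu\), and it is membership in \(\overline{[h]}\) that then yields preservation of every \(h\)-invariant measure; the subsequent sentence shows you in fact use the correct chain.
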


In the proof, we will apply two well-known results gathered in a single result
below (see also \cite{IM16} for more details):

\begin{thm}{PN-G-W}
Let \(\Delta\) be the set of all \(h\)-invariant probability Borel measures on
\(\Can\) where \(h\) is a minimal homeomorphism on the \Cantor.
\begin{enumerate}[\upshape(A)]
\item \cite{GW95} For two clopen sets \(a\) and \(b\) of \(\bbB\) \tfcae
 \begin{itemize}
 \item \(\mu(a) = \mu(b)\) for all \(\mu \in \Delta\);
 \item \(g[a] = b\) for some \(g \in [h]\).
 \end{itemize}
\item \cite{GPS99} The group \(\overline{[h]}\) consists precisely of all
 homeomorphisms of \(\Can\) that preserve each measure from \(\Delta\).
\end{enumerate}
\end{thm}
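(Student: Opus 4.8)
The statement combines two classical facts about a minimal Cantor system $(\Can,h)$: part (A) is the Glasner--Weiss theorem and part (B) is the Giordano--Putnam--Skau theorem. Throughout I write $X=\Can$ and let $\Delta$ denote the (nonempty, by Krylov--Bogolyubov) compact convex set of $h$-invariant Borel probability measures; by minimality every $\mu\in\Delta$ has full support. The plan is to prove (A) first and then derive (B) from it by an approximation-and-gluing argument. The easy implication of (A) is that every $g\in[h]$ preserves every $\mu\in\Delta$: writing $g(x)=h^{k(x)}(x)$ and using that $g$ is a homeomorphism of a Cantor set, a compactness argument produces a finite clopen decomposition on whose pieces $g$ agrees with a fixed power of $h$, whence $\mu(g[A])=\mu(A)$ for every clopen $A$ by $h$-invariance; in particular $g[a]=b$ forces $\mu(a)=\mu(b)$ for all $\mu$. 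The content of (A) is the converse.

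For the converse I would use Kakutani--Rokhlin towers: choosing a nested sequence of clopen bases shrinking to a point, the first-return times to them are bounded (by minimality), so one obtains finite clopen partitions $X=\bigsqcup_{k}\bigsqcup_{i=0}^{H_k-1}h^{i}(Y_k)$, refining and with heights $H_k\to\infty$, whose levels eventually generate the topology; hence prescribed clopen $a,b$ eventually become unions of levels. Within one tower, moving one level to another is a power of $h$ and so lies in $[h]$, and the freedom that $[h]$ permits a \emph{discontinuous} cocycle $k(\cdot)$ lets one also route pieces through the common base from one tower into another. The main obstacle -- the technical heart of Glasner--Weiss -- is the comparison lemma: if $\mu(a)<\mu(b)$ for all $\mu\in\Delta$ (equivalently, by compactness of $\Delta$, $\inf_\mu(\mu(b)-\mu(a))>0$), then $a$ is $[h]$-equivalent to a clopen subset of $b$. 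I would establish this by passing to a tower fine enough that the uniform measure gap dominates the per-level weights $\mu(Y_k)$, and then matching the levels occupied by $a$ into those occupied by $b$, using the base-routing to absorb the discrepancies in level counts that the pointwise (rather than uniform-per-tower) nature of the measure inequality would otherwise obstruct. Granting the comparison lemma, the equality case $\mu(a)=\mu(b)$ for all $\mu$ follows by a Cantor--Bernstein argument for ``$[h]$-equivalent to a subset of'': applying comparison to slightly shrunk versions of $a$ and $b$ (full support keeps the inequalities strict) and exhausting, one produces mutually inverse embeddings and glues them into a single $g\in[h]$ with $g[a]=b$. This comparison step is the part I expect to be genuinely hard; everything else is bookkeeping.

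With (A) in hand, (B) is comparatively formal. For $\overline{[h]}\subseteq\{g:\ g_*\mu=\mu\ \forall\mu\in\Delta\}$, note that for fixed $\mu$ the preservers form a uniformly closed set: if $g_n\to g$ uniformly and each $g_n$ preserves $\mu$, then $f\circ g_n\to f\circ g$ uniformly for every $f\in C(X)$, so $\int f\circ g\,d\mu=\lim_n\int f\circ g_n\,d\mu=\int f\,d\mu$; recalling from the easy half of (A) that $[h]$ preserves every $\mu\in\Delta$ and intersecting over $\mu$, we get the inclusion. For the reverse inclusion, fix $g$ preserving every $\mu\in\Delta$ and $\epsilon>0$, and take the common refinement $\{a_1,\dots,a_m\}$ of a clopen partition into sets of diameter $<\epsilon$ with its $g^{-1}$-image, so that each $a_i$ and each $b_i:=g[a_i]$ has diameter $<\epsilon$. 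Then $\{a_i\}$ and $\{b_i\}$ are clopen partitions with $\mu(b_i)=\mu(g[a_i])=\mu(a_i)$ for all $\mu$, so by (A) there are $g_i\in[h]$ with $g_i[a_i]=b_i$; since membership in $[h]$ only requires a pointwise cocycle, gluing the restrictions $g_i|_{a_i}$ over the clopen partition $\{a_i\}$ yields a single $\tilde g\in[h]$ with $\tilde g[a_i]=b_i$ for all $i$. For $x\in a_i$ both $\tilde g(x)$ and $g(x)$ lie in the $\epsilon$-small set $b_i$, so $\sup_x d(\tilde g(x),g(x))\le\epsilon$; letting $\epsilon\to0$ shows $g\in\overline{[h]}$, which completes (B).
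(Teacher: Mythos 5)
First, the key point of comparison: the paper gives \emph{no proof} of this theorem --- part (A) is quoted from \cite{GW95}, part (B) from \cite{GPS99}, and both are used as black boxes in the proof of \THM{PN-homeo-mini}. So your proposal can only be measured against the cited literature, and as a self-contained proof it is incomplete. What is correct and genuinely useful is your reduction of (B) to (A): for fixed \(\mu\) the \(\mu\)-preserving homeomorphisms form a uniformly closed set, which together with the easy half of (A) gives \(\overline{[h]}\subseteq\{g\dd\ g \text{ preserves every }\mu\in\Delta\}\); conversely, refining a diameter-\(<\epsi\) clopen partition by its \(g^{-1}\)-image, applying (A) piecewise, and gluing over the clopen partition (legitimate precisely because \([h]\) requires only a pointwise cocycle) yields \(\tilde g\in[h]\) with \(\sup_x d(\tilde g(x),g(x))\le\epsi\). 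That argument is sound.

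There are, however, two genuine gaps. (i) Minor but real: your justification of the easy implication of (A) is false as stated. An element \(g\in[h]\) need \emph{not} admit a finite clopen decomposition on whose pieces it agrees with fixed powers of \(h\); that property defines the topological full group \([[h]]\), a proper subgroup of \([h]\) --- and you implicitly rely on the distinction yourself when you invoke discontinuous cocycles for ``routing''. The repair is standard: the sets \(E_k=\{x\dd\ g(x)=h^k(x)\}\) are closed and pairwise disjoint (minimality on an infinite space forces aperiodicity), so for Borel \(A\) one has the disjoint union \(g[A]=\bigcup_k h^k[A\cap E_k]\), and countable additivity plus \(h\)-invariance give \(\mu(g[A])=\mu(A)\). (ii) Major: part (A) itself is never proved. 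The comparison lemma --- \(\mu(a)<\mu(b)\) for all \(\mu\in\Delta\) implies \(a\) is \([h]\)-equivalent to a clopen subset of \(b\) --- \emph{is} the Glasner--Weiss theorem, and your tower paragraph is a plan, not an argument. In particular it does not touch the actual difficulty: the hypothesis is a measure inequality (made uniform, \(\inf_{\mu\in\Delta}(\mu(b)-\mu(a))>0\), by weak\({}^*\) compactness of \(\Delta\)), whereas the level-matching requires a combinatorial inequality --- in \emph{every} column of the Kakutani--Rokhlin partition, the number of levels meeting \(b\) must dominate the number meeting \(a\) --- and deducing the latter from the former is where the work lies (a violating column produces, as a weak\({}^*\) limit of empirical measures along its levels, an invariant measure contradicting the uniform gap; making this precise needs heights large compared with the reciprocal gap and control of boundary levels). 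Likewise your Cantor--Bernstein step for the equality case glues countably many clopen pieces, and continuity of the glued bijection at accumulation points of infinitely many pieces is not automatic; it needs a separate argument. As it stands, the proposal establishes (B) modulo (A), and (A) is assumed rather than proved.
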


\begin{proof}[Proof of \THM{PN-homeo-mini}]
Let \(\Delta\) be as specified in the statement of \THM{PN-G-W}. Since
\(\Delta\) is non-empty, we know from \THM{PN-univ-exist} that \(\mu\) exists.
It follows from item (B) of \THM{PN-G-W} that
\begin{equation}\label{eqn:PN-aux51}
\overline{[h]} = \bigcap_{\rho\in\Delta} \OPN{Aut}(\rho).
\end{equation}
We know from the proof of part (C) of \THM{PN-univ-exist} that
\(\mu\restriction{\bbB}\) is of the form \(\phi \circ M\) where
\begin{enumerate}[(u1)]
\item \(M\dd \bbB \to \ell_{\infty}\) has the form \(M(a) =
 (\rho_n(a))_{n=1}^{\infty}\ (a \in \bbB)\) where
\item \(\rho_1,\rho_2,\ldots\) are measures from \(\Delta\) that form a dense
 set in \(\Delta\) (in the weak* topology);
\item \(\phi\dd Q \to \RRR\) is a one-to-one function and \(Q \supset M(\bbB)\).
\end{enumerate}
We infer from (u3) that \(\OPN{Aut}(\mu) = \OPN{Aut}(M)\) and that \(\mu\) is
homogeneous iff \(M\) is so; and from (u1) that \(\OPN{Aut}(M) =
\bigcap_{n=1}^{\infty} \OPN{Aut}(\rho_n)\). Finally, (u2) combined with
\eqref{eqn:PN-aux51} yield \(\bigcap_{n=1}^{\infty} \OPN{Aut}(\rho_n) =
\overline{[h]}\). All these remarks give \(\OPN{Aut}(\mu) = \overline{[h]}\).
We now briefly explain why \(M\) is homogeneous (and therefore \(\mu\) is
homogeneous as well). To this end, fix arbitrary proper elements \(a, b \in
\bbB\) such that \(M(a) = M(b)\). Then \(\rho_n(a) = \rho_n(b)\) for each \(n\)
(by (u1)) and, consequently, \(\nu(a) = \nu(b)\) for all \(\nu \in \Delta\), by
(u2). So, part (A) of \THM{PN-G-W} yields a homeomorphism \(g \in [h]\ (\subset
\OPN{Aut}(M))\) such that \(g(a) = b\), which implies homogeneity of \(M\).\par
Now it is clear that \(\mu\) is supersolid (as \(h \in \OPN{Aut}(\mu)\) is
minimal). So, it remains to verify when \(\mu\) is a good measure. A one
direction is simple: if \(h\) is uniquely ergodic, then \(\mu\) is a unique
\(h\)-invariant probability measure, which is known to be good by a result due
to Glasner and Weiss \cite{GW95}. Conversely, if \(\mu\) is good, then it is not
difficult to show that the identity is a unique real-valued \(\QQQ\)-linear
operator defined on the \(\QQQ\)-linear span of \(\mu(\bbB)\) that fixes \(1\)
(as then \(\mu(\bbB)+\ZZZ\) is a subgroup of \(\RRR\)). So, it follows from
the universal property of \(\mu\) that \(\Delta = \{\mu\}\) and hence \(h\) is
uniquely ergodic.
\end{proof}

As minimal homeomorphisms induce supersolid probability measures, a natural
question arises:

\begin{prb}{PN-which-super-mini} (cf. \PRO{PN-univ-homo})
Assume \(\mu\) is a supersolid probability measure on \(\Can\). Find sufficient
or necessary conditions for the existence of a minimal homeomorphism \(h\) on
the \Cantor\ such that \(\OPN{Aut}(\mu) = \overline{[h]}\).
\end{prb}

Now we briefly discuss the question of when a homogeneous measure is universal
w.r.t. its own automorphism group.

\begin{pro}{PN-homo-when-univ}
Let \(\mu\) be a homogeneous measure and let \(H \df \OPN{Aut}(\mu)\), \(V_* \df
\mu(\bbB \setminus \{\zero,\unit\})\) and \(V \df V_* \cup \{\mu(\zero),
\mu(\unit)\}\).
\begin{enumerate}[\upshape(A)]
\item If \(\mu\dd \bbB \to G\) is group-valued and either \(\mu(\unit) = 0_G \in
 V_*\) or \(\mu(\unit) \neq 0_G\), then \(\mu\) is a universal \(H\)-invariant
 group-valued measure iff \(G\) is generated by \(V\), and for any group \(Q\),
 each function \(\xi\dd \mu(\bbB) \to Q\) such that \(\xi \circ \mu\) is
 a measure extends to a homomorphisn from \(G\) into \(Q\).
\item If \(\mu\) is a probability Borel measure on \(\Can\), then it is
 a universal \(H\)-invariant probability measure iff each function \(\xi\dd V
 \to [0,1]\) such that \(\xi(1) = 1\) and \(\xi \circ (\mu\restriction{\bbB})\)
 is a measure \UP(on \(\bbB\)\UP) extends to a \(\QQQ\)-linear operator from
 the \(\QQQ\)-linear span of \(V\) into \(\RRR\).
\end{enumerate}
\end{pro}
\begin{proof}
First assume \(\nu\dd \bbB \to Q\) is an \(H\)-invariant measure (where \(Q\) is
an arbitrary group---the argument presented below works also for part (B)). For
any two proper elements \(a, b \in \bbB\) such that \(\mu(a) = \mu(b)\) there
exists \(\phi \in H\) such that \(\phi(a) = b\) (as \(\mu\) is homogeneous and
takes values in a group), and then \(\nu(a) = \nu(\phi(a)) = \nu(b)\). This
shows that
\[\mu(a) = \mu(b) \implies \nu(a) = \nu(b) \qquad (a,b \in \bbB \setminus
\{\zero,\unit\}).\]
So, there exists a function \(\xi\dd \mu(\bbB \setminus \{\zero,\unit\}) \to Q\)
such that \(\nu(a) = \xi(\mu(a))\) for all \(a\) different from \(\zero\) and
\(\unit\). For simplicity, we set \(\zZ \df \mu(\zero)\) and \(E \df
\mu(\unit)\). Observe that if \(E = \zZ\), then, according to the assumptions in
(A), \(\zZ \in V_*\). So, there are only two possibilities:
\begin{itemize}
\item \(\zZ \notin V_*\) and \(E \neq \zZ\); then also \(E \notin V_*\) (as \(E
 = \mu(a)+\mu(\BooD{\unit}{a})\)); and in that case we may simply extend \(\xi\)
 to a function \(\xi\dd V \to Q\) by assigning \(0_Q\) to \(\zZ\) and
 \(\nu(\unit)\) to \(E\)---then \(\nu = \xi \circ \mu\) (on \(\bbB\)).
\item \(\zZ \in V_*\); then also \(E \in V_*\), and we infer from
 \LEM{PN-zero-2} that there are two disjoint proper elements \(a\) and \(b\)
 such that \(\BooV{a}{b} \neq \unit\) and \(\mu(a) = \mu(b) = \zZ\); then
 \(\xi(\zZ) = \xi(\mu(\BooV{a}{b})) = \nu(\BooV{a}{b}) = \nu(a)+\nu(b) =
 \xi(\mu(a))+\xi(\mu(b)) = \xi(\zZ)+\xi(\zZ)\), which implies that \(\xi(\zZ) =
 0_H\); similarly, \(\xi(E) = \xi(\mu(\BooD{\unit}{a})) = \nu(\BooD{\unit}{a}) =
 \nu(\unit)-\nu(a) = \nu(\unit)\), and therefore \(\nu = \xi \circ \mu\).
\end{itemize}
In this way we have shows that in both cases (A) and (B) each group-valued
\(H\)-invariant measure \(\nu\dd \bbB \to Q\) has the form \(\nu = \xi \circ
(\mu\restriction{\bbB})\) for some \(\xi\dd V \to Q\). Observe also that if
\(\mu\) from part (A) is universal, then \(V\) generates \(G\) (which
follows e.g. from the construction presented in the proof of part (A) of
\THM{PN-univ-exist}). Now the whole assertion (of both (A) and (B)) simply
follows.
\end{proof}

\begin{rem}{PN-univ-non-zero}
In our subsequent paper we will show that if a group-valued homogeneous measure
\(\mu\) is universal with respect to its automorphism group and satisfies \(0_G
\notin \mu(\bbB \setminus \{\zero,\unit\})\), then \(\mu(\unit) \neq 0_G\); and
therefore item (A) of the above result exhausts all possibilities.
\end{rem}

\begin{cor}{PN-good-are-univ}
Let \(H\) denote the automorphism group of a group-valued measure \(\mu\dd \bbB
\to G\). If \(\mu\) is good-like \UP(w.r.t. some linear order `\(<\)' on
\(G\)\UP) and \(G\) is generated by \(\mu(\bbB)\), or \(\mu\) is \(G\)-filling,
then it is a universal \(H\)-invariant group-valued measure.
\end{cor}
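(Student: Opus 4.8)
The plan is to deduce the corollary from \PRO{PN-homo-when-univ}(A). First I would record that in both cases \(\mu\) is homogeneous: a \(G\)-filling measure satisfies (4EP) and hence is homogeneous by \THM{PN-4EP}, while a good-like measure is homogeneous by definition (\DEF{PN-good-like}). Next I would verify the dichotomy required to invoke \PRO{PN-homo-when-univ}(A): for a good-like measure \(\mu(\unit) = \Omega\) is a positive element of \(G\), so \(\mu(\unit) \neq 0_G\); for a \(G\)-filling measure, if \(\mu(\unit) = 0_G\), the filling property applied to \(g = 0_G\) and any proper non-zero element produces a proper element of value \(0_G\), so \(0_G \in \mu(\bbB \setminus \{\zero,\unit\})\). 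Thus \PRO{PN-homo-when-univ}(A) applies, and it remains to verify its two conditions: that \(G\) is generated by \(V = \mu(\bbB)\), and that every \(\xi\dd \mu(\bbB) \to Q\) (for an arbitrary group \(Q\)) for which \(\xi \circ \mu\) is a measure extends to a homomorphism \(G \to Q\). The generation condition is immediate: for a \(G\)-filling measure every \(g \in G\) is attained, so \(\mu(\bbB) = G\); for a good-like measure it is part of the hypothesis.

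For the \(G\)-filling case the extension condition is easy. Given \(g, h \in G\), I would apply the filling property twice to obtain disjoint proper elements \(a, b \in \bbB\) with \(\mu(a) = g\) and \(\mu(b) = h\) (first realise \(g\) as \(\mu(a)\) inside some proper element, then realise \(h\) as \(\mu(b)\) inside \(\BooD{\unit}{a}\)). Since \(\xi \circ \mu\) is a measure, \(\xi(g + h) = \xi(\mu(\BooV{a}{b})) = \xi(\mu(a)) + \xi(\mu(b)) = \xi(g) + \xi(h)\); together with \(\xi(0_G) = (\xi\circ\mu)(\zero) = 0_Q\) this shows that \(\xi\dd G \to Q\) is already a homomorphism, so nothing need be extended.

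The good-like case is the heart of the argument. Here \(V = \{0_G\} \cup I \cup \{\Omega\}\) is the order interval \([0_G,\Omega]\), where \(I = \mu(\bbB \setminus \{\zero,\unit\})\) and \(\Omega = \mu(\unit)\). Using the realisation property of the induced measure (\REM{PN-pospec-measure}) I would first show that \(\xi\) is additive on \(V\): whenever \(u, v \in I\) with \(u + v \le \Omega\), one finds disjoint \(a, b \in \bbB\) with \(\mu(a) = u\) and \(\mu(b) = v\) (for \(u+v < \Omega\) put \(b\) below \(\BooD{\unit}{a}\), using \(v \prec \mu(\BooD{\unit}{a}) = \Omega - u\); for \(u+v=\Omega\) take \(b = \BooD{\unit}{a}\)), whence additivity of \(\xi \circ \mu\) gives \(\xi(u+v) = \xi(u) + \xi(v)\). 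I would then let \(P\) be the set of all finite sums of elements of \(I\) (together with \(0_G\)) and define \(\hat{\xi}\dd P \to Q\) by \(\hat{\xi}(u_1 + \dots + u_m) \df \xi(u_1) + \dots + \xi(u_m)\). The main obstacle is well-definedness, i.e. that \(\sum_i u_i = \sum_j v_j\) (all \(u_i, v_j \in I\)) forces \(\sum_i \xi(u_i) = \sum_j \xi(v_j)\). I expect to prove this by induction on the total number of summands, exploiting linearity of the order on \(G\): after renaming so that \(u_1\) is the smallest summand occurring on either side, either \(u_1 = v_1\) and one cancels it, or \(u_1 < v_1\), in which case \(v_1 - u_1 \in I\), additivity of \(\xi\) on \(V\) yields \(\xi(v_1) = \xi(u_1) + \xi(v_1 - u_1)\), and replacing \(v_1\) by the two smaller summands \(u_1\) and \(v_1 - u_1\) strictly reduces the total count, so the inductive hypothesis applies.

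Once \(\hat{\xi}\) is well defined it is automatically additive on \(P\). Since \(G = \langle I\rangle\) (because \(\Omega = u + (\Omega - u)\) with \(u, \Omega - u \in I\), so \(V \subseteq \langle I\rangle\)), every element of \(G\) is a difference of two members of \(P\), and \(\hat{\xi}\) extends uniquely to a homomorphism \(G \to Q\); this homomorphism restricts to \(\xi\) on \(V\) (the value at \(\Omega\) being \(\xi(u) + \xi(\Omega - u) = \xi(\Omega)\), again by additivity on \(V\)). Feeding the generation condition and both extension verifications into \PRO{PN-homo-when-univ}(A) then gives that \(\mu\) is a universal \(H\)-invariant group-valued measure, completing the proof.
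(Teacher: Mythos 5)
Your proposal is correct, and the \(G\)-filling half coincides with the paper's argument. In the good-like half you diverge from the paper: both proofs first establish that \(\xi\) is additive on \(V=\mu(\bbB)\) (i.e. \(\xi(u+v)=\xi(u)+\xi(v)\) whenever \(u,v,u+v\in V\)), but the paper then extends \(\xi\) by writing every \(x\in G\) in the normal form \(x=k\Omega+r\) with \(k\in\ZZZ\) and \(r\in I\cup\{0_G\}\) (this uses that \(G\) is generated by \(\mu(\bbB)\), so every element is bounded by multiples of \(\Omega\)), sets \(\bar\xi(k\Omega+r)=k\xi(\Omega)+\xi(r)\), and checks additivity by a two-case computation according to whether \(p+q<\Omega\) or \(p+q\geq\Omega\). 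You instead extend \(\xi\) through the positive cone: define \(\hat\xi\) on the sub-semigroup \(P\) of finite sums of elements of \(I\), prove well-definedness by a cancellation induction driven by the linear order, and pass to \(G=P-P\) by the standard difference construction. Both routes use the generation hypothesis essentially; yours avoids the normal-form claim (which the paper states slightly too strongly, since \(0_G\) and the multiples of \(\Omega\) are not of the form \(k\Omega+r\) with \(r\in I\)), at the price of the well-definedness induction. One point to tighten there: literally ``replacing \(v_1\) by \(u_1\) and \(v_1-u_1\)'' increases the total number of summands by one; the count only drops after you also cancel the common summand \(u_1\) from both sides of the resulting identity (legitimate, since \(G\) is a group), so the induction step should be: replace, cancel, then apply the hypothesis to an identity with one fewer summand in total. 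With that adjustment the argument is complete. Your explicit verification of the dichotomy hypothesis of \PRO{PN-homo-when-univ} (for a \(G\)-filling measure with \(\mu(\unit)=0_G\) one has \(0_G\in\mu(\bbB\setminus\{\zero,\unit\})\)) is a detail the paper leaves implicit.
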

\begin{proof}
\PRO{PN-homo-when-univ} applies to both the classes of measures specified in
the result. So, we assume \(\xi\dd \mu(\bbB) \to Q\) is a function such that
\(\nu \df \xi \circ \mu\) is a measure. In particular, \(\xi(0_G) = 0_Q\).\par
First assume \(\mu\) is \(G\)-filling. We only need to show that \(\xi\) is
a homomorphism, which is a simple observation: given abitrary elements \(g\) and
\(h\) of \(G\) there are two disjoint proper elements \(a\) and \(b\) of
\(\bbB\) such that \(\mu(a) = g\) and \(\mu(b) = h\); and then \(\xi(g+h) =
\xi(\mu(\BooV{a}{b})) = \nu(\BooV{a}{b}) = \nu(a)+\nu(b) = \xi(\mu(a))+
\xi(\mu(b)) = \xi(g)+\xi(h)\).\par
Now assume \(\mu\) is good-like and \(J \df \mu(\bbB)\) generates \(G\).
A similar argument as presented in the previous paragraph shows that \(\xi(g+h)
= \xi(g)+\xi(h)\) unless \(g, h \in J\) are such that \(g+h \in J\). Set
\(\Omega \df \mu(\unit)\), \(I \df J \setminus \{\Omega\}\) and note that any
element \(g \in G\) can uniquely be expressed in the form \(x = k\Omega+r\)
where \(k \in \ZZZ\) and \(r \in I\). (Indeed, the set of all \(x \in G\) such
that \(-n\Omega < x < n\Omega\) for some integer \(n > 0\) is a subgroup of
\(G\) that contains \(J\); and thus for any \(x \in G\) there exists a unique
integer \(k\) such that \(k\Omega \leq x\), then \(x-k\Omega \in I\).) So, we
define an extension \(\bar{\xi}\dd G \to Q\) of \(\xi\) by
\(\bar{\xi}(k\Omega+r) \df k \xi(\Omega)+\xi(r)\ (k \in \ZZZ,\ r \in I)\). It
remains to show that \(\bar{\xi}\) is a homomorphism. To this end, fix any \(k,
\ell \in \ZZZ\) and \(p, q \in I\). If \(p+q < \Omega\), then
\(\bar{\xi}((k\Omega+p)+(\ell\Omega+q)) = (k+\ell)\xi(\Omega)+(\xi(p)+\xi(q)) =
\bar{\xi}(k\Omega+p)+\bar{\xi}(\ell\Omega+q)\). Finally, we assume \(p+q \geq
\Omega\) (in particular, \(q \neq 0_G\)). Then \(p+q < 2\Omega\) and therefore
\(p+q-\Omega \in I\). Consequently, \(\bar{\xi}((k\Omega+p)+(\ell\Omega+q)) =
\bar{\xi}((k+\ell+1)\Omega+(p+q-\Omega)) =
(k+\ell+1)\xi(\Omega)+\xi(p+q-\Omega)\). So, we only need to show that
\(\xi(p)+\xi(q) = \xi(\Omega)+\xi(p+q-\Omega)\). Since \(\Omega-q \in J\) and
\(p = (\Omega-q)+(p+q-\Omega)\), we get \(\xi(p) = \xi(\Omega-q)+
\xi(p+q-\Omega)\). Similarly, \(\xi(\Omega) = \xi(q)+\xi(\Omega-q)\), and
the conclusion follows.
\end{proof}

\section{Dense conjugacy classes in automorphism groups}

Here we adapt classical concepts present in studies of the automorphism groups
of good measures (or in a more general context) to the context of
semigroup-valued homogeneous measures (consult, e.g.,
\cite[Sections~4--5]{DKMN1}), and generalise some of the theorems obtained in
the cited article. Since most of the proofs proceed in the same way as in
the classical case, some of the results of this section are presented without
proofs. Specifically, we omit the proofs of all the results up to
\THM{PN-dense-conj} below.\par
As previously, \(S\) will denote a semigroup and \(G\) is reserved to denote
a group that is at most countable. Additionally, \(\lambda\) is a fixed
\(S\)-valued (or \(G\)-valued if it is explicitly stated) homogeneous measure on
\(\bbB\). We follow the conception introduced in
Section~\ref{sec:group}---specifically, if \(Q\) is a (semi)group, then
\(\varempty \notin Q\), \(Q^{\circ} = Q \cup \{\varempty\}\) and \(g+\varempty =
\varempty+g = g\) for any \(g \in Q^{\circ}\). Further, any \(Q\)-valued measure
\(\mu\) on \(\bbB\) will be identified with a \(Q^{\circ}\)-valued measure
\(\mu^{\circ}\) on \(\bbB\) given by \(\mu^{\circ}(\zero) = \varempty\) and
\(\mu^{\circ}(a) = \mu(a)\) for all non-zero \(a \in \bbB\) (note that the class
of all partial \(\mu\)-isomorphisms coincides with the class of all partial
\(\mu^{\circ}\)-isomorphisms; in particular, \(\OPN{Aut}(\mu) =
\OPN{Aut}(\mu^{\circ})\)). So, whenever (in this section) we will speak about
(semi)group-valued measures, we will always understand them as taking the value
\(\varempty\) at \(\zero\). In particular, \(\lambda(\zero) = \varempty\).

\begin{dfn}{PN-matrix}
Given two square matrices \(A = [a_{\alpha\beta}]_{\alpha,\beta \in I}\) and
\(B = [b_{\gamma\delta}]_{\gamma,\delta \in J}\) with entries in \(S^{\circ}\)
(where \(I\) and \(J\) are two arbitrary finite non-empty sets of indices), we
will write \(A \equiv B\) to express that there is a bijection \(\sigma\dd
I \to J\) such that \(a_{\alpha\beta} = b_{\sigma(\alpha)\sigma(\beta)}\) for
all \(\alpha,\beta \in I\). In particular, if \(A \equiv B\), then \(A\) and
\(B\) have the same dimension. If \(A \equiv B\), we call \(A\) and \(B\)
\emph{equivalent}. We call the entry \(a_{\alpha\beta}\) of \(A\) \emph{empty}
if \(a_{\alpha\beta} = \varempty\), otherwise it is \emph{non-empty}.
\end{dfn}

The following result is a direct consequence of \THM{PN-4EP}.

\begin{lem}{PN-conjugate-partial}
For \(j=1,2\) let \(\aaA_j\) and \(\phi_j\dd \aaA_j \to \bbB\) be, respectively,
a finite Boolean subalgebra of \(\bbB\) and a partial \(\lambda\)-isomorphism.
Further, let \(\{a_1,\ldots,a_N\}\) and \(\{b_1,\ldots,b_M\}\) be partitions of
\(\bbB\) that generate \(\aaA_1\) and \(\aaA_2\), respectively. Set \(A \df
[\lambda(\BooW{a_j}{\phi_1(a_k)})]_{j,k=1}^N\) and, similarly, \(B \df
[\lambda(\BooW{b_j}{\phi_2(b_k)})]_{j,k=1}^M\). \TFCAE
\begin{enumerate}[\upshape(i)]
\item there exists \(\psi \in \OPN{Aut}(\lambda)\) such that \(\psi(\aaA_1) =
 \aaA_2\) and \(\psi \circ \phi_1 = \phi_2 \circ (\psi\restriction{\aaA_1})\);
\item \(A \equiv B\).
\end{enumerate}
More specifically, \(M = N\) and the assignments \(\BooW{a_j}{\phi_1(a_k)}
\mapsto \BooW{b_j}{\phi_2(b_k)}\ (j,k=1,\ldots,N)\) extend to \(\psi \in
\OPN{Aut}(\lambda)\) \UP(and then automatically \UP{(i)} holds with this
\(\psi\)\UP) iff \(A = B\).
\end{lem}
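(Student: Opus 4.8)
The plan is to prove the sharper ``More specifically'' clause first and then deduce the equivalence of (i) and (ii) from it by a relabelling. The starting observation is that, since $\phi_1$ is a one-to-one Boolean homomorphism, it preserves $\zero$ and $\unit$, so $\{\phi_1(a_1),\ldots,\phi_1(a_N)\}$ is again a partition of $\bbB$; consequently the nonzero meets $\BooW{a_j}{\phi_1(a_k)}$ are exactly the atoms of the finite Boolean algebra $\ddD_1$ generated by $\aaA_1 \cup \phi_1(\aaA_1)$, and the matrix $A$ records the values of $\lambda$ on these atoms (the entry $\varempty$ marking a zero meet). The same remarks apply to $\aaA_2$, $\phi_2$, the generated algebra $\ddD_2$, and $B$.

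For the ``if'' direction of the sharper clause, assume $A = B$. Comparing dimensions gives $M = N$, and since $\lambda$ attains $\varempty$ only at $\zero$, the positions of the empty entries coincide; hence $\BooW{a_j}{\phi_1(a_k)} \mapsto \BooW{b_j}{\phi_2(b_k)}$ is a bijection between the atoms of $\ddD_1$ and those of $\ddD_2$ that preserves $\lambda$. It therefore extends uniquely to a Boolean isomorphism $\psi_0\dd \ddD_1 \to \ddD_2$, and additivity of $\lambda$ makes $\psi_0$ a partial $\lambda$-isomorphism on the finite subalgebra $\ddD_1$. Now I would invoke \THM{PN-4EP}: homogeneity of $\lambda$ yields ultrahomogeneity, so $\psi_0$ extends to some $\psi \in \OPN{Aut}(\lambda)$. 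Taking joins of the assignment over $k$ (resp. over $j$) gives $\psi(a_j) = b_j$ (resp. $\psi(\phi_1(a_k)) = \phi_2(b_k)$), whence $\psi(\aaA_1) = \aaA_2$ and, comparing the two Boolean homomorphisms on the generators $a_k$, $\psi \circ \phi_1 = \phi_2 \circ (\psi\restriction{\aaA_1})$; this is exactly (i). The converse direction is immediate: if the assignment extends to some $\psi \in \OPN{Aut}(\lambda)$, then $\lambda$-invariance of $\psi$ forces $\lambda(\BooW{a_j}{\phi_1(a_k)}) = \lambda(\BooW{b_j}{\phi_2(b_k)})$ for all $j,k$, i.e. $A = B$.

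Deducing the main equivalence is then routine. For (i) $\Rightarrow$ (ii): the relation $\psi(\aaA_1) = \aaA_2$ makes $\psi$ carry atoms to atoms, so $\psi(a_j) = b_{\sigma(j)}$ for a bijection $\sigma$ (in particular $M = N$); combining this with the intertwining relation yields $\psi(\BooW{a_j}{\phi_1(a_k)}) = \BooW{b_{\sigma(j)}}{\phi_2(b_{\sigma(k)})}$, and $\lambda$-invariance gives $A \equiv B$ via $\sigma$. For (ii) $\Rightarrow$ (i): $A \equiv B$ furnishes $M = N$ and a bijection $\sigma$ with $a_{jk} = b_{\sigma(j)\sigma(k)}$; relabelling the generating partition of $\aaA_2$ as $b_j' \df b_{\sigma(j)}$ turns this into $A = B'$ for the corresponding matrix $B'$, and the sharper clause (already proved) supplies the desired $\psi$ satisfying (i).

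I expect the only substantive step to be the appeal to ultrahomogeneity through \THM{PN-4EP}; everything else is bookkeeping with finite Boolean algebras, namely that the nonzero meets of two partitions are precisely the atoms of the generated subalgebra, and that a value-preserving bijection of atoms extends to a $\lambda$-preserving isomorphism by additivity. The one point requiring a little care is the matching of the empty entries, which is what makes the index-preserving assignment in the sharper clause a genuine bijection of atoms; this rests on $\lambda$ taking the value $\varempty$ exactly at $\zero$.
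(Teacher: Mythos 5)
Your proof is correct and follows exactly the route the paper intends: the paper omits the proof, introducing the lemma only as ``a direct consequence of \THM{PN-4EP}'', and your argument---identifying the non-zero meets as the atoms of the generated subalgebras, matching empty entries via the convention $\lambda(\zero)=\varempty$, extending the atom bijection to a partial $\lambda$-isomorphism, and invoking ultrahomogeneity---is precisely that intended deduction. The reduction of (i)$\Leftrightarrow$(ii) to the sharper clause by relabelling through $\sigma$ is also the natural bookkeeping the paper leaves to the reader.
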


\begin{dfn}{PN-Gamma(f)}
Let \(\aaA\) be a finite Boolean subalgebra of \(\bbB\) generated by a partition
\(\{a_j\dd\ j \in J\}\) of \(\bbB\). For any partial \(\lambda\)-isomorphism
\(\phi\dd \aaA \to \bbB\) we set \(\Gamma(\phi) \df
[\lambda(\BooW{a_j}{\phi(a_k)})]_{j,k \in J}\). Note that \(\Gamma(\phi)\) does
depend on the way we index the unique partition that generates \(\aaA\)
(however, no matter how we do it, we obtain equivalent matrices).
\end{dfn}

\begin{lem}{PN-when-Gamma(f)}
Let \(\Gamma = [\gamma_{\alpha\beta}]_{\alpha,\beta \in I}\) be a matrix with
entries in \(S^{\circ}\) \UP(where \(I\) is a finite non-empty set\UP). Assume
\(s_1,\ldots,s_p \in S\) are such that each element of \(S\) appears among all
\(s_k\) as many times as it appears among all \(\gamma_{\alpha\beta}\). Then
\tfcae
\begin{enumerate}[\upshape(i)]
\item there exists a partial \(\lambda\)-isomorphism \(\phi\) defined on
 a finite Boolean subalgebra of \(\bbB\) such that \(\Gamma(\phi) \equiv
 \Gamma\);
\item \((s_1,\ldots,s_p) \in \Delta_p(\lambda)\) \UP(cf. \EXM{PN-delta-k}\UP),
 and \(\sum_{\beta \in I} \gamma_{\alpha\beta} = \sum_{\beta \in I}
 \gamma_{\beta\alpha} \neq \varempty\) for any \(\alpha \in I\).
\end{enumerate}
\end{lem}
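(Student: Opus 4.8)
The plan is to prove both implications by analysing the common refinement of the two partitions attached to a partial $\lambda$-isomorphism: the pieces of this refinement are precisely the sets whose $\lambda$-values populate the matrix $\Gamma(\phi)$, so the whole lemma reduces to reading off row and column sums and invoking membership in $\Delta_p(\lambda)$.

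First I would record the two elementary facts that drive everything. If $\phi\dd\aaA\to\bbB$ is a partial $\lambda$-isomorphism with $\aaA$ generated by a partition $\{a_j\}_{j\in I}$, then $\phi$, being a one-to-one Boolean homomorphism, fixes $\zero$ and $\unit$ and hence carries $\{a_j\}$ to another partition $\{\phi(a_j)\}_{j\in I}$ of $\bbB$; consequently the meets $a_j\wedge\phi(a_k)$ are pairwise disjoint and their non-zero members form a partition of $\bbB$. From $a_j=\bigvee_k(a_j\wedge\phi(a_k))$ and $\phi(a_k)=\bigvee_j(a_j\wedge\phi(a_k))$ I obtain that the $j$-th row sum and the $j$-th column sum of $\Gamma(\phi)$ both equal $\lambda(a_j)$, which lies in $S$ (hence is non-empty) because $a_j\neq\zero$. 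This immediately yields (i)$\Rightarrow$(ii): the equality of row and column sums is invariant under the equivalence $\equiv$, and the multiset of non-empty entries of $\Gamma(\phi)$ is exactly the list of $\lambda$-values of the refinement pieces, so it lies in $\Delta_p(\lambda)$; since $\Delta_p(\lambda)$ is symmetric and $\equiv$ preserves this multiset, the given tuple $(s_1,\dots,s_p)$ belongs to $\Delta_p(\lambda)$.

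For the converse (ii)$\Rightarrow$(i) I would build the refinement directly. Since $(s_1,\dots,s_p)\in\Delta_p(\lambda)$ there is a partition of $\bbB$ realising these values, and because $\{s_1,\dots,s_p\}$ coincides (as a multiset) with the non-empty entries of $\Gamma$, I can re-index its members as $\{c_{jk}:\gamma_{jk}\neq\varempty\}$ with $\lambda(c_{jk})=\gamma_{jk}$, putting $c_{jk}\df\zero$ when $\gamma_{jk}=\varempty$. Then I set $a_j\df\bigvee_k c_{jk}$ and $b_k\df\bigvee_j c_{jk}$; the hypothesis that every row and column sum is non-empty forces each $a_j$ and each $b_k$ to be non-zero, so $\{a_j\}_{j\in I}$ and $\{b_k\}_{k\in I}$ are genuine partitions of $\bbB$. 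The assignment $a_j\mapsto b_j$ extends uniquely to a one-to-one Boolean homomorphism $\phi$ on the algebra $\aaA$ generated by $\{a_j\}$, and $\lambda(\phi(a_j))=\lambda(b_j)=\sum_i\gamma_{ij}=\sum_k\gamma_{jk}=\lambda(a_j)$ — using precisely the column-sum $=$ row-sum equality from (ii) — so $\phi$ is measure-preserving, i.e. a partial $\lambda$-isomorphism. Finally $a_j\wedge\phi(a_k)=a_j\wedge b_k=c_{jk}$ by disjointness of the $c$'s, whence $\Gamma(\phi)=\Gamma$ and in particular $\Gamma(\phi)\equiv\Gamma$.

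The only genuinely load-bearing input is the existence of a partition with prescribed $\lambda$-values, i.e. membership in $\Delta_p(\lambda)$, which is supplied by (ii) and ultimately rests on (4EP) through the characterisation in Example~\ref{exm:PN-delta-k}; everything else is bookkeeping. I expect the step requiring the most care to be the correct deployment of the row-sum $=$ column-sum condition, since that is exactly what upgrades $\phi$ from a mere Boolean isomorphism to a $\lambda$-preserving one, together with the observation that non-emptiness of the sums is precisely what keeps all $a_j$ and $b_k$ non-zero, so that the objects constructed are honest partitions rather than degenerate ones.
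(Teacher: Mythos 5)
Your proof is correct. The paper explicitly omits the proof of this lemma (along with the other preparatory results before Theorem~\ref{thm:PN-dense-conj}), but the argument you give — reading $\Gamma(\phi)$ off the common refinement of $\{a_j\}$ and $\{\phi(a_j)\}$ for the forward direction, and reconstructing the refinement from a partition realising $(s_1,\dots,s_p)\in\Delta_p(\lambda)$ with the row-sum $=$ column-sum condition guaranteeing that $a_j\mapsto b_j$ is $\lambda$-preserving for the converse — is precisely the standard intended argument, and all the details (non-emptiness of the sums forcing the $a_j,b_k$ to be non-zero, disjointness giving $a_j\wedge b_k=c_{jk}$) check out.
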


\begin{dfn}{PN-compatible}
We call a square matrix \(\Gamma = [\gamma_{\alpha\beta}]_{\alpha,\beta \in I}\)
(with entries in \(S^{\circ}\)) \emph{\compatible{\lambda}} if it has all
the properties stated in item (ii) of \LEM{PN-when-Gamma(f)}.\par
Let \(A = [a_{uv}]_{u,v \in I}\) and \(B = [b_{pq}]_{p,q \in J}\) be two
\(S^{\circ}\)-valued matrices. A \emph{\morphism} from \(A\) into \(B\) is any
function \(f\dd J \to I\) such that
\begin{equation}\label{eqn:PN-morph}
a_{uv} = \sum_{(p,q) \in F_{uv}} b_{pq}
\end{equation}
for any \(u, v \in I\) where \(F_{uv} \df \{(p,q) \in J \times J\dd\ f(p) = u,\
f(q) = v\}\) and \(\sum_{(p,q) \in \varempty} b_{pq} \df \varempty\). It is
worth noticing here that each \morphism{} is a surjection between the respective
sets of indices.
\end{dfn}

\begin{lem}{PN-is-arrow}
Let \(\phi\dd \aaA \to \bbB\) be a partial \(\lambda\)-isomorphism defined on
a finite Boolean subalgebra \(\aaA\) generated by a partition \(\{a_1,\ldots,
a_N\}\) of \(\bbB\).
\begin{enumerate}[\upshape(A)]
\item  Let \(\{b_1,\ldots,b_M\}\) be a partition of \(\bbB\) contained in
 \(\aaA\). For each \(j \in \{1,\ldots,N\}\) let \(f(j)\) denote the unique
 index \(k \in \{1,\ldots,M\}\) such that \(\BooW{a_j}{b_k} = a_j\). Then
 \(f\dd \{1,\ldots,N\} \to \{1,\ldots,M\}\) is a \morphism{} from
 \(\Gamma(\phi\restriction{\aaA'})\) into \(\Gamma(\phi)\) where \(\aaA'\) is
 the Boolean subalgebra of \(\bbB\) generated by \(\{b_1,\ldots,b_M\}\) \UP(and
 both the \compatible{\lambda} matrices appearing here are computed with respect
 to the current indexing of the partitions\UP).
\item If \(g\dd I \to \{1,\ldots,N\}\) is a \morphism{} from \(\Gamma(\phi)\)
 into a \compatible{\lambda} matrix \(\Gamma =
 [\gamma_{\alpha\beta}]_{\alpha,\beta \in I}\), then there are a partition
 \(\{c_{\alpha}\dd \alpha \in I\}\) of \(\bbB\) that generates a Boolean algebra
 \(\ddD \supset \aaA\) and a partial \(\lambda\)-isomorphism \(\psi\dd \ddD \to
 \bbB\) such that \(\psi\) extends \(\phi\), \(\Gamma(\psi) = \Gamma\) and
 \(\BooW{a_{g(\alpha)}}{c_{\alpha}} = a_{g(\alpha)}\) for any \(\alpha \in I\)
 \UP(so, \(g\) is ``the function \(f\) from \UP{(A)}'' constructed for \(\psi\)
 and \(\psi\restriction{\aaA}\)\UP).
\end{enumerate}
\end{lem}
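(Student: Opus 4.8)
The plan is to treat \UP{(A)} as a short direct computation and to reduce \UP{(B)} to three results already at hand by a conjugation trick. For \UP{(A)}: since $\{b_1,\ldots,b_M\}\subset\aaA$ and $\aaA$ is generated by the partition $\{a_1,\ldots,a_N\}$, each atom $a_j$ lies under a unique $b_{f(j)}$, so $b_u=\bigvee_{f(j)=u}a_j$, and, $\phi$ being a Boolean homomorphism, $\phi(b_v)=\bigvee_{f(k)=v}\phi(a_k)$. Meeting these two joins and using that the elements $\BooW{a_j}{\phi(a_k)}$ are pairwise disjoint, I would write $\BooW{b_u}{\phi(b_v)}=\bigvee(\BooW{a_j}{\phi(a_k)})$ over all $j,k$ with $f(j)=u$, $f(k)=v$ as a disjoint join, whence additivity of $\lambda$ gives $\lambda(\BooW{b_u}{\phi(b_v)})=\sum_{f(j)=u,\,f(k)=v}\lambda(\BooW{a_j}{\phi(a_k)})$. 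This is exactly the defining identity \eqref{eqn:PN-morph} of a \morphism{} $f$ from $\Gamma(\phi\restriction{\aaA'})$ into $\Gamma(\phi)$ (empty cells contributing $\varempty$), and surjectivity of $f$ is automatic because each $b_u$ is non-zero.

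For \UP{(B)} the naive route---refining each cell $\BooW{a_u}{\phi(a_v)}$ directly into pieces of prescribed $\lambda$-value---runs into the difficulty that one would have to know the relevant sub-family of the $\gamma_{\alpha\beta}$ is realizable as a partition of that cell $d$ (that is, that the tuple lies in the appropriate $\Delta_k(\lambda\restriction{d})$ of \EXM{PN-delta-k}), which is not transparent from the mere fact that $\Gamma$ is \compatible{\lambda}. I would instead first build a free-standing realization of $\Gamma$ and only then transport it. Since $\Gamma$ is \compatible{\lambda}, \LEM{PN-when-Gamma(f)} provides a partial $\lambda$-isomorphism $\psi'$ on a finite algebra $\ddD'$ generated by a partition $\{c'_\alpha\dd\,\alpha\in I\}$ with $\Gamma(\psi')\equiv\Gamma$; relabelling the $c'_\alpha$ I may assume $\Gamma(\psi')=\Gamma$, i.e. $\lambda(\BooW{c'_\alpha}{\psi'(c'_\beta)})=\gamma_{\alpha\beta}$ for all $\alpha,\beta\in I$.

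Next I would coarsen $\psi'$. Set $a'_u\df\bigvee_{\alpha\in g^{-1}(u)}c'_\alpha$ and let $\aaA'\subset\ddD'$ be the algebra they generate; the ``function $f$ from \UP{(A)}'' attached to $\psi'$ and this coarser partition is exactly $g$, so part \UP{(A)} shows $g$ is a \morphism{} from $\Gamma(\psi'\restriction{\aaA'})$ into $\Gamma$. As $g$ is, by hypothesis, also a \morphism{} from $\Gamma(\phi)$ into $\Gamma$, and \eqref{eqn:PN-morph} expresses both $\Gamma(\psi'\restriction{\aaA'})_{uv}$ and $\Gamma(\phi)_{uv}$ as the same sum $\sum_{g(\alpha)=u,\,g(\beta)=v}\gamma_{\alpha\beta}$, I conclude $\Gamma(\psi'\restriction{\aaA'})=\Gamma(\phi)$ as matrices indexed by $\{1,\ldots,N\}$. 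Then \LEM{PN-conjugate-partial}, in its stronger form valid when $A=B$, yields $\theta\in\OPN{Aut}(\lambda)$ such that the assignments $\BooW{a'_u}{\psi'(a'_v)}\mapsto\BooW{a_u}{\phi(a_v)}$ are realized by $\theta$; summing over $v$ gives $\theta(a'_u)=a_u$, and the lemma guarantees $\theta\circ(\psi'\restriction{\aaA'})=\phi\circ(\theta\restriction{\aaA'})$. I then put $\ddD\df\theta(\ddD')$, $c_\alpha\df\theta(c'_\alpha)$ and $\psi\df\theta\circ\psi'\circ(\theta^{-1}\restriction{\ddD})$, and check the required properties: $\psi$ is a partial $\lambda$-isomorphism (a composite of $\lambda$-preserving maps) whose domain $\ddD$ contains $\aaA=\theta(\aaA')$ and is generated by the partition $\{c_\alpha\}$; $\psi$ extends $\phi$, since for $x\in\aaA$ the intertwining relation gives $\psi(x)=\theta(\psi'(\theta^{-1}(x)))=\phi(x)$; $\Gamma(\psi)=\Gamma$, because $\BooW{c_\alpha}{\psi(c_\beta)}=\theta(\BooW{c'_\alpha}{\psi'(c'_\beta)})$ and $\theta$ preserves $\lambda$; and $\BooW{c_\alpha}{a_{g(\alpha)}}=c_\alpha$ (so that $g$ is indeed the function $f$ from \UP{(A)} for $\psi$), obtained by applying $\theta$ to the inclusion $c'_\alpha\leq a'_{g(\alpha)}$.

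The main obstacle is conceptual rather than computational: one must resist honouring the extension of $\phi$ while simultaneously splitting cells, and instead realise $\Gamma$ freely and only afterwards align the $g$-coarsening of that realisation with $\phi$ by an automorphism. Once this is seen, the closing verifications are routine; the single point that genuinely needs care is the bookkeeping identity $\Gamma(\psi'\restriction{\aaA'})=\Gamma(\phi)$, which is precisely where part \UP{(A)} and the \morphism{} hypothesis on $g$ are combined.
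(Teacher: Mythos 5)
Your argument is correct. Note, however, that the paper deliberately gives no proof of this lemma (it is among the results whose proofs are omitted as being ``the same as in the classical case'' of \cite{DKMN1}), so there is no official argument to compare against; judged on its own merits, your proposal works. Part (A) is the expected disjoint-refinement computation, with the conventions \(\lambda(\zero)=\varempty\) and \(g+\varempty=g\) handling the empty cells, and surjectivity of \(f\) following from \(b_u\neq\zero\). For part (B) you correctly identify the real difficulty of the naive route --- one would have to know that the sub-tuple \((\gamma_{\alpha\beta})_{(\alpha,\beta)\in F_{uv}}\) is realizable as a partition of the cell \(\BooW{a_u}{\phi(a_v)}\), which is not visible from \(\lambda\)-compatibility of \(\Gamma\) alone --- and your detour (realize \(\Gamma\) freely via \LEM{PN-when-Gamma(f)}, coarsen along \(g\), observe via part (A) and the \morphism{} hypothesis that \(\Gamma(\psi'\restriction{\aaA'})=\Gamma(\phi)\), then align by the automorphism supplied by the \(A=B\) clause of \LEM{PN-conjugate-partial} and conjugate) is clean and complete; every verification at the end goes through as you state. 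One remark: the displayed condition in the statement, \(\BooW{a_{g(\alpha)}}{c_{\alpha}}=a_{g(\alpha)}\), would force \(g\) to be injective (distinct \(c_\alpha\) are disjoint, so no non-zero \(a_u\) can lie below two of them) and is evidently a typo for \(\BooW{a_{g(\alpha)}}{c_{\alpha}}=c_{\alpha}\); what you prove, namely \(c_\alpha\leq a_{g(\alpha)}\), is the intended condition, as the parenthetical identification of \(g\) with ``the function \(f\) from (A)'' confirms.
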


As a corollary from the above result, we obtain the following counterpart of
\cite[Theorem~4.8]{DKMN1}.

\begin{thm}{PN-dense-conj}
The group \(\OPN{Aut}(\lambda)\) has a dense conjugacy class iff for any two
\compatible{\lambda} matrices \(\Gamma_1\) and \(\Gamma_2\) there exists
a \compatible{\lambda} matrix \(\Gamma_0\) such that both \(\Gamma_1\) and
\(\Gamma_2\) admit a \morphism{} into \(\Gamma_0\).
\end{thm}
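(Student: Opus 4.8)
The plan is to recast the statement as a topological transitivity property of the conjugation action of $\OPN{Aut}(\lambda)$ on itself, and then to translate that property, through the dictionary furnished by Lemmas~\ref{lem:PN-conjugate-partial}, \ref{lem:PN-when-Gamma(f)} and \ref{lem:PN-is-arrow}, into the stated amalgamation condition on \compatible{\lambda} matrices. I first recall that a basic nonempty open subset of the Polish group $\OPN{Aut}(\lambda)$ has the form $N(\phi) \df \{g \in \OPN{Aut}(\lambda)\dd\ g\restriction{\aaA} = \phi\}$, where $\phi\dd \aaA \to \bbB$ is a partial $\lambda$-isomorphism on a finite subalgebra; by ultrahomogeneity (\THM{PN-4EP}) every such $N(\phi)$ is nonempty, and these sets form a basis. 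Since $(g,x) \mapsto gxg^{-1}$ is a continuous action of the Polish (hence second countable Baire) group $\OPN{Aut}(\lambda)$ on the Polish space $\OPN{Aut}(\lambda)$, a standard Birkhoff-type argument shows that $\OPN{Aut}(\lambda)$ has a dense conjugacy class iff this action is topologically transitive, i.e.\ iff for any two partial $\lambda$-isomorphisms $\phi_1,\phi_2$ there is $g$ with $g\,N(\phi_1)\,g^{-1} \cap N(\phi_2) \neq \varempty$. It therefore suffices to prove that this transitivity is equivalent to the amalgamation condition.

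For the passage from a dense conjugacy class $C$ to amalgamation, I would begin with two \compatible{\lambda} matrices $\Gamma_1,\Gamma_2$ and use \LEM{PN-when-Gamma(f)} to realise them as $\Gamma_1 \equiv \Gamma(\phi_1)$ and $\Gamma_2 \equiv \Gamma(\phi_2)$ for partial $\lambda$-isomorphisms $\phi_i$ on finite subalgebras $\aaA_i$. Density of $C$ yields $g_i \in C \cap N(\phi_i)$; as $g_1,g_2$ are conjugate, write $g_2 = h g_1 h^{-1}$. Then $g_2\restriction{\aaA_2} = \phi_2$ while $g_2\restriction{h(\aaA_1)} = h \phi_1 h^{-1}$, so setting $q \df g_2\restriction{\ddD}$ for the finite subalgebra $\ddD$ generated by $\aaA_2 \cup h(\aaA_1)$, the matrix $\Gamma_0 \df \Gamma(q)$ is \compatible{\lambda}, and \LEM{PN-is-arrow}(A) supplies a \morphism{} from $\Gamma(\phi_2)$ and a \morphism{} from $\Gamma(h\phi_1 h^{-1})$ into $\Gamma_0$. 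Since $\Gamma(h\phi_1 h^{-1}) \equiv \Gamma(\phi_1) \equiv \Gamma_1$ by \LEM{PN-conjugate-partial}, and composing a \morphism{} with the relabelling bijection witnessing an equivalence is again a \morphism{}, both $\Gamma_1$ and $\Gamma_2$ admit \morphism{}s into $\Gamma_0$, as required.

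For the converse, assume the amalgamation condition and fix partial $\lambda$-isomorphisms $\phi_1,\phi_2$. Applying the hypothesis to $\Gamma_i \df \Gamma(\phi_i)$ yields a \compatible{\lambda} matrix $\Gamma_0$ receiving \morphism{}s from both. By \LEM{PN-is-arrow}(B) these \morphism{}s lift to extensions $\psi_i \supseteq \phi_i$, defined on finite subalgebras $\ddD_i \supseteq \aaA_i$, with $\Gamma(\psi_1) = \Gamma_0 = \Gamma(\psi_2)$; in particular $\Gamma(\psi_1) \equiv \Gamma(\psi_2)$, so \LEM{PN-conjugate-partial} produces $\psi \in \OPN{Aut}(\lambda)$ with $\psi(\ddD_1) = \ddD_2$ and $\psi \circ \psi_1 = \psi_2 \circ (\psi\restriction{\ddD_1})$. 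Extending $\psi_1$ to a full automorphism $F \in \OPN{Aut}(\lambda)$ via \THM{PN-4EP}, a direct computation gives $(\psi F \psi^{-1})\restriction{\ddD_2} = \psi_2$, whence $(\psi F \psi^{-1})\restriction{\aaA_2} = \phi_2$; thus $\psi F \psi^{-1} \in N(\phi_2)$ while $F \in N(\phi_1)$, so $\psi F \psi^{-1} \in \psi N(\phi_1) \psi^{-1} \cap N(\phi_2)$. This establishes transitivity and completes the proof.

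The genuinely nontrivial content is entirely absorbed into the three cited lemmas, so I expect no serious obstacle beyond careful bookkeeping (tracking which matrix is the source and which the refinement in each \morphism{}, and tracing conjugations through restrictions). The one point demanding real attention is the Birkhoff reduction: verifying that in this Polish-group setting topological transitivity of the conjugation action is genuinely equivalent to the existence of a dense orbit (the set of points with dense orbit being comeager, via the sets $\bigcup_{g} g^{-1}(V_n)$ for a countable basis $\{V_n\}$), and that it suffices to test transitivity on the basic sets $N(\phi)$ rather than on arbitrary open sets.
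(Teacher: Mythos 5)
Your argument is correct and is precisely the (omitted) proof the paper intends: the paper presents this theorem as a corollary of Lemmas~\ref{lem:PN-conjugate-partial}, \ref{lem:PN-when-Gamma(f)} and \ref{lem:PN-is-arrow} "in the same way as in the classical case" of \cite[Theorem~4.8]{DKMN1}, which is exactly the Birkhoff-transitivity reduction plus the matrix/joint-embedding translation you carry out. All the delicate points you flag (testing transitivity only on the basic sets \(N(\phi)\), nonemptiness of \(N(\phi)\) via ultrahomogeneity from \THM{PN-4EP}, and composing \morphism{}s with the relabelling witnessing \(\equiv\)) check out.
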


\begin{dfn}{PN-dense-conj}
We say that \(\lambda\) has the \emph{Rokhlin property} if
\(\OPN{Aut}(\lambda)\) contains a dense conjugacy class. If
\(\lambda\restriction{j}\) has the Rokhlin property for each non-zero \(j \in
\bbB\), we say that \(\lambda\) has the \emph{hereditary Rokhlin property}.
\end{dfn}

\begin{dfn}{equi-measured}
Let \(N\) be a positive integer. We say a non-zero element \(j \in \bbB\)
\emph{admits an equi-measured \(N\)-partition} (w.r.t. \(\lambda\)) if there
exists a partition \(\{b_1,\ldots,b_N\}\) of \(\bbB\restriction{j}\) (consisting
of precisely \(N\) elements) such that \(\lambda(b_k) = \lambda(b_1)\) for each
\(k=2,\ldots,N\). In the above situation we call \(c \df \lambda(b_1)\)
the \emph{common measure of the partition} and we say that \(\{b_1,\ldots,
b_N\}\) is an \emph{equi-measured partition of \(j\) with common measure \(c\)}.
\end{dfn}

The following two results are natural generalisations of
\cite[Lemmas~5.4 and 5.2]{DKMN1} (respectively).

\begin{lem}{PN-1/N}
Assume \(\lambda\) has the Rokhlin property and \(\unit\) admits
an equi-measured \(N\)-partition \(\{c_1,\ldots,c_N\}\) \UP(for some with \(N >
1\)\UP). Then each proper \(j \in \bbB\) admits an equi-measured \(N\)-partition
as well.
\end{lem}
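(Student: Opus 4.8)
The plan is to exploit the combinatorial characterisation of the Rokhlin property furnished by \THM{PN-dense-conj}, which trades the analytic statement about a dense conjugacy class for an amalgamation property of \compatible{\lambda} matrices. First I would encode the two pieces of available data into such matrices. Writing $c \df \lambda(c_1)$ for the common measure of the given equi-measured $N$-partition $\{c_1,\ldots,c_N\}$ of $\unit$, the tuple $(c,\ldots,c)$ (with $N$ entries) lies in $\Delta_N(\lambda)$ by \EXM{PN-delta-k}, so the $N \times N$ cyclic matrix $\Gamma_1$ having $c$ in the positions $(u,u-1)$ (indices mod $N$) and $\varempty$ elsewhere is \compatible{\lambda} by \LEM{PN-when-Gamma(f)}: its row and column sums all equal $c$. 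For a fixed proper $j$ the complement $k \df \BooD{\unit}{j}$ is non-zero, so $\{j,k\}$ is a partition of $\bbB$ and the diagonal matrix $\Gamma_2 \df \Gamma(\OPN{id}_{\aaA_2})$, where $\aaA_2$ is generated by $\{j,k\}$ (so $\Gamma_2$ has diagonal $\lambda(j),\lambda(k)$), is \compatible{\lambda} as well.

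Next I would invoke \THM{PN-dense-conj} to obtain a \compatible{\lambda} matrix $\Gamma_0$ admitting a \morphism{} $f_1$ from $\Gamma_1$ and a \morphism{} $f_2$ from $\Gamma_2$. The crucial step is to make $\Gamma_0$ concrete \emph{relative to} $j$: applying part~(B) of \LEM{PN-is-arrow} to the partial isomorphism $\OPN{id}_{\aaA_2}$ along $f_2$, I obtain a partition $\{e_p\}_{p \in K}$ of $\bbB$ refining $\{j,k\}$ together with a partial $\lambda$-isomorphism $\psi$ extending $\OPN{id}_{\aaA_2}$ and satisfying $\Gamma(\psi) = \Gamma_0$. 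Extending the identity is precisely what forces $\psi(j) = j$ (and $\psi(k) = k$); this is the device that sidesteps the fact that, read off abstractly from the matrices alone, the $\{j,k\}$-block of $\Gamma_0$ need not be literally equal to $j$.

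Then I would read the cyclic structure through $f_1$. Setting $C_u \df \bigvee_{p\,:\,f_1(p)=u} e_p$ for $u = 1,\ldots,N$ gives a partition of $\unit$ (morphisms are onto, cf. \DEF{PN-compatible}) with $\lambda(C_u) = c$, and the defining identity of a \morphism{} yields $\lambda(\BooW{C_u}{\psi(C_v)}) = (\Gamma_1)_{uv}$ for all $u,v$. Since $(\Gamma_1)_{uv} = \varempty$ unless $u = v+1$, and $\lambda$ takes the value $\varempty$ only at $\zero$, the element $\BooW{C_u}{\psi(C_v)}$ is $\zero$ except when $u = v+1$; as the $C_u$ partition $\unit$ and the measures match, this forces $\psi(C_v) = C_{v+1}$ cyclically. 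Finally I would put $b_u \df \BooW{C_u}{j}$; these are pairwise disjoint with $\bigvee_u b_u = j$, and $\psi(b_u) = \BooW{\psi(C_u)}{\psi(j)} = \BooW{C_{u+1}}{j} = b_{u+1}$, whence $\lambda(b_u) = \lambda(b_{u+1})$ for every $u$.

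The only remaining point—and the single place the argument could break—is ensuring that each $b_u$ is non-zero, so that $\{b_1,\ldots,b_N\}$ is a genuine equi-measured $N$-partition of $j$ in the sense of \DEF{equi-measured}. This is handled by the cyclic relation itself: if $b_{u_0} = \zero$, then $b_{u_0+1} = \psi(b_{u_0}) = \zero$, and propagating around the cycle forces every $b_u = \zero$, hence $j = \bigvee_u b_u = \zero$, contradicting the properness of $j$. I expect the genuine technical care to lie in correctly tracking the directions of the \morphism{}s $f_1,f_2$ and in the concrete realisation supplied by \LEM{PN-is-arrow}, rather than in any delicate estimate.
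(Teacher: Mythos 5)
Your proof is correct, but it takes a genuinely different route from the paper's. The paper argues directly with the dense conjugacy class: it picks \(h \in \OPN{Aut}(\lambda)\) with dense conjugacy class and notes that, after replacing \(h\) by a conjugate (the set \(\{f\dd\ f(j)=j\}\) is open and non-empty, so the dense class meets it), one may assume \(h(j)=j\); ultrahomogeneity (\THM{PN-4EP}) gives \(g \in \OPN{Aut}(\lambda)\) cyclically permuting \(c_1,\ldots,c_N\), and density of the conjugacy class of \(h\) then yields a partition \(\{a_1,\ldots,a_N\}\) of \(\bbB\) cyclically permuted by \(h\) itself; setting \(b_k = \BooW{a_k}{j}\) and propagating non-vanishing around the cycle finishes exactly as in your last paragraph. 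You instead route everything through the combinatorial characterisation \THM{PN-dense-conj}, amalgamating the cyclic matrix of the equi-measured partition with the diagonal matrix of \(\{j,\BooD{\unit}{j}\}\) and realising the amalgam via part (B) of \LEM{PN-is-arrow} as a partial \(\lambda\)-isomorphism \(\psi\) with \(\psi(j)=j\), which the emptiness pattern of \(\Gamma_1\) then forces to permute the blocks \(C_u\) cyclically (your deduction \(\psi(C_v)=C_{v+1}\) is sound because both \(\{C_u\}_u\) and \(\{\psi(C_v)\}_v\) are partitions of \(\unit\), \(\psi\) being a Boolean monomorphism defined on all of \(\ddD\)). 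The underlying idea is the same in both proofs—produce a measure-preserving map that cyclically permutes an \(N\)-block partition of \(\unit\) while fixing \(j\), then cut with \(j\)—but yours trades a short topological argument for the matrix machinery; this is legitimate, since \THM{PN-dense-conj} and \LEM{PN-is-arrow} are stated (without proof, as imported classical facts) before the lemma, and it has the modest virtue of exhibiting the precise amalgamation instance where the Rokhlin property is consumed. All the individual steps check out, including the compatibility of \(\Gamma_1\) and \(\Gamma_2\), the identity \(\lambda(\BooW{C_u}{\psi(C_v)})=(\Gamma_1)_{uv}\), and the final propagation showing each \(b_u\) is non-zero.
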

\begin{proof}
Let \(h \in \OPN{Aut}(\lambda)\) have dense conjugacy class, from which we infer
that we may and do assume \(h(j) = j\). It follows from ultrahomogeneity of
\(\lambda\) (cf. \THM{PN-4EP}) that there is \(g \in \OPN{Aut}(\lambda)\) such
that \(g(c_N) = c_1\) and \(g(c_k) = c_{k+1}\) for \(k=1,\ldots,N-1\). Since
\(h\) has dense conjugacy class, we conclude that there is a partition \(\{a_1,
\ldots,a_N\}\) of \(\bbB\) such that \(h(a_N) = a_1\) and \(h(a_k) = a_{k+1}\)
for \(k < N\). But then \(b_k \df \BooW{a_k}{j}\ (k=1,\ldots,N)\) are pairwise
disjoint and satisfy \(h(b_N) = b_1\) and \(h(b_k) = b_{k+1}\) for \(k < N\).
Consequently, all \(b_k\)'s are non-zero (and hence they form a partition of
\(\bbB\restriction{j}\)) and \(\lambda(b_k) = \lambda(b_1)\) for each \(k\).
\end{proof}

\begin{lem}{PN-ring}
Let \(I\) and \(J\) stand for, respectively, \(\lambda(\bbB \setminus \{\zero,
\unit\})\) and \(\lambda(\bbB)\). Assume that there exists a one-to-one function
\(h\dd J \to R\) into a \UP(not necessarily commutative\UP) ring \((R,+,\cdot)\)
with unit such that all the following conditions are fulfilled:
\begin{enumerate}[\upshape(h1)]
\item \(h(0_S) = 0_R\), and \(h(\lambda(\unit))\) is invertible in \(R\);
\item \(h(I) \cdot h(\lambda(\unit))^{-1} \cdot h(I) \subset h(I)\);
\item if \(a, b \in I\) are such that \(a+b \in J\), then \(h(a+b) =
 h(a)+h(b)\);
\item for any \(s_1,\ldots,s_N \in I\) \UP(where \(N > 0\)\UP) such that
 \(\sum_{k=1}^N h(s_k) = h(\lambda(\unit))\) there exists a partition \(\{c_1,
 \ldots,c_N\}\) of \(\bbB\) satisfying \(\lambda(c_k) = s_k\) for \(k=1,\ldots,
 N\).
\end{enumerate}
Then \(\lambda\) has the Rokhlin property.
\end{lem}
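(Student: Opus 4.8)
The plan is to verify the combinatorial criterion of \THM{PN-dense-conj}: given two \compatible{\lambda} matrices $\Gamma_1 = [a_{uv}]_{u,v \in I_1}$ and $\Gamma_2 = [b_{pq}]_{p,q \in I_2}$, I must produce a \compatible{\lambda} matrix $\Gamma_0$ into which each admits a \morphism{}. Write $E \df \lambda(\unit)$ and $e \df h(E)$, invertible by (h1). First I would dispose of the trivial cases: by \LEM{PN-when-Gamma(f)} together with $\Delta_1(\lambda) = \{E\}$ (see \EXM{PN-delta-k}), the only $1 \times 1$ \compatible{\lambda} matrix is $[E]$, and the constant map is a \morphism{} from $[E]$ into any \compatible{\lambda} matrix (whose total entry-sum is $E$); so if either $\Gamma_i$ is $1\times 1$ I take $\Gamma_0$ to be the other one. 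Hence I may assume both matrices have size at least $2$, in which case all their non-empty entries lie in $I$, being $\lambda$-values of blocks of a partition into at least two proper pieces. The candidate is the ``normalized product'' matrix $\Gamma_0$ indexed by $I_1 \times I_2$, whose $((u,p),(v,q))$-entry is declared empty when $a_{uv} = \varempty$ or $b_{pq} = \varempty$, and otherwise equals $h^{-1}(h(a_{uv})\, e^{-1}\, h(b_{pq}))$; this is well defined and lands in $I$ thanks to (h2). (For good measures this is literally the entrywise product of values, normalized by $\lambda(\unit) = 1$.)

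The key preliminary observation is that for every \compatible{\lambda} matrix the sum of $h$ over its non-empty entries equals $e$. Indeed, by \LEM{PN-when-Gamma(f)} those entries form a tuple in $\Delta_p(\lambda)$, that is, they are the $\lambda$-values of a genuine partition of $\bbB$; since the partial joins are proper clopen sets their $\lambda$-values are successively in $I$ and the final one is $E \in J$, so iterating (h3) yields $\sum h(\cdot) = h(E) = e$. The delicate ordering point --- and the step I expect to be the main obstacle --- is that verifying compatibility of $\Gamma_0$ seems to require additivity of $h$ over sums of its own entries, while that additivity is only available once those entries are known to come from an actual partition. I would break this circularity by first computing the \emph{total} $h$-sum of $\Gamma_0$ without reference to any realization: distributivity (valid even in the noncommutative $R$, since $e^{-1}$ is fixed and the $b$-factors stay on the right) gives $\sum h(\text{entries of }\Gamma_0) = \big(\sum h(a_{uv})\big)\, e^{-1} \big(\sum h(b_{pq})\big) = e\, e^{-1}\, e = e$. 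Now (h4) applies and produces a partition of $\bbB$ realizing the non-empty entries of $\Gamma_0$ as its $\lambda$-values; this simultaneously supplies the $\Delta_p(\lambda)$-clause of \LEM{PN-when-Gamma(f)}(ii) and a concrete partition licensing all further additivity computations.

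With that partition in hand the remainder is bookkeeping. For the marginal clause I would compute, for fixed $(u,p)$, the $h$-image of the $(u,p)$-row sum as $\big(\sum_v h(a_{uv})\big) e^{-1} \big(\sum_q h(b_{pq})\big) = h(r^{(1)}_u)\, e^{-1}\, h(r^{(2)}_p)$, where $r^{(1)}_u$ and $r^{(2)}_p$ are the (row-equals-column) marginals of $\Gamma_1$ and $\Gamma_2$; the $(u,p)$-column sum of $\Gamma_0$ yields the same expression, so injectivity of $h$ forces the two marginals of $\Gamma_0$ to coincide and be non-empty. Thus $\Gamma_0$ is \compatible{\lambda}. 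Finally I would check that the coordinate projections $I_1 \times I_2 \to I_1$ and $I_1 \times I_2 \to I_2$ are \morphism{}s from $\Gamma_1$ and from $\Gamma_2$, respectively, into $\Gamma_0$: for the first, the $(u,v)$-block sum of $\Gamma_0$ has $h$-image $h(a_{uv})\, e^{-1}\big(\sum_{p,q} h(b_{pq})\big) = h(a_{uv})\, e^{-1} e = h(a_{uv})$, hence equals $a_{uv}$ (empty blocks matching the $\varempty$ convention exactly when $a_{uv} = \varempty$), and symmetrically for the second using $\sum h(a_{uv}) = e$. An appeal to \THM{PN-dense-conj} then shows that $\OPN{Aut}(\lambda)$ has a dense conjugacy class, that is, $\lambda$ has the Rokhlin property.

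Throughout, the one subtlety to keep straight is the distinction between an \emph{empty} cell (value $\varempty$, arising from $\zero$) and a genuine cell of value $0_S$: emptiness must be decided by the factors, not by whether the $R$-product happens to vanish, so that $h$ is never asked to evaluate $\varempty$. Resolving the circularity of the previous paragraph --- compute the total $h$-sum first, invoke (h4) to obtain a partition, then use it to justify every remaining application of (h3) --- is what makes the argument go through, and I would present the steps strictly in that order.
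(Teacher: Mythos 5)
Your proposal is correct and follows essentially the same route as the paper: both reduce to the criterion of \THM{PN-dense-conj} and amalgamate the two \compatible{\lambda} matrices into the ``normalized product'' matrix with entries $h^{-1}(h(a_{uv})\,h(\lambda(\unit))^{-1}\,h(b_{pq}))$, verified compatible via (h2) and (h4), with the coordinate projections as the two \morphism{}s. Your explicit treatment of the $1\times 1$ case and of the order of operations (total $h$-sum first, then (h4), then the marginal computations) only spells out what the paper leaves as ``one easily checks''.
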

\begin{proof}
It follows from (h3) that \(\mu \df h \circ \lambda\) is an \(R\)-valued
measure. Moreover, since \(h\) is one-to-one, we infer that the class of all
partial \(\mu\)-isomorphisms coincides with the one of all partial
\(\lambda\)-isomorphisms. In particular, \(\mu\) is homogeneous as well and
\(\OPN{Aut}(\mu) = \OPN{Aut}(\lambda)\). Thus, we may and do assume that \(\mu =
\lambda\) (and \(h\) is the identity map). Then \(E \df \lambda(\unit)\) is
invertible in \(R\) and \(I \cdot E^{-1} \cdot I \subset I\). Moreover, (h4)
says that if \(s_1,\ldots,s_N \in I\) are such that \(\sum_{k=1}^N s_k = E\),
then there is a partition \(\{c_1,\ldots,c_N\}\) of \(\bbB\) satisfying
\(\lambda(c_k) = s_k\) for all \(k\).\par
Fix two \compatible{\lambda} matrices \(A = [a_{jk}]_{j,k=1}^m\) and \(B =
[b_{pq}]_{p,q=1}^n\). Thanks to \THM{PN-dense-conj}, it is sufficient to show
that both \(A\) and \(B\) admit \morphism{}s into a common \compatible{\lambda}
matrix. We may and do assume that both \(n\) and \(m\) are greater than \(1\).
Then all non-empty entries of both these matrices are from \(I\). We set \(T \df
\{1,\ldots,m\} \times \{1,\ldots,n\}\) and for any \(s = (j,p)\) and \(t =
(k,q)\) from \(T\) we define
\[c_{st} = \begin{cases}
\varempty & \UP{if } a_{jk} = \varempty \UP{ or } b_{pq} = \varempty\\
a_{jk} E^{-1} b_{pq} & \UP{otherwise}
\end{cases}.\]
Since \(\sum_{j,k} a_{jk} = E = \sum_{p,q} b_{pq}\), one easily checks that \(C
= [c_{st}]_{s,t \in T}\) is \compatible{\lambda} (thanks to (h4)). Now we define
\(f\dd T \to \{1,\ldots,m\}\) and \(g\dd T \to \{1,\ldots,n\}\) by \(f(j,p) \df
j\) and \(g(j,p) = p\). As there is no difficulty in checking that \(f\) and
\(g\) are \morphism{}s from \(C\) into \(A\) and \(B\), respectively, the proof
is finished.
\end{proof}

Recall that a set \(I \subset [0,1]\) is \emph{group-like} if \(\{0,1\} \subset
I\) and \(I+\ZZZ\) is an additive subgroup of \(\RRR\).

\begin{thm}{PN-ring-vector}
Let \(I_1,\ldots,I_N\) be countably infinite group-like subsets of \([0,1]\)
such that \(I_k \cdot I_k \subset I_k\) for each \(k\), and let \(\Sigma\)
consist of all triples \((x,y,z)\) of vectors \(x, y, z \in (0,1)^N \cap (I_1
\times \ldots \times I_N)\) such that \(x+y+z = (1,\ldots,1)\). Let \(\mu\dd
\bbB \to \RRR^N\) be a homogeneous measure such that \(\Sigma(\nu) = \Sigma\).
\begin{enumerate}[\upshape(A)]
\item The measure \(\mu\) is supersolid and has the Rokhlin property.
\item If, in addition, \(\frac{ab}{c} \in I_k\) for all \(a, b, c \in I_k\) such
 that \(a < c\) and \(b < c\), and any \(k = 1,2,\ldots,N\), then \(\mu\) has
 the hereditary Rokhlin property.
\end{enumerate}
\end{thm}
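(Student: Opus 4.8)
The plan is to recognise the spectrum $\Sigma$ as a skew product and then to feed a suitable ring into \LEM{PN-ring}. First I would observe that, writing $\Sigma_k \df \{(a,b,c) \in (I_k \cap (0,1))^3 \dd\ a+b+c = 1\}$, the hypothesis that each $I_k$ is group-like makes every $\Sigma_k$ a supersolid \homspec{3} (this is exactly the good-measure pattern recorded in \EXM{PN-good-super}), and that by construction $\Sigma = \overline{\oplus}_{k \le N} \Sigma_k$. \PRO{PN-prod-solid}(a) then yields at once that $\Sigma$ is a supersolid \homspec{3} with the replacement property, so any homogeneous $\mu$ with $\Sigma(\mu) = \Sigma$ is supersolid; this settles the first assertion of (A). (Via \PRO{PN-coord} this also shows that each coordinate measure $\pi_k \circ \mu$ is a full homogeneous real-valued measure, a fact I use later.)

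For the Rokhlin property in (A) I would apply \LEM{PN-ring} with $R \df \RRR^N$ equipped with coordinatewise multiplication (a commutative unital ring with unit $E = (1,\dots,1)$) and with $h$ the identity inclusion of $J \df \mu(\bbB)$ into $R$. One first identifies the value sets: from the product form of $\Sigma$ together with density of the group-like sets $I_k$, $I \df \mu(\bbB \setminus \{\zero,\unit\}) = (0,1)^N \cap (I_1 \times \dots \times I_N)$. Conditions (h1) and (h3) are immediate for the inclusion (addition and $0$ agree in $R$), and $E$ is its own inverse. Condition (h2) reads $I \cdot I \subseteq I$, which holds coordinatewise precisely because $I_k \cdot I_k \subseteq I_k$ and products of numbers in $(0,1)$ stay in $(0,1)$. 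Condition (h4) is the statement $(s_1,\dots,s_M) \in \Delta_M(\mu)$ for any $s_1,\dots,s_M \in I$ with $\sum_k s_k = E$; using the characterisation of $\Delta_M$ in \EXM{PN-delta-k} this reduces to checking $(\sum_{i<k}s_i,\, s_k,\, \sum_{i>k}s_i) \in \Sigma$ for $1<k<M$, which I verify one coordinate at a time—the outer partial sums lie in each $I_l$ because $I_l + \ZZZ$ is a group and the sums fall in $(0,1)$, while the three entries sum to $1$. \LEM{PN-ring} then gives the Rokhlin property.

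For (B) I would run the same argument on each restriction $\mu\restriction{j}$ with $j$ proper (the case $j = \unit$ being (A), and every non-zero $j$ is either $\unit$ or proper). By \COR{PN-solid-hered} each $\mu\restriction{j}$ is supersolid, and by \PRO{PN-homo-inherit} it is homogeneous with a spectrum $\Sigma_j$ that again decomposes coordinatewise. Here the relevant unit becomes $m \df \mu(j) \in (0,1)^N \cap \prod_k I_k$ (each coordinate is strictly between $0$ and $1$ since the coordinate measures are full), so in $R = \RRR^N$ the element $E = m$ is invertible with $m^{-1} = (1/m_1,\dots,1/m_N)$, and the value set becomes $I' = \prod_k (I_k \cap (0,m_k))$. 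The only condition that changes substantively is (h2), which now demands $I' \cdot m^{-1} \cdot I' \subseteq I'$, i.e.\ coordinatewise $\tfrac{ab}{m_k} \in I_k$ for $a,b \in I_k \cap (0,m_k)$; this is exactly the extra hypothesis of (B) applied with $c = m_k$ (and $\tfrac{ab}{m_k} < b < m_k$ keeps it in range). Conditions (h1), (h3), (h4) go through as before, with (h4) verified through $\Sigma_j$ using \PRO{PN-homo-inherit} and \EXM{PN-delta-k}, so \LEM{PN-ring} gives the Rokhlin property of $\mu\restriction{j}$ for every non-zero $j$, i.e.\ the hereditary Rokhlin property.

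The main obstacle I anticipate is the bookkeeping in condition (h4): one must pin down the value sets $I$ and $I'$ precisely as the displayed products and then confirm that every partial sum of the prescribed measures lands back in the correct group-like set and open interval in each coordinate. This is where the group-like hypothesis (that $I_k + \ZZZ$ is a group) does the real work, and it is the step most prone to slip—especially for the restricted measures, where the target interval is $(0,m_k)$ rather than $(0,1)$ and the multiplicative closure needed for (h2) is genuinely the new assumption of part (B).
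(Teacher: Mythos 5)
Your proposal is correct and follows essentially the same route as the paper: recognise \(\Sigma\) as the skew product of \(N\) good-measure-type supersolid \homspec{3}s and apply \LEM{PN-ring} to \(\RRR^N\) with coordinatewise multiplication and \(h\) the identity, the extra hypothesis of (B) being exactly condition (h2) for the restricted measures. The paper's proof is a terser version of the same argument; your explicit identification of the value sets \(I\) and \(I'\) and the verification of (h4) via \EXM{PN-delta-k} merely fill in details the paper declares ``easily seen''.
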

\begin{proof}
First of all, note that \(\Sigma\) is a supersolid \homspec{3} (as it is
the skew product of \(N\) such \homspec{3}s: \(\{(a,b,c) \in (0,1)^3\dd\ a,b,c
\in I_k,\ a+b+c = 1\}\), cf. \EXM{PN-good-super}). This explains why \(\mu\)
exists (and is supersolid). Now consider \(\RRR^N\) as a ring with
the coordinatewise product (that is, \((x_1,\ldots,x_N) \cdot (y_1,\ldots,y_N)
\df (x_1 y_1,\ldots,x_N y_N)\)). Substituting the identity map for \(h\), it is
easily seen that all the conditions (h1)--(h4) of \LEM{PN-ring} are satisfied,
which shows (A). To prove (B), fix non-zero \(j \in \bbB\), write \(\mu =
(\mu_1,\ldots,\mu_N)\) (where \(\mu_k\dd \bbB \to I_k\) is a certain good
measure) and note that \(\Sigma(\mu\restriction{j}) = \bar{\oplus}_k
\Sigma(\mu_k\restriction{j})\). Again, we may apply the same lemma (with
\(\lambda \df \mu\restriction{j}\)) to get the conclusion of (B) (notice that
the assumption of (B) is nothing else than (h2) of the cited lemma).
\end{proof}

Repeating the argument presented in \REM{PN-uncount} and in the proof of
\THM{PN-super-no-good} (and using the fact that there are uncountably many
countable subfields of \(\RRR\)), one obtains the following consequence of
the above result (we skip the proof).

\begin{cor}{PN-super-nogood-dense}
There exist uncountably many probability Borel measures on the \Cantor\ that are
supersolid, have the hereditary Rokhlin property and whose restrictions to
\(\bbB\restriction{j}\) \UP(where \(j\) runs over all non-zero elements of
\(\bbB\)\UP) are all non-good.\par
Similarly, there exist uncountably many \UP(essentially\UP) signed Borel
measures on the \Cantor\ that are supersolid and have the hereditary Rokhlin
property.
\end{cor}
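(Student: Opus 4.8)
The plan is to obtain every measure in the statement as a one-dimensional projection of a vector-valued measure supplied by \THM{PN-ring-vector}, exactly in the spirit of \REM{PN-uncount} and the proof of \THM{PN-super-no-good}. First I would fix a countable subfield \(F \subseteq \RRR\) and set \(I \df F \cap [0,1]\). Then \(I\) is countably infinite and group-like (since \(I + \ZZZ = F\)), it satisfies \(I \cdot I \subset I\), and it also satisfies the hypothesis of part (B) of \THM{PN-ring-vector}: for \(a,b,c \in I\) with \(a < c\) and \(b < c\) one has \(\frac{ab}{c} \in F\) and \(0 \le \frac{ab}{c} < c \le 1\), hence \(\frac{ab}{c} \in I\). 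Applying \THM{PN-ring-vector} with \(N = 2\) and \(I_1 = I_2 = I\) produces a homogeneous measure \(\mu = (\mu_1,\mu_2)\dd \bbB \to \RRR^2\) that is supersolid and has the hereditary Rokhlin property, where \(\mu_1,\mu_2\) are good measures valued in \(I\) whose diagonal has the skew-product trinary spectrum; in particular \(\mu(\bbB \setminus \{\zero,\unit\}) = (0,1)^2 \cap I^2\) and, by density of \(I\), the group it generates is \(F \times F\); write \(G_0 \df F \times F\).

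Next I would collapse \(\mu\) to the reals. Fix a transcendental number \(a \in (0,1)\) that is transcendental over \(F\), and let \(u(x,y) \df (1-a)x + ay\). Because \(a \notin F\), the only solution of \((1-a)p + aq = 0\) in \(F \times F\) is the origin, so \(u\) restricts to a one-to-one homomorphism on \(G_0\). Put \(\nu \df u \circ \mu = (1-a)\mu_1 + a\mu_2\); this is a probability Borel measure, as \(\nu(\unit) = 1\) and \(a,1-a > 0\). Viewing \(\mu\) (and each \(\mu\restriction{j}\)) as \(G_0\)-valued, \LEM{PN-1-1} gives that \(\nu\) is supersolid with \(\OPN{Aut}(\nu) = \OPN{Aut}(\mu)\) and, applied to the restrictions, \(\OPN{Aut}(\nu\restriction{j}) = \OPN{Aut}((u \circ \mu)\restriction{j}) = \OPN{Aut}(\mu\restriction{j})\) for every non-zero \(j\); hence \(\nu\) inherits the hereditary Rokhlin property of \(\mu\).

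The crux --- and the step I expect to be the main obstacle --- is to check that \emph{every} restriction \(\nu\restriction{j}\) is non-good. Fix a non-zero \(j\) and write \(m_k \df \mu_k(j)\). Using the product decomposition \(\Sigma(\mu\restriction{j}) = \Sigma(\mu_1\restriction{j}) \mathrel{\bar{\oplus}} \Sigma(\mu_2\restriction{j})\) together with the standard fact that the clopen value set of the restriction of a good measure to \(j\) is its value set intersected with \([0,m_k]\), one computes (up to the two endpoints) that \(\mu\restriction{j}(\bbB\restriction{j}) = (I \cap [0,m_1]) \times (I \cap [0,m_2])\) and that the group it generates is all of \(G_0 = F \times F\). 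Thus \(\nu\restriction{j}\) is full (by \COR{PN-solid-hered} and \PRO{PN-super-full-nonatom}) with value group \(u(G_0)\), whereas its clopen value set is merely the \(u\)-image of that rectangle. Since \(u\) is one-to-one on \(G_0\), each attained value has a unique representation \((1-a)p + aq\) with \((p,q) \in G_0\); choosing \(p \in F\) with \(p < 0\) and \(q \in F\) with \(q > 0\) so that \((1-a)p + aq\) falls in \((0,\nu(j))\) (possible because \(F \supseteq \QQQ\) is dense), this value lies in the value group but, having its unique preimage outside the rectangle, is absent from the clopen value set. Exactly as in the proof of \THM{PN-super-no-good}, this gap shows that \(\nu\restriction{j}\) is not good.

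Finally, to produce uncountably many pairwise non-conjugate examples I would keep the transcendental \(a\) fixed and let \(F\) range over an uncountable family of countable subfields of \(\RRR\) over each of which \(a\) is transcendental (such a family exists since \(\RRR\) has transcendence degree \(2^{\aleph_0}\) over \(\QQQ\)). Conjugate measures have the same clopen value set, hence the same value group \(G_{\nu_F} = u(G_0) = F + aF\). Because \(1,a,a^2\) are linearly independent over \(F\), one checks that \(F = \{\, w \in G_{\nu_F} \dd aw \in G_{\nu_F} \,\}\), so the field \(F\) is recovered from the conjugacy invariant \(G_{\nu_F}\) and distinct \(F\) give non-conjugate \(\nu_F\); this yields uncountably many classes, each of which is supersolid, has the hereditary Rokhlin property, and has all restrictions non-good. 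For the signed statement I would repeat the construction verbatim with a transcendental \(a \in (1,2)\), so that \(1-a < 0\) makes \(\nu\) a genuinely signed measure with \(\nu(\unit) = 1\); supersolidity and the hereditary Rokhlin property follow from \LEM{PN-1-1} as before, while fullness and non-atomicity of the variation \(|\nu|\) are guaranteed by \REM{PN-solid-without-full-nonatom}.
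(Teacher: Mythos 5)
Your proposal is correct and follows essentially the route the paper intends: the paper itself only sketches this corollary, instructing the reader to combine \THM{PN-ring-vector} (applied with \(I_k = F \cap [0,1]\) for countable subfields \(F\) of \(\RRR\)) with the collapsing-by-a-linear-functional argument of \THM{PN-super-no-good} and \REM{PN-uncount}, which is exactly what you do. Your variant of the counting step (fixing the transcendental \(a\) and recovering \(F\) from the value group \(F + aF\) via \(F = \{w \in G \dd aw \in G\}\)) is a clean and valid implementation of the paper's appeal to "uncountably many countable subfields."
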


In the next results we study the question of when a \(G\)-filling \(E\)-measure
(cf. \DEF{PN-G-filling}) has the Rokhlin property. We recall that each such
a measure is supersolid. We begin with

\begin{thm}{PN-Z-dense}
Let \(d \geq 1\) and \(E = (k_1,\ldots,k_d) \in \ZZZ^d\). A \(\ZZZ^d\)-filling
\(E\)-measure \(\mu\) has the Rokhlin property iff \(E \neq 0\) and
\(\OPN{GCD}(k_1,\ldots,k_d) = 1\).
\end{thm}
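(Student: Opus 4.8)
The plan is to prove the two implications by entirely different mechanisms: sufficiency through the ring criterion \LEM{PN-ring}, and necessity through the equi-measured-partition criterion \LEM{PN-1/N}, rather than wrestling directly with the matrix-amalgamation condition of \THM{PN-dense-conj}. Throughout I would use that, since $\mu$ is $\ZZZ^d$-filling (\DEF{PN-G-filling}), one has $\mu(\bbB\setminus\{\zero,\unit\})=\ZZZ^d$ and, by \EXM{PN-G-E}, $\Sigma(\mu)=\{(a,b,c)\in(\ZZZ^d)^3\dd a+b+c=E\}$; hence by \EXM{PN-delta-k} a tuple $(s_1,\dots,s_N)$ lies in $\Delta_N(\mu)$ precisely when $\sum_k s_k=E$. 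In other words, every tuple of values summing to $E$ is realised by a genuine partition of $\bbB$.

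For sufficiency, assume $E\neq 0$ and $\gcd(k_1,\dots,k_d)=1$, so that $E$ is a primitive vector and extends to a $\ZZZ$-basis $E=v_1,v_2,\dots,v_d$ of $\ZZZ^d$. I would equip $\ZZZ^d$ with the unital commutative ring structure transported from $R\df\ZZZ[x]/(x^d)$ via the group isomorphism $h\dd\ZZZ^d\to R$ sending $v_j\mapsto x^{j-1}$; then $h(E)=1$ is the unit of $R$. Now apply \LEM{PN-ring} with this $h$: condition (h1) holds since $h(0)=0$ and $h(E)=1$ is invertible; (h3) holds because $h$ is additive; (h2) reads $h(I)\,h(E)^{-1}\,h(I)=R\cdot 1\cdot R=R=h(I)$, as $I=\ZZZ^d$; and (h4) is exactly the realisability noted above (via $h$ injective, $\sum_k h(s_k)=h(E)$ means $\sum_k s_k=E$). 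Hence $\mu$ has the Rokhlin property. For $d=1$ and $E=\pm1$ this degenerates to $R=\ZZZ$, $h(n)=n/E$.

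For necessity I would argue contrapositively: suppose $\gcd(k_1,\dots,k_d)\neq 1$, so either $E=0$ or $m\df\gcd(k_1,\dots,k_d)\ge 2$. In either case there is an integer $N>1$ with $E/N\in\ZZZ^d$ (a prime divisor of $m$ when $E\neq0$; any $N$, say $N=2$, when $E=0$). Then $(E/N,\dots,E/N)\in\Delta_N(\mu)$, so $\unit$ admits an equi-measured $N$-partition in the sense of \DEF{equi-measured}, with common measure $E/N$. Assuming toward a contradiction that $\mu$ has the Rokhlin property, \LEM{PN-1/N} shows that every proper $j\in\bbB$ then admits an equi-measured $N$-partition as well; writing its common measure as $c$, we get $\mu(j)=Nc$, so $N$ divides $\mu(j)$ coordinatewise for every proper $j$. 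But the $\ZZZ^d$-filling property supplies a proper $j$ with $\mu(j)=(1,0,\dots,0)$, which is not divisible by $N>1$ — a contradiction. Hence $\mu$ fails the Rokhlin property, completing the equivalence.

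The genuinely load-bearing observation is the necessity step: although \THM{PN-dense-conj} gives a matrix criterion, producing explicit non-amalgamable \compatible{\lambda} matrices appears delicate, whereas \LEM{PN-1/N} converts the hypothesis into the transparent arithmetic statement ``$N\mid\mu(j)$ for all proper $j$'', which the filling property instantly refutes. On the sufficiency side the only real point is the ring construction, and the full force of the $\gcd=1$ hypothesis is spent there: it is precisely primitivity of $E$ that lets $E$ become a unit of a rank-$d$ ring on $\ZZZ^d$, whereas for $m\ge 2$ no such structure can make $E=m\cdot(E/m)$ invertible (matching the failure of (h2) for the naive choice $R=\QQQ$, $h(n)=n/m$).
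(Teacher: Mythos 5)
Your proof is correct and follows essentially the same route as the paper: necessity via \LEM{PN-1/N} applied to an element of measure \((1,0,\ldots,0)\) versus the divisible \(\unit\), and sufficiency via \LEM{PN-ring} after arranging for \(E\) to be the unit of a ring structure on \(\ZZZ^d\). The only (cosmetic) difference is that the paper uses the coordinatewise product on \(\ZZZ^d\) after an automorphism sending \(E\) to \((1,\ldots,1)\), while you transport the ring \(\ZZZ[x]/(x^d)\) along a basis extending \(E\); both choices satisfy (h1)--(h4) equally well.
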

\begin{proof}
If \(E = 0\) or \(\OPN{GCD}(k_1,\ldots,k_d) > 1\), then \(\mu\) fails to have
the Rokhlin property, which simply follows from \LEM{PN-1/N} (as an element
\(a\) for which \(\mu(a) = (1,0,\ldots,0)\) cannot be partioned into parts of
equal measure, whereas \(\mu(E)\) can). On the other hand, if \(\OPN{GCD}(k_1,
\ldots,k_d) = 1\), then there exists a group automorphism \(\phi\dd \ZZZ^d \to
\ZZZ^d\) such that \(\phi(E) = (1,\ldots,1)\) and then \LEM{PN-ring} applies (as
\(\ZZZ^d\) is a unital ring, with pointwise actions).
\end{proof}

\begin{lem}{PN-1/N-uniq}
Assume \(\lambda\) has the Rokhlin property and \(\unit\) admits two
equi-measured \(N\)-partitions \(\{a_1,\ldots,a_N\}\) and \(\{b_1,\ldots,b_N\}\)
with common measures, respectively, \(\alpha\) and \(\beta\) \UP(where \(N >
1\)\UP). Then \(\alpha = \beta\).
\end{lem}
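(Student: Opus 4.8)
The plan is to use the Rokhlin property to realise \emph{both} equi-measured partitions as orbits of one and the same automorphism, and then to read off $\alpha=\beta$ from a common refinement. A naive attempt fails immediately: summing measures gives only $N\alpha=\lambda(\unit)=N\beta$, and in a semigroup this does not let us cancel the $N$. The whole content of the lemma is thus to produce extra structure linking $\alpha$ and $\beta$ beyond their $N$-fold sums, and this is exactly what a single shared automorphism will supply.

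First I would produce, via ultrahomogeneity (\THM{PN-4EP}), two automorphisms realising the cyclic shift of each partition. Since $\lambda(a_k)=\alpha$ for all $k$, the assignment $a_k\mapsto a_{k+1}$ (indices taken mod $N$, so $a_{N+1}\df a_1$) is a partial $\lambda$-isomorphism of the finite subalgebra generated by $\{a_1,\dots,a_N\}$, hence extends to some $g_a\in\OPN{Aut}(\lambda)$; likewise one obtains $g_b$ with $g_b(b_k)=b_{k+1}$. Now let $h\in\OPN{Aut}(\lambda)$ have dense conjugacy class. The set of $\theta\in\OPN{Aut}(\lambda)$ with $\theta(a_k)=a_{k+1}$ for $k=1,\dots,N$ is a non-empty open set (it contains $g_a$), so by density there is $\phi$ with $\phi h\phi^{-1}$ in it; setting $x_k\df\phi^{-1}(a_k)$ yields a partition with $\lambda(x_k)=\alpha$ and $h(x_k)=x_{k+1}$. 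Applying the same reasoning to $g_b$ gives a partition $\{y_1,\dots,y_N\}$ with $\lambda(y_k)=\beta$ and $h(y_k)=y_{k+1}$, permuted by the \emph{same} $h$, since $h$ is fixed once and for all and only the witnesses $\phi,\psi$ change.

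Finally I would pass to the common refinement $\{\BooW{x_i}{y_j}\}$ and compute. Because $h(\BooW{x_i}{y_j})=\BooW{x_{i+1}}{y_{j+1}}$ and $h$ preserves $\lambda$ (with the convention $\lambda(\zero)=\varempty$ so that vanishing cells contribute the neutral element), the value $c_{ij}\df\lambda(\BooW{x_i}{y_j})$ is invariant under $(i,j)\mapsto(i+1,j+1)$, hence depends only on $j-i\bmod N$; write $c_{ij}=d_{\,j-i}$ for a function $d$ on $\ZZZ/N\ZZZ$. Summing a row gives $\alpha=\lambda(x_i)=\sum_{j}c_{ij}=\sum_{m\in\ZZZ/N\ZZZ}d_m$, while summing a column gives $\beta=\lambda(y_j)=\sum_{i}c_{ij}=\sum_{m\in\ZZZ/N\ZZZ}d_m$, whence $\alpha=\beta$. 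The main obstacle is the middle step: everything hinges on the Rokhlin property forcing a \emph{single} automorphism that cyclically permutes both an $\alpha$-partition and a $\beta$-partition. Once that shared $h$ is in hand, the diagonal invariance $c_{i+1,j+1}=c_{ij}$ makes the row and column totals the very same unordered sum $\sum_m d_m$, and this is precisely what circumvents the absence of cancellation in $S$.
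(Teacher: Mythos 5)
Your proof is correct and follows essentially the same route as the paper: both use the dense conjugacy class to replace each partition by a conjugated copy cyclically permuted by one fixed automorphism \(h\), and then compare \(\alpha\) and \(\beta\) via the common refinement, whose cell measures are constant along the diagonals \(j-i \bmod N\). The paper writes the final computation as a single telescoping chain of equalities rather than introducing the function \(d\) on \(\ZZZ/N\ZZZ\), but the content is identical.
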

\begin{proof}
Let \(h \in \OPN{Aut}(\lambda)\) have a dense conjugacy class. A reasoning
presented in the proof of \LEM{PN-1/N} shows that we may and do assume (after
replacing the given partitions by new equi-measured ones with the same common
measures as assumed) that \(h(a_N) = a_1\), \(h(a_k) = a_{k+1}\), \(h(b_N) =
b_1\) and \(h(b_k) = b_{k+1}\) for any \(k < N\). Then \(\alpha = \lambda(a_1) =
\sum_{k=1}^N \lambda(\BooW{a_1}{b_k}) = \sum_{k=1}^N \lambda(\BooW{a_1}{h^{k-1}
(b_1)}) = \sum_{k=1}^N \lambda(\BooW{h^{1-k}(a_1)}{b_1}) = \sum_{k=1}^N
\lambda(\BooW{h^{N-k+1}(a_k)}{b_1}) = \lambda(\BooW{h^N(a_1)}{b_1}) +
\sum_{s=1}^{N-1} \lambda(\BooW{h^s(a_1)}{b_1}) = \lambda(\BooW{a_1}{b_1}) +
\sum_{s=2}^N \lambda(\BooW{a_s}{b_1}) = \lambda(b_1) = \beta\), and we are done.
\end{proof}

\begin{cor}{PN-equal-part-uniq}
Assume \(\lambda\) is group-valued and has the Rokhlin property. If \(\unit\)
admits an equi-measured \(N\)-partition \UP(for some \(N > 1\)\UP), then for any
non-zero element \(j \in \bbB\) all equi-measured \(N\)-partitions of \(j\) have
the same common measure.
\end{cor}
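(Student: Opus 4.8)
The plan is to reduce everything to the already-established uniqueness on $\unit$ (namely \LEM{PN-1/N-uniq}) by a gluing argument, exploiting that $\lambda$ takes values in a group so that common measures can be cancelled.

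First I would dispose of the trivial case $j = \unit$, which is precisely the content of \LEM{PN-1/N-uniq}. So from then on I would assume $j$ is proper and let $\{b_1,\ldots,b_N\}$ and $\{b_1',\ldots,b_N'\}$ be two equi-measured $N$-partitions of $j$, with common measures $\beta$ and $\beta'$ respectively; the goal is to show $\beta = \beta'$.

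The key observation is that the complement $j' \df \BooD{\unit}{j}$ is again proper (since $j$ is), so \LEM{PN-1/N} applies to $j'$ and yields an equi-measured $N$-partition $\{d_1,\ldots,d_N\}$ of $\bbB\restriction{j'}$ with some common measure $\delta$. Since each $b_k$ (and each $b_k'$) is disjoint from each $d_\ell$ (they lie below $j$ and $j'$ respectively), the families $\{\BooV{b_k}{d_k}\}_{k=1}^N$ and $\{\BooV{b_k'}{d_k}\}_{k=1}^N$ are partitions of $\bbB$ into $N$ non-zero pieces, and additivity of $\lambda$ gives $\lambda(\BooV{b_k}{d_k}) = \beta+\delta$ and $\lambda(\BooV{b_k'}{d_k}) = \beta'+\delta$ for every $k$. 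Thus $\unit$ carries two equi-measured $N$-partitions, with common measures $\beta+\delta$ and $\beta'+\delta$. Applying \LEM{PN-1/N-uniq} to these forces $\beta+\delta = \beta'+\delta$, and cancelling $\delta$ in the group $G$ then yields $\beta = \beta'$.

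The argument is short and I expect no serious obstacle; the only points that need care are checking that the glued families really are partitions of $\bbB$ (disjointness of the $b$'s from the $d$'s, together with $\BooV{j}{j'} = \unit$), and that $j'$ is proper so that \LEM{PN-1/N} is applicable. The essential use of the hypothesis that $\lambda$ is group-valued is the final cancellation of $\delta$, which would fail for a mere semigroup; note also that one cannot shortcut the proof via $N\beta = \lambda(j) = N\beta'$, since this does not force $\beta = \beta'$ when $G$ has torsion.
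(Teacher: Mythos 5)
Your proof is correct and follows essentially the same route as the paper's: handle $j=\unit$ via \LEM{PN-1/N-uniq}, use \LEM{PN-1/N} to obtain an equi-measured $N$-partition of the complement, glue the pieces to get two equi-measured $N$-partitions of $\unit$, and cancel the common summand in the group. Your remarks on why the glued families are partitions and why group-valuedness is needed for the final cancellation are accurate and match the (implicit) steps in the paper's argument.
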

\begin{proof}
If \(j = \unit\), the assertion follows from the previous result. So, assume
\(j\) is proper and take two equi-measured \(N\)-partitions \(\{a_1,\ldots,
a_N\}\) and \(\{b_1,\ldots,b_N\}\) of \(j\). It follows from \LEM{PN-1/N} that
there exists an equi-measured \(N\)-partition \(\{c_1,\ldots,c_N\}\) of
\(\BooD{\unit}{j}\). Then \(\{\BooV{a_1}{c_1},\ldots,\BooV{a_N}{c_N}\}\) and
\(\{\BooV{b_1}{c_1},\ldots,\BooV{b_N}{c_N}\}\) are two equi-measured
\(N\)-partitions of \(\unit\). So, we infer from \LEM{PN-1/N-uniq} that
\((\lambda(a_1)+\lambda(c_1) =)\ \lambda(\BooV{a_1}{c_1}) =
\lambda(\BooV{b_1}{c_1})\ (= \lambda(b_1)+\lambda(c_1))\) and thus
\(\lambda(a_1) = \lambda(b_1)\) as well.
\end{proof}

\begin{thm}{PN-0-filling}
\begin{enumerate}[\upshape(A)]
\item If a \(G\)-filling \(0_G\)-measure has the Rokhlin property, then \(G\) is
 a vector space over \(\QQQ\).
\item \TFCAE
 \begin{itemize}
 \item for each non-zero \(E \in G\), a \(G\)-filling \(E\)-measure has
  the Rokhlin property;
 \item \(G\) is a vector space over \(\QQQ\) or over \(\ZZZ_p \df \ZZZ / p\ZZZ\)
  for some prime \(p\).
 \end{itemize}
\end{enumerate}
\end{thm}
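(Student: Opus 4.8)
The plan is to reduce everything to two arithmetic consequences of the Rokhlin property for \(G\)-filling measures, and then carry out an elementary group-theoretic classification. First I would record the dictionary. Fix a nonzero \(E \in G\) and a \(G\)-filling \(E\)-measure \(\mu\); by \EXM{PN-G-E} its trinary spectrum is the full one, \(\Sigma(\mu) = \{(a,b,c) \in G^3 : a+b+c = E\}\), so by the description of \(\Delta_N\) in \EXM{PN-delta-k} one has \(\Delta_N(\mu) = \{(s_1,\dots,s_N) \in G^N : \sum_k s_k = E\}\). Consequently \(\unit\) admits an equi-measured \(N\)-partition with common measure \(c\) iff \(Nc = E\), and every \(g \in G\) equals \(\mu(j)\) for some proper \(j\) (since \(\mu(\bbB \setminus \{\zero,\unit\}) = G\)). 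Assuming \(\mu\) has the Rokhlin property, I would extract two facts: (D) if \(E\) is \(N\)-divisible then every element of \(G\) is \(N\)-divisible (apply \LEM{PN-1/N}: \(\unit\) has an equi-measured \(N\)-partition, hence so does each proper \(j\), and reading off common measures gives \(N c_j = \mu(j)\)); and (U) if \(E\) is \(N\)-divisible with \(N>1\) then \(Nt = 0_G\) forces \(t = 0_G\) (by \COR{PN-equal-part-uniq} the common measure of an equi-measured \(N\)-partition of \(\unit\) is unique, so \(Nc = Nc' = E\) gives \(c = c'\)).

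For Part (A) we have \(E = 0_G\). For every \(N > 1\), \(\unit\) carries the equi-measured \(N\)-partition with common measure \(0_G\) (as \(N\cdot 0_G = 0_G = E\)), so (D) shows every \(g \in G\) is \(N\)-divisible; thus \(G\) is divisible. If some nonzero \(g\) had \(Ng = 0_G\), then \(\unit\) would admit equi-measured \(N\)-partitions with common measures both \(g\) and \(0_G\), contradicting (U); hence \(G\) is torsion-free. A divisible torsion-free abelian group is a \(\QQQ\)-vector space.

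For Part (B), implication (i)\(\Rightarrow\)(ii), facts (D) and (U) are now available for every nonzero \(E\), and I would argue prime by prime. Fix a prime \(q\). If \(qG \neq \{0_G\}\), pick a nonzero \(E = qc \in qG\); then (D) and (U) with \(N = q\) give \(qG = G\) and the absence of nonzero \(q\)-torsion. So for each prime \(q\) either \(G\) is \(q\)-divisible and \(q\)-torsion-free, or \(qG = \{0_G\}\). Two distinct primes cannot both satisfy \(qG = \{0_G\}\), since that would force \(G = \{0_G\}\). Assuming \(G \neq \{0_G\}\): if no prime has \(qG = \{0_G\}\), then \(G\) is divisible and torsion-free, hence a \(\QQQ\)-vector space; if exactly one prime \(p\) has \(pG = \{0_G\}\), then \(G\) is an elementary abelian \(p\)-group, i.e. a vector space over \(\ZZZ_p\). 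The degenerate case \(G = \{0_G\}\) is vacuous and trivially a \(\QQQ\)-vector space.

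For implication (ii)\(\Rightarrow\)(i), let \(k\) denote the ground field (\(\QQQ\) or \(\ZZZ_p\)), so \(G\) is a \(k\)-vector space, and fix a nonzero \(E \in G\); I would invoke \LEM{PN-ring}. Choose a unital \(k\)-algebra \(R\) with \(\dim_k R = \dim_k G\) (for instance \(k[x]/(x^n)\) in finite dimension \(n\), a polynomial algebra in the countably infinite case, or \(R = k\) in dimension one) together with a \(k\)-linear isomorphism \(\theta : G \to R\) satisfying \(\theta(E) = 1_R\); this is possible because \(E\) and \(1_R\) are nonzero and the dimensions agree. Setting \(h = \theta\), condition (h1) holds since \(\theta(0_G) = 0_R\) and \(\theta(E) = 1_R\) is invertible, (h3) holds since \(\theta\) is additive, (h2) holds because \(h(I) = \theta(G) = R\) so \(R \cdot 1_R^{-1} \cdot R \subseteq R = h(I)\), and (h4) reduces, via injectivity of \(\theta\), to the assertion that \(\sum_k s_k = E\) yields a partition realising the \(s_k\)—precisely \(\Delta_N(\mu) = \{\sum = E\}\). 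Thus \LEM{PN-ring} delivers the Rokhlin property. The main obstacle is the opening paragraph: faithfully converting the Rokhlin property into the clean statements (D) and (U) and matching equi-measured partitions with divisibility through the \(\Delta_N\) computation; once (D) and (U) are in hand, both the classification and the ring construction are routine, and I would only need to watch the degenerate cases \(G = \{0_G\}\) and \(\dim_k G = 1\), where one must ensure \(R \neq 0\) so that \(1_R \neq 0_R\).
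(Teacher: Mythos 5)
Your proof is correct and follows essentially the same route as the paper: part (A) and the forward direction of (B) rest on \LEM{PN-1/N} (divisibility) and \LEM{PN-1/N-uniq} (uniqueness of common measures, hence torsion control), and the reverse direction of (B) invokes \LEM{PN-ring} after endowing \(G\) with a unital ring structure. The only differences are organizational — you classify prime by prime where the paper splits into the infinite-order and torsion cases, and you absorb the paper's reduction to a single \(E\) (via a group automorphism moving \(x\) to \(y\)) into the choice of a linear isomorphism sending \(E\) to \(1_R\) — and both variants are sound.
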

\begin{proof}
Everywhere below \(\mu_E\) is a \(G\)-filling \(E\)-measure on \(\bbB\). Observe
that if \(E = N g\) where \(N > 1\) and \(g \in G\), then there exists
a partition \(\{a_1,\ldots,a_N\}\) of \(\bbB\) such that \(\mu(a_k) = g\) for
each \(k = 1,\ldots,N\).\par
Now if \(E = 0_G\), it follows from the above remark and \LEM{PN-1/N-uniq} that
\(G\) is torsion-free, and from \LEM{PN-1/N} that \(G\) is divisible. So, \(G\)
is a vector space over \(\QQQ\).\par
Now assume that each \(\mu_E\) has the Rokhlin property for \(E \neq 0_G\).
First assume there exists an element \(g \in G\) that has infinite order. Then
it follows from the first paragraph of the proof, applied to \(E = Ng\)
(separately for each \(N > 1\)), combined with \LEM{PN-1/N} that \(G\) is
divisible. On the other hand, \LEM{PN-1/N-uniq} implies that the equation
\(Ng = Nx\) (with unknown \(x\)) has a unique solution. Consequently, \(G\) is
torsion-free.\par
Now let \(G\) be torsion and take a non-zero element \(h\) of \(G\) whose order
is a prime \(p\). Then \LEM{PN-1/N-uniq} implies that \(px = 0_G\) for each \(x
\in G\) (because otherwise \(\mu_E\) with \(E = px\) will fail to have
the Rokhlin property, since \(p(x+h) = px\); cf. again the remark formulated in
the first paragraph of the proof). But this is equivalent to the statement that
\(G\) is a vector space over \(\ZZZ_p\).\par
It remains to show that if \(G \neq \{0_G\}\) is a vector space over one of
the fields \(\KKK\) specified in (B), then all \(\mu_E\) (with \(E \neq 0_G\))
have the Rokhlin property. As we will see, it is an almost immediate consequence
of \LEM{PN-ring}. First of all, being a vector space, \(G\) has the following
property: for any two non-zero \(x\) and \(y\) in \(G\) there exists
an automorphism \(\phi\dd G \to G\) such that \(\phi(x) = y\). Then \(\phi \circ
\mu_x = \mu_y\) and thus our assertion holds iff at least one of the measures
\(\mu_E\) has the Rokhlin property. But this is a consequence of the cited lemma
as \(G\) can be endowed with a unital ring structure. Indeed, if \(G\) is
finite-dimensional (as a vector space over \(\KKK\)), it is isomorphic to
\(\KKK^d\) for some finite \(d\) and can be endowed with a multiplication
defined coordinatewise; and if \(G\) is infinite-dimensional, it is isomorphic
(as a vector space over \(\KKK\)) to the ring \(\KKK[X]\) of all polynomials.
\end{proof}

\begin{thm}{PN-Q-vector}
Let \(G\) be a countable vector space over \(\QQQ\). Then each \(G\)-filling
measure has the hereditary Rokhlin property.
\end{thm}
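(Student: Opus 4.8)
The plan is to reduce the hereditary statement to a pointwise one and then treat the only genuinely new case, the top-value $0_G$, by a direct amalgamation argument based on \THM{PN-dense-conj}. First I would record that for every non-zero $j\in\bbB$ the restriction $\mu\restriction{j}$ is again a $G$-filling measure with $(\mu\restriction{j})(j)=\mu(j)$: if $a$ is a proper non-zero element of $\bbB\restriction{j}$, then $a$ is proper and non-zero in $\bbB$, so $G$-filling of $\mu$ supplies a non-zero $b\leq a$ with $b\neq a$ and $\mu(b)=g$, and $b\in\bbB\restriction{j}$. Hence $\mu\restriction{j}$ is a $G$-filling $\mu(j)$-measure, and it suffices to show that a $G$-filling $E$-measure has the Rokhlin property for every $E\in G$. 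When $E\neq 0_G$ this is exactly \THM{PN-0-filling}\,(B) (with $\KKK=\QQQ$). The whole difficulty is therefore concentrated in the case $E=0_G$, where \LEM{PN-ring} is unavailable because $\mu(\unit)=0_G$ is not invertible in any unital ring; this is the step I expect to be the main obstacle.

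For $E=0_G$ I would invoke \THM{PN-dense-conj}: given two \compatible{\mu} matrices $\Gamma_1=[a_{jk}]_{j,k\in I}$ and $\Gamma_2=[b_{pq}]_{p,q\in I'}$, I must produce a common \compatible{\mu} matrix into which both admit morphisms. For a $G$-filling measure a matrix is \compatible{\mu} precisely when each of its rows and columns contains a non-empty entry, the corresponding row and column sums agree, and the sum of all non-empty entries equals $E$ (this uses \LEM{PN-when-Gamma(f)} and \EXM{PN-delta-k}, since $\Delta_p(\mu)$ is the set of all $p$-tuples summing to $E$). I would take for the amalgam the index set $T=I\times I'$ with the two coordinate projections as candidate morphisms, declare the entry at $((j,p),(k,q))$ empty unless both $a_{jk}$ and $b_{pq}$ are non-empty, and search for group values $c_{(j,p),(k,q)}$ on the remaining ``product support'' $W$. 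The demands that the projections be morphisms and that the amalgam be \compatible{\mu} then become the finite $\QQQ$-linear system formed by the block-sum equations $\sum_{(p,q)}c_{(j,p),(k,q)}=a_{jk}$ and $\sum_{(j,k)}c_{(j,p),(k,q)}=b_{pq}$ together with the balance equations (row sum $=$ column sum) at every index of $T$; the total sum then automatically equals $0_G=E$, and every row and column of the amalgam meets $W$ because $\Gamma_1,\Gamma_2$ are \compatible{\mu}.

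The crux is solvability of this system over $G$. Since $G$ is a $\QQQ$-vector space, a finite $\QQQ$-linear system is solvable iff every $\QQQ$-linear combination of its equations whose left-hand side vanishes identically also annihilates the right-hand sides. Such a combination is a family of rationals $\lambda_{jk},\mu_{pq},\nu_{jp}$ with $\lambda_{jk}+\mu_{pq}+\nu_{jp}-\nu_{kq}=0$ on $W$, and I must verify that then $\sum_{jk}\lambda_{jk}a_{jk}+\sum_{pq}\mu_{pq}b_{pq}=0_G$. Fixing any $(p_0,q_0)$ with $b_{p_0q_0}\neq\varempty$ and substituting $\lambda_{jk}=\nu_{kq_0}-\nu_{jp_0}-\mu_{p_0q_0}$, the row/column-sum identities of $\Gamma_1$ together with $\sum_{j,k}a_{jk}=E=0_G$ collapse $\sum_{jk}\lambda_{jk}a_{jk}$ to $\Phi(q_0)-\Phi(p_0)$, where $\Phi(r)=\sum_j\nu_{jr}\,d_j$ and $d_j$ is the common $j$-th row/column sum of $\Gamma_1$.

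The key new observation finishing the argument is the following. Since the left-hand side above is independent of the chosen reference, $\Phi(q)-\Phi(p)$ takes one and the same value on every non-empty position $(p,q)$ of $\Gamma_2$. Now the support of a \compatible{\mu} matrix, read as a digraph on its index set, has all out- and in-degrees positive and therefore contains a directed cycle; running $\Phi$ once around that cycle shows the common value to be a torsion element of $G$, hence $0_G$ as $G$ is torsion-free. Thus $\sum_{jk}\lambda_{jk}a_{jk}=0_G$, and the symmetric computation (with $\Gamma_1$ in place of $\Gamma_2$) gives $\sum_{pq}\mu_{pq}b_{pq}=0_G$, so the required annihilation holds. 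The system is then solvable, the amalgam exists, and \THM{PN-dense-conj} yields the Rokhlin property in the $0_G$-case, completing the proof. I expect the torsion/directed-cycle step to be the decisive point, precisely because it replaces the multiplicative ``$aE^{-1}b$'' amalgam of \LEM{PN-ring}, which breaks down when $E=0_G$.
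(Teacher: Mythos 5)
Your proposal is correct, and its overall architecture matches the paper's: the same reduction (restrictions of a \(G\)-filling measure are \(G\)-filling, \THM{PN-0-filling} disposes of \(E \neq 0_G\), so everything hinges on the \(0_G\)-filling measure), the same amalgam over the product index set \(T = I \times I'\) with the two coordinate projections as the candidate \morphism{}s, and the same eventual reliance on directed cycles in the supports of the two \compatible{\lambda} matrices. Where you genuinely diverge is in how the entries of the amalgam are produced. The paper writes them down explicitly: it extracts one directed cycle from the support of each matrix, forms the associated \(0\)--\(1\) indicator matrices \(\delta_{jk}\), \(\epsi_{pq}\), and sets \(c_{st} = \frac1N\epsi_{pq}a_{jk}+\frac1M\delta_{jk}b_{pq}\), after which the block-sum and balance identities are a two-line computation (the cross terms vanish precisely because \(\sum a_{jk} = \sum b_{pq} = 0_G\)). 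You instead treat the block-sum and balance conditions as a finite \(\QQQ\)-linear system over the vector space \(G\) and verify the Fredholm-type orthogonality criterion, using the cycle to show the relevant obstruction is torsion and hence zero. Both arguments use divisibility and torsion-freeness in essentially the same places. Your route is more conceptual and would adapt to other situations where one can only certify that the obstructions vanish, at the cost of importing the solvability criterion for rational linear systems valued in a \(\QQQ\)-vector space (standard, but worth a sentence: decompose \(G\) as a direct sum of copies of \(\QQQ\) and apply the rank condition coordinatewise); the paper's formula is more elementary and self-verifying. One small point to make explicit in a write-up: the block-sum equation for a pair \((j,k)\) exists only when \(a_{jk}\) is non-empty, and the corresponding block of \(T\) meets the product support because every row and column of a \compatible{\lambda} matrix contains a non-empty entry --- you note this, but it is the place where \LEM{PN-when-Gamma(f)} is silently doing work.
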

\begin{proof}
For each \(E \in G\) denote by \(\mu_E\) a \(G\)-filling \(E\)-measure on
\(\bbB\). It follows from \THM{PN-0-filling} that \(\mu_E\) has the Rokhlin
property for each \(E \neq 0\). And since for each non-zero \(j \in \bbB\),
\(\mu_E\restriction{j}\) is a \(G\)-filling \(E'\)-measure (on
\(\bbB\restriction{j}\)) with \(E' \df \mu_E(j)\), we infer that the conclusion
of the theorem is equivalent to the statement that \(\lambda \df \mu_{0_G}\) has
the Rokhlin property. The proof that \(\lambda\) has this property is given
below.\par
Fix two \compatible{\lambda} matrices \(A = [a_{jk}]_{j,k=1}^m\) and \(B =
[b_{pq}]_{p,q=1}^n\). Similarly as argued in the proof of \LEM{PN-ring}, it is
sufficient to show that both \(A\) and \(B\) admit \morphism{}s into a common
\compatible{\lambda} matrix. To this end, observe that there is a finite system
\(\nu_0,\ldots,\nu_M\) of indices from \(\{1,\ldots,m\}\) such that \(M > 0\),
\(\nu_1,\ldots,\nu_M\) are all different, \(\nu_0 = \nu_M\), and
\(a_{\nu_{k-1}\nu_k} \neq \varempty\) for any \(k=1,\ldots,M\). (Indeed, start
from arbitrary \(s_0,s_1\) such that \(a_{s_0s_1}\) is non-empty and then
inductively add new indices \(s_k\) so that \(a_{s_{k-1}s_k}\) is non-empty,
until \(s_k\) appears among \(s_0,\ldots,s_{k-1}\), say \(s_k = s_j\) with \(j <
k\). Then set \(\nu_0,\ldots,\nu_M\) as \(s_j,\ldots,s_k\). Note: it may happen
that \(M = 1\).) Similarly, there are indices \(\eta_0,\ldots,\eta_N\) among
\(\{1,\ldots,n\}\) such that \(N > 0\), \(\eta_1,\ldots,\eta_N\) are all
different, \(\eta_0 = \eta_N\), and \(b_{\eta_{q-1}\eta_q} \neq \varempty\) for
\(q = 1,\ldots,N\). To simplify further statements, for a tuple \((z_0,\ldots,
z_K)\) of indices and two indices \(u\) and \(v\) we will write:
\begin{itemize}
\item \(u \in (z_0,\ldots,z_K)\) to express that \(u = z_j\) for some \(j=0,
 \ldots,K\);
\item \((u,v) \subset (z_0,\ldots,z_K)\) to express that \(u = z_{j-1}\) and \(v
 = z_j\) for some \(j \in \{1,\ldots,K\}\).
\end{itemize}
Now we define \([\delta_{jk}]_{j,k=1}^m\) and \([\epsi_{pq}]_{p,q=1}^n\) as
folows:
\[\delta_{jk} \df \begin{cases}1 & \UP{if } (j,k) \subset (\nu_0,\ldots,\nu_M)\\
0 & \UP{otherwise}\end{cases}, \qquad
\epsi_{pq} \df \begin{cases}1 & \UP{if } (p,q) \subset (\eta_0,\ldots,\eta_N)\\
0 & \UP{otherwise}\end{cases}.\]
Now we set \(T \df \{1,\ldots,m\} \times \{1,\ldots,n\}\) and
\(C = [c_{st}]_{s,t \in T}\) where for \(s = (j,p)\) and \(t = (k,q)\):
\[c_{st} \df \begin{cases}\varempty & \UP{if } a_{jk} = \varempty \UP{ or }
b_{pq} = \varempty\\\frac1N\epsi_{pq}a_{jk}+\frac1M\delta_{jk}b_{pq} &
\UP{otherwise}\end{cases}.\]
Finally, as in the proof of \LEM{PN-ring}, we define \(f\dd T \to \{1,\ldots,
m\}\) and \(g\dd T \to \{1,\ldots,n\}\) by \(f(j,p) = j\) and \(g(j,p) = p\). It
remains to check that \(C\) is \compatible{\lambda} and that \(f\) and \(g\) are
\morphism{}s from \(C\) into \(A\) and \(B\), respectively. To this end, observe
that \(\sum_{j,k} \delta_{jk} = M\) and for each \(j \in \{1,\ldots,m\}\):
\[\sum_{k=1}^m \delta_{jk} = \begin{cases}1 & \UP{if } j \in (\nu_0,\ldots,
\nu_M)\\0 & \UP{otherwise}\end{cases} = \sum_{k=1}^m \delta_{kj},\]
and, similarly, \(\sum_{p,q} \epsi_{pq} = N\) and \(\sum_{q=1}^n \epsi_{pq} =
\sum_{q=1}^n \epsi_{qp}\) for \(p = 1,\ldots,n\). Now all the properties
postulated by us easily follow, and we are done.
\end{proof}

It is not a coincidence that among `group-filling' measures only those whose
underlying groups are vector spaces over \(\QQQ\) have the hereditary Rokhlin
property. Actually, a more general phenomenon occurs, as shown by

\begin{pro}{PN-hered-1/N}
Let \(\mu\dd \bbB \to S\) be a semigroup-valued measure that is supersolid and
has the hereditary Rokhlin property. Then for any integer \(N > 1\) there is
a unique measure \(\mu_N\dd \bbB \to S\) with the following property: each
non-zero \(j \in \bbB\) admits an equi-measured \(N\)-partition \UP(w.r.t.
\(\mu\)\UP) with common measure \(\mu_N(j)\). Moreover:
\begin{itemize}
\item \(\mu_N\) is supersolid as well and \(\OPN{Aut}(\mu_N) = \OPN{Aut}(\mu)\);
\item \(n \cdot \mu_{nm}(a) = \mu_m(a)\) for all integers \(n, m > 0\) \UP(where
 \(\mu_1 \df \mu\)\UP) and any \(a \in \bbB\);
\item if \(S = G\) is a group, then the subgroup of \(G\) generated by
 \(\mu(\bbB)\) is divisible.
\end{itemize}
\end{pro}
\begin{proof}
Fix \(N > 1\). We claim that each non-zero \(j \in \bbB\) admits
an equi-measured \(N\)-partition (w.r.t. \(\mu\)). To show this property, it is
sufficient to prove that there exists a partition \(\Pp = \{b_1,\ldots,b_p\}\)
of \(\bbB\) such that each of \(b_k\) admits an equi-measured \(N\)-partition.
Indeed, if \(\Pp\) exists, then \(\unit\) admits an equi-measured
\(N\)-partition (cf. the argument used in the proof of \COR{PN-equal-part-uniq})
and then also each non-zero element of \(\bbB\) does so, thanks to \LEM{PN-1/N}.
Thus, below we show the existence of \(\Pp\).\par
Firstly, we use induction to show that for each \(n \geq 2\) there is non-zero
\(a_n \in \bbB\) such that \(a_n\) admits an equi-measured \(n\)-partition. To
this end, assume \(c_1,\ldots,c_{n-1}\) are pairwise disjoint non-zero elements
of \(\bbB\) such that \(\mu(c_k) = \mu(c_1)\) for all \(k < n\) (note that when
\(n = 2\), such a collection obviously exists). Observe that also
\(\mu(\BooD{\unit}{c_k}) = \mu(\BooD{\unit}{c_1})\) for each \(k < n\), and,
consequently, there are \(\phi_2,\ldots,\phi_{n-1} \in \OPN{Aut}(\mu)\) such
that \(\phi_k(c_1) = c_k\ (1 < k < n)\). Express \(c_1\) as \(\BooV{p}{q}\)
where \(p\) and \(q\) are two disjoint proper elements of \(\bbB\). Since \(H
\df \OPN{Aut}(\mu)\) act transitively on \(\bbB\), we infer that there exist
a proper element \(r \in \bbB\) and an automorphism \(\phi_1 \in H\) such that
\(\BooW{r}{p} = r\) and \(\BooW{\phi_1(r)}{q} = \phi_1(r)\). Then the elements
\(r,\phi_1(r),\ldots,\phi_{n-1}(r)\) are pairwise disjoint and have equal
measure, and hence they form an equi-measured \(n\)-partition of a certain
non-zero element \(a_n \in \bbB\).\par
Now set \(d \df a_N\) and use the property that \(\mu\) is supersolid to
conclude that there are a finite number of automorphisms \(h_1,\ldots,h_q \in
H\) such that \(\unit = \bigvee_{k=1}^q h_k(d)\). Finally, let \(\Pp\) be any
partition of \(\bbB\) whose each member is contained in some \(h_k(d)\). It then
follows from \LEM{PN-1/N} that each member of \(\Pp\) admits an equi-measured
\(N\)-partition, as all \(h_k(d)\) do so (as \(d\) does so).\par
So, we have shown that each non-zero element \(j \in \bbB\) admits
an equi-measured \(N\)-partition. As \(\mu\restriction{j}\) has the Rokhlin
property, we infer from \LEM{PN-1/N-uniq} that the common measure of each such
a parition of \(j\) is independent of this partition. Thus, we denote this
unique common measure by \(\mu_N(j)\). Additionally, we set \(\mu_N(\zero) \df
0_S\). In this way we obtain a function \(\mu_N\dd \bbB \to S\). It follows from
the uniqueness of common values of equi-measured partitions that \(\mu_N\) is
a measure. Moreover,
\begin{equation}\label{eqn:PN-aux70}
N \cdot \mu_N = \mu
\end{equation}
(as each \(j \in \bbB\) can be divided into \(N\) disjoint elements whose
measure \(\mu\) is equal to \(\mu_N(j)\)). In particular, \(\OPN{Aut}(\mu_N)
\subset H\) and each partial \(\mu_N\)-isomorphism is a partial
\(\mu\)-isomorphism. Thus, to conclude that \(\mu_N\) is supersolid and \(H =
\OPN{Aut}(\mu_N)\), it is enough to show that each \(\phi \in H\) belongs to
the latter group. But if \(\phi \in H\) and \(a \in \bbB\) is proper, then
both \(a\) and \(\phi(a)\) admit equi-measured \(N\)-partitions with common
measure \(\mu_N(a)\), which implies that \(\mu_N(a) = \mu_N(\phi(a))\).\par
A verification that \(n \cdot \mu_{nm}(a) = \mu_m(a)\) is immediate: \(a\)
admits an equi-measured \(nm\)-partition with common measure \(\mu_{nm}(a)\) and
therefore it admits an equi-mea\-sured \(m\)-partition with common measure
\(n \cdot \mu_{nm}(a)\). Finally, when \(S = G\) is a group, the divisibility of
the subgroup of \(G\) generated by \(\mu(\bbB)\) follows from
\eqref{eqn:PN-aux70}, as \(\mu_N(\bbB) \subset \mu(\bbB)\) for each \(N\).
\end{proof}

As an immediate consequence, we obtain

\begin{cor}{PN-non-hered}
If \(G\) is a reduced group, then the measure constantly equal to \(0_G\) is
the only \(G\)-valued supersolid measure with the hereditary Rokhlin property.
\end{cor}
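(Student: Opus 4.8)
The plan is to obtain the whole statement directly from the last clause of \PRO{PN-hered-1/N}, which is the only nontrivial input. First I would recall the relevant definition: a group $G$ is \emph{reduced} exactly when it contains no nonzero divisible subgroup, i.e.\ its maximal divisible subgroup is $\{0_G\}$.

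The main step is the uniqueness direction. Suppose $\mu\dd \bbB \to G$ is any $G$-valued supersolid measure with the hereditary Rokhlin property. Then \PRO{PN-hered-1/N} applies with $S = G$, and its final bullet tells us that the subgroup $\langle \mu(\bbB)\rangle$ of $G$ generated by the values of $\mu$ is divisible. Since $G$ is reduced this forces $\langle \mu(\bbB)\rangle = \{0_G\}$, and hence $\mu(a) = 0_G$ for every $a \in \bbB$; that is, $\mu$ is the constant measure $0_G$.

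To see that the uniqueness claim is not vacuous, I would check that $\mu \equiv 0_G$ does itself lie in the class under discussion. For this measure the defining equality $\mu(h(a)) = \mu(a)$ holds automatically, so $\OPN{Aut}(\mu) = \OPN{Aut}(\bbB)$; this group acts minimally on $\bbB$ (any proper $a$ can be moved by finitely many automorphisms so that the images cover $\unit$) and, being the automorphism group of the countable atomless Boolean algebra, it carries a dense conjugacy class. The same observations apply verbatim to every restriction $\mu\restriction{j} \equiv 0_G$, so $\mu$ is supersolid and has the hereditary Rokhlin property.

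There is essentially no obstacle beyond invoking \PRO{PN-hered-1/N}: the only arithmetic fact needed is the elementary one that a reduced group has trivial divisible part, and the only topological fact is that $\OPN{Aut}(\bbB)$ acts minimally and admits a dense conjugacy class, both of which are standard (the latter may alternatively be read off from \THM{PN-dense-conj}).
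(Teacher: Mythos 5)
Your proof is correct and matches the paper's intended argument: the corollary is stated there as an immediate consequence of \PRO{PN-hered-1/N}, exactly as you use it, with the divisibility of the subgroup generated by \(\mu(\bbB)\) clashing with reducedness of \(G\). Your additional verification that the zero measure itself is supersolid with the hereditary Rokhlin property is a harmless (and correct) extra check that the class is nonempty.
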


\begin{cor}{PN-hered-univ}
If \(\mu\) is a group-valued supersolid measure with the hereditary Rokhlin
property and \(\lambda\dd \bbB \to G\) is a universal \(H\)-invariant
group-valued measure for \(H = \OPN{Aut}(\mu)\), then \(G\) is a vector space
over \(\QQQ\).
\end{cor}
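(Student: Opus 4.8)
The plan is to transfer the relevant structure from $\mu$ to the universal measure $\lambda$ and then exploit universality to see that multiplication by each integer $N$ is a bijection of $G$. First I would record what $\lambda$ inherits. Since $\mu$ is supersolid it is homogeneous and $H=\OPN{Aut}(\mu)$ acts minimally on $\bbB$; hence \PRO{PN-univ-homo} applies and shows that $\lambda$ is homogeneous with $\OPN{Aut}(\lambda)=H$, so $\lambda$ is supersolid as well (it is homogeneous, and its automorphism group, being $H$, acts minimally). The universal property yields a homomorphism $\phi$ with $\mu=\phi\circ\lambda$, and by \REM{PN-univ-homo} this $\phi$ is one-to-one on $\lambda(\bbB\setminus\{\zero,\unit\})$; equivalently, because $\mu$ is homogeneous and $\OPN{Aut}(\mu)=\OPN{Aut}(\lambda)$, for any two proper $x,y\in\bbB$ the equality $\mu(x)=\mu(y)$ forces $\lambda(x)=\lambda(y)$ (choose $h\in H$ with $h(x)=y$; it preserves $\lambda$).

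The main work is to show that $\lambda$ has the hereditary Rokhlin property, for which I would prove $\OPN{Aut}(\lambda\restriction{j})=\OPN{Aut}(\mu\restriction{j})$ for every non-zero $j\in\bbB$. The inclusion $\OPN{Aut}(\lambda\restriction{j})\subset\OPN{Aut}(\mu\restriction{j})$ is immediate from $\mu\restriction{j}=\phi\circ(\lambda\restriction{j})$. For the reverse inclusion, note that every element of $\bbB\restriction{j}$ other than $\zero$ and $j$ is proper in $\bbB$; so if $\psi\in\OPN{Aut}(\mu\restriction{j})$ and $a\in\bbB\restriction{j}$ is such an element, then $\mu(a)=\mu(\psi(a))$ with $\psi(a)$ again proper, whence $\lambda(a)=\lambda(\psi(a))$ by the implication recorded above; as $\psi$ fixes $\zero$ and $j$, this gives $\psi\in\OPN{Aut}(\lambda\restriction{j})$. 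Consequently $\lambda\restriction{j}$ has a dense conjugacy class iff $\mu\restriction{j}$ does, and since $\mu$ has the hereditary Rokhlin property so does $\lambda$. This transfer is where the argument could go wrong, and I expect it to be the only genuinely technical point.

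Finally I would run the divisibility and torsion analysis through universality. Applying \PRO{PN-hered-1/N} to the supersolid measure $\lambda$ (which now has the hereditary Rokhlin property) produces, for each integer $N>1$, a supersolid measure $\mu_N\dd\bbB\to G$ with $\OPN{Aut}(\mu_N)=\OPN{Aut}(\lambda)=H$ and $N\cdot\mu_N=\lambda$. In particular $\mu_N$ is $H$-invariant, so by the universal property of $\lambda$ there is a unique homomorphism $u_N\dd G\to G$ with $\mu_N=u_N\circ\lambda$. The homomorphism $x\mapsto N\,u_N(x)$ then satisfies $(N u_N)\circ\lambda=N\cdot\mu_N=\lambda$, so by the uniqueness clause of universality (the identity being the representing homomorphism of $\lambda$ itself) we obtain $N\,u_N=\OPN{id}_G$. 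Since $u_N$ is a homomorphism, this says precisely that multiplication by $N$ on $G$ is a bijection with inverse $u_N$: it is surjective (so $G$ is divisible) and injective (so $G$ is $N$-torsion-free). As $N>1$ was arbitrary, $G$ is torsion-free and divisible, hence a vector space over $\QQQ$, as claimed. Note that I never need to invoke that $G$ is generated by $\lambda(\bbB)$, since bijectivity of multiplication-by-$N$ is proved on all of $G$.
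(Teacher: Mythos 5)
Your proof is correct, and its second half takes a genuinely different (and cleaner) route than the paper's. The first half coincides with the paper's: both pass through \PRO{PN-univ-homo} to get that \(\lambda\) is homogeneous with \(\OPN{Aut}(\lambda)=H\), hence supersolid with the hereditary Rokhlin property, and both then invoke \PRO{PN-hered-1/N}. (The paper merely asserts the transfer of the \emph{hereditary} Rokhlin property to \(\lambda\); your explicit verification that \(\OPN{Aut}(\lambda\restriction{j})=\OPN{Aut}(\mu\restriction{j})\) via the injectivity of \(\phi\) on \(\lambda(\bbB\setminus\{\zero,\unit\})\) from \REM{PN-univ-homo} fills in exactly the point the paper leaves implicit, and your parenthetical homogeneity argument is also fine here since in the group-valued setting \(\mu(x)=\mu(y)\) already forces \(\mu(\BooD{\unit}{x})=\mu(\BooD{\unit}{y})\).) Where you diverge is the endgame. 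The paper assembles the diagonal measure \(\xi(a)=(\lambda_n(a))_{n=1}^{\infty}\) into \(G^{\omega}\), uses universality to produce an isomorphism of \(G\) onto the subgroup \(Q\) generated by \(\xi(\bbB)\), and then proves \(Q\) is torsion-free by a coordinatewise computation with the identities \(p\cdot\lambda_{pn}=\lambda_n\); divisibility is quoted separately from \PRO{PN-hered-1/N}. You instead represent each \(\mu_N\) by a homomorphism \(u_N\dd G\to G\) and exploit the \emph{uniqueness} clause of universality applied to \(\lambda\) itself to conclude \(N\,u_N=\OPN{id}_G\), whence \(u_N\circ(N\cdot)=(N\cdot)\circ u_N=\OPN{id}_G\) because \(u_N\) is additive; this delivers divisibility and torsion-freeness simultaneously, avoids the auxiliary measure into \(G^{\omega}\) entirely, and, as you note, never needs the fact that \(G\) is generated by \(\lambda(\bbB)\). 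The paper's construction has the mild advantage of being self-contained at the level of elements (it would survive a weakening of the uniqueness clause), but your argument is shorter and isolates the real mechanism: universality forces multiplication by \(N\) to be invertible on \(G\).
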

\begin{proof}
We know form \PRO{PN-univ-homo} that \(\lambda\) is homogeneous and
\(\OPN{Aut}(\lambda) = H\) (recall also that \(G\) is generated by
\(\lambda(\bbB)\). So, \(\lambda\) is supersolid and has the hereditary Rokhlin
property. So, we infer from \PRO{PN-hered-1/N} that there are measures
\(\lambda_n\dd \bbB \to G\ (n > 1)\) with all the properties specified in that
result. Additionally, we set \(\lambda_1 \df \lambda\). Let \(G^{\omega}\) stand
for the (full) countable Cartesian power of \(G\). Define \(\xi\dd \bbB \to
G^{\omega}\) as \(\xi(a) \df (\lambda_n(a))_{n=1}^{\infty}\) and denote by \(Q\)
the subgroup of \(G^{\omega}\) generated by \(\xi(\bbB)\). Note that \(\xi\) is
an \(H\)-invariant measure. So, it follows from the universal property of
\(\lambda\) that \(\xi = \phi \circ \lambda\) for some homomorphism \(\phi\dd
G \to Q\). On the other hand, \(\lambda = \pi_1 \circ \xi\) where \(\pi_1\dd Q
\to G\) is the projection onto the first coordinate. So, \(\lambda = (\pi_1
\circ \phi) \circ \lambda\) and hence \(\pi_1 \circ \phi\) is the identity on
\(G\), and \(\phi\) is one-to-one. As \(\phi\) is also surjective (because
\(\phi(G)\) contains \(\xi(\bbB)\)), it is sufficient to show that the group
\(Q\) is a vector space over \(\QQQ\). It follows from \PRO{PN-hered-1/N} that
\(G\) is divisible and, consequently, \(Q\) is divisible as well. So, we only
need to show that \(Q\) is torsion-free, which we do below.\par
Assume \(q \in Q\) and an integer \(p > 0\) are such that \(p \cdot q = 0_G\).
Then there are non-zero elements \(a_1,\ldots,a_r \in \bbB\) and integers
\(s_1,\ldots,s_r\) such that \(q = \sum_{k=1}^r s_k \xi(a_k)\). In particular,
\(\sum_{k=1}^r s_k p \lambda_n(a_k) = 0_G\) for all \(n > 0\). Substituting
\(pn\) in place of \(n\), we obtain \(0_G = \sum_{k=1}^r s_k p \lambda_{pn}(a_k)
= \sum_{k=1}^r s_k \lambda_n(a_k)\), and therefore \(q = 0_G\), and we are done.
\end{proof}

\begin{exm}{PN-hered-non-torsion}
The assumption in the last result that \(\lambda\) is universal is essential.
More specifically, if \(\mu\dd \bbB \to G\) is a group-valued supersolid measure
with the hereditary Rokhlin property and \(G\) is generated by \(\mu(\bbB)\),
then we cannot---in general---conclude that \(G\) is torsion-free, although we
know from \PRO{PN-hered-1/N} that in this case \(G\) is divisible. To convince
oneself of that, consider a good measure \(\lambda\) such that \(\lambda(\bbB) =
[0,1] \cap \QQQ\) and define \(\mu\dd \bbB \to \RRR/\ZZZ\) as \(\pi \circ
\lambda\) where \(\pi\dd \RRR \to \RRR/\ZZZ\) is the quotient epimorphism. Since
\(\pi\) is one-to-one on \(\lambda(\bbB) \setminus \{0\}\), it follows that
\(\mu\) is supersolid and \(\OPN{Aut}(\mu) = \OPN{Aut}(\lambda)\). Moreover,
according to Akin's result \cite{Akin05}, \(\lambda\) has the hereditary Rokhlin
property. Consequently, so has \(\mu\), although the group \(G \df \RRR/\ZZZ\)
is torsion. In this example a strange phenomenon appears: on the one hand, for
\textbf{each} element \(g\) of \(G\) and any \(n > 1\) the equation \(n \cdot x
= g\) has more than one solution; on the other hand, \(\mu(\bbB \setminus
\{\zero\})\) coincides with the whole group \(G\) and the measures \(\mu_N\),
introduced in \PRO{PN-hered-1/N}, serve as `selectors' of such solutions.
\end{exm}

\section{Co-meager conjugacy classes for good-like measures}

In this section we generalise Akin's result \cite{Akin05} (on co-meager
conjugacy class of the automorphism group of a good measure whose clopen values
set is \(\QQQ\)-like) and some of results from \cite{IM16} and \cite{DKMN1}
concerning co-meager conjugacy classes in automorphism groups of some good
measures. Here we will develop a method of cyclic matrices introduced in
the last cited article. We keep the notation from the previous section.

\begin{dfn}{PN-cycle}
We call a \compatible{\lambda} matrix \emph{cyclic} if each of its rows as
well as each of its columns contains a unique non-empty entry. If \(\Gamma =
[\gamma_{pq}]_{p,q \in I}\) is such a matrix, then there exists a bijection
\(\sigma\dd I \to I\) such that for any \(p,q \in I\), \(\gamma_{pq} \neq
\varempty\) iff \(q = \sigma(p)\). If this \(\sigma\) is a cycle (that is, if
\(\sigma\) acts transitively on \(I\)), we call \(\Gamma\) \emph{singly
cyclic}. We call the above permutation \(\sigma\) \emph{induced} by
\(\Gamma\).\par
For simplicity, we will say that \(\lambda\) has \emph{cyclic property} if
every \compatible{\lambda} matrix admits a morphism into a certain cyclic
\compatible{\lambda} matrix.
\end{dfn}

\begin{lem}{PN-cyclic-inherit}
Cyclic property is hereditary. That is, if \(\lambda\) has this property,
then so has \(\lambda\restriction{j}\) for each non-zero \(j \in \bbB\).
\end{lem}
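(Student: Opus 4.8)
The plan is to reduce a $\nu$-compatible matrix, where $\nu \df \lambda\restriction{j}$, to a $\lambda$-compatible one by adjoining a single block for the complement $\BooD{\unit}{j}$, invoke the cyclic property of $\lambda$ on it, and then cut the resulting cyclic matrix back down. If $j = \unit$ the claim is trivial, so I assume $j$ is proper and set $a_0 \df \BooD{\unit}{j}$. By \PRO{PN-homo-inherit} the measure $\nu$ is homogeneous, so \LEM{PN-when-Gamma(f)} shows that every $\nu$-compatible matrix is equivalent to $\Gamma(\phi_0)$ for some partial $\nu$-isomorphism $\phi_0\dd \aaA_0 \to \bbB\restriction{j}$; write $\{a_1,\ldots,a_N\}$ for the partition of $\bbB\restriction{j}$ generating $\aaA_0$. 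Since both having a \morphism{} into a cyclic matrix and being cyclic are preserved under equivalence, it suffices to treat $\Gamma \df \Gamma(\phi_0)$.

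First I would extend $\phi_0$ to the partial $\lambda$-isomorphism $\phi$ on the algebra generated by $\{a_0,a_1,\ldots,a_N\}$ by declaring $\phi(a_0) = a_0$, and consider $\Gamma^+ \df \Gamma(\phi)$, indexed by $\{0,1,\ldots,N\}$. A direct computation shows $\Gamma^+$ is block-diagonal: its $(0,0)$ entry is $\lambda(a_0) \neq \varempty$, its $\{1,\ldots,N\}\times\{1,\ldots,N\}$ block equals $\Gamma$, and each mixed entry $\Gamma^+_{0v}$ or $\Gamma^+_{u0}$ (for $u,v\ge 1$) is $\varempty$, because $a_0$ is disjoint from $j$ and hence from every $a_i$ and every $\phi_0(a_i)$. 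As $\Gamma^+$ is $\lambda$-compatible, the cyclic property of $\lambda$ furnishes a cyclic $\lambda$-compatible matrix $C = [c_{pq}]_{p,q\in J'}$ and a \morphism{} $f\dd J' \to \{0,\ldots,N\}$ into $\Gamma^+$.

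The crux is to show that $C$ splits along $j$. Put $J'_0 \df f^{-1}(0)$ and $J'_+ \df f^{-1}(\{1,\ldots,N\})$, and let $\sigma$ be the permutation induced by $C$ (cf. \DEF{PN-cycle}). For $u=0$ and $v \ge 1$ the \morphism{} identity reads $\varempty = \Gamma^+_{0v} = \sum_{f(p)=0,\,f(q)=v} c_{pq}$; since a sum in $S^{\circ}$ can equal $\varempty$ only when all its summands are $\varempty$, this gives $c_{pq} = \varempty$ for all $p \in J'_0$, $q \in J'_+$, and symmetrically for $p \in J'_+$, $q \in J'_0$. As each row of the cyclic matrix carries its unique non-empty entry in column $\sigma(p)$, we are forced to have $\sigma(J'_0) \subseteq J'_0$, whence $\sigma(J'_0) = J'_0$ and $\sigma(J'_+) = J'_+$.

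Finally I would realize the principal submatrix $C_+ \df [c_{pq}]_{p,q\in J'_+}$ as the $\Gamma$ of a partial $\nu$-isomorphism. Applying \LEM{PN-is-arrow}(B) to $f$ and $\phi$ produces a partition $\{c_p : p \in J'\}$ of $\bbB$ refining $\{a_0,\ldots,a_N\}$, with $c_p \le a_{f(p)}$, together with a partial $\lambda$-isomorphism $\psi \supseteq \phi$ satisfying $\Gamma(\psi) = C$; here $\psi(j) = j$ because $\phi(j) = j$. The elements $\{c_p : p \in J'_+\}$ then partition $j$, and $\psi$ carries $\bbB\restriction{j}$ into itself, so the restriction $\psi_0$ of $\psi$ to the subalgebra of $\bbB\restriction{j}$ generated by $\{c_p : p\in J'_+\}$ is a partial $\nu$-isomorphism with $\Gamma(\psi_0) = C_+$ (as $c_p\wedge\psi(c_q) \le j$ for $p,q\in J'_+$). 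Thus $C_+$ is $\nu$-compatible; it is cyclic since $\sigma$ restricts to a bijection of $J'_+$; and $f\restriction{J'_+}$ is a \morphism{} from $\Gamma$ into $C_+$, because for $u,v \ge 1$ every pair contributing to $\Gamma_{uv} = \Gamma^+_{uv}$ already lies in $J'_+\times J'_+$. Hence every $\nu$-compatible matrix admits a \morphism{} into a cyclic $\nu$-compatible matrix, i.e.\ $\lambda\restriction{j}$ has the cyclic property. The main obstacle is the splitting step of the third paragraph, and it is precisely there that the convention that an $S^{\circ}$-sum vanishes only when all summands are empty, combined with cyclicity, does the essential work.
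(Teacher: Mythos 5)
Your proof is correct and takes essentially the same route as the paper's: augment the $\nu$-compatible matrix by a single diagonal block carrying $\lambda(\BooD{\unit}{j})$, apply the cyclic property of $\lambda$, use the morphism identity together with the fact that a sum in $S^{\circ}$ is empty only when all summands are, plus cyclicity, to split the resulting cyclic matrix along $j$, and restrict. The only cosmetic difference is at the last step, where you invoke \LEM{PN-is-arrow}(B) for an extension fixing $\BooD{\unit}{j}$ to certify $\nu$-compatibility of the restricted block, whereas the paper appeals to homogeneity of $\lambda$ directly to reposition the corresponding partition inside $j$ — the same mechanism in different packaging.
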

\begin{proof}
Assume \(\lambda\) has the cyclic property and fix a proper element \(j \in
\bbB\) as well as a \compatible{\mu} matrix \(A = [a_{st}]_{s,t \in I}\) where
\(\mu = \lambda\restriction{j}\). In particular, the non-empty entries of \(A\)
correspond to the values of \(\mu\) at all the elements of certain partition of
\(\bbB_0 \df \bbB\restriction{j}\). Let \(\omega \notin I\), \(J \df I \cup
\{\omega\}\) and \(B = [b_{st}]_{s,t \in J}\) be given by \(b_{st} \df a_{st}\)
for \(s,t \in I\), \(b_{\omega\omega} \df \lambda(\BooD{\unit}{j})\) and
\(b_{st} = \varempty\) otherwise. It is clear that all non-empty entries of
\(B\) correspond to the values of \(\lambda\) at all the elements of certain
partition of \(\bbB\) and that the latter condition of item (ii) of
\LEM{PN-when-Gamma(f)} is fulfilled. Therefore \(B\) is \compatible{\lambda}.
So, we infer that there are a cyclic \compatible{\lambda} matrix \(C =
[c_{pq}]_{p,q \in Q}\) and a morphism \(f\dd Q \to J\) from \(B\) into \(C\).
Set \(P \df f^{-1}(I)\) and note that if \(p \in P\) and \(q \in Q \setminus P\)
(or conversely), then \(b_{f(p)f(q)} = \varempty\) (because \(f(q) = \omega\)).
This means that \(C\) is a ``direct sum'' of two matrices: \(C' = [c_{pq}]_{p,q
\in P}\) and \(C'' \df [c_{pq}]_{p,q \in Q \setminus P}\). Since
\(\lambda(\BooD{\unit}{j}) = b_{\omega\omega} = \sum_{p,q \notin P} c_{pq}\) and
\(\sum_{p,q \in P} c_{pq} = \sum_{s,t \in I} a_{st} = \mu(j)\), it follows from
the homogeneity of \(\lambda\) that all non-empty entries of \(C'\) correspond
to the values of \(\mu\) at all the elements of some partition of \(\bbB_0\). It
is now easily seen that \(C'\) is a cyclic \compatible{\mu} matrix and
\(f\restriction{P}\) is a morphism of \(A\) into \(C'\).
\end{proof}

The material of this section applies mainly to good-like measures (introduced in
\DEF{PN-good-like}), as shown by the following result, which is a natural
generalisation of \cite[Corollary~4.2]{DKMN1}.

\begin{pro}{PN-cyclic-good-like}
Let \(\Omega\) be a positive element in a countable group \(G\) densely ordered
by `\(<\)', and let \(\mu\) be a \(G\)-valued good-like measure with
\(\mu(\unit) = \Omega\). Then \(\mu\) has the cyclic property.
\end{pro}
\begin{proof}
We begin with an observation that \(0_G \notin \mu(\bbB \setminus \{\zero\})\)
and hence \compatible{\mu} matrices do not have entries with value \(0_G\).
Thus, to simplify the proof, let us agree that (here, in this proof) empty
entries of such matrices will now be represented as those with value \(0_G\).
So, all \compatible{\mu} matrices are \(G\)-valued; and a \(G\)-valued matrix
\(B = [b_{st}]_{s,t \in T}\) is \compatible{\mu} iff all the following
conditions are met:
\begin{itemize}
\item \(0_G \leq b_{st} \leq \Omega\) for all \(s, t \in T\);
\item \(\sum_{s,t} b_{st} = \Omega\);
\item \(\sum_{t \in T} b_{st} = \sum_{t \in T} b_{ts} \neq 0_G\) for each
 \(s \in T\).
\end{itemize}
Now consider any \compatible{\mu} matrix \(A = [a_{st}]_{s,t \in T}\). We may
and do assume that \(|T| > 1\). In that case all entries of \(A\) are less than
\(\Omega\). We adapt the notation introduced in the proof of \THM{PN-Q-vector}.
Namely, for a tuple \((z_0,\ldots,z_K)\) of indices and two indices \(u\) and
\(v\) we will write:
\begin{itemize}
\item \(u \in (z_0,\ldots,z_K)\) to express that \(u = z_j\) for some \(j=0,
 \ldots,K\);
\item \((u,v) \subset (z_0,\ldots,z_K)\) to express that \(u = z_{j-1}\) and \(v
 = z_j\) for some \(j \in \{1,\ldots,K\}\).
\end{itemize}
We will construct inductively a finite collection of pairs of the form
\((\Gamma,g)\) where \(\Gamma\) is a tuple \((s_0,\ldots,s_M)\ (M > 0)\) of
indices from \(T\) such that \(s_1,\ldots,s_M\) are all distinct and \(s_0 =
s_M\), and \(g\) is a positive element of \(G\). At the \(k\)-th step of
induction we will build such a pair \((\Gamma_k,g_k)\) and a matrix \(B_k =
\bigl[b^{(k)}_{st}\bigr]_{s,t \in T}\) whose all entries are non-negative
elements of \(G\), in a way such that:
\begin{enumerate}[(1\({}_k\))]
\item \(B_k\) has more zero entries than \(B_{k-1}\) (where \(B_0 \df A\);
 the inductive construction will finish when \(B_k\) consists only of zeros);
 and
\item \(\sum_{s \in T} b^{(k)}_{s,t} = \sum_{s \in T} b^{(k)}_{ts}\) for any
 \(t \in T\); and
\item for each \(s,t \in T\), \(a_{st}\) coincides with the sum of
 \(b^{(k)}_{st}\) and all \(g_j\) from the pairs \((\Gamma_j,g_j)\) whose index
 \(j \in \{1,\ldots,k\}\) satisfies \((s,t) \subset \Gamma_j\).
\end{enumerate}
So, we start from \(B_0 = A\) and an empty collection of pairs. Now assume that
\(k > 0\) and \(B_{k-1}\) is defined as well as the pairs \((\Gamma_j,g_j)\)
for all positive \(j < k\) so that conditions \((1_{k-1})\)--\((3_{k-1})\) hold.
If \(B_{k-1}\) consists only of zeros, the construction is finished. So, assume
there are \(t_0,t_1 \in T\) such that \(b^{k-1}_{t_0t_1} > 0_G\). Now we
repeat the argument used in the proof of \THM{PN-Q-vector}, making use of
\((2_{k-1})\): if \(t_1 \neq t_0\), we inductively find indices \(t_j \in T\)
such that \(b^{(k-1)}_{t_{j-1}t_j} > 0_G\), until some \(t_p\) appears among
\(t_0,\ldots,t_{p-1}\), say \(t_p = t_q\) where \(0 \leq q < p\); then we define
\(\Gamma_k\) as \((t_q,\ldots,t_p)\), and \(g_k\) as the least entry (w.r.t.
`\(<\)') among \(b^{(k-1)}_{t_{j-1}t_j}\) (where \(q < j \leq p\)). Finally, we
define the matrix \(B^{(k)} = \bigl[b^{(k)}_{st}\bigr]_{s,t \in T}\) as follows:
\[b^{(k)}_{st} \df \begin{cases}b^{(k-1)}_{st} & (s,t) \not\subset \Gamma_k\\
b^{(k-1)}_{st}-g_k & (s,t) \subset \Gamma_k\end{cases}.\]
It follows from the construction that all the conditions \((1_k)\)--\((3_k)\)
are satisfied. In particular, after a finite number of steps, say \(p\),
the matrix \(B^{(p)}\) will consist only of zeros; and then it follows from
\((3_p)\) that:
\begin{equation}\label{eqn:PN-aux71}
a_{st} = \sum \bigl\{g_k\dd\ (s,t) \subset \Gamma_k,\
k \in \{1,\ldots,p\}\bigr\} \qquad (s,t \in T)
\end{equation}
(with the convention that \(\sum \varempty = 0_G\)). Fix for a while \(q \in
\{1,\ldots,p\}\) and say \(\Gamma_q = (s_0,\ldots,s_M)\). Let \(C_q =
[c_{k\ell}]_{k,\ell \in R_q}\) be a \(G\)-valued \(M \times M\) matrix where
\(R_q = \{r_1,\ldots,r_M\}\) and \(c_{r_jr_{j+1}} = c_{r_Mr_1} = g_q\) and
\(c_{k\ell} = 0_G\) otherwise. Additionally, let \(u_q\dd R_q \to T\) be given
by \(u_q(r_j) = s_j\). Note that \(C_q\) at each row and each column has
a unique non-zero entry and its value is \(g_q\). Moreover, it follows from
the above construction that for any \(k,\ell \in R_q\):
\begin{equation}\label{eqn:PN-aux73}
c_{k\ell} > 0_G \iff c_{k\ell} = g_q \iff (u_q(k),u_q(\ell)) \subset \Gamma_q.
\end{equation}
Further, we may and do assume that the sets \(R_1,\ldots,R_p\) are pairwise
disjoint, and thus all the functions \(u_k\) extend to a common function \(u\dd
R \to T\) where \(R\) is the union of all \(R_k\). Now we define our final
(cyclic) matrix \(C\) as the diagonal block matrix with \(C_1,\ldots,C_p\) on
the diagonal. In particular, \(R\) serves as the set of indices of \(C\). We
infer from \eqref{eqn:PN-aux71} that when computing the sum \(\sum_{s,t \in S}
a_{st}\) with \(a_{st}\) replaced by the right-hand side expression of this
formula, then each of \(g_k\) appears precisely \(|R_k|\) times, which yields
that the sum of all entries of \(C\) equals \(\Omega\), and hence \(C\) is
\compatible{\mu}, thanks to the preliminary remark of this whole proof.
Moreover, it is easily seen that \(C\) is cyclic. So, it remains to show that
\(u\) is a morphism from \(A\) to \(C\). But this is an immediate consequence of
\eqref{eqn:PN-aux71} and \eqref{eqn:PN-aux73}.
\end{proof}

\begin{rem}{PN-cyclic-pospec}
The above proof, without any major change (however, one needs to skip the first
paragraph), works also for measures induced by linearly ordered solid pospecs
\((J,\prec)\) in groups \(G\) (after replacing `\(<\)' by `\(\prec\)'). To
convince oneself of that, after starting from a \compatible{\mu} matrix \(A\),
one fixes a partition of \(\bbB\) such that the measures of its members coincide
with the values of non-empty entries of \(A\). Then, when constructing pairs
\((\Gamma_k,g_k)\) (here \(g_k \in J \setminus \{\varempty,\Omega\}\)) and
a matrix \(B_k\) (with entries in \(J\)), one decomposes members of
this partition into proper parts so that their measures have appropriate values
among \(g_k\) and \(b^{(k)}_{st}\). This will guarantee that the final matrix
\(C\) will also be \compatible{\mu}. We leave the details to interested readers.
So, measures induced by linearly ordered solid pospecs have the cyclic property.
\end{rem}

\begin{exm}{PN-non-cyclic}
Below we show that if \(\lambda(a) = 0_S\) for some proper element \(a \in
\bbB\), then \(\lambda\) does not have the cyclic property. This applies,
e.g., to \(G\)-filling measures introduced in \DEF{PN-G-filling} that seem to be
most appropriate counterexamples of good measures in the abstract context of
measures with values in groups without ordering.\par
So, assume \(\lambda(a) = 0_S\) where \(a\) is proper. Express
\(\BooD{\unit}{a}\) in the form \(\BooV{b}{c}\) where \(b\) and \(c\) are two
disjoint proper elements of \(\bbB\). Define \(D = [d_{jk}]_{j,k=1}^2\) as
follows: \(d_{11} = \lambda(b)\), \(d_{22} = \lambda(c)\), \(d_{12} = 0_S\ (=
\lambda(a))\) and \(d_{21} = \varempty\). Since \(\{a,b,c\}\) is a partition of
\(\bbB\), it may easily be checked that \(D\) is \compatible{\lambda}. We claim
that \(D\) admits no morphisms into cyclic \compatible{\lambda} matrices. To
convince oneself of that, assume \(R = [r_{jk}]_{j,k=1}^n\) is cyclic and \(f\dd
\{1,\ldots,n\} \to \{1,2\}\) is a morphism from \(D\) into \(R\). Denote by
\(\sigma\) the permutation of \(\{1,\ldots,n\}\) induced by \(R\); that is,
\begin{equation}\label{eqn:PN-aux72}
r_{jk} \neq \varempty \iff k = \sigma(j).
\end{equation}
Set \(I \df f^{-1}(\{1\})\). Since \(a_{21} = \varempty\), \eqref{eqn:PN-morph}
implies that \(r_{jk} = \varempty\) whenever \(j \in J \df \{1,\ldots,n\}
\setminus I\) and \(k \in I\). So, \(\sigma(J) \subset J\), by
\eqref{eqn:PN-aux72}. But \(\sigma\) is a bijection, hence \(\sigma(J) = J\) and
\(\sigma(I) = I\) as well. On the other hand, \(d_{12} \neq \varempty\), thus,
again by \eqref{eqn:PN-morph}, \(r_{jk} \neq \varempty\) for some \(j \in I\)
and \(k \in J\). We infer from \eqref{eqn:PN-aux72} that \(k = \sigma(j)\),
which is impossible as \(\sigma(j) \in \sigma(I) = I\).
\end{exm}

Since the structure of cyclic matrices is quite simple, it is more natural to
work with their more handy `models'. Following \cite{DKMN1}, each such a matrix
can be `encoded' as a tuple of the form \(((g_1,n_1),\ldots,(g_m,n_m))\) where
\(g_1,\ldots,g_m\) are elements of the respective (semi)group (in which a given
matrix takes values) and \(n_1,\ldots,n_m\) are positive integers. Each of
the pairs \((g_j,n_j)\) corresponds to a single orbit of the permutation (of
the set of indices) induced by non-empty entries of a given cyclic matrix.
Conversely, any tuple of the above form induces a cyclic matrix in an obvious
way (each pair \((g,n)\) from the tuple produces a \(n \times n\) singly cyclic
submatrix, put on the diagonal of the entire matrix, with a unique non-empty
entry equal to \(g\)). Under such an identification, two cyclic matrices \(C\)
and \(D\) corresponding to two tuples \(((g_1,n_1),\ldots,(g_m,n_m))\) and
\(((h_1,k_1),\ldots,(h_p,k_p))\) (respectively) are equivalent precisely when
there exists a bijection \(\sigma\dd \{1,\ldots,m\} \to \{1,\ldots,p\}\) such
that \((h_{\sigma(j)},k_{\sigma(j)}) = (g_j,n_j)\) for all \(j \in \{1,\ldots,
m\}\); and \(C\) is \compatible{\mu} iff
\begin{equation}\label{eqn:PN-poly-compat}
(\underbrace{g_1,\ldots,g_1}_{n_1},\ldots,\underbrace{g_m,\ldots,g_m}_{n_m})
\in \Delta_{n_1+\ldots+n_m}(\mu).
\end{equation}
Moreover, one may check that if \(f\dd P \to Q\) is a morphism (where \(P\) and
\(Q\) are the sets of indices of, respectively, \(D\) and \(C\)) from \(C\) into
\(D\), and \(\sigma\) and \(\tau\) are permutations (of \(Q\) and \(P\)) induced
by \(C\) and \(D\), then \(f \circ \tau = \sigma \circ f\) and, consequently,
the image of each \(\tau\)-orbit \(o\) under \(f\) is a \(\sigma\)-orbit \(o'\),
and the size of \(o'\) divides the size of \(o\). This implies that \(f\)
induces a surjection \(u\dd \{1,\ldots,p\} \to \{1,\ldots,m\}\) such that:
\begin{itemize}
\item[(mor)] for each \(s \in \{1,\ldots,m\}\), \(n_s\) divides \(k_j\) for any
 \(j \in F_s \df u^{-1}(\{s\})\), and \(g_s = \sum_{j \in F_s} \frac{k_j}{n_s}
 h_j\).
\end{itemize}
Conversely, any surjection \(u\) satisfying (mor) naturally induces a morphism
from \(C\) into \(D\). (Moreover, up to permutation of \(P\) and \(Q\), each
morphism from \(C\) into \(D\) is obtained from some function \(u\) described
above.)\par
Since in our research we consider all \compatible{\mu} matrices up to
equivalence `\(\equiv\)', the above observations enable us to consider tuples of
pairs instead of cyclic matrices, as we will do from now on to the end of
the paper. For simplicity, we call each tuple \(\gamma = ((g_1,n_1),\ldots,
(g_m,n_m))\) of the form discussed above a \emph{polycycle}. It is
\emph{\compatible{\mu}} if \eqref{eqn:PN-poly-compat} is fulfilled.
A \emph{morphism} from \(\gamma\) into a polycycle \(\eta = ((h_1,k_1),\ldots,
(h_p,k_p))\) is any surjection \(u\dd \{1,\ldots,p\} \to \{1,\ldots,m\}\)
satisfying (mor). It is easily seen that the composition of two morphisms
between polycycles is a morphism as well (however, as morphisms between two
polycycles are actually functions coming in reverse directions, formally
the composition of two morphism composes corresponding two functions in
a reverse order). So, all (\compatible{\mu}) polycycles form a category.
The most important thing to remember is that if this category or some its
subcategory has amalgamation (or some of its variants), then so has
the corresponding (sub)category of cyclic matrices.\par
The following is a direct consequence of \THM{PN-dense-conj}. We leave its proof
to the reader.

\begin{lem}{PN-polycyc-dense}
If \(\lambda\) has the cyclic property, then \(\OPN{Aut}(\lambda)\) contains
a dense conjugacy class iff for any two \compatible{\lambda} polycycles
\(\gamma_1\) and \(\gamma_2\) there exists a \compatible{\lambda} polycycle
\(\gamma_0\) such that both \(\gamma_1\) and \(\gamma_2\) admit morphisms into
\(\gamma_0\).
\end{lem}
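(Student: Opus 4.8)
The plan is to read off this lemma from \THM{PN-dense-conj} by replacing arbitrary \compatible{\lambda} matrices with cyclic ones, which is legitimate precisely because of the cyclic property. Two auxiliary facts make the translation mechanical. First, under the identification discussed above a \compatible{\lambda} polycycle is the same datum (up to equivalence) as a cyclic \compatible{\lambda} matrix, and a \morphism{} of polycycles corresponds to a \morphism{} of the associated cyclic matrices. Second, the composition of two \morphism{}s of $S^{\circ}$-valued matrices is again a \morphism{}: if $f$ is a \morphism{} from $A$ into $B$ and $g$ is a \morphism{} from $B$ into $C$, then substituting the defining relation \eqref{eqn:PN-morph} for $B$ into that for $A$ shows that $f \circ g$ is a \morphism{} from $A$ into $C$ (this is the unindexed version of the remark already recorded for polycycles). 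I would state these two facts at the outset and then argue both implications.

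For the forward implication, I would assume $\OPN{Aut}(\lambda)$ has a dense conjugacy class and fix two \compatible{\lambda} polycycles $\gamma_1,\gamma_2$, viewed as cyclic \compatible{\lambda} matrices $\Gamma_1,\Gamma_2$. By \THM{PN-dense-conj} there is a \compatible{\lambda} matrix $\Gamma$ admitting \morphism{}s from both $\Gamma_1$ and $\Gamma_2$. This $\Gamma$ need not be cyclic, so I would apply the cyclic property of $\lambda$ to get a \morphism{} from $\Gamma$ into a cyclic \compatible{\lambda} matrix $\Gamma_0$; composing, both $\Gamma_1$ and $\Gamma_2$ admit \morphism{}s into $\Gamma_0$. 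As $\Gamma_0$ is cyclic it corresponds to a \compatible{\lambda} polycycle $\gamma_0$, which is the desired amalgam.

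For the converse, I would assume the polycycle amalgamation condition and take arbitrary \compatible{\lambda} matrices $\Gamma_1,\Gamma_2$. The cyclic property furnishes cyclic \compatible{\lambda} matrices $C_1,C_2$ together with \morphism{}s $\Gamma_j \to C_j$ ($j=1,2$); identifying $C_1,C_2$ with \compatible{\lambda} polycycles, the hypothesis yields a \compatible{\lambda} polycycle $C_0$ into which both $C_1$ and $C_2$ admit \morphism{}s. Composing $\Gamma_j \to C_j$ with $C_j \to C_0$ produces \morphism{}s $\Gamma_1 \to C_0$ and $\Gamma_2 \to C_0$ into a common \compatible{\lambda} matrix, so the amalgamation condition of \THM{PN-dense-conj} is satisfied and $\OPN{Aut}(\lambda)$ has a dense conjugacy class.

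The argument is pure bookkeeping and I do not expect a genuine obstacle; the only points requiring care are that \morphism{}s run opposite to the index sets (so the underlying surjections must be composed in the reverse order), and that in the forward direction the amalgam one extracts from \THM{PN-dense-conj} is made cyclic \emph{only} after an extra appeal to the cyclic property. This last step is exactly where the hypothesis that $\lambda$ has the cyclic property is used.
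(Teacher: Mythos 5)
Your argument is correct and is exactly the intended one: the paper leaves this proof to the reader, stating only that it is a direct consequence of \THM{PN-dense-conj}, and your translation via the cyclic property together with closure of \morphism{}s under composition is precisely that bookkeeping. Both directions, including the extra appeal to the cyclic property to make the amalgam cyclic in the forward direction, are handled correctly.
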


\begin{dfn}{PN-strong-Rokhlin}
We say \(\lambda\) has \emph{strong Rokhlin property} if \(\OPN{Aut}(\lambda)\)
admits a dense \(\ggG_{\delta}\) conjugacy class. If \(\lambda\restriction{j}\)
has the strong Rokhlin property for each non-zero \(j \in \bbB\), we say that
\(\lambda\) has \emph{hereditary strong Rokhlin property}.
\end{dfn}

The role of the cyclic property is explained in the following

\begin{thm}{PN-conj-class}
If \(\lambda\) has the cyclic property, then \(\lambda\) has the hereditary
Rokhlin property iff it has the hereditary strong Rokhlin property.
\end{thm}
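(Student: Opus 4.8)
The plan is to handle the two implications asymmetrically: the forward one is immediate, while the converse is an amalgamation analysis of the polycycles attached to $\lambda$. Since a dense $\ggG_{\delta}$ conjugacy class is in particular dense, the hereditary strong Rokhlin property trivially entails the hereditary Rokhlin property. For the converse I would first strip off the word ``hereditary''. Because $(\lambda\restriction{j})\restriction{k} = \lambda\restriction{k}$ whenever $k$ lies below $j$, the hereditary Rokhlin property of $\lambda$ is inherited by every $\lambda\restriction{j}$, and by \LEM{PN-cyclic-inherit} so is the cyclic property. Hence it suffices to prove the single-measure statement: if $\lambda$ has the cyclic property and the hereditary Rokhlin property, then $\OPN{Aut}(\lambda)$ has a dense $\ggG_{\delta}$ conjugacy class. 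Applying this to each $\lambda\restriction{j}$ then gives the hereditary strong Rokhlin property of $\lambda$.

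Next I would record, in the polycycle language already set up, the criterion underlying the whole section (in the spirit of \THM{PN-dense-conj}, \LEM{PN-polycyc-dense}, and the arguments of \cite{DKMN1}): $\OPN{Aut}(\lambda)$ has a dense $\ggG_{\delta}$ conjugacy class iff the category of \compatible{\lambda} polycycles has both the joint embedding property (for any two there is a common target receiving morphisms from both) and the weak amalgamation property, namely that for every \compatible{\lambda} polycycle $\gamma$ there is a \compatible{\lambda} polycycle $\gamma^*$ with a morphism $\gamma \to \gamma^*$ such that any two morphisms $\gamma^* \to \delta_1$ and $\gamma^* \to \delta_2$ can be completed to morphisms $\delta_1 \to \delta$ and $\delta_2 \to \delta$ with equal composites $\gamma^* \to \delta$. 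The Baire-category half of this equivalence (weak amalgamation forces the relevant conjugacy-invariant sets to be dense open, whence their intersection is a dense $\ggG_{\delta}$ orbit) is standard, and the joint embedding property is exactly the Rokhlin hypothesis via \LEM{PN-polycyc-dense}. So the theorem reduces to verifying the weak amalgamation property.

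For that verification I would exploit that a morphism $\gamma^* \to \delta$ decomposes over the individual cycles of $\gamma^*$: the fibres of the underlying surjection split the cycles of $\delta$ according to which cycle of $\gamma^*$ they refine, and condition (mor) is imposed cycle-by-cycle. Thus the amalgamation localizes, and it is enough, for each cycle $(g_s,n_s)$ of $\gamma^*$—whose base level is a copy of $\bbB\restriction{j_s}$ carrying the measure $\lambda\restriction{j_s}$ with $\lambda(j_s)=g_s$—to amalgamate the two induced refinements there. I would choose $\gamma^*$ by replacing each cycle of $\gamma$ by a taller, equi-measured one, using \PRO{PN-hered-1/N} to obtain, for every $N$, equi-measured $N$-partitions and the division measures $\mu_N$ with $N\mu_N=\mu$; this divisibility of the value semigroup is what replaces the dense clopen value set of \cite{DKMN1}. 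Over a fixed cycle $s$ the common refinement $\delta$ is then built by passing to the least common multiple of the return times (heights) arising from $\delta_1$ and $\delta_2$ and subdividing the base measures into correspondingly equal parts; that these subdivisions are realizable as genuine values of $\lambda$ and can be matched cyclically follows from the Rokhlin property of the restriction $\lambda\restriction{j_s}$ (available by heredity, and itself having the cyclic property by \LEM{PN-cyclic-inherit}) together with the uniqueness of common measures from \LEM{PN-1/N-uniq}. Reassembling the cyclewise amalgams over all $s$ yields $\delta$ with automatically commuting composites.

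The hard part will be precisely this weak amalgamation verification, and the difficulty is combinatorial rather than measure-theoretic: one cannot simply intersect the two base partitions, because the towers wrap around their bases with different return times, so the common refinement must interleave them along a single orbit, which forces the least-common-multiple of heights together with simultaneous equal subdivisions of the base measures. This is exactly the point at which the hereditary—rather than plain—Rokhlin property is indispensable, since the gluing takes place inside the sub-towers, i.e. inside the restrictions $\lambda\restriction{j_s}$, and therefore requires a dense conjugacy class in each such restriction and not merely in $\OPN{Aut}(\lambda)$. A subsidiary point I would need to settle first is that $\lambda$ is supersolid, so that \PRO{PN-hered-1/N} applies: the cyclic property already forbids $\lambda$ from vanishing on a proper element (\EXM{PN-non-cyclic}), and I would either derive minimality of the action from the cyclic property and the realizability of long equi-measured cycles, or bypass supersolidity by extracting the needed divisibility directly from \LEM{PN-1/N} and \LEM{PN-1/N-uniq}.
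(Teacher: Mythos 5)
Your overall architecture matches the paper's: the forward implication is trivial, the problem reduces to a single restriction, the criterion is a Fra\"{i}ss\'{e}-type amalgamation statement for \compatible{\lambda} polycycles, the amalgamation localizes over the cycles of a base polycycle, and the hereditary hypothesis enters precisely because the local work takes place inside restrictions of \(\lambda\). (The paper proves \emph{full} amalgamation rather than weak amalgamation, noting that \(((\lambda(\unit),1))\) maps into every \compatible{\lambda} polycycle so that joint embedding comes for free; your JEP\(+\)WAP formulation is an acceptable alternative target.) The genuine gap is in the mechanism you propose for the local amalgamation. You build the common refinement by passing to the least common multiple of the heights and subdividing the base measures into equal parts, and you invoke \PRO{PN-hered-1/N} to supply the equi-measured partitions and the division measures \(\mu_N\). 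That proposition requires \(\lambda\) to be supersolid, and neither of your fallbacks closes this. The cyclic property does not imply supersolidity: good-like measures induced by the lexicographic order on \(\QQQ^d\) \((d>1)\) have the cyclic property by \PRO{PN-cyclic-good-like}, are solid but not supersolid by item (B) of \PRO{PN-lin-ord} and \COR{PN-strong-not-super}, and yet are exactly the measures \THM{PN-conj-class} is meant to cover in \THM{PN-strong-good-like}. The other fallback, extracting divisibility from \LEM{PN-1/N}, also fails because that lemma presupposes that \(\unit\) already admits an equi-measured \(N\)-partition, which is the very divisibility you are trying to manufacture.

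The repair is that no equal subdivision (hence no divisibility of the value semigroup) is needed at all. Once the amalgamation has been localized to the fibre over a cycle \((f_z,\nu_z)\) of the base polycycle \(\epsi\), the two induced polycycles \(\gamma_z\) and \(\delta_z\) (with heights divided by \(\nu_z\)) are \compatible{\rho} for \(\rho \df \lambda\restriction{a^z_1}\), where \(\{a^z_1,\ldots,a^z_{\nu_z}\}\) is an equi-measured \(\nu_z\)-partition realising that cycle; this uses \LEM{PN-1/N-uniq} only for well-definedness. The Rokhlin property of \(\rho\) --- which is exactly what the hereditary hypothesis supplies --- is, via \LEM{PN-polycyc-dense} (together with \LEM{PN-cyclic-inherit}), literally the statement that \(\gamma_z\) and \(\delta_z\) admit morphisms into a common \compatible{\rho} polycycle \(\eta_z\). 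One then concatenates \(\nu_z\) copies of each \(\eta_z\), checks that the result is \compatible{\lambda}, and verifies that the assembled surjections are morphisms making the square commute. Your proposal correctly identifies where heredity is indispensable, but the lcm-of-heights-plus-equal-subdivision step it rests on is neither available under the hypotheses nor necessary.
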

\begin{proof}
We only need to show the `only if' part of the result. So, assume \(\lambda\)
has the hereditary Rokhlin property and fix a non-zero \(E \in \bbB\). Thanks to
\LEM{PN-cyclic-inherit}, \(\lambda\restriction{E}\) has the cyclic property and
all that has been assumed about \(\lambda\) is valid for
\(\lambda\restriction{E}\). Thus, we may and do assume that \(E = \unit\).\par
It is a kind of folklore (see, e.g., \cite{DKMN1}) that under the assumption of
the theorem, it is sufficient to prove that the category of \compatible{\lambda}
polycycles has amalgamation (since \(\lambda\) is actually ultrahomogeneous---by
\THM{PN-4EP}---and \(((\lambda(\unit),1))\) is \compatible{\lambda} and admits
a morphism into every \compatible{\lambda} polycycle). (Then
the Fra\"{\i}ss\'{e} limit-like of the class of partial \(\lambda\)-isomorphisms
defined on finite Boolean subalgebras of \(\bbB\) that correspond to cyclic
matrices will be a \(\lambda\)-automorphism with dense \(\ggG_{\delta}\)
conjugacy class.) To this end, we fix three \compatible{\lambda} polycycles
\(\epsi = ((f_1,\nu_1),\ldots,(f_r,\nu_r))\), \(\gamma = ((g_1,\alpha_1),\ldots,
(g_p,\alpha_p))\) and \(\delta = ((h_1,\beta_1),\ldots,(h_q,\beta_q))\) and two
morphisms \(u\dd \{1,\ldots,p\} \to \{1,\ldots,r\}\) (from \(\gamma\) into
\(\epsi\)) and \(v\dd \{1,\ldots,q\} \to \{1,\ldots,r\}\) (from \(\delta\) into
\(\epsi\)). Since \(\epsi\) satisfies \eqref{eqn:PN-poly-compat}, there is
a partition \(\{a^1,\ldots,a^r\}\) of \(\bbB\) such that each \(a^j\) admits
an equi-measured \(\nu_j\)-partition with common measure \(f_j\), say \(\{a^j_1,
\ldots,a^j_{\nu_j}\}\). In particular, for any \(k \in \{1,\ldots,\nu_j\}\)
there exists \(\phi \in \OPN{Aut}(\lambda)\) that sends \(a^j_1\) onto
\(a^j_k\).\par
Fix arbitrary \(z \in \{1,\ldots,r\}\) and let \(\{j_1,\ldots,j_x\}\) and
\(\{k_1,\ldots,k_y\}\) be the inverse images, respectively, \(u^{-1}(\{z\})\)
and \(v^{-1}(\{z\})\) (we assume here that all \(j_s\) are distinct and
similarly for all \(k_s\)). Consider polycycles \(\gamma_z \df ((g_{j_1},
\alpha_{j_1}/\nu_z),\ldots,(g_{j_x},\alpha_{j_x}/\nu_z))\) and \(\delta_z \df
(h_{k_1},\beta_{k_1}/\nu_z),\ldots,(h_{k_y},\beta_{k_y}/\nu_z))\). (To simplify
reasoning, here we allow polycycles to have indices that are not consecutive
integers.) It follows from homogeneity of \(\lambda\) and the property that
\(u\) and \(v\) are morphisms that \(\gamma_z\) and \(\delta_z\) are
\compatible{\rho} where \(\rho \df \lambda\restriction{a^z_1}\) (here one
applies \LEM{PN-1/N-uniq}). Now since \(\rho\) has the Rokhlin property, we
infer from \LEM{PN-polycyc-dense} that there are a \compatible{\rho} polycycle
\(\eta_z = ((w_1,\xi_1),\ldots,(w_{m_z},\xi_{m_z}))\) and two morphisms \(s_z\dd
\{1,\ldots,m_z\} \to \{j_1,\ldots,j_x\}\) (from \(\gamma_z\) into \(\eta_z\))
and \(t_z\dd \{1,\ldots,m_z\} \to \{k_1,\ldots,k_y\}\) (from \(\delta_z\) into
\(\eta_z\)).\par
We now define a final polycycle \(\eta\) as \((\underbrace{\eta_1,\ldots,
\eta_1}_{\nu_1},\ldots,\underbrace{\eta_r,\ldots,\eta_r}_{\nu_r})\). Since
\(\eta_z\) induces a partition of \(a^z_1\), each of its \(\nu_k\) `copies' in
\(\eta\) induces a partition of \(a^z_k\) (\(k=1,\ldots,\nu_z\)). Hence \(\eta\)
is \compatible{\lambda}. Now changing the indices in each \(\eta^z\) appearing
in \(\eta\) (in a way consistent with the order of pairs in \(\eta\)), we may
and assume that all the corresponding functions \(s_z\) have pairwise disjoint
domains (and similarly for all \(t_z\)) so that all of them extend to a function
\(s\) from the set \(T\) of indices of \(\eta\) to \(\{1,\ldots,p\}\) (and
similarly, all \(t_z\) extend to a function \(t\dd T \to \{1,\ldots,q\}\)). It
easily follows from the construction that both \(s\) and \(t\) are morphisms
(from \(\gamma\) and \(\delta\), respectively, into \(\eta\)), and that \(u
\circ s = v \circ t\), which finishes the proof.
\end{proof}

\begin{thm}{PN-strong-good-like}
Let \(G\) be a countable group densely ordered by `\(<\)'. \TFCAE
\begin{enumerate}[\upshape(i)]
\item all \(G\)-valued good-like measures have the Rokhlin property;
\item all \(G\)-valued good-like measures have the hereditary strong Rokhlin
 property;
\item \(G\) is a vector space over \(\QQQ\).
\end{enumerate}
\end{thm}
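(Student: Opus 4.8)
The plan is to establish the equivalence through the three structural facts about good-like measures, and then to prove the cycle (iii)$\Rightarrow$(i)$\Rightarrow$(iii) with (ii) bridged by these facts. First, every $G$-valued good-like measure $\mu$ has the cyclic property (\PRO{PN-cyclic-good-like}), so by \LEM{PN-polycyc-dense} the Rokhlin property of $\OPN{Aut}(\mu)$ is equivalent to the amalgamation statement ``any two \compatible{\mu} polycycles admit morphisms into a common \compatible{\mu} polycycle'', while by \THM{PN-conj-class} the hereditary Rokhlin and hereditary strong Rokhlin properties coincide. Second, for each non-zero $j$ the restriction $\mu\restriction{j}$ is again a good-like $G$-valued measure with $(\mu\restriction{j})(\unit)=\mu(j)$: it is solid by \COR{PN-solid-hered}, group-valued, does not attain $0_G$ on non-zero elements, and (by \REM{PN-pospec-measure}) its value set on proper elements is the order-interval $\{g: 0_G<g<\mu(j)\}$, so it is induced by a linearly ordered solid pospec of the form in \PRO{PN-lin-ord}(A) with $\Omega=\mu(j)$. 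Third, since `$<$' is a compatible linear order, $G$ is automatically torsion-free. Granting these, (ii)$\Rightarrow$(i) is trivial (a dense $\ggG_\delta$ class is dense), and once (iii)$\Rightarrow$(i) is known, (i)$\Rightarrow$(ii) follows: (i) and the second fact give the hereditary Rokhlin property for every good-like measure, which \THM{PN-conj-class} upgrades to the hereditary strong Rokhlin property.

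For (i)$\Rightarrow$(iii) I would prove divisibility directly. Fix $g\in G$ with $g>0_G$ and an integer $N\ge 2$; I seek $x$ with $Nx=g$. Setting $t:=2g$ (so $t>g>0_G$) and $\Omega:=Nt=2Ng$, the element $\Omega$ is divisible by $N$ with positive quotient $t$ and satisfies $\Omega>g$, so that $g$ lies in the value-interval $\{h: 0_G<h<\Omega\}$. Let $\mu$ be a good-like measure with $\mu(\unit)=\Omega$ (such a measure exists by \DEF{PN-good-like}). Since the trinary spectrum of a good-like measure is $\{(a,b,c): a,b,c>0_G,\ a+b+c=\Omega\}$, the tuple $(t,\dots,t)$ belongs to $\Delta_N(\mu)$, so $\unit$ admits an equi-measured $N$-partition. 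By (i) the measure $\mu$ has the Rokhlin property, and \LEM{PN-1/N} then guarantees that every proper element admits an equi-measured $N$-partition. Applying this to a proper $j$ with $\mu(j)=g$ yields a partition of $\bbB\restriction{j}$ into $N$ pieces of a common measure $c$ with $Nc=g$, so $g$ is divisible by $N$. Running over all positive $g$ (and over $-g$ otherwise) shows $G$ is divisible, and together with torsion-freeness this gives (iii).

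The substantial direction is (iii)$\Rightarrow$(i). By the first fact it suffices to amalgamate two \compatible{\mu} polycycles $\gamma=((g_i,n_i))_{i=1}^m$ and $\delta=((h_j,k_j))_{j=1}^l$, where compatibility means $g_i,h_j>0_G$ and $\sum_i n_ig_i=\Omega=\sum_j k_jh_j$. The natural candidate target is $\eta:=((w_{ij},\,n_ik_j))_{i,j}$, for which the maps $(i,j)\mapsto i$ and $(i,j)\mapsto j$ are morphisms from $\gamma$ and from $\delta$ exactly when the elements $w_{ij}>0_G$ satisfy the marginal equations $\sum_j k_jw_{ij}=g_i$ and $\sum_i n_iw_{ij}=h_j$; these marginals automatically force $\sum_{i,j} n_ik_j w_{ij}=\Omega$, so $\eta$ is then \compatible{\mu}. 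Substituting $x_{ij}:=n_ik_jw_{ij}$ converts this into a \emph{balanced transportation problem} over $G$ with strictly positive supplies $a_i:=n_ig_i$, strictly positive demands $b_j:=k_jh_j$, and $\sum_i a_i=\sum_j b_j=\Omega$. Thus everything reduces to the following claim, which I expect to be the main obstacle: \emph{in a densely ordered $\QQQ$-vector space, a balanced transportation problem with positive marginals admits a strictly positive solution $x_{ij}>0_G$.}

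Over $\RRR$ the explicit solution $x_{ij}=a_ib_j/\Omega$ settles this at once, but a general densely ordered $\QQQ$-vector space (for instance lexicographically ordered $\QQQ^d$) admits neither an order-preserving embedding into $\RRR$ nor a compatible multiplication, so \LEM{PN-ring} is unavailable and the product formula cannot be used. Instead I would prove the claim by induction on the number $l$ of demands: for $l=1$ take $x_{i1}=a_i$; for $l\ge 2$ first fill the last demand $b_l$ by choosing $x_{il}\in(0_G,a_i)$ with $\sum_i x_{il}=b_l$, which is feasible because $0_G<b_l<\sum_i a_i$ and, density of the order letting one insert elements strictly between given bounds, the $x_{il}$ can be selected greedily so that every partial residual demand stays strictly between $0_G$ and the remaining supply. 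The residual supplies $a_i-x_{il}>0_G$ then remain balanced against $b_1,\dots,b_{l-1}$, and the inductive hypothesis applies. Feeding the resulting $w_{ij}$ back gives the common compatible polycycle $\eta$, which establishes the amalgamation and hence (iii)$\Rightarrow$(i); combined with the first and second structural facts this closes the whole equivalence.
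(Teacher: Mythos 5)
Your proposal is correct, and two of its three legs --- the equivalence of (i) and (ii) via the cyclic property, \LEM{PN-cyclic-inherit}/heredity of good-likeness and \THM{PN-conj-class}, and the proof of (i)\(\Rightarrow\)(iii) via \LEM{PN-1/N} and torsion-freeness of linearly ordered groups --- follow the paper essentially verbatim (your detour through \(\Omega=2Ng\) instead of the paper's \(\Omega=Ng\) is harmless). Where you genuinely diverge is in the amalgamation step for (iii)\(\Rightarrow\)(i). The paper chooses a common multiple \(N\) of all cycle lengths, uses divisibility to rescale each \(g_j\) to \(g_j'=(n_j/N)g_j\) and each \(h_s\) to \(h_s'=(k_s/N)h_s\), realizes the two resulting tuples as two partitions \(\{x_j\}\) and \(\{y_s\}\) of a single non-zero \(z\in\bbB\), and lets the Boolean algebra produce the amalgam: the non-zero intersections \(\BooW{x_j}{y_s}\) yield a \compatible{\mu} polycycle with all lengths equal to \(N\), the coordinate projections being the two morphisms. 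You instead keep the product lengths \(n_ik_j\) and solve the marginal equations \(\sum_j k_jw_{ij}=g_i\), \(\sum_i n_iw_{ij}=h_j\) abstractly in \(G\), reducing to a balanced transportation problem with strictly positive entries, which you settle by a greedy induction using density of the order (correctly noting that \LEM{PN-ring} is unavailable since, e.g., lexicographic \(\QQQ^d\) carries no compatible multiplication). Both arguments are sound and use divisibility in an essential way; the paper's intersection trick is slightly more economical because zero intersections are simply dropped from the index set, so no strict-positivity bookkeeping is needed, whereas your version is purely order-algebraic, makes the combinatorial content (a transportation problem over an ordered \(\QQQ\)-vector space) explicit, and does not need to re-enter the Boolean algebra once compatibility has been translated into positivity-and-sum conditions on \(\Delta_p(\mu)\).
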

\begin{proof}
Since the property of being a good-like measure is hereditary, it follows from
\PRO{PN-cyclic-good-like} and \THM{PN-conj-class} that conditions (i) and (ii)
are equivalent.\par
First assume (i) holds. Recall that \(G\) is torsion-free (as a linearly ordered
group). So, (iii) holds iff \(G\) is divisible. To show that it is the case, it
is enough to prove that for any positive element \(g \in G\) and each \(n > 1\)
there is \(x \in G\) such that \(g = nx\). To this end, we fix \(g > 0_G\) and
\(n > 1\). Set \(\Omega \df ng\), note that \(\Omega > 0_G\), and take
a good-like measure \(\mu\dd \bbB \to G\) with \(\mu(\unit) = \Omega\). Since
\(\mu\) has the Rokhlin property and \(\unit\) admits an equi-measured
\(n\)-partition w.r.t. \(\mu\) (because \(\mu(\unit) = ng\) and \(0_G < g <
\Omega\)), we infer from \LEM{PN-1/N} that each proper element of \(\bbB\) also
admits such a partition, which implies that the equation \(g = nx\) has
a solution in \(G\).\par
Now assume that \(G\) is divisible, and fix a \(G\)-valued good-like measure
\(\mu\) with \(\mu(\unit) = \Omega\). To prove that \(\mu\) has the Rokhlin
property, it is enough to show, thanks to \LEM{PN-polycyc-dense}, that any two
\compatible{\mu} polycycles admit a morphism into a common \compatible{\mu}
polycycle. To this end, fix two such polycycles \(\gamma = ((g_1,n_1),\ldots,
(g_m,n_m))\) and \(\delta = ((h_1,k_1),\ldots,(h_p,k_p))\). So, all \(g_j\) and
\(k_s\) are positive elements of \(G\) and \(\sum_{j=1}^m n_j g_j = \Omega =
\sum_{s=1}^p k_s h_s\). Choose positive integer \(N > 1\) that is divisible by
all \(n_j\) as well as all \(k_s\). It follows from divisibility of \(G\) that
there are \(g_1',\ldots,g_m'\) and \(h_1',\ldots,h_p'\) such that \(g_j =
\frac{N}{n_j} g_j'\) and \(h_s = \frac{N}{k_s} h_s'\) (for all \(j\) and \(s\)).
Then all \(g_j'\) and all \(h_s'\) are positive elements of \(G\) less than
\(\Omega\); and \(N \sum_{j=1}^m g_j' = \Omega = N \sum_{s=1}^p h_s'\).
Consequently, \(\sum_{j=1}^m g_j' = \sum_{s=1}^p h_s'\). So, there is
a non-zero element \(z \in \bbB\), and two partitions \(\{x_1,\ldots,x_m\}\) and
\(\{y_1,\ldots,y_p\}\) of \(\bbB\restriction{z}\) such that \(\mu(x_j) = g_j'\)
and \(\mu(y_s) = h_s'\) (for all \(j\) and \(s\)). For any \(j \in \{1,\ldots,
m\}\) and \(s \in \{1,\ldots,p\}\) set \(w_{js} \df \BooW{x_j}{y_s}\); and let
\(T \df \{(j,s)\dd\ w_{js} \neq \zero\}\). Further, fix a bijection \(\sigma\dd
\{1,\ldots,|T|\} \to T\) and denote its coordinates by \(\tau\) and \(\rho\);
that is, \(\sigma(t) = (\tau(t),\rho(t))\ (1 \leq t \leq |T|)\). Finally, for
any \(t \in I \df \{1,\ldots,|T|\}\) we set \(f_t \df \mu(w_{\tau(t)\rho(t)})\),
and denote by \(\epsi\) the polycycle \(((f_1,N),\ldots,(f_{|T|},N))\). Note
that \(f_t > 0_G\) and \(N \sum_{t=1}^{|T|} f_t = N \mu(\bigvee_{(j,s) \in T}
w_{js}) = N \mu(z) = N \sum_{j=1}^m g_j' = \Omega\), which means that \(\epsi\)
is \compatible{\mu}. Finally, for any \(j \in \{1,\ldots,m\}\) we have
\(\sum_{t \in \tau^{-1}(\{j\})} \frac{N}{n_j} f_t = \frac{N}{n_j} \mu(x_j) =
g_j\); similarly, \(\sum_{t \in \rho^{-1}(\{s\})} \frac{N}{k_s} f_t = h_s\) for
any \(s \in \{1,\ldots,p\}\), which shows that \(\tau\) and \(\rho\) are
morphisms from, respectively, \(\gamma\) and \(\delta\) to \(\epsi\), and we are
done.
\end{proof}

As good-like measures are solid (and most of them are not supersolid---cf.
\REM{PN-lin-ord-super} and item (B) of \PRO{PN-lin-ord}), we get

\begin{cor}{PN-strong-not-super}
There exist solid measures with the hereditary strong Rokhlin property that are
not supersolid.
\end{cor}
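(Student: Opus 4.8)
The plan is to exhibit a single good-like measure that simultaneously witnesses all three requirements, by combining \THM{PN-strong-good-like} with the non-supersolid example furnished by \PRO{PN-lin-ord}. The point is that good-like measures are always solid, that the strong hereditary conclusion of \THM{PN-strong-good-like} holds precisely when the underlying group is a $\QQQ$-vector space, and that \PRO{PN-lin-ord}(B) already exhibits a $\QQQ$-vector space carrying a good-like measure that is not supersolid.

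First I would fix the group $G = \QQQ^d$ with $d > 1$ finite, equipped with the canonical (lexicographic) order `$<$' described in \EXM{PN-lin-ord}; this is a countable group densely ordered by `$<$'. Taking $\Omega = (1,0,\ldots,0)$, item (B) of \PRO{PN-lin-ord} produces a solid pospec $(J,\prec)$ (defined as in item (A) of that proposition) which is \emph{not} supersolid. Let $\mu \colon \bbB \to G$ be a good-like measure induced by $(J,\prec)$ in the sense of \DEF{PN-good-like} and \REM{PN-pospec-measure}.

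Next I would verify the three properties of $\mu$. Solidity is immediate, since $\mu$ is induced by a solid pospec and hence is a solid measure by \THM{PN-pospec} together with \REM{PN-pospec-measure}. That $\mu$ is not supersolid follows directly from \DEF{PN-super-pospec} combined with the fact, established in \PRO{PN-lin-ord}(B), that the inducing pospec $(J,\prec)$ is not supersolid. Finally, because $G = \QQQ^d$ is a vector space over $\QQQ$, condition (iii) of \THM{PN-strong-good-like} is satisfied, so by the equivalence (ii)$\iff$(iii) in that theorem every $G$-valued good-like measure --- in particular $\mu$ --- has the hereditary strong Rokhlin property.

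The argument is essentially an assembly of previously established facts, so there is no genuine obstacle to overcome; the only point requiring attention is to choose the group and order so that the hypotheses of both cited results hold at once. \THM{PN-strong-good-like} demands that $G$ be a $\QQQ$-vector space in order to obtain the hereditary strong Rokhlin property, while \PRO{PN-lin-ord}(B) needs the dimension $d > 1$ in order to destroy supersolidity (the offending element there being $(0,\ldots,0,1) \in I$). The group $\QQQ^2$ with its lexicographic order meets both demands, which completes the proof.
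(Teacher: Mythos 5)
Your proposal is correct and follows essentially the same route as the paper: the corollary is obtained by combining \THM{PN-strong-good-like} (hereditary strong Rokhlin property for good-like measures over a \(\QQQ\)-vector space) with the non-supersolid good-like measure over \(\QQQ^d\), \(d>1\), furnished by item (B) of \PRO{PN-lin-ord} and \REM{PN-lin-ord-super}. Nothing further is needed.
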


It follows from Theorems~\ref{thm:PN-0-filling} and \ref{thm:PN-Q-vector} that
countable vector spaces over \(\QQQ\) are precisely those groups \(G\) such that
all \(G\)-filling measures have the Rokhlin property; or, equivalently, that
admit a \(G\)-filling measure with the hereditary Rokhlin property. Note also
that if one such a measure has the latter property, then all of them have it.
\THM{PN-strong-good-like} has a similar spirit, but asserts more. So,
the following question naturally arises:\vspace{2mm}

\begin{prb}{PN-prb2}
Do all \(\QQQ\)-filling measures have the hereditary strong Rokhlin property?
\end{prb}

We also pose a related problem.

\begin{prb}{PN-prb3}
Characterise good-like measures with the hereditary Rokhlin property.
\end{prb}

We wish to underline that \THM{PN-strong-good-like} does not solve the above
problem, as good-like measures are, in general, not supersolid (cf.
\REM{PN-lin-ord-super}), so, \COR{PN-hered-univ} cannot be applied in this case.

\bibliographystyle{siam}
\bibliography{ref}
\end{document}